%% file: BLF-PLF-localization-inequalities.tex
\pdfoutput= 1 
\RequirePackage{fixltx2e}
\documentclass[11pt]{article}  
\usepackage[utf8]{inputenc}
\usepackage[T1]{fontenc}
\usepackage{amsmath, amsfonts, amsthm, amssymb, amscd, aliascnt}
\usepackage{a4wide}
\usepackage[normalem]{ulem}     
\usepackage{relsize}
\usepackage{cancel}
\usepackage{slashed}
\usepackage{stmaryrd}
\usepackage{esint}
\usepackage{amsmath,amssymb,slashed}
\usepackage{bbm}
\usepackage{float} 
\usepackage{setspace}
\usepackage{titlesec}
\titlespacing*{\paragraph}{0pt}{1.25ex plus 1ex minus .2ex}{.5em}
\usepackage{subcaption} 
\usepackage{comment}
\usepackage{longtable}
\usepackage{perpage}
\MakePerPage{footnote}
\numberwithin{table}{section}
\usepackage[mathcal]{euscript} 
\usepackage{mathrsfs} 
\usepackage{accents, verbatim}
\DeclareMathAlphabet\mathbfcal{OMS}{cmsy}{b}{n}  
\makeatletter
\@ifundefined{c@localmathalphabets}{}{\setcounter{localmathalphabets}{0}}
\makeatother

\input{epsf}
\usepackage{pdfpages, enumerate, graphicx, xcolor, array, booktabs, microtype}
\usepackage{fancyhdr}
\usepackage[nottoc]{tocbibind} 
\usepackage[pdfusetitle]{hyperref}
\hypersetup{linktoc = all}
\hypersetup{hidelinks}
\hypersetup{bookmarksnumbered,hypertexnames=false}

\numberwithin{equation}{section}
\numberwithin{table}{section}
\numberwithin{figure}{section}
\usepackage{footnotebackref}
\input{macros-localization}

\newcounter{fig}[section]

\usepackage{etoolbox}
\AtBeginEnvironment{table}{\refstepcounter{fig}}
\AtBeginEnvironment{figure}{\refstepcounter{fig}} 
\newcommand{\compactpart}[1]{%
\vskip1.2cm 
  \refstepcounter{part}%
  \phantomsection
  \addcontentsline{toc}{part}{\thepart\texorpdfstring{\quad}{ }#1}%
  \vspace{1.5em}
  \noindent{\Large\bfseries Part \thepart.\quad #1\par}
  \vspace{1em}
}
\begin{document} 

\phantomsection\label{firstpage}
 
\title{Harmonic, radial, and shell stability of the 
\\
weighted Einstein constraints on the sphere at infinity} 

\author{Bruno Le Floch\texorpdfstring{$^{1}$}{} and Philippe G. LeFloch\texorpdfstring{$^2$}{}}

\date{}

\maketitle 
\footnotetext[1]{Laboratoire de Physique Théorique et Hautes Énergies, Sorbonne Universit\'e \& Centre National de la Recherche Scientifique, 4 Place Jussieu, 75252 Paris, France. Email: {\tt blefloch@lpthe.jussieu.fr}.
}
\footnotetext[2]{Laboratoire Jacques-Louis Lions, Sorbonne Universit\'e \& Centre National de la Recherche Scientifique, 4 Place Jussieu, 75252 Paris, France. Email: {\tt contact@philippelefloch.org}.
\newline
{\it Keywords and Phrases.} 
Linearized Einstein equations; optimal localization; harmonic, radial, and shell stability; Poincar\'e, Korn, and Hardy functional constants; monotone curvature functional. 
\hfill  This version: July 2026.\!%
}

\begin{abstract}
We consider fourth-order and second-order partial differential operators localized on domains of the sphere in arbitrary dimension. These operators arise as weighted compositions of the linearized Einstein constraint operators and their adjoints, and played a key role in our resolution of the optimal localization problem in general relativity, also referred to as the gravitational shielding problem. To control the asymptotic behavior of solutions to Einstein's constraints in our companion paper (B.~Le Floch and P.G.~LeFloch, \emph{Optimal localization for the Einstein constraints}, preprint arXiv:2312.17706), we introduced the notions of \emph{harmonic, radial, and shell stability}. Harmonic stability controls the borderline harmonic modes, radial stability governs the radial evolution of spherical averages, and shell stability controls the coupled radial--angular evolution of solutions. In the present paper, we establish that these stability properties follow from weighted Poincar\'e, Korn, and Hardy inequalities. Furthermore, in arbitrary dimension, we investigate the behavior of the associated geometric constants, and conclude that the stability conditions hold for a broad class of localization functions. Crucially, our theory applies to \emph{arbitrarily small localization domains}, corresponding, in our resolution of the Einstein constraints, to gluing cones with \emph{arbitrarily small aperture}. At the opposite extreme, our stability conditions also hold on the entire sphere, corresponding to the absence of localization, provided that the space dimension is at most \(17\). This completes, for gluing cones of arbitrarily small aperture in every dimension, the program initiated by A.~Carlotto and R.~Schoen on gravitational shielding and the construction of solutions enjoying super-harmonic decay estimates. A natural further problem is to determine the optimal constants in these inequalities.
In the course of our proofs, we introduce Hamiltonian and momentum functionals, which we call \emph{shell functionals}, and show that they enjoy monotonicity and semi-coercivity properties; their structure also suggests possible analogies with functionals arising in other curvature-related geometric problems.
\end{abstract}


\clearpage 

{\small

\setcounter{secnumdepth}{2} 
\setcounter{tocdepth}{2}
\tableofcontents

} 


\section{Introduction}
\label{section-1}

\subsection{Main objective}
\label{sectionN1-1}

We consider two elliptic operators on conical domains of~$\RR^n$: a fourth-order scalar operator and a second-order vector-valued  operator, referred to here as the \emph{localized Hamiltonian and momentum operators}.
These operators depend on a radial exponent $p\in(0,n-2)$ and an angular localization function~$\lambda$.
They arise when gluing asymptotically Euclidean scalar-flat Riemannian manifolds, and more generally solutions of Einstein's constraint equations, using a variational method pioneered by Carlotto and Schoen~\cite{CarlottoSchoen}.
These variational and linearization procedures belong to a deformation theory for the Einstein equations, developed first in~\cite{FischerMarsden-1973,Moncrief-1975}.
The present paper is a companion to~\cite{LL-optimal-main}, where geometric inequalities involving these two operators were used to obtain an optimal radial decay of the resulting glued solutions at each asymptotic end.
It can, however, be read \emph{independently}. A short account of the gravitational-shielding aspect of this theory appeared earlier in~\cite{LL-Letter}.

In~\cite{LL-optimal-main}, we introduced three structural conditions governing the asymptotic behavior of localized solutions: \emph{harmonic stability} controls angular profiles with borderline radial decay, \emph{radial stability} governs the radial decay of spherical averages of solutions, and \emph{shell stability} concerns fluctuations away from these averages on each sphere. The main objective of the present paper is to provide convenient sufficient conditions for these three stability properties to hold. 

The localized Hamiltonian and momentum operators, applied to functions with a specific power of the radius~$r$ on~$\RR^n$, reduce to a pair of elliptic operators $\ssA,\ssB$ on a subdomain of the (unit) sphere $\Sbb^{n-1}\subset\RR^n$.
For the reader's convenience, the first part of the paper (Sections~\ref{section-2} to~\ref{section-4}) is formulated in terms of $\ssA,\ssB$, without any reference to the Einstein equations.
This suffices to obtain harmonic and radial stability, before moving on to shell stability in the second part of the paper (Sections~\ref{section-6} to~\ref{section-9}).
A central ingredient there is the construction of Hamiltonian and momentum \emph{shell functionals} enjoying certain \emph{monotonicity and semi-coercivity properties.}

The pivotal \autoref{section-5} explains how our present results complement those of our companion paper~\cite{LL-optimal-main} by providing broad classes of localization functions~$\lambda$ that satisfy all three stability conditions.
This paper therefore completes the localization program initiated by Carlotto and Schoen~\cite{CarlottoSchoen} and continued in~\cite{LL-optimal-main}. Explicit examples and complementary consequences of this localization theory will be developed in~\cite{LL-next}.

More precisely, we prove \emph{quantitative sufficient conditions} for harmonic, radial, and shell stability in terms of weighted Poincar\'e, Korn, Korn--Poincar\'e, and Hardy constants associated with the localization function $\lambda:\Sbb^{n-1}\to[0,+\infty)$. In particular, for every function $\lambda$, the Hamiltonian harmonic and radial conditions, as well as all three momentum conditions, hold when the parameter $p$ is taken to be sufficiently close to $n- 2$ (a minor restriction in our construction scheme in \cite{LL-optimal-main}); on the other hand, the Hamiltonian shell condition is established under an explicit smallness condition on Poincaré constants. Furthermore, the scaling results of \autoref{section-5} then show that all these functional inequalities are compatible with spherical localization domains of \emph{arbitrarily small diameter} ---corresponding, for asymptotically conical gluing, to \emph{arbitrarily small aperture}. At the opposite extreme, we also prove stability when $\lambda$ is constant on the entire sphere, corresponding to the absence of localization, in dimensions \(3\leq n\leq17\).

As a by-product, our method yields super-harmonic estimates for gluing, for instance, scalar-flat metrics on conical localization domains, a problem of interest in Riemannian geometry. We expect the shell functionals and weighted inequalities developed here to be relevant to other geometric problems and elliptic systems. Indeed, the weighted estimates established here suggest a broader use for the analytic mechanism of this paper.  On a foliation by spherical slices, they convert radial identities into effective control of tangential fluctuations even when the underlying functionals are only semi-coercive.  A comparable combination of slice-wise propagation and weighted angular control may be useful for geometric variational and evolution equations with degenerate modes.  The effectiveness of monotonicity methods in the analysis of concentration and singular limits provides further motivation for pursuing this connection; see, for instance,~\cite{BrendleHuisken:2017,CDHS,ColdingMinicozzi}.

From a broader viewpoint, the present results suggest two natural directions for further study. The first concerns the determination of sharp weighted Poincar\'e, Korn, Korn--Poincar\'e, and Hardy constants, for which optimal-transport and symmetrization methods may be relevant. The second concerns the extension of the gluing construction to localization domains substantially more general than asymptotic cones, once suitable analogues of the present weighted inequalities and shell estimates have been established.


\subsection{Spherical localization framework}
\label{sectionN1-spherical}

Fix an integer $n\ge 3$. Let $\Sbb^{n-1}=\Sbb^{n-1}_1\subset\RR^n$ be the unit sphere endowed with its round metric~$\gslash$ and Levi--Civita connection~$\nablaslash$.

\begin{definition}\label{def:localization-function}
A \underline{localization function} is a non-negative function \(\lambda:\Sbb^{n-1}\to[0,\lambda_0]\), for some \(\lambda_0>0\), whose positivity domain
\be
\Lambda\coloneqq\operatorname{int}(\operatorname{supp}\lambda)=\{\lambda>0\}
\ee
is connected and has smooth boundary, subject to the following conditions. 
\bei

\item If \(\Lambda=\Sbb^{n-1}\), then \(\lambda\) is smooth and strictly positive on the whole sphere.

\item If \(\del\Lambda\neq\emptyset\), then \(\lambda\) restricted to $\overline\Lambda$ is positive in \(\Lambda\) and vanishes with a non-zero gradient \(|\nablaslash\lambda|>0\) on \(\del\Lambda\).  Its extension by zero on \(\Sbb^{n-1}\setminus\overline\Lambda\) is only piecewise smooth.
\eei
\noindent As \(\lambda\) determines \(\Lambda\), the latter will often be omitted from notation.
\end{definition}

Fix $p\in(0,n- 2)$ and an integer $\expoP\ge 2$. To any localization function $\lambda$, we associate the \textbf{localization measure} on its domain $\Lambda$ defined by 
\bel{equa--09}
d\chi \coloneqq \lambda^{2\expoP} \, d\Vol_{\Sbb^{n-1}}.
\ee
Here $d\Vol_{\Sbb^{n-1}}$, also denoted by $d\xh$ in the following, is the standard volume form on $\Sbb^{n-1}$ restricted to~$\Lambda$. 

We recall some notation from~\cite{LL-optimal-main}; see also \autoref{section-5}, below. Further notation can be found in \autoref{section-2}, below, while all numerical constants such as $a_{n,p}, b_{n,p}, \ldots$ are defined in \autoref{section-2} and collected in \autoref{sec-appendix-A}, below. The  (spherical part of the) \emph{localized Hamiltonian operator} $\ssA^\lambda[\nu]$ is
\bel{equa--11}
\aligned
\ssA^\lambda[\nu] \coloneqq \lambda^{- 2\expoP} \Bigl( (n- 2) \, \Deltaslash ( \lambda^{2\expoP} \Deltaslash \nu) 
& + \nablaslash^a\nablaslash^b\bigl(  \lambda^{2\expoP} \nablaslash_a\nablaslash_b \nu\bigr)
\\
& - 2 (a_{n,p}+1) \nablaslash \cdot (  \lambda^{2\expoP} \nablaslash \nu) - c_{n,p} \Deltaslash( \lambda^{2\expoP} \nu) \Bigr),
\endaligned
\ee
where the unknown $\nu: \Lambda \to \RR$ is a scalar-valued field.  On the other hand, the  (spherical part of the) \emph{localized momentum operator} $\ssB^\lambda[\xi]$ has components 
\bel{equa--12}
\aligned
\ssB^{\lambda\perp\perp}[\xiperp]
& = (n-1) \xiperp - \frac{1}{2} \lambda^{- 2\expoP} \nablaslash \cdot \bigl( \lambda^{2\expoP} \nablaslash \xiperp \bigr), 
\\
\ssB^{\lambda\parallel\perp}[\xiperp]_a & = - \frac{1}{2} \nablaslash_a\xiperp - \lambda^{- 2\expoP} \nablaslash_a\bigl(\lambda^{2\expoP} \xiperp\bigr),
\\
\ssB^{\lambda\perp\parallel}[\xipar]
& = \frac{1+a_{n,p}}{2} \lambda^{- 2\expoP} \nablaslash \cdot \bigl(\lambda^{2\expoP} \xipar \bigr) + \nablaslash \cdot \xipar, 
\\
\ssB^{\lambda\parallel\parallel}[\xipar]_a  & = \frac{1+a_{n,p}}{2} \xipar_a
-  \lambda^{- 2\expoP} \nablaslash^b\Bigl(\lambda^{2\expoP} \Sym(\nablaslash \xipar)_{ab} \Bigr).
\endaligned
\ee
Here the unknown is a scalar field $\xiperp$ and a vector field $\xipar$ on $\Lambda$, which can arise as the restriction of a vector field $\xi=(\xiperp,\xipar)$ on~$\RR^n$, decomposed into components normal and tangent to the sphere.  We can either regard $\xipar$ as a vector in $\RR^n$ with components $\xipar_i$ (with $i=1,\dots,n$) or, more geometrically, as a tangent vector to the sphere with components denoted by $\xipar_a$, using abstract Penrose indices $a,b,\dots$ on the unit sphere~$\Sphe^{n-1}$. In our calculations we found it convenient to use both standpoints. In \eqref{equa--12}, we also used the notation
$\Sym(\nablaslash \xipar)_{ab} \coloneqq \frac{1}{2} \bigl( \nablaslash_a \xipar_b + \nablaslash_b \xipar_a\bigr)$.


\subsection{Outline of the main results}
\label{sectionN1-3}

We aim at connecting the stability conditions introduced in~\cite{LL-optimal-main} to concrete geometric inequalities associated with the localization function. We give here a description of the results and the organization of this paper.  

\bei

\item In \autoref{section-2}, we state first some definitions; more precisely, in Definitions~\ref{art2 -def-easy-Hstab}, \ref{def-harmonic-Mstab}, and \ref{def-radial-Mstab}, 
we state the harmonic and radial conditions, while postponing to later sections the presentation of the shell stability conditions. In \autoref{art2 -thm:informal-suff-stab}, we then state our main conclusion that all of our stability conditions hold under standard functional inequalities of Poincar\'e, Korn, and Hardy type.  

\item In \autoref{section-3}, we consider the harmonic Hamiltonian operator~$\ssA^\lambda$ and begin the main analytical part of the paper with the proof of Hamiltonian harmonic and radial stability. We state our conclusion as \autoref{art2 -prop-harm-rad} (harmonic stability) and \autoref{art2 -prop--radial} (radial stability). Furthermore, in \autoref{art2 -lem-nut-small} we also establish a key estimate on the fluctuations (the difference from the average) of the Hamiltonian silhouette function. The silhouette is defined as an element of the kernel of~$\ssA^\lambda$.

\item In \autoref{section-4}, we then turn our attention to the harmonic momentum operator~$\ssB^{\lambda}$. In \autoref{art2 -equa--weighted-Korn}, we establish a weighted Korn inequality for vector fields on the sphere, and next in \autoref{art2 -lem-poinckorn} a weighted Korn--Poincar\'e inequality adapted to the momentum operator $\ssB^{\lambda}$ in the harmonic limit $p \to (n- 2)$ (corresponding to $a_{n,p}\to 0$). These preliminary inequalities eventually lead us to \autoref{art2 -lem-39harmmom} (harmonic stability) and \autoref{art2 -coro-radial-momen} (radial stability).

\item In \autoref{section-5}, we return to the Einstein constraint equations and derive the localized Hamiltonian and momentum operators from the weighted-adjoint construction.  We give their harmonic--spherical decompositions, prove the three weighted Poincar\'e inequalities used in the Hamiltonian analysis, and establish the complete small-domain scaling result.

\item The weighted geometric inequalities are introduced where they enter the analysis.  The Poincar\'e inequalities and their small-domain scaling are discussed in \autoref{sectionN3- 2} and proven in \autoref{sectionN3- 2 -complete}; the weighted Korn and Korn--Poincar\'e inequalities are established in \autoref{art2 -equa--weighted-Korn} and \autoref{art2 -lem-poinckorn}; and the weighted Hardy inequality used for momentum shell stability is stated in \autoref{art2 -equainequ49}.

\item In \autoref{section-6}, we turn our attention to the shell stability condition for the localized Hamiltonian operator. After introducing a quadratic functional on the shells $\Lambda_r$ and its radial identity, we are in a position to present the notion of Hamiltonian shell stability in \autoref{def-shellH} and, in~\autoref{art2 -prop-dj39}, we state the desired stability property. 

\item In \autoref{section-7}, the notion of shell stability for the localized momentum operator is presented in \autoref{def-shell-Mstab} and consists of continuity and semi-coercivity conditions. We then analyze the momentum fluctuation operator in \autoref{art2 -prop-k-fluctuations-m}, while the corresponding stability theorem is postponed to a later section; cf.~\autoref{art2 -prop-semi-mdiss}.

\item In \autoref{section-8}, we introduce a general Ansatz of Hamiltonian shell functionals depending on several coefficients, and establish monotonicity and semi-coercivity properties under suitable inequalities on its defining coefficients. We then complete the proof of~\autoref{art2 -prop-dj39} on the shell stability of the Hamiltonian operator. 

\item In \autoref{section-9}, we consider the proposed momentum shell functional and investigate the positivity of the dissipation functional. We then conclude with the momentum shell stability in \autoref{art2 -prop-semi-mdiss}. 

\eei

Given these results, it would be interesting to extend the proposed method to the setting in which the sphere $\Sbb^{n-1}$ is replaced by an arbitrary compact Riemannian manifold. It would also be of interest to determine sharp constants in our inequalities: while $a_{n,p}, b_{n,p}, \ldots$ are \emph{$\lambda$-independent structure constants}, our \emph{$\lambda$-dependent constants} are likely not optimal. It would be interesting to seek sharper bounds via optimal-transport techniques. 
	

\subsection{Implications for general relativity} 
\label{sectionN1-4}

\subsubsection{Localization with sub-harmonic decay}

We briefly explain how \autoref{art2 -thm:informal-suff-stab} applies to the Einstein constraint equations; the precise analytic statements are given in \autoref{section-5}. Let $\Mbf$ be an $n$-dimensional spacelike hypersurface embedded in a Ricci-flat Lorentzian manifold of dimension $n+1$. Its induced metric $g$ and second fundamental form $k$ satisfy the vacuum Einstein constraint equations, namely\footnote{See for instance~\cite[Chap.~VII]{Choquet-book}.}:
\bel{eq:ee11}
\aligned
R_g + (\Tr_g k)^2 - |k|_g^2 &= 0,
\qquad
\bfDiv_g\bigl(k-(\Tr_g k)g\bigr)=0.
\endaligned
\ee
Here $R_g$ denotes the scalar curvature of $g$, $\Tr_g k$ and $|k|_g$ are respectively the trace and norm of $k$, and $\bfDiv_g$ is the divergence operator. Carlotto and Schoen~\cite{CarlottoSchoen} first discovered the \emph{gravity shielding} phenomenon for a class of asymptotically Euclidean initial data sets, while Chru\'sciel and Delay~\cite{ChruscielDelay-2021} later extended it to substantially more general domains and asymptotic ends. 
In essence, the constraint equations admit families of solutions that can be glued in distinct asymptotically conical regions near infinity. For instance, they can be exactly Euclidean in all angular directions except within a conical sector whose aperture can be made arbitrarily small. More generally, we may interpolate between an exactly Euclidean end and a nontrivial end, such as a Schwarzschild end, across a gluing region whose boundary is conical at infinity. These constructions rely on, and broaden, the earlier gluing techniques developed in compact settings~\cite{ChruscielDelay-memoir,Corvino-2000,CorvinoSchoen}. Related scalar-curvature deformation and localized gluing procedures were developed in~\cite{CorvinoEM,Delay}. Further localized deformations for initial data satisfying the dominant energy condition were obtained in~\cite{CorvinoHuang}. We also refer to the surveys~\cite{Carlotto-Review,Chrusciel-bourbaki,GallowayMiaoSchoen}.


\subsubsection{Localization with (super-)harmonic decay} 

Quantitatively, the Carlotto--Schoen construction provides control in the gluing region with \emph{sub-harmonic} decay $r^{-n+2-\eta}$, where $\eta>0$ is small. Carlotto and Schoen~\cite{CarlottoSchoen,Carlotto-Review} conjectured that the borderline case $\eta=0$, corresponding to harmonic decay, should also be attainable.
In~\cite{LeFlochNguyen}, this construction was revisited from the new viewpoint of \emph{asymptotic localization}: instead of requiring exact agreement with a model solution throughout the complement of the gluing cone, we allowed super-harmonic perturbations there.

In contrast, in~\cite{LL-optimal-main}, we solved the original (exact) localization problem and introduced a new methodology to establish metric estimates at super-harmonic rates $r^{-n+2+\eta}$, for $\eta>0$, with respect to a radial distance function. More precisely, we defined a \emph{seed-to-solution map}, viewed as a nonlinear projection from approximate solutions to exact solutions of the Einstein constraint equations. This yielded an \emph{optimal localization} theory producing solutions whose asymptotic behavior is controlled beyond the ADM energy--momentum term. The resulting super-harmonic decay theorem is conditional on harmonic, radial, and shell stability for both the Hamiltonian and momentum operators. Many other developments are also reviewed in~\cite{LL-optimal-main}.


\subsection{Notation}
\label{sectionN1-5}

Throughout the paper, $n\ge 3$ denotes the dimension of the Euclidean space $(\RR^n,\delta)$ and $\delta$ denotes the Euclidean metric. We write $\Deltaslash$ and $\nablaslash$ for the Laplace--Beltrami operator and Levi-Civita connection on the sphere $(\Sphe^{n-1}, \gslash)$ endowed with its standard round metric, and $d\xh$ for the standard homogeneous measure on the unit sphere.
Weighted Sobolev spaces are defined with standard conventions; cf.~\cite{LL-optimal-main}. In particular, we use the notation  
$\| v\|^2_{L^2_{- \expoP}(\Lambda)}$ for the norm of $L^2$ functions $v:\Lambda \subset \Sphe^{n-1} \to \RR$ defined on the $(n-1)$-sphere endowed with the measure $d\chi = \lambda^{2\expoP} d\xh$ (a real number $\expoP \geq 2$ being fixed), and similarly for Sobolev spaces $H^k_{- \expoP}(\Lambda)$. For convenience we normalize the norm as
\bel{equa-norm-poids} 
\| v\|^2_{H^k_{- \expoP}(\Lambda)} \coloneqq \sum_{0\leq j\leq k} \fint_{\Lambda} |\nablaslash^j v|_{\gslash}^2 \, d\chi
= \frac{1}{\aire[\Lambda,\lambda]} \sum_{0\leq j\leq k} \int_{\Lambda} |\nablaslash^j v|_{\gslash}^2 \, d\chi. 
\ee
Furthermore, the weighted average of a function $f\colon \Lambda \to \RR$ defined on an open set $\Lambda \subset \Sphe^{n-1}$ of the $(n-1)$-dimensional, unit sphere $\Sphe^{n-1} \subset \RR^n$ is denoted by 
\bel{equa-average} 
\la f \ra \coloneqq \fint_{\Lambda} f \, d\chi 
\coloneqq {1 \over \aire[\Lambda,\lambda]} \int_{\Lambda} f \, d\chi,   
\qquad  
\aire[\Lambda,\lambda] \coloneqq \int_{\Lambda} \, d\chi.
\ee

For a list of relevant constants, we refer to \autoref{sec-appendix-A}.


\compactpart{Asymptotic structure on the sphere at infinity}

\section{Definitions and main stability statements}
\label{section-2}

\subsection{Hamiltonian stability}
\label{sectionN2 - 2}

Fix a localization function $\lambda$.  We introduce harmonic and radial stability for the spherical operators $\ssA^\lambda$ and $\ssB^\lambda$ defined in \eqref{equa--11} and \eqref{equa--12}.  These notions are formulated entirely on the weighted spherical domain $(\Lambda,d\chi)$, where $\Lambda=\{\lambda>0\}$ and $d\chi=\lambda^{2\expoP}d\xh$ is the weighted measure defined in \eqref{equa--09}, for a fixed sufficiently large integer~$\expoP$.
We shall assume that $p\in(p_n^\flat,n-2)$, where
\be
p_n^\flat = \frac{(n-1)(n-3)}{2(n-2)} < \frac{n-2}{2} ,
\ee
as this ensures that various operator coefficients $a_{n,p}, b_{n,p}, c_{n,p}, d_{n,p}$ given in~\eqref{equa-our-parame-00} are positive.

The harmonic Hamiltonian operator~$\ssA^{\lambda}$ is self-adjoint when the parameter $c_{n,p}$~vanishes (which occurs in the limits $p\to n-2$ and $p\to p_n^\flat$), but is generally not, and we encode its coercivity through a quadratic functional obtained by a formal integration by parts. Specifically, we set
\be
\ssrmA^{\lambda}[\nu]\coloneqq \fint_{\Lambda} \nu\,\ssA{}^{\lambda}[\nu]\,d\chi
\ee
and, consequently,  
\bel{ssAalpha-quaform-0}
\aligned
\ssrmA^{\lambda}[\nu]
&= \fint_{\Lambda}\Bigl(
(n- 2)(\Deltaslash\nu)^2
+|\nablaslash^2\nu|^2
+2(1+a_{n,p})|\nablaslash\nu|^2
-c_{n,p}\,\nu\Deltaslash\nu
\Bigr)\,d\chi.
\endaligned
\ee
The functional $\ssrmA^{\lambda}$ serves as the basic energy controlling harmonic modes of the localized Hamiltonian operator. We also use the corresponding bilinear expression and retain the same notation for its diagonal quadratic functional:
\be
\ssrmA^\lambda[\nu,\mu]
\coloneqq \fint_\Lambda \mu\,\ssA^\lambda[\nu]\,d\chi,
\qquad
\ssrmA^\lambda[\nu]\equiv\ssrmA^\lambda[\nu,\nu].
\ee
Here and below, $\ker(\ssA^\lambda)$ is understood weakly in $H^2_{- \expoP}(\Lambda)$, that is, $\ssrmA^\lambda[\nu,\mu]=0$ for every $\mu\in H^2_{- \expoP}(\Lambda)$.

Besides harmonic coercivity, we introduce a radial stability condition which involves the kernel of $\ssA^\lambda$. In \eqref{equa-H211} and \eqref{equa-H-normalization}, below, the numerical constants $d^{\min}_{n,p}\leq d_{n,p} \leq d^{\max}_{n,p}$, as well as the normalization constant $\theta^\lambda$ are defined explicitly in \autoref{sec-appendix-A} below. The constant $\theta^\lambda$ selects a distinguished generator of the one-dimensional kernel, compatible with an ADM mass formula.

\begin{definition}
\label{art2 -def-easy-Hstab}
Fix $p<n- 2$ and consider the Hamiltonian operator $\ssA^\lambda$. A localization function $\lambda:\Lambda\to(0,\lambda_0]$ is said to satisfy
\bei
\item the Hamiltonian \underline{harmonic stability condition} provided that 
\bel{equa-H- 210}
\aligned
\ssrmA^{\lambda}[\nu]\gtrsim \|\nu\|_{H^2_{- \expoP}(\Lambda)}^2,
\qquad
\nu\in H^2_{- \expoP}(\Lambda)
\text{ with }
\la\nu\ra =0, 
\endaligned
\ee
\item the Hamiltonian \underline{radial stability condition}, provided that harmonic stability holds and, for every non-zero $\nu\in\ker(\ssA^{\lambda})$,
\bel{equa-H211}
\frac{\la\Deltaslash\nu\ra}{\la\nu\ra} < d^{\min}_{n,p}
\quad \text{ or } \quad  
\frac{\la\Deltaslash\nu\ra}{\la\nu\ra} > d^{\max}_{n,p}. 
\ee
\eei

\noindent Moreover, when both stability conditions hold, the Hamiltonian \underline{silhouette function} $\nu^{\normal}$ is the unique function on $\Lambda$ satisfying 
\bel{equa-H-normalization} 
\aligned
& \nu^{\normal} \in \ker(\ssA^{\lambda}),  
&
&& \la - \Deltaslash\nu^{\normal} + d_{n,p} \, \nu^{\normal} \ra 
&= \theta^{\lambda}. 
\endaligned
\ee
\end{definition}

Observe that $1\in\ker((\ssA^\lambda)^\ast)$, namely $\ssA^\lambda[\nu]$ has a vanishing average.
An argument based on applying the Banach--Nečas--Babuška theorem to the zero-average subspace shows that the kernel and cokernel are one-dimensional (cf.~\cite[Section~6.2]{LL-optimal-main}).
Concretely, harmonic stability makes $\nu\mapsto\la\nu\ra$ injective on $\ker(\ssA^\lambda)$, hence that space is at most one-dimensional.  Conversely, harmonic stability ensures that $\ssA^\lambda[\nu]=\ssA^\lambda[1]$ has a unique solution~$\nu$ with zero average, from which one learns that $1-\nu$ belongs to the kernel.  As a result, $\ssA^\lambda$~has a one-dimensional kernel whose elements have a non-zero average, and \eqref{equa-H211} is well-defined.

To complement these two stability conditions, the Hamiltonian \emph{shell stability condition}, which concerns the radial evolution of solutions (to the linearized Einstein constraints), will be introduced in \autoref{section-6}.


\subsection{Momentum stability}
\label{sectionN2 -3}

We write $\xi=(\xi^\perp,\xi^\parallel)$, where $\xi^\perp$ is a scalar function on $\Lambda$ and $\xi^\parallel$ is a vector field tangent to $\Sphe^{n-1}$ along $\Lambda$. We consider the non-self-adjoint spherical operator $\xi\mapsto\ssB^\lambda[\xi]$ (which is self-adjoint in the limit $p\to n-2$) and associate with it the quadratic functional
\bel{ssBalpha-quaform-0}
\aligned
\ssrmB^{\lambda}[\xi]
= \fint_{\Lambda} \hskip-.1cm \Bigl( (n-1) (\xiperp)^2 + \frac{1}{2} |\nablaslash\xiperp|^2
& - {a_{n,p}+2 \over 2} \xipar\cdot\nablaslash\xiperp + 2 \xiperp \nablaslash\cdot\xipar
\\
&  + {a_{n,p}+1\over 2}|\xipar|^2
+ \bigl|\Sym(\nablaslash\xipar) \bigr|^2 \Bigr) \, d\chi, 
\endaligned
\ee
where $\Sym(\nablaslash \xipar)_{ab} \coloneqq \frac{1}{2} \bigl( \nablaslash_a \xipar_b + \nablaslash_b \xipar_a\bigr)$. As above, we set
\be
\ssrmB^\lambda[\xi,\rho]
\coloneqq\fint_\Lambda \rho\cdot\ssB^\lambda[\xi]\,d\chi,
\qquad
\ssrmB^\lambda[\xi]\equiv\ssrmB^\lambda[\xi,\xi].
\ee
The kernel is understood weakly in $H^1_{- \expoP}(\Lambda,\RR^n)$, and \eqref{equa-stable-M-414} is a weighted Korn inequality for the measure~$d\chi$. 

\begin{definition}
\label{def-harmonic-Mstab}
Fix $p<n- 2$ and consider the momentum operator $\ssB^\lambda$.
A localization function $\lambda\colon\Lambda\to(0,\lambda_0]$ is said to satisfy the momentum \underline{harmonic stability condition} provided that
\bel{equa-stable-M-414}
\aligned
& \ssrmB^{\lambda}[\xi] \gtrsim 
\| \xiperp \|_{H^1_{- \expoP}(\Lambda)}^2
+ \| \xipar \|_{L^2_{- \expoP}(\Lambda)}^2
+ \| \Sym(\nablaslash \xipar) \|_{L^2_{- \expoP}(\Lambda)}^2,
\\
& \xi \in H^1_{- \expoP}(\Lambda)
\, \text{ with } \, 
 \bigl\la 2 \, \xh_l\, \xi^{\perp} + \xi_{l}^{\parallel} \bigr \ra  = 0.
\endaligned
\ee
\end{definition}

Observe that the $n$ fields 
\be
\xi_l^\ast \coloneqq (\xh_l,\nablaslash\xh_l),
\ee
lie in $\ker((\ssB^\lambda)^\ast)$; the corresponding Banach--Nečas--Babuška argument on the codimension-$n$ subspace in \eqref{equa-stable-M-414} shows that the kernel and cokernel have dimension~$n$ (cf.~\cite[Section~9.2]{LL-optimal-main}).

Assume henceforth that harmonic stability holds, and let $\{\xi^{(j)}\}_{j=1}^n$ be any basis of $\ker(\ssB^\lambda)$. Its associated \emph{structure matrix} is
\bel{equa-the-matrix}
(\Xi^\notreM)^{(j)}{}_k
 \coloneqq
 \bigl\la - \nablaslash_k \xi^{(j) \perp} + 2 a_{n,p} \xh_k\, \xi^{(j) \perp} \bigr \ra
 +(1+a_{n,p}) \la \xi^{(j) \parallel}{}_k \ra. 
 \ee

The normalization constant $\eta^\lambda$ in \eqref{equa-M-normalization} below merely selects a distinguished basis of the $n$-dimensional kernel compatible with an ADM momentum formula; it is defined explicitly in \autoref{sec-appendix-A}.

\begin{definition}
\label{def-radial-Mstab}
Fix $p<n- 2$ and consider the momentum operator $\ssB^\lambda$. A localization function $\lambda$ satisfies the momentum \underline{radial stability condition} provided that harmonic stability in \autoref{def-harmonic-Mstab} holds and that the structure matrix associated with one (equivalently, every) basis of $\ker(\ssB^\lambda)$ is invertible, namely    
\bel{equa-Xi-invertible}
\detbf (\Xi^\notreM) \neq 0. 
\ee 
When both stability conditions hold, the momentum \underline{silhouette vector fields} $\xi^{\normal(j)}$, $j=1,\ldots,n$, are the unique basis of the kernel whose associated structure matrix satisfies
\bel{equa-M-normalization}
\aligned
\Xi^\notreM &= \eta^{\lambda} I.
\endaligned
\ee 
\end{definition}

In addition, the momentum \emph{shell stability condition}, which concerns the radial evolution of solutions (to the linearized Einstein constraints), will be introduced later in \autoref{section-7}.


\subsection{Main results of this paper}
\label{sectionN2 -4}

\subsubsection{Definitions of the geometric constants}

To a localization domain $\Lambda\subset\Sphe^{n-1}$, we associate its diameter with respect to the geodesic distance of the round metric $\gslash$,
\bel{diamLambdadef}
\diam(\Lambda) = \sup_{x,y\in\Lambda} \dbf_{\gslash}(x,y) ,
\ee
Given a localization function $\lambda$ and the measure $d\chi=\lambda^{2\expoP}d\xh$, the \emph{weighted Poincaré constant} $\CPoin$ is the optimal constant for which
\bel{equa-Poin1}
\| \nu - \la\nu\ra \|_{L^2_{- \expoP}(\Lambda)}
\leq \CPoin\,\| \nablaslash\nu \|_{L^2_{- \expoP}(\Lambda)} 
\ee 
holds for every $\nu\in H^1_{- \expoP}(\Lambda)$. On the other hand, the \emph{second-order weighted Poincaré constant} $\CPoinTwo$ is the optimal constant for which
\bel{equa-Poin2}
\| \nablaslash\nu \|_{L^2_{- \expoP}(\Lambda)}^2
\leq (\CPoinTwo)^2\,\| \nablaslash^2\nu \|_{L^2_{- \expoP}(\Lambda)}^2 + \sum_{k,l} \matG_{kl} \la\nablaslash_k\nu\ra \la\nablaslash_l\nu\ra ,
\ee
holds for every $\nu\in H^2_{- \expoP}(\Lambda)$, where the (positive definite) matrix $\matG$ is defined by
\bel{equa-defineG}
\matG = \Id + \la\xh \otimes \xh\ra - 2 \, \langle\xh\rangle \otimes\langle\xh\rangle.
\ee
This inequality enters the proof of the Hamiltonian shell stability in Sections~\ref{section=6.3}--\ref{art2 -section=2.3}.
For instance, in a small domain, $\matG$~is close to a projector orthogonal to~$\la\xh\ra$, so that~\eqref{equa-Poin2} reduces essentially to a component-wise Poincaré inequality~\eqref{equa-Poin1} on the tangent vector~$\nablaslash\nu$.
Importantly, there is no constant in front of the average term in~\eqref{equa-Poin2}.


\subsubsection{Main stability theorem}

The theorem below gives sufficient geometric conditions for harmonic and radial stability of the spherical operators, together with shell stability for their radial extensions. The shell conditions are defined later in \autoref{def-shell-Hstab} for the Hamiltonian operator and in \autoref{def-shell-Mstab} for the momentum operator.

\begin{theorem}[Harmonic, radial, and shell stability under functional inequalities]
\label{art2 -thm:informal-suff-stab} 
Fix $p\in(p_n^\flat,n- 2)$ and consider the Hamiltonian operator $\ssA^\lambda$ and the momentum operator $\ssB^\lambda$ associated with a given localization function $\lambda$.

\bei

\item[1a.] There exists a universal constant $c_1^\notreH(n)$ (depending only on the dimension~$n$) such that,
if the Poincar\'e constant $\CPoin$ satisfies
\be
(n- 2 -p) \, \CPoin < c_1^\notreH(n),
\ee
then the Hamiltonian \underline{harmonic stability} and \underline{radial stability} conditions hold.

\item[1b.] There exists a universal constant $c_2^\notreH(n)$ such that, if
\be
\diam(\Lambda) + \CPoin + \CPoinTwo < c_2^\notreH(n),
\ee
then the Hamiltonian \underline{shell stability} condition holds. 

\item[2.] There exists a geometric constant $c^{\notreM \lambda}>0$, depending\footnote{More precisely, as we show in the proof, $c^{\notreM \lambda}>0$ depends explicitly on suitably weighted Poincaré, Korn, Korn--Poincaré, and Hardy constants, and on the moment matrices $S^2$ and $\matV$ associated with~$\lambda$.}  on the localization function and the dimension $n$ such that, if
\be
0<n- 2 -p< c^{\notreM \lambda},
\ee
the \underline{harmonic, radial, and shell momentum stability} conditions hold.
\eei
\end{theorem}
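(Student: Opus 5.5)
The theorem assembles several separate statements, and I would prove it part by part following the outline of \autoref{sectionN1-3}.

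\emph{Part 1a.} For harmonic stability I would bound $\ssrmA^\lambda[\nu]$ from below in \eqref{ssAalpha-quaform-0}. The terms $(n-2)(\Deltaslash\nu)^2+|\nablaslash^2\nu|^2+2(1+a_{n,p})|\nablaslash\nu|^2$ are positive. The only indefinite term is $-c_{n,p}\,\nu\Deltaslash\nu$, and $c_{n,p}$ vanishes linearly as $p\to n-2$, so $c_{n,p}\lesssim (n-2-p)$. On the zero-average subspace, Cauchy--Schwarz and \eqref{equa-Poin1} give
\[
\Bigl|\fint \nu\Deltaslash\nu\,d\chi\Bigr|\le \CPoin\|\nablaslash\nu\|\,\|\Deltaslash\nu\|.
\]
This is absorbed by Young's inequality into $(n-2)\|\Deltaslash\nu\|^2+2\|\nablaslash\nu\|^2$ once $(n-2-p)\CPoin$ is small. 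The $L^2$ norm of $\nu$ itself is then controlled again by \eqref{equa-Poin1}. For radial stability I would use the kernel construction recalled after \autoref{art2 -def-easy-Hstab}: $\nu=1-\nu_0$, where $\nu_0$ has zero average and solves $\ssA^\lambda[\nu_0]=\ssA^\lambda[1]=-c_{n,p}\lambda^{-2\expoP}\Deltaslash(\lambda^{2\expoP})$. Testing against $\nu_0$ and using coercivity, the right-hand side is $c_{n,p}\fint \nablaslash\nu_0\cdot\lambda^{-2\expoP}\nablaslash\lambda^{2\expoP}$ after integration by parts. This needs care near $\del\Lambda$, and I would rather rewrite it as $-c_{n,p}\fint\Deltaslash\nu_0\,d\chi$. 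With this, $\|\nu_0\|_{H^2}\lesssim c_{n,p}$, so $\la\nu\ra=1$ while $|\la\Deltaslash\nu\ra|\lesssim (n-2-p)$. Since $d^{\min}_{n,p}$ stays bounded away from zero (to be checked from the appendix constants), the ratio satisfies $\la\Deltaslash\nu\ra/\la\nu\ra<d^{\min}_{n,p}$, which is \eqref{equa-H211}.

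\emph{Part 2.} I would first prove a weighted Korn inequality for $\xipar$, bounding $\|\nablaslash\xipar\|$ by $\|\Sym\nablaslash\xipar\|+\|\xipar\|$ up to Killing fields. I would then prove a Korn--Poincaré inequality on the codimension-$n$ subspace $\la 2\xh_l\xiperp+\xipar_l\ra=0$, first at $a_{n,p}=0$. At $a_{n,p}=0$ the cross terms in \eqref{ssBalpha-quaform-0} combine into an expression that is positive modulo exactly the $n$ directions $\xi^\ast_l$. I would verify this by completing squares together with the Korn inequality, arguing by compactness and contradiction to obtain a constant. The $a_{n,p}$-dependent perturbation is bounded by $a_{n,p}$ times the energy, so harmonic stability holds for $a_{n,p}$ small. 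For radial stability, at $a_{n,p}=0$ the operator is self-adjoint, so its kernel is spanned by $\xi^\ast_l=(\xh_l,\nablaslash\xh_l)$. The structure matrix \eqref{equa-the-matrix} then reduces to $\la-\nablaslash_k\xh_l+\nablaslash_l\xh_k\ra$ plus terms of order $a_{n,p}$, and this leading part vanishes, so a first-order expansion in $a_{n,p}$ is needed. I would compute the kernel perturbatively, $\xi=\xi^\ast+a_{n,p}\zeta+O(a^2)$, and show that $\Xi^\notreM/a_{n,p}$ tends to a positive definite moment matrix, built from $\la\xh\otimes\xh\ra$ and similar moments (the matrices $S^2,\matV$). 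Momentum shell stability would follow from the momentum shell functional of \autoref{section-9}. Its dissipation is positive at $a_{n,p}=0$, using a weighted Hardy inequality to handle the radial weights, and positivity persists for small $a_{n,p}$.

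\emph{Part 1b.} This is where I expect the real difficulty. The plan is to build a Hamiltonian shell functional along the Ansatz of \autoref{section-8}, with free coefficients, and derive its radial identity. The coefficients must then be chosen so that the functional is semi-coercive and monotone. On small domains the sphere is nearly flat with $\diam(\Lambda)$ small, so curvature corrections are $O(\diam^2)$. The averages are handled by \eqref{equa-Poin1}, and the gradient averages by \eqref{equa-Poin2} with its exact matrix $\matG$. With these, every error term is dominated by the principal coercive terms. The delicate point is that $\matG$ carries no small constant, so the mean modes $\la\nablaslash\nu\ra$ must be cancelled exactly by the coefficient choice rather than absorbed.
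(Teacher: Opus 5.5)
Your harmonic-stability argument in Part~1a is correct; it is the paper's completion of the square with different bookkeeping. Your Korn and Korn--Poincar\'e steps in Part~2 also follow the paper. The radial-stability step of Part~1a, however, fails as written.

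The claim that $d^{\min}_{n,p}$ stays bounded away from zero is false. We have $d^{\min}_{n,p}=\min\bigl(\frac{(n-1)c_{n,p}}{(n-2)^2},d_{n,p}\bigr)$, and both $c_{n,p}$ and $d_{n,p}=\frac{(n-1)a_{n,p}b_{n,p}}{(n-2)^2+1}$ carry the factor $a_{n,p}=2(n-2-p)$. Near the harmonic limit you are therefore comparing $\la\Deltaslash\nu\ra/\la\nu\ra=O(c_{n,p})$ with a threshold of the same order. In the other regime covered by 1a ($\CPoin$ small, $p$ fixed), neither side is small. So the bound $\|\nu_0\|_{H^2_{-\expoP}}\lesssim c_{n,p}$ with an unspecified constant proves nothing.

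You need the sharp Laplacian coefficient. Split $|\nablaslash^2\nu|^2=|(\nablaslash^2\nu)^\circ|^2+\frac1{n-1}(\Deltaslash\nu)^2$ and complete the square in the $c_{n,p}\,\nu\Deltaslash\nu$ term. This gives $\ssrmA^\lambda[\nu_0]\geq\bigl(\frac{n^2-3n+3}{n-1}-\gamma\bigr)\|\Deltaslash\nu_0\|^2$ for every $\gamma>c_{n,p}^2(\CPoin)^2/(8(1+a_{n,p}))$. Your identity $\ssrmA^\lambda[\nu_0]=-c_{n,p}\la\Deltaslash\nu_0\ra$ and Jensen then yield $0\leq\la\Deltaslash\nu\ra/\la\nu\ra\leq c_{n,p}\bigl(\frac{n^2-3n+3}{n-1}-\gamma\bigr)^{-1}$. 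This lies below $d^{\min}_{n,p}$ exactly when $\gamma<\min\bigl(1,(1+a_{n,p})/b_{n,p}\bigr)$, by the identities $\frac{n^2-3n+3}{n-1}-1=\frac{(n-2)^2}{n-1}$ and $\frac{n^2-3n+3}{n-1}-\frac{1+a_{n,p}}{b_{n,p}}=\frac{c_{n,p}}{d_{n,p}}$. That is where the condition $\CPoin<\beta_{n,p}$ comes from. The margin is tight: using only the coefficient $n-2$ of $\|\Deltaslash\nu\|^2$, as your absorption step does, gives the bound $c_{n,p}/(n-2)\approx\tfrac52 a_{n,p}$ for $n=4$, $a_{n,p}\to0$. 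This exceeds $d_{4,p}\approx\tfrac{12}{5}a_{n,p}$.

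Your perturbative route to momentum radial stability is a reasonable alternative to the paper's exact matrix inequalities. However, the positivity of $\lim\Xi^\notreM/a_{n,p}$ is again a sharp-constant fact, which you assert rather than derive: the limit is $T$ minus a first-order moment whose symmetric part is at most $U=I-S^2$.

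More seriously, your claim that the momentum dissipation is positive at $a_{n,p}=0$ and persists for small $a_{n,p}$ is false. Take the Section~9 functional ($\onec=0$, $\twoc=1$) at $a_{n,p}=0$, hence $\beta=0$. It reduces to $\|(\vartheta-1)\Zpar+\nablaslash\Zperp\|^2+2\|\vartheta\Zperp\|^2+2\|\Sym(\nablaslash\Zpar)+\Zperp\gslash\|^2$. This vanishes on translations $(\Zperp,\Zpar)=(\xh_k,\nablaslash\xh_k)$ with $\vartheta Z=0$, and on rotation Killing fields with $\Zperp=0$, $\vartheta\Zpar=\Zpar$. The limit is degenerate: the $L^2$ part of $\Ecal[Z]$ carries the factor $a_{n,p}$. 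So the $O(a_{n,p})$ terms must be tracked exactly, as the paper does:
\begin{itemize}
\item It integrates $\Zpar\cdot\nablaslash\Zperp$ by parts and completes the square in $\Zperp$. This creates $\lambda^{-2\expoP}\nablaslash\cdot(\lambda^{2\expoP}\Zpar)$, which is what the Hardy inequality controls; the weight involved is angular, not radial.
\item It controls rotations through the $O(a_{n,p})$ term $\|\Zpar\|^2$ and the covariance matrix $\matV$.
\item It controls translations only through the added average term. This uses the approximation $T(\Xi^\notreM)^{-1}\Kappa^\notreM[Z]\approx 2L_Z^{kl}\la\xh_k\ra$, where $L_Z^{kl}$ are the rotation coefficients of the conformal Killing field closest to $\Zpar$.
\end{itemize}

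Your Part~1b sketch misplaces the role of the smallness of $\diam(\Lambda)+\CPoin+\CPoinTwo$. The dissipation estimate of Section~8 is domain-independent, and its coefficients are not tuned to $\Lambda$. The geometric constants serve only to make $c^\notreH_{\textnormal{radial}}(\Kappa^\notreH[\ut])^2$ small. Here $\Kappa^\notreH$ couples $\ut$ to the silhouette fluctuation $\nut^\normal$, with $c^\notreH_{\textnormal{radial}}\lesssim a_{n,p}^{-2}\la\nu^\normal\ra^{-2}$. Meanwhile $\|\nut^\normal\|_{H^2_{-\expoP}}\lesssim c_{n,p}|\la\nu^\normal\ra|$ is bounded but not small, so an extra small factor is needed. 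The $\matG$-term of the second-order Poincar\'e inequality is not cancelled by any coefficient choice; it is applied to $\nut^\normal$. Testing $\ssA^\lambda[\nu^\normal]=0$ against $\xh_m$ gives $2(1+a_{n,p})\la\nablaslash_m\nut^\normal\ra=(n^2-3n+3)\la\xh_m\Deltaslash\nut^\normal\ra-(n-1)c_{n,p}\la\xh_m\nu^\normal\ra$. The component of this orthogonal to $\la\xh\ra$ is $O(\diam(\Lambda))$. Finally, $\matG$ differs from the projector orthogonal to $\la\xh\ra$ by $O(\diam(\Lambda)^2)$.
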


The Hamiltonian assertions follow from Theorems~\ref{art2 -prop-harm-rad}, \ref{art2 -prop--radial}, and~\ref{art2 -prop-dj39} stated in the following sections; the momentum assertion follows from Theorems~\ref{art2 -lem-39harmmom}, \ref{art2 -coro-radial-momen}, and~\ref{art2 -prop-semi-mdiss} below. More explicit bounds are established in those results, with auxiliary constants collected in \autoref{sec-appendix-A}.


\subsubsection{The almost-harmonic and small-domain regimes}

The next proposition summarizes several settings in which all stability conditions hold.  Item (i) below is a direct consequence of the main theorem, (ii)~is proven as \cite[Theorem~4.10]{LL-optimal-main}, and (iii)~arises from the Poincar\'e scaling results in \autoref{lem:Poincare-scaling-complete}.

\begin{proposition}[Sufficient conditions for harmonic, radial, and shell stability]
\label{prop:Poincare-scaling}
The following consequences of \autoref{art2 -thm:informal-suff-stab} hold.
\bei

\item[\textup{(i)}] 
For every fixed localization function $\lambda$, the Hamiltonian harmonic and radial stability conditions and all three momentum stability conditions hold when $p\in(p_n^\flat,n-2)$ is sufficiently close to $n-2$. If, in addition, the smallness condition in item~\textup{1b} of \autoref{art2 -thm:informal-suff-stab} holds, then the Hamiltonian shell stability condition also holds.

\item[\textup{(ii)}] In dimension $3\leq n\leq 17$ and for $p\in(p_n^\flat,n-2)$ sufficiently close to $n-2$, the localization function $\lambda\equiv 1$, with $\Lambda=\Sphe^{n-1}$, satisfies all six stability conditions.

\item[\textup{(iii)}] Let $\breve\Lambda$ be a bounded, connected domain with smooth boundary and compact closure in the unit ball of $\RR^{n-1}$, and let $\breve\lambda\in C^\infty(\overline{\breve\Lambda},[0,\lambda_0])$ be positive in $\breve\Lambda$ and vanish linearly on $\del\breve\Lambda$. For $\alpha\in(0,1]$, let $\Lambda_{(\alpha)}\subset\Sbb^{n-1}$ consist of the points $\xh$ such that $\xh_n>0$ and $(\xh_1,\dots,\xh_{n-1})/\alpha\in\breve\Lambda$, and set
\be
\lambda_{(\alpha)}(\xh)=\breve\lambda(\xh_1/\alpha,\dots,\xh_{n-1}/\alpha).
\ee
Then one has
\be
\diam(\Lambda_{(\alpha)})+C_\Poin^{\lambda_{(\alpha)}}+C_{\Poin 2}^{\lambda_{(\alpha)}}\lesssim\alpha,
\qquad \alpha \to 0.
\ee
Here, the implicit constant depends only on $n$, $\expoP$, and the reference localization function~$\breve\lambda$.   Consequently, we have the following conclusions: all three Hamiltonian stability conditions hold for sufficiently small $\alpha$, uniformly in $p\in(p_n^\flat,n- 2)$. Moreover, for a given~$\alpha$, all three momentum stability conditions hold whenever $p\in(p_n^\flat,n- 2)$ is sufficiently close to $n- 2$. In this joint small-domain and almost-harmonic regime, all six stability conditions hold.
\eei
\end{proposition}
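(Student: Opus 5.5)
The proposition is essentially a corollary, so the plan is to assemble it from \autoref{art2 -thm:informal-suff-stab}, from \cite[Theorem~4.10]{LL-optimal-main}, and from a scaling analysis of the three geometric quantities.

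Item (i) is immediate from the theorem. For a fixed $\lambda$, the constant $\CPoin$ is finite, so $(n-2-p)\CPoin<c_1^\notreH(n)$ holds once $n-2-p$ is small; this is item~1a. Likewise $c^{\notreM\lambda}>0$ is fixed, so $n-2-p<c^{\notreM\lambda}$ holds for $p$ close to $n-2$; this is item~2. The added hypothesis gives item~1b. For item (ii), with $\lambda\equiv1$ on the whole sphere, $\diam=\pi$ is not small, so item~1b does not apply. Here I would simply invoke \cite[Theorem~4.10]{LL-optimal-main}, which treats the unlocalized case by spherical harmonic decomposition. There each stability condition reduces to a sign condition on explicit eigenvalue polynomials in $\ell$, $n$, $p$, and the restriction $n\le17$ enters through the Hamiltonian shell condition for low modes. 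The momentum and harmonic/radial parts also follow from item (i), since $\CPoin=1/\sqrt{n-1}$ is finite.

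Item (iii) carries the real work. I would parametrize $\Lambda_{(\alpha)}$ by $y=(\xh_1,\dots,\xh_{n-1})/\alpha\in\breve\Lambda$. The round metric pulls back to $\alpha^2(\delta+O(\alpha^2)\,y\otimes y)$, and $d\xh=\alpha^{n-1}(1+O(\alpha^2))dy$. Hence $d\chi$ is $\alpha^{n-1}$ times a perturbation of $\breve\lambda^{2\expoP}dy$, and the normalized averages $\la\cdot\ra$ converge to flat weighted averages. The diameter bound $\diam\lesssim\alpha$ is clear, since $\breve\Lambda$ is bounded and the chart is bi-Lipschitz with constant $\alpha$. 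For $\CPoin$, gradients scale as $|\nablaslash\nu|_{\gslash}\approx\alpha^{-1}|\nabla_y\nu|$, so $\CPoin\le\alpha(1+O(\alpha^2))\breve C$. Here $\breve C$ is the flat weighted Poincaré constant of $(\breve\Lambda,\breve\lambda^{2\expoP}dy)$. That constant is finite because $\breve\lambda$ vanishes linearly at a smooth boundary: the weight is comparable to $\mathrm{dist}^{2\expoP}$, and such power-weighted measures on Lipschitz domains admit a Poincaré inequality by compactness of $H^1(\breve\lambda^{2\expoP})\hookrightarrow L^2(\breve\lambda^{2\expoP})$, via a Hardy-type argument near the boundary.

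The main obstacle is $\CPoinTwo$, because the sphere's curvature enters through $\matG$, the extrinsic components $\nablaslash_k\nu$ (with $k=1,\dots,n$), and the absence of a constant in front of the average term. Here $\nablaslash\nu$ is an $\RR^n$-valued function with $\nablaslash_k\nu$ tangent, so $\nablaslash_k\nu\,\xh_k=0$. I would apply the first-order inequality componentwise to $\partial_k$ of this vector. This gives
\[
\|\nablaslash\nu\|^2\le\sum_k\la\nablaslash_k\nu\ra^2+\CPoin^2\sum_k\|\nablaslash(\nablaslash_k\nu)\|^2 .
\]
The ambient derivative of $\nablaslash_k\nu$ is $\nablaslash^2\nu$ plus a normal term $-\xh_k\,\nablaslash\nu$ from the second fundamental form, whose size is $|\nablaslash\nu|$. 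The term $\CPoin^2|\nablaslash\nu|^2=O(\alpha^2)\|\nablaslash\nu\|^2$ is absorbed on the left. Absorbing it yields a factor $(1-C\alpha^2)^{-1}$ in front of $\sum\la\nablaslash_k\nu\ra^2$, and this factor must be compensated by $\matG-\Id=\la\xh\otimes\xh\ra-2\la\xh\rangle\otimes\la\xh\ra$.

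I would therefore make the average term exact. Writing $e=\la\xh\ra$, which has $|e|=1-O(\alpha^2)$, tangency gives $\la\nablaslash\nu\cdot\xh\ra=0$. Hence the component of $\la\nablaslash\nu\ra$ along $e$ equals $-\la\nablaslash\nu\cdot(\xh-e)\ra$, which by Cauchy--Schwarz is bounded by $O(\alpha)\|\nablaslash\nu-\la\nablaslash\nu\ra\|$. The transverse components are handled by the eigenvalues of $\matG$ there: $\la\xh\otimes\xh\ra$ is positive semidefinite on $e^\perp$ and $\langle\xh\rangle\otimes\langle\xh\rangle$ vanishes on $e^\perp$, so $\matG\ge\Id$ on $e^\perp$. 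Combined with the absorption step, this gives $(\CPoinTwo)^2\lesssim\alpha^2$. The conclusions of (iii) then follow from items~1a, 1b, and 2 of \autoref{art2 -thm:informal-suff-stab}, with uniformity in $p$ because $c_1^\notreH$ and $c_2^\notreH$ depend only on $n$ and $n-2-p<n-2$.
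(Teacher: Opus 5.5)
Your items (i) and (ii) follow the paper: a direct application of the main theorem, and the citation of \cite[Theorem~4.10]{LL-optimal-main}. So do your bounds on $\diam(\Lambda_{(\alpha)})$ and $C_\Poin^{\lambda_{(\alpha)}}$ in item (iii), obtained by pulling back through the chart $\Phi_\alpha$.

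\textbf{The gap is in your bound $C_{\Poin2}^{\lambda_{(\alpha)}}\lesssim\alpha$.}

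Write $w=\la\nablaslash\nu\ra$ and $e=\la\xh\ra$, with all norms in $L^2_{-\expoP}(\Lambda_{(\alpha)})$. Your componentwise Poincar\'e step gives exactly $(1-(\CPoin)^2)\|\nablaslash\nu\|^2\le|w|^2+(\CPoin)^2\|\nablaslash^2\nu\|^2$. This costs about $\alpha^2\breve C_\Poin^2|w|^2$ on the average term, produced by the normal term $-\xh_k\nablaslash\nu$.

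As you noticed, this loss has to be paid by $\matG-\Id$. But for $v\perp e$ one has exactly $v^t\matG v-|v|^2=v^t\matV v=\alpha^2 v'^{\,t}\mathsf C v'+O(\alpha^4|v|^2)$. Here $v'$ denotes the first $n-1$ components and $\mathsf C=M-m\otimes m$. Testing the reference Poincar\'e inequality on $y\mapsto c\cdot y$ gives $\mathsf C\le\breve C_\Poin^2\Id$, in general strictly. So ``$\matG\ge\Id$ on $e^\perp$'' leaves a residual of order $\alpha^2\bigl(\breve C_\Poin^2|w|^2-w'^{\,t}\mathsf C w'\bigr)$.

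This residual cannot be charged to $\alpha^2\|\nablaslash^2\nu\|^2$. Take $\nu=c\cdot\xh$ with $c\perp e_n$. Then $|w|\simeq|c|$, but $\|\nablaslash^2\nu\|^2=(n-1)c^tS^2c\simeq\alpha^2|c|^2$. For these functions the true defect $\|\nablaslash\nu\|^2-w^t\matG w=c^tR_\alpha c$ is $O(\alpha^4|c|^2)$. Your chain of inequalities only bounds it by $O(\alpha^2|c|^2)$. The argument therefore gives at best $C_{\Poin2}^{\lambda_{(\alpha)}}=O(1)$, which is not enough to invoke item~1b of the theorem.

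\textbf{The missing idea.} The ambient linear modes nearly saturate the corrected inequality, so they must be split off exactly rather than passed through a first-order Poincar\'e inequality. The paper writes $\nu=c_\alpha\cdot\xh+w_{\mathrm{fl}}$ with $c_\alpha=U_\alpha^{-1}\la\nablaslash\nu\ra$. Then $\la\nablaslash w_{\mathrm{fl}}\ra=0$, and the defect equals $\|\nablaslash w_{\mathrm{fl}}\|^2+c_\alpha^tR_\alpha c_\alpha$ exactly, see \eqref{equa-G-exact-remainder}, with $R_\alpha\ge0$ of the size \eqref{equa-section5-R-two-sided}.

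After this split, your componentwise argument does give $\|\nablaslash w_{\mathrm{fl}}\|^2\lesssim\alpha^2\|\nablaslash^2 w_{\mathrm{fl}}\|^2$. You would still need a near-orthogonality estimate, uniform in $\alpha$, between the two pieces of $\nablaslash^2\nu=\nablaslash^2 w_{\mathrm{fl}}-(c_\alpha\cdot\xh)\gslash$. The paper obtains it by a rescaled contradiction argument. The blow-up limit satisfies $\partial_{ab}h=(\gamma+\beta\cdot y)\delta_{ab}$. This forces $\beta=0$ because $n-1\ge2$, and then $\gamma=0$ from the orthogonality conditions inherited from $\la\nablaslash w_{\mathrm{fl}}\ra=0$.
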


For the estimates in item~(iii), the average term involving the matrix $\matG$ in \eqref{equa-Poin2} is essential for the above small-domain statements;  see Sections~\ref{sectionN3- 2} and~\ref{sectionN3- 2 -complete}. For comparison, when $\Lambda =\Sphe^{n-1}$ and $\lambda\equiv 1$, the present normalization gives (with obvious notation)
\bel{equa-round-sphere-constants}
\diam(\Sphe^{n-1})=\pi,
\quad C_\Poin(\Sphe^{n-1}, \lambda \equiv 1)=\frac{1}{\sqrt{n-1}}, 
\quad C_{\Poin 2}(\Sphe^{n-1},\lambda\equiv 1) = \frac{1}{\sqrt{n+2}}.
\ee
The proof proceeds by decomposing functions into spherical harmonics and applying the Bochner identity (given by an integration by parts) to decouple the modes.
The standard Poincaré inequality is saturated by degree-one modes.
For the second-order estimate, the degree-one modes obey the inequality (with a constant $1/n<1/\sqrt{n+2}$) thanks to the matrix~$\matG$, while degree-two modes saturate the inequality.


\section{Harmonic and radial stability for the localized Hamiltonian operator}
\label{section-3}

\subsection{Harmonic stability under a Poincaré inequality}
\label{sectionN4-1}

\subsubsection{Main statement of harmonic stability}

We now study the Hamiltonian operator $\ssA^\lambda$ and derive conditions on the weighted Poincar\'e constant that imply harmonic and radial stability. We impose, step by step, increasingly restrictive geometric conditions on the localization function $\lambda$. Throughout this section, $p\in(p_n^\flat,n- 2)$, so in particular $c_{n,p}>0$. The following statement involves the constant $\alpha_{n,p}$ defined explicitly in~\eqref{equa-const-alpha}.

\begin{theorem}[Harmonic stability of the Hamiltonian operator $\ssA^\lambda$]
\label{art2 -prop-harm-rad}
Fix an exponent $p\in(p_n^\flat,n- 2)$ and a localization function $\lambda$, with $d\chi=\lambda^{2\expoP}d\xh$. If the weighted Poincar\'e constant satisfies
\bel{art2 -equa-4178}
\CPoin < \alpha_{n,p}, 
\ee
then the \underline{Hamiltonian harmonic stability} \eqref{equa-H- 210} holds. Moreover, the upper bound in~\eqref{art2 -equa-4178} tends to $+\infty$ in the harmonic limit $p\to n-2$, so that \eqref{art2 -equa-4178} holds for any localization function provided $p$ is sufficiently close to $n- 2$. Alternatively, uniformly in the exponent~$p$, it holds on any family of localization pairs for which $C_\Poin^\lambda$ is sufficiently small, in particular on the small domains constructed in \autoref{sectionN3- 2 -complete}, below.
\end{theorem}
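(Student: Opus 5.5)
Starting from the quadratic form \eqref{ssAalpha-quaform-0}, the only indefinite term is $-c_{n,p}\fint_\Lambda \nu\,\Deltaslash\nu\,d\chi$; all other terms are nonnegative. The plan is to show that this term is dominated by the coercive part $2(1+a_{n,p})|\nablaslash\nu|^2 + |\nablaslash^2\nu|^2 + (n-2)(\Deltaslash\nu)^2$, using the Poincar\'e inequality \eqref{equa-Poin1} on zero-average functions.

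First, I bound the cross term by Cauchy--Schwarz with respect to $d\chi$:
\[
c_{n,p}\Bigl|\fint_\Lambda \nu\,\Deltaslash\nu\,d\chi\Bigr|
\le c_{n,p}\,\|\nu\|_{L^2_{-\expoP}(\Lambda)}\,\|\Deltaslash\nu\|_{L^2_{-\expoP}(\Lambda)}
\le c_{n,p}\,\CPoin\,\|\nablaslash\nu\|_{L^2_{-\expoP}(\Lambda)}\,\|\Deltaslash\nu\|_{L^2_{-\expoP}(\Lambda)},
\]
using $\la\nu\ra=0$. I do not integrate by parts here, because the weight would generate uncontrolled terms $\nablaslash\lambda^{2\expoP}$.

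Next, Young's inequality gives
\[
c_{n,p}\,\CPoin\, XY \le \tfrac{\epsilon}{2}X^2 + \tfrac{(c_{n,p}\CPoin)^2}{2\epsilon}Y^2,
\]
with $X=\|\nablaslash\nu\|$ and $Y=\|\Deltaslash\nu\|$. I then choose $\epsilon$ so that the two pieces are absorbed by $2(1+a_{n,p})X^2$ and $(n-2)Y^2$ with a strict margin. Optimizing, absorption holds as soon as
\[
(c_{n,p}\CPoin)^2 < 4\,(1+a_{n,p})(n-2).
\]
Hence one may take $\alpha_{n,p}=2\sqrt{(n-2)(1+a_{n,p})}/c_{n,p}$. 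I would check this against the definition \eqref{equa-const-alpha}; a slightly different constant there could also exploit $|\nablaslash^2\nu|^2\ge (\Deltaslash\nu)^2/(n-1)$. Under \eqref{art2 -equa-4178} this yields
\[
\ssrmA^\lambda[\nu]\ge \delta\bigl(\|\nablaslash\nu\|^2+\|\Deltaslash\nu\|^2\bigr)+\|\nablaslash^2\nu\|^2
\]
for some $\delta>0$. Finally, the Poincar\'e inequality turns $\|\nablaslash\nu\|^2$ into control of $\|\nu\|^2$, giving the full $H^2_{-\expoP}$ norm in \eqref{equa-H- 210}.

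For the limiting statements, I use that $c_{n,p}\to0$ as $p\to n-2$ (as noted before \autoref{art2 -def-easy-Hstab}), while $a_{n,p}$ remains bounded and nonnegative. Hence $\alpha_{n,p}\to+\infty$, and the condition holds for any fixed $\lambda$ once $p$ is close to $n-2$. For uniformity in $p\in(p_n^\flat,n-2)$, I need $\inf_p \alpha_{n,p}>0$, which follows if $c_{n,p}$ is bounded above on that interval and $a_{n,p}\ge0$. This should be checked from the explicit formulas in \autoref{sec-appendix-A}.

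The main obstacle is the justification for weighted $H^2$ functions: the integration by parts behind \eqref{ssAalpha-quaform-0} and the Cauchy--Schwarz step both need the boundary behaviour of $\lambda^{2\expoP}$. Since the form is taken as the definition of $\ssrmA^\lambda$, this concern is mild, and the rest is algebra on the constants.
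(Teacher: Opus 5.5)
Your route is essentially the paper's. You bound the cross term $-c_{n,p}\fint_\Lambda\nu\,\Deltaslash\nu\,d\chi$ by Cauchy--Schwarz, then Poincar\'e, then Young. The paper completes the square pointwise, $\gamma\bigl(\Deltaslash\nu-\frac{c_{n,p}}{2\gamma}\nu\bigr)^2$, and then applies Poincar\'e to $\fint_\Lambda\nu^2\,d\chi$. Both lead to the same condition.

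As written, however, your argument does not prove the statement, because you do not get to choose $\alpha_{n,p}$. It is fixed in \eqref{equa-const-alpha} as $\alpha_{n,p}=\sqrt{8(n^2-3n+3)/(n-1)}\,\sqrt{1+a_{n,p}}/c_{n,p}$. Your argument closes only for $\CPoin<2\sqrt{(n-2)(1+a_{n,p})}/c_{n,p}$, which is strictly smaller (already by a factor $\sqrt3$ when $n=3$). So for $2\sqrt{(n-2)(1+a_{n,p})}/c_{n,p}\le\CPoin<\alpha_{n,p}$ you conclude nothing.

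Two losses account for the difference.
\begin{itemize}
\item There is an arithmetic slip. The form $2(1+a_{n,p})X^2-c_{n,p}\CPoin XY+(n-2)Y^2$ is positive definite iff $(c_{n,p}\CPoin)^2<8(1+a_{n,p})(n-2)$, not $4(1+a_{n,p})(n-2)$.
\item You flagged the trace of the Hessian but did not use it. Writing $|\nablaslash^2\nu|^2=|(\nablaslash^2\nu)^\circ|^2+\frac{1}{n-1}(\Deltaslash\nu)^2$, the coefficient available to absorb $\|\Deltaslash\nu\|^2$ is $(n-2)+\frac{1}{n-1}=\frac{n^2-3n+3}{n-1}$, not $n-2$.
\end{itemize}

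With both corrections the discriminant condition becomes $(c_{n,p}\CPoin)^2<8(1+a_{n,p})\frac{n^2-3n+3}{n-1}$, which is exactly $\CPoin<\alpha_{n,p}$. Because the inequality is strict, you can pick $\gamma<\frac{n^2-3n+3}{n-1}$ with $\frac{c_{n,p}^2(\CPoin)^2}{4\gamma}<2(1+a_{n,p})$. This leaves positive multiples of $\|\Deltaslash\nu\|^2$ and $\|\nablaslash\nu\|^2$. Note that you now keep only the traceless part $\|(\nablaslash^2\nu)^\circ\|^2$ as a free term, not the full Hessian. Still, $\|(\nablaslash^2\nu)^\circ\|^2$ together with the leftover $\|\Deltaslash\nu\|^2$ controls $\|\nablaslash^2\nu\|^2$, and Poincar\'e then gives $\|\nu\|^2$.

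Your treatment of the limiting statements is fine. From $c_{n,p}=a_{n,p}\bigl(1+(n-2)(n-2-a_{n,p})\bigr)\to0$ you get $\alpha_{n,p}\to+\infty$. Since $c_{n,p}$ is a polynomial in the bounded parameter $a_{n,p}$ and $1+a_{n,p}\ge1$, you get $\inf_p\alpha_{n,p}>0$. Combined with $C_\Poin^{\lambda_{(\alpha)}}\lesssim\alpha$ from the small-domain scaling, this yields the uniform statement.
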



\subsubsection{Proof of harmonic stability}

Before proving \autoref{art2 -prop-harm-rad}, we need a technical estimate on the harmonic functional~$\ssrmA^\lambda$.

\begin{proposition}[Semi-coercivity of the functional $\ssrmA^{\lambda}$ on $\la\nut\ra =0$]
\label{art2 -lem-ssrmAcoer}
With the notation in \autoref{art2 -prop-harm-rad}, assume that the Poincar\'e constant obeys
\be
\rho(\gamma)
\coloneqq
2(1+a_{n,p})(\CPoin)^{- 2}
-
\frac{c_{n,p}^2}{4\gamma}
>0
\ee
for some constant $\gamma>0$. Then, for any function $\nut$ with $\la\nut\ra =0$ and any $0\le \rho' < \rho(\gamma)$, one has the
semi-coercivity bound
\be
\aligned
& \ssrmA^{\lambda}[\nut,\nut]
- \Bigl(\frac{n^2 -3n+3}{n-1}- \gamma\Bigr)
\|\Deltaslash \nut\|_{L^2_{- \expoP}(\Lambda)}^2
- \|(\nablaslash^2\nut)^\circ\|_{L^2_{- \expoP}(\Lambda)}^2
- \rho'\|\nut\|_{L^2_{- \expoP}(\Lambda)}^2
\\
&\ge
\frac{(\rho(\gamma)- \rho')}{1+(\CPoin)^{- 2}}
\|\nut\|_{H^1_{- \expoP}(\Lambda)}^2 .
\endaligned
\ee
\end{proposition}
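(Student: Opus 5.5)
The plan is a pointwise algebraic splitting of the integrand of $\ssrmA^\lambda$, followed by Young's inequality on the cross term and the weighted Poincar\'e inequality \eqref{equa-Poin1}. The whole argument stays at the level of the already-integrated quadratic form \eqref{ssAalpha-quaform-0}, so no further integration by parts against the weight $\lambda^{2\expoP}$ is needed, and boundary terms do not arise.

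\emph{Step 1 (trace decomposition).} On the $(n-1)$-dimensional sphere, write $\nablaslash^2\nut=(\nablaslash^2\nut)^\circ+\frac{1}{n-1}(\Deltaslash\nut)\,\gslash$. This gives pointwise
\[
|\nablaslash^2\nut|^2=|(\nablaslash^2\nut)^\circ|^2+\tfrac{1}{n-1}(\Deltaslash\nut)^2 .
\]
Since $(n-2)+\frac{1}{n-1}=\frac{n^2-3n+3}{n-1}$, subtracting the two Hessian terms in the statement leaves exactly
\[
\gamma\|\Deltaslash\nut\|^2+2(1+a_{n,p})\|\nablaslash\nut\|^2-c_{n,p}\fint_\Lambda\nut\,\Deltaslash\nut\,d\chi-\rho'\|\nut\|^2 ,
\]
with all norms in $L^2_{-\expoP}(\Lambda)$.

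\emph{Step 2 (absorb the cross term).} Young's inequality gives $c_{n,p}|\nut\,\Deltaslash\nut|\le\gamma(\Deltaslash\nut)^2+\frac{c_{n,p}^2}{4\gamma}\nut^2$. This removes the $\gamma$-term. Using $\frac{c_{n,p}^2}{4\gamma}=2(1+a_{n,p})(\CPoin)^{-2}-\rho(\gamma)$, the left-hand side is bounded below by
\[
2(1+a_{n,p})X-\bigl(2(1+a_{n,p})(\CPoin)^{-2}-\rho+\rho'\bigr)Y,
\qquad X=\|\nablaslash\nut\|^2,\ Y=\|\nut\|^2 .
\]

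\emph{Step 3 (Poincar\'e and optimization).} Because $\la\nut\ra=0$, \eqref{equa-Poin1} gives $Y\le(\CPoin)^2X$. Set $\kappa=(\rho-\rho')/(1+(\CPoin)^{-2})$; the goal is to show the bound above is at least $\kappa(X+Y)$. The difference is affine in $X$ with slope $2(1+a_{n,p})-\kappa$. This slope is positive, since $\rho\le 2(1+a_{n,p})(\CPoin)^{-2}$ implies $\kappa<2(1+a_{n,p})$. Hence the difference is minimized at the extreme admissible value $X=(\CPoin)^{-2}Y$. At that value the $2(1+a_{n,p})(\CPoin)^{-2}Y$ terms cancel, and the difference becomes
\[
\bigl(\rho-\rho'-\kappa(1+(\CPoin)^{-2})\bigr)Y=0 .
\]
Since $X+Y=\|\nut\|^2_{H^1_{-\expoP}(\Lambda)}$ in the normalization \eqref{equa-norm-poids}, this yields the claim.

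There is no real obstacle here. The only points needing care are the exact constant from the traceless decomposition and checking the sign of the slope in Step 3, which is what makes the extremal case $Y=(\CPoin)^2X$ the worst case.
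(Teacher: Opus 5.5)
Your proof is correct and follows the paper's route: you split the Hessian into trace and traceless parts, absorb the $c_{n,p}\,\nut\,\Deltaslash\nut$ cross term (your Young's inequality is the paper's completed square with the non-negative square dropped), and then apply the weighted Poincar\'e inequality. Your Step~3 also spells out how the factor $1/(1+(\CPoin)^{-2})$ arises, including the check that the slope in $X$ is positive, which the paper only asserts.
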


\begin{proof}[Proof of \autoref{art2 -lem-ssrmAcoer}]
Starting from the quadratic functional~\eqref{ssAalpha-quaform-0}, we decompose the Hessian into its trace and traceless parts, and we complete the square in the $\Deltaslash\nut$--$\nut$ coupling term.
This yields
\bel{art2 -ssrmAcoer-ineq}
\aligned
& \ssrmA^{\lambda}[\nut,\nut]
-
\Bigl(\frac{n^2 -3n+3}{n-1}- \gamma\Bigr)\fint_{\Lambda}(\Deltaslash\nut)^2\,d\chi
-
\fint_{\Lambda}\bigl|(\nablaslash^2\nut)^\circ\bigr|^2\,d\chi
-
\rho'\fint_{\Lambda}\nut^2\,d\chi
\\
&=
\gamma \fint_{\Lambda}\Bigl(\Deltaslash\nut- \frac{c_{n,p}}{2\gamma}\nut\Bigr)^2\,d\chi
+
\fint_{\Lambda}\Bigl(
2(1+a_{n,p})|\nablaslash\nut|^2
-
\Bigl(\frac{c_{n,p}^2}{4\gamma}+\rho'\Bigr)\nut^2
\Bigr)\,d\chi.
\endaligned
\ee
The first integral on the right-hand side is non-negative. For the second one, we use the weighted Poincar\'e inequality (under the condition $\la\nut\ra =0$) to estimate
\be
\fint_{\Lambda}\nut^2\,d\chi
\le (\CPoin)^2 \fint_{\Lambda}|\nablaslash\nut|^2\,d\chi,
\ee
and therefore the second integral is coercive as soon as $\frac{c_{n,p}^2}{4\gamma}+\rho' < 2(1+a_{n,p})(\CPoin)^{- 2}$, that is, $\rho'<\rho(\gamma)$. Combining this with the definition of the weighted $H^1_{- \expoP}(\Lambda)$ norm and the Poincar\'e estimate above gives the claimed bound with the explicit factor $\frac{1}{1+(\CPoin)^{- 2}}$.
\end{proof}

\begin{proof}[Proof of \autoref{art2 -prop-harm-rad}]
Harmonic stability requires coercivity of the quadratic form $\ssrmA^\lambda[\nu]$ for functions $\nu$ with vanishing average $\la\nu\ra =0$, at the $H^2_{- \expoP}(\Lambda)$ level. We apply \autoref{art2 -lem-ssrmAcoer} with a parameter $\gamma$ chosen so that
\be
0<\gamma<\frac{n^2 -3n+3}{n-1}, \qquad \quad \rho(\gamma)>0.
\ee
Under~\eqref{art2 -equa-4178} such a choice is possible, and in fact one may take $\gamma$ arbitrarily close to $\frac{n^2 -3n+3}{n-1}$ while
preserving $\rho(\gamma)>0$.

Since both coefficients subtracted in \autoref{art2 -lem-ssrmAcoer} are positive, that proposition shows that $\ssrmA^\lambda[\nu,\nu]$ controls $\|\Deltaslash\nu\|_{L^2_{- \expoP}(\Lambda)}^2$, $\|(\nablaslash^2\nu)^\circ\|_{L^2_{- \expoP}(\Lambda)}^2$, and $\|\nu\|_{H^1_{- \expoP}(\Lambda)}^2$.  This yields the desired coercivity estimate in $H^2_{- \expoP}(\Lambda)$, namely the harmonic stability condition~\eqref{equa-H- 210}.
\end{proof}


\subsection{Radial stability under a Poincaré inequality}
\label{sectionN4- 2}

\subsubsection{Main statement of radial stability}

We now use the constants $d^{\min}_{n,p}$ and~$\beta_{n,p}$ defined explicitly in \eqref{equa-H2110} and~\eqref{equa-cons-beta}.

\begin{theorem}[Radial stability of the Hamiltonian operator $\ssA^\lambda$]
\label{art2 -prop--radial} 
Fix an exponent $p\in(p_n^\flat,n- 2)$ and a localization function $\lambda$. If the weighted Poincar\'e constant satisfies
\bel{art2 -art2 -equa-4178-bis}
\CPoin < \beta_{n,p}, 
\ee
then harmonic stability holds and, for every nonzero $\nu\in\ker(\ssA^\lambda)$, one has\footnote{The lower bound is necessarily non-strict: when $\lambda\equiv 1$, every nonzero constant function belongs to $\ker(\ssA^1)$, and the quotient $\la\Deltaslash\nu\ra/\la\nu\ra$ vanishes.}
\bel{art2 -equa-kss27}
0 \leq \frac{\la\Deltaslash\nu\ra}{\la\nu\ra}
< d^{\min}_{n,p} .
\ee
Consequently, the \underline{Hamiltonian radial stability} condition \eqref{equa-H211} holds. Moreover, the upper bound in~\eqref{art2 -art2 -equa-4178-bis} tends to $+\infty$ in the harmonic limit $p\to n- 2$, so that \eqref{art2 -art2 -equa-4178-bis} holds for any localization provided $p$ is sufficiently close to $n- 2$. Alternatively, it holds uniformly in~$p$ on any family of localization pairs for which $C_\Poin^\lambda$ is sufficiently small, in particular on the small domains constructed in \autoref{sectionN3- 2 -complete}. 
\end{theorem}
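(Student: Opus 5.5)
We will reduce the claim to an explicit inequality for a zero-average function, namely the fluctuation of a kernel element around its mean. First, since $\beta_{n,p}\le\alpha_{n,p}$ (to be checked from the explicit formulas in \autoref{sec-appendix-A}), the hypothesis \eqref{art2 -art2 -equa-4178-bis} implies \eqref{art2 -equa-4178}. \autoref{art2 -prop-harm-rad} then gives harmonic stability. By the discussion after \autoref{art2 -def-easy-Hstab}, the kernel is one-dimensional and spanned by $\nu=1-\nut$, where $\nut$ is the unique zero-average solution of $\ssA^\lambda[\nut]=\ssA^\lambda[1]$. This $\nu$ has $\la\nu\ra=1$, so the quotient is $\la\Deltaslash\nu\ra/\la\nu\ra=-\la\Deltaslash\nut\ra$, and it suffices to prove
\[
0\le -\la\Deltaslash\nut\ra<d^{\min}_{n,p}.
\]

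\textbf{Testing the kernel equation against suitable functions.} From \eqref{equa--11} one computes $\ssA^\lambda[1]=-c_{n,p}\lambda^{-2\expoP}\Deltaslash(\lambda^{2\expoP})$. Thus $\ssrmA^\lambda[\nu,\mu]=0$ for all $\mu$ means
\[
\ssrmA^\lambda[\nut,\mu]=-c_{n,p}\fint_\Lambda \Deltaslash\mu\,d\chi
\]
after integration by parts.

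For the sign: take $\mu=\nut$. This gives $\ssrmA^\lambda[\nut]=-c_{n,p}\la\Deltaslash\nut\ra$. By coercivity (\autoref{art2 -lem-ssrmAcoer}) the left side is nonnegative, and since $c_{n,p}>0$ we get $-\la\Deltaslash\nut\ra\ge0$.

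For the upper bound, the same identity turns the target quantity into an energy:
\[
-\la\Deltaslash\nut\ra=c_{n,p}^{-1}\,\ssrmA^\lambda[\nut].
\]
To bound $\ssrmA^\lambda[\nut]$, write it as $-c_{n,p}\fint \nut\,\Deltaslash\log$-type terms, or more simply use $-c_{n,p}\la\Deltaslash\nut\ra\le c_{n,p}\|\Deltaslash\nut\|_{L^2_{-\expoP}}$ (Cauchy--Schwarz with the normalized measure). Combining this with $\ssrmA^\lambda[\nut]\ge \kappa\|\Deltaslash\nut\|^2$ from \autoref{art2 -lem-ssrmAcoer}, where $\kappa=\gamma$-dependent and positive, yields
\[
\|\Deltaslash\nut\|\le c_{n,p}/\kappa
\quad\text{and hence}\quad
-\la\Deltaslash\nut\ra\le c_{n,p}^2/\kappa .
\]

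\textbf{Main obstacle.} The difficulty is making this crude bound fall below $d^{\min}_{n,p}$, with $\kappa$ depending on $\CPoin$. The naive bound does not improve as $\CPoin\to0$, because $\kappa$ is capped by the $\Deltaslash$ coefficient. I would therefore refine it in two steps.

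First, I would use the $\rho'\|\nut\|^2$ and $H^1$ gains in \autoref{art2 -lem-ssrmAcoer}, together with the identity $\la\Deltaslash\nut\ra=-\fint \nablaslash\nut\cdot\nablaslash\log\lambda^{2\expoP}$. Note that $\la\Deltaslash\nut\ra$ is not zero because of the weight.

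Second, I would optimize over $\gamma$. The resulting bound should have the form $c_{n,p}^2\CPoin^2/(2(1+a_{n,p})-\dots)$, which tends to $0$ as $\CPoin\to0$ or as $c_{n,p}\to0$ (the harmonic limit). I would then define $\beta_{n,p}$ as the threshold at which this bound equals $d^{\min}_{n,p}$, and match it against \eqref{equa-cons-beta}.

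Finally, the asymptotic claims follow: as $p\to n-2$ we have $c_{n,p}\to0$, so $\beta_{n,p}\to\infty$, while the condition is uniform in $p$ once $\CPoin$ is small. Together with \autoref{lem:Poincare-scaling-complete}, this covers the small domains.
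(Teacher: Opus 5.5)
\textbf{There is a genuine gap: you never show that your upper bound is below $d^{\min}_{n,p}$, and the refinement you propose instead aims at a false statement.}

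Your reduction, your normalization $\la\nu\ra=1$ with $\nu=1-\nut$, and your lower bound all match the paper's argument. Your ``crude'' estimate is in fact the paper's upper bound. Cauchy--Schwarz (or Jensen) together with \autoref{art2 -lem-ssrmAcoer} at $\rho'=0$ gives $-\la\Deltaslash\nut\ra\le\|\Deltaslash\nut\|_{L^2_{-\expoP}(\Lambda)}\le c_{n,p}/\kappa$, where $\kappa=\frac{n^2-3n+3}{n-1}-\gamma$. (Your $c_{n,p}^2/\kappa$ is a slip.)

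Comparing this bound with $d^{\min}_{n,p}$ is the entire content of the hypothesis $\CPoin<\beta_{n,p}$, and it does not require any improvement as $\CPoin\to0$. The target $d^{\min}_{n,p}$ is fixed, and $\CPoin$ enters only through the admissibility condition $\rho(\gamma)>0$, that is, $\gamma>c_{n,p}^2(\CPoin)^2/(8(1+a_{n,p}))$. From \eqref{equa-our-parame-00} one checks
\[
\frac{n^2-3n+3}{n-1}-1=\frac{(n-2)^2}{n-1},
\qquad
\frac{n^2-3n+3}{n-1}-\frac{1+a_{n,p}}{b_{n,p}}=\frac{c_{n,p}}{d_{n,p}} .
\]
Hence $c_{n,p}/\kappa<\min\bigl(\frac{(n-1)c_{n,p}}{(n-2)^2},d_{n,p}\bigr)=d^{\min}_{n,p}$ as soon as $\gamma<\min\bigl(1,\frac{1+a_{n,p}}{b_{n,p}}\bigr)$. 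The window $\frac{c_{n,p}^2(\CPoin)^2}{8(1+a_{n,p})}<\gamma<\min\bigl(1,\frac{1+a_{n,p}}{b_{n,p}}\bigr)$ is non-empty exactly when $\CPoin<\beta_{n,p}$; this is where \eqref{equa-cons-beta} comes from. Any $\gamma$ in this window closes the proof, with strict inequality. So $\beta_{n,p}$ is not something you get to ``define as the threshold'' at the end. It is given, and this computation is what shows it suffices.

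Your proposed refinement would not work, because no bound on $-\la\Deltaslash\nut\ra$ of the form $c_{n,p}^2(\CPoin)^2/(\dots)$, tending to zero with $\CPoin$, can hold. Take the small caps $\Lambda_{(\alpha)}$ centred at $e$, where both $\CPoin$ and $\diam(\Lambda)$ are $O(\alpha)$. Set $\tau=c_{n,p}/(n^2-3n+3+c_{n,p})$ and $\nu_*=1-\tau\,\xh\cdot e$. Using $\nablaslash^2(\xh\cdot e)=-(\xh\cdot e)\gslash$, one finds
\[
\ssrmA^\lambda[\nu_*,\mu]=c_{n,p}\fint_\Lambda(\xh\cdot e-1)\Deltaslash\mu\,d\chi-2(1+a_{n,p})\tau\fint_\Lambda\nablaslash(\xh\cdot e)\cdot\nablaslash\mu\,d\chi,
\]
so $|\ssrmA^\lambda[\nu_*,\mu]|\lesssim c_{n,p}\diam(\Lambda)\bigl(\|\Deltaslash\mu\|+\|\nablaslash\mu\|\bigr)$. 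Correct $\nu_*$ by a zero-average $\delta$ into a genuine kernel element, and test the equation for $\delta$ against $\delta$ using coercivity. This gives $\|\Deltaslash\delta\|\lesssim c_{n,p}\diam(\Lambda)$, hence $\la\Deltaslash\nu\ra/\la\nu\ra\to(n-1)c_{n,p}/(n^2-3n+3)\neq0$ as $\alpha\to0$. This is also the limit of the bound in \autoref{art2 -lem-nut-small}.

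So the fact that your crude bound is capped below by this value is not a defect: it is sharp. The detour through $\la\Deltaslash\nut\ra=-\fint_\Lambda\nablaslash\nut\cdot\nablaslash\log\lambda^{2\expoP}\,d\chi$ is likewise unnecessary, and it would introduce the unbounded factor $\nablaslash\log\lambda$ near $\partial\Lambda$.
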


\subsubsection{Proof of radial stability}

Since $(n^2 -3n+3)/(n-1)>1$, the definitions of $\alpha_{n,p}$ and $\beta_{n,p}$ give
\be
\beta_{n,p}
\leq \frac{2\sqrt{2(1+a_{n,p})}}{c_{n,p}}
< \sqrt{\frac{8(n^2 -3n+3)}{n-1}}\,
\frac{\sqrt{1+a_{n,p}}}{c_{n,p}}
=\alpha_{n,p}.
\ee
Hence \autoref{art2 -prop-harm-rad} gives harmonic stability. Let $0\neq\nu\in\ker(\ssA^\lambda)$ and set $\nut=\nu- \la\nu\ra$. By harmonic stability, the average map is injective on $\ker(\ssA^\lambda)$; hence $\la\nu\ra\neq0$. The fluctuation $\nut$ obeys
\bel{art2 -nut-438}
\aligned 
\la\nut\ra = 0 , \qquad
\ssA^{\lambda}[\nut] = c_{n,p} \la\nu\ra \lambda^{- 2\expoP} \Deltaslash(\lambda^{2\expoP}).
\endaligned
\ee 
Multiplying the latter equation by $\lambda^{2\expoP}\nut$ and integrating by parts, we obtain the equation 
\bel{art2 -equa-428}
\aligned
\ssrmA^{\lambda}[\nut,\nut]
& = c_{n,p} \la\nu\ra \la\Deltaslash\nut\ra .
\endaligned
\ee
On the other hand, the condition on $\CPoin$ allows us to pick $\gamma$ so that
\bel{art2 -equa-choicegamma}
\frac{c_{n,p}^2 (\CPoin)^2}{8(1+a_{n,p})} < \gamma < \min\Bigl(1, \frac{1+a_{n,p}}{b_{n,p}} \Bigr) ,
\ee
which is in particular bounded above by $(n^2 -3n+3)/(n-1)>1$. { Applying \autoref{art2 -lem-ssrmAcoer} with $\rho'=0$, and then using Jensen's inequality, yields}
\be
\ssrmA^{\lambda}[\nut,\nut] 
\geq \Bigl( \frac{n^2 -3n+3}{n-1} - \gamma \Bigr)  \|\Deltaslash \nut \|^2_{L^2_{- \expoP}(\Lambda)}
\geq \Bigl( \frac{n^2 -3n+3}{n-1} - \gamma \Bigr) \la\Deltaslash\nut\ra^2. 
\ee

\bse
Therefore, in combination with~\eqref{art2 -equa-428} we find  
\be
c_{n,p} \la\nu\ra \la\Deltaslash\nut\ra 
\geq \Bigl( \frac{n^2 -3n+3}{n-1} - \gamma \Bigr) \la\Deltaslash\nut\ra^2. 
\ee
{ Since $\la\nu\ra\neq0$, we can divide by $\la\nu\ra^2>0$ and obtain}
\be
\frac{\la\Deltaslash\nut\ra}{\la\nu\ra} \biggl( c_{n,p}
- \Bigl( \frac{n^2 -3n+3}{n-1} - \gamma \Bigr) \frac{\la\Deltaslash\nut\ra}{\la\nu\ra} \biggr) \geq 0 .
\ee
Since $c_{n,p}>0$ and the coefficient multiplying the squared ratio is positive, this quadratic constraint implies 
\bel{art2 -ratiodnutnun}
\aligned
0 & \leq \frac{\la\Deltaslash\nut\ra}{\la\nu\ra}
  \leq c_{n,p} \Bigl( \frac{n^2 -3n+3}{n-1} - \gamma \Bigr)^{-1} \\
& < c_{n,p} \max\Bigl( \frac{n^2 -3n+3}{n-1} - 1 ,  \frac{n^2 -3n+3}{n-1} - \frac{1+a_{n,p}}{b_{n,p}} \Bigr)^{-1} 
\\
& = c_{n,p} \max\Bigl( \frac{(n- 2)^2}{n-1} ,  \frac{c_{n,p}}{d_{n,p}} \Bigr)^{-1} ,
\endaligned
\ee
Since $\la\Deltaslash\nu\ra =\la\Deltaslash\nut\ra$, this is precisely \eqref{art2 -equa-kss27} and proves radial stability. The value zero is allowed; it occurs, for example, for the constant kernel element when $\lambda\equiv 1$.
\ese
This completes the proof of \autoref{art2 -prop--radial}.


\subsection{Estimates for the Hamiltonian silhouette}
\label{sectionN4-3}

{ It is only at this stage, thanks to \autoref{art2 -prop--radial}, that we know, for every nonzero kernel element $\nu$, that
\be
\frac{\la- \Deltaslash\nu+d_{n,p}\nu\ra}{\la\nu\ra}
=d_{n,p}- \frac{\la\Deltaslash\nu\ra}{\la\nu\ra}>0.
\ee
The normalization functional is therefore nonzero, and we may define the silhouette function $\nu^\normal$ by the normalization \eqref{equa-H-normalization}, which is motivated by the study of the ADM mass in~\cite{LL-optimal-main}.}

Moreover, certain Sobolev norms of the silhouette function $\nu^{\normal} \in \ker(\ssA^{\lambda})$ can be controlled, as follows. Observe that the $L^2$ norm of $\nut^\normal$ could be made arbitrary small by taking the Poincaré constant sufficiently small, but the $\dot{H}^1$ and $\dot{H}^2$ norms are only \emph{bounded}. Along the way we introduce a pair of auxiliary constants $0<K_{2\Poin}<K_{1\Poin}<1$ in \eqref{art2 -k1poin-def} and~\eqref{art2 -k2poin-def} that depend on $n$, $p$ and the Poincaré constant $\CPoin$, and which remain bounded away from zero in either of the two limits $\CPoin\to 0$ and $p\to n- 2$.

\begin{proposition}[Fluctuations of the Hamiltonian silhouette function]
\label{art2 -lem-nut-small}
Provided the weighted Poincaré constant $\CPoin$ satisfies the radial-stability condition \eqref{art2 -art2 -equa-4178-bis}, the fluctuation $\nut^\normal$ of the Hamiltonian silhouette function satisfies the following integral estimates\footnote{According to our notation~\eqref{equa-norm-poids}, squared norms contain the factor $1/\aire[\Lambda,\lambda]$.}:
\bel{art2 -equa-4206} 
\aligned  
\|\Deltaslash \nut^\normal \|_{L^2_{- \expoP}(\Lambda)}
& \leq \frac{(n-1)c_{n,p}}{(n^2 -3n+3)K_{1\Poin}} \, |\la\nu^\normal\ra|,
& \qquad
\bigl\|\nablaslash^2\nut^\normal,\nablaslash\nut^\normal\bigr\|_{L^2_{- \expoP}(\Lambda)}
& \lesssim \frac{c_{n,p}}{K_{1\Poin}} |\la\nu^\normal\ra|,
\endaligned
\ee 
with $\tcnp$ defined in \eqref{equa-cntilde}, and with the short-hand notation  
\bel{art2 -k1poin-def}
K_{1\Poin} \coloneqq 1 - (\tcnp)^2 (\CPoin)^2.
\ee
\end{proposition}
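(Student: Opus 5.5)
The plan is to combine the energy identity \eqref{art2 -equa-428} with the semi-coercivity bound of \autoref{art2 -lem-ssrmAcoer}, choosing the parameter~$\gamma$ at the threshold where $\rho(\gamma)=0$. Throughout, $\nu=\nu^\normal$ and $\nut=\nut^\normal=\nu-\la\nu\ra$, and $\la\nu\ra\neq0$ by \autoref{art2 -prop--radial}. Write $A_n\coloneqq\frac{n^2-3n+3}{n-1}$.

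\emph{Upper bound on the energy.} As in the proof of \autoref{art2 -prop--radial}, the fluctuation satisfies \eqref{art2 -nut-438}. Testing against $\nut$ gives \eqref{art2 -equa-428}, namely $\ssrmA^\lambda[\nut,\nut]=c_{n,p}\la\nu\ra\la\Deltaslash\nut\ra$. By Cauchy--Schwarz (Jensen) for the probability measure $d\chi/\aire[\Lambda,\lambda]$, this yields
\[
\ssrmA^\lambda[\nut,\nut]\le c_{n,p}\,|\la\nu\ra|\,\|\Deltaslash\nut\|_{L^2_{- \expoP}(\Lambda)} .
\]

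\emph{Lower bound and the Laplacian estimate.} I would apply \autoref{art2 -lem-ssrmAcoer} (or rather its identity \eqref{art2 -ssrmAcoer-ineq} directly) with $\rho'=0$ and the borderline choice
\[
\gamma_*=\frac{c_{n,p}^2(\CPoin)^2}{8(1+a_{n,p})},
\]
for which $\rho(\gamma_*)=0$. The Poincar\'e step still shows that the second integral in \eqref{art2 -ssrmAcoer-ineq} is non-negative, so
\[
\ssrmA^\lambda[\nut,\nut]\ge (A_n-\gamma_*)\|\Deltaslash\nut\|^2+\|(\nablaslash^2\nut)^\circ\|^2 .
\]
Writing $A_n-\gamma_*=A_nK_{1\Poin}$ identifies $(\tcnp)^2=c_{n,p}^2(n-1)/(8(1+a_{n,p})(n^2-3n+3))$. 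I expect this to match \eqref{equa-cntilde}, and radial stability $\CPoin<\beta_{n,p}$ guarantees $K_{1\Poin}>0$. Combining with the upper bound and dividing by $\|\Deltaslash\nut\|$ (the estimate being trivial if it vanishes) gives exactly the first inequality of \eqref{art2 -equa-4206}.

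\emph{Hessian and gradient.} The same chain gives $\|(\nablaslash^2\nut)^\circ\|^2\le c_{n,p}|\la\nu\ra|\,\|\Deltaslash\nut\|\lesssim (c_{n,p}/K_{1\Poin})^2\la\nu\ra^2$. The trace part is controlled by $\|\Deltaslash\nut\|/\sqrt{n-1}$, so the full Hessian is bounded as claimed.

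For the gradient, I would avoid a Bochner-type integration by parts, because weight derivatives of $\lambda^{2\expoP}$ appear there. Instead I would use $\fint|\nablaslash\nut|^2\,d\chi=-\fint\nut\,\lambda^{-2\expoP}\nablaslash\cdot(\lambda^{2\expoP}\nablaslash\nut)\,d\chi$. A simpler route is to reuse \eqref{art2 -ssrmAcoer-ineq} with $\gamma$ slightly larger than $\gamma_*$, say $\gamma=\gamma_*+\tfrac12 A_nK_{1\Poin}$, and keep the now strictly positive coefficient $\rho(\gamma)$. The proposition then controls $\|\nut\|_{H^1_{- \expoP}}^2$ by $\ssrmA^\lambda[\nut,\nut]\lesssim c_{n,p}^2\la\nu\ra^2/K_{1\Poin}$.

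\emph{Main obstacle.} The delicate point is keeping all constants uniform in the two regimes $\CPoin\to0$ and $p\to n-2$. In the first regime the factor $1+(\CPoin)^{-2}$ in \autoref{art2 -lem-ssrmAcoer} blows up. In that case I would bound the gradient directly by $2(1+a_{n,p})\|\nablaslash\nut\|^2\le$ (the second integral in \eqref{art2 -ssrmAcoer-ineq}) $+\rho$-terms, rather than through the $H^1$ norm. This should yield $\|\nablaslash\nut\|\lesssim c_{n,p}|\la\nu\ra|/K_{1\Poin}$ with constants depending only on $n$, possibly after absorbing a factor $K_{1\Poin}^{-1/2}\le K_{1\Poin}^{-1}$.
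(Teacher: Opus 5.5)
Your proposal is correct and follows essentially the paper's argument. The paper likewise combines the energy identity \eqref{art2 -equa-428} with the completed-square identity \eqref{art2 -ssrmAcoer-ineq} at the threshold $\gamma=c_{n,p}^2(\CPoin)^2/(8(1+a_{n,p}))$, reached there as a limit, to get the Laplacian and traceless-Hessian bounds. It bounds the gradient directly from the second integral of that identity with $\gamma=\frac{n^2-3n+3}{n-1}$ and $\rho'=0$, obtaining $2(1+a_{n,p})K_{1\Poin}\|\nablaslash\nut^\normal\|^2\le\ssrmA^\lambda[\nut^\normal,\nut^\normal]$.

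One correction to your last paragraph: you located the uniformity problem in the wrong regime. Using the $H^1$ conclusion of \autoref{art2 -lem-ssrmAcoer} as a black box costs a factor $(1+(\CPoin)^2)^{1/2}$. This factor is harmless as $\CPoin\to0$, because $\rho(\gamma)$ grows like $(\CPoin)^{-2}$ as well. It is unbounded, however, for large $\CPoin$, which is allowed as $p\to n-2$ since $\beta_{n,p}\to+\infty$. So the direct gradient bound you fall back on is the one to use in both regimes.
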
 

\begin{proof}
{\bf 1. \it Average Laplacian.} The proof of \autoref{art2 -prop--radial}, and especially~\eqref{art2 -ratiodnutnun}, shows that
\be
{ 0 \leq} \frac{\la\Deltaslash\nut^\normal\ra}{\la\nu^\normal\ra}
\leq c_{n,p} \Bigl( \frac{n^2 -3n+3}{n-1} - \gamma \Bigr)^{-1}
\ee
for any $\gamma$ in the range~\eqref{art2 -equa-choicegamma}.  By taking the limit where $\gamma>c_{n,p}^2 (\CPoin)^2 / (8(1+a_{n,p}))$ tends to its lower bound, this implies
\be
{ 0 \leq} \frac{\la\Deltaslash\nut^\normal\ra}{\la\nu^\normal\ra}
\leq \frac{(n-1)c_{n,p}}{(n^2 -3n+3)K_{1\Poin}} ,
\ee
which provides a quantitative estimate for $\la\Deltaslash\nut^\normal\ra/\la\nu^\normal\ra$.

\medskip

\bse
\noindent{\bf 2. \it Hessian estimate.} \autoref{art2 -lem-ssrmAcoer} shows 
\be
\aligned
& \|(\nablaslash^2 \nut^\normal)^\circ\|^2_{L^2_{- \expoP}(\Lambda)} + \Bigl( \frac{n^2 -3n+3}{n-1} - \gamma \Bigr)  \|\Deltaslash \nut^\normal \|^2_{L^2_{- \expoP}(\Lambda)}
\\
& \leq \ssrmA^{\lambda}[\nut^\normal,\nut^\normal]
= c_{n,p} \la\nu^\normal\ra \la\Deltaslash\nut^\normal\ra
\leq \frac{n-1}{n^2 -3n+3} \frac{(c_{n,p})^2}{K_{1\Poin}} \la\nu^\normal\ra^2
\endaligned
\ee
for all $\gamma>c_{n,p}^2 (\CPoin)^2 / (8(1+a_{n,p}))$. This immediately bounds the traceless Hessian as stated. Taking the limit where $\gamma$ tends to its lower bound yields the desired bound on $\|\Deltaslash \nut^\normal \|^2_{L^2_{- \expoP}(\Lambda)}$. In view of Jensen's inequality $\bigl|\la\Deltaslash\nut^\normal\ra\bigr| \leq \|\Deltaslash \nut^\normal \|_{L^2_{- \expoP}(\Lambda)}$, we can check that our result is consistent with the previous estimate on the average.

\medskip

\noindent{\bf 3. \it $L^2$ estimate.} Next, let us pick $\rho'$ so that
$0 < \rho' < 2 (1+a_{n,p}) \bigl( (\CPoin)^{- 2} - (\tcnp)^2 \bigr)$, 
and let us take $\gamma$ so that
\be
\frac{c_{n,p}^2/4}{2 (1+a_{n,p}) (\CPoin)^{- 2} - \rho'} < \gamma < \frac{n^2 -3n+3}{n-1} .
\ee
\autoref{art2 -lem-ssrmAcoer} applies and gives
\be
\rho' \|\nut^\normal\|_{L^2_{- \expoP}(\Lambda)}^2
\leq \ssrmA^\lambda[\nut^\normal,\nut^\normal]
\leq 8(1+a_{n,p}) (\tcnp)^2 (K_{1\Poin})^{-1} \la\nu^\normal\ra^2 
= \frac{n-1}{n^2 -3n+3}\,\frac{(c_{n,p})^2}{K_{1\Poin}}\,\la\nu^\normal\ra^2. 
\ee 
By taking the limit where $\rho'$ tends to its upper bound, we obtain 
\be
\|\nut^\normal\|_{L^2_{- \expoP}(\Lambda)}
\leq \frac{2\tcnp \CPoin}{1 - (\tcnp)^2 (\CPoin)^2} \bigl|\la\nu^\normal\ra\bigr| .
\ee
\ese

\medskip

\noindent{\bf 4. \it Gradient estimate.} Finally, we control $\|\nablaslash\nut^\normal\|_{L^2_{- \expoP}(\Lambda)}$ by returning to \eqref{art2 -ssrmAcoer-ineq}, for $\gamma =(n^2 -3n+3)/(n-1)$ and $\rho'=0$, and bounding the $\nut^2$ term by $|\nablaslash\nut|^2$ using the Poincaré inequality. This yields
\be
2 (1+a_{n,p}) K_{1\Poin} \|\nablaslash\nut^\normal\|_{L^2_{- \expoP}(\Lambda)}^2
\leq \ssrmA^\lambda[\nut^\normal,\nut^\normal]
\leq \frac{n-1}{n^2 -3n+3} \frac{(c_{n,p})^2}{K_{1\Poin}} \la\nu^\normal\ra^2 .
\ee
Incidentally, this bound on $\nablaslash\nut^\normal$ immediately gives a bound on $\|\nut^\normal\|_{L^2_{- \expoP}(\Lambda)}$ via the Poincaré inequality.
\end{proof}

We thus arrive at estimates for averages $\la\nu^\normal\ra$ and $\la\Deltaslash\nut^\normal\ra$. 

\begin{proposition}[Averages of the Hamiltonian silhouette function]
\label{art2 -cor-lem-nut-small}
In the setup of \autoref{art2 -lem-nut-small}, condition \eqref{art2 -art2 -equa-4178-bis} implies $0<K_{2\Poin}<K_{1\Poin}<1$, where
\bel{art2 -k2poin-def}
K_{2\Poin} = \frac{(n-1)(1+a_{n,p})}{(n^2 -3n+3)b_{n,p}} - (\tcnp)^2 (\CPoin)^2
\ee
Then one has
\be 
\aligned
0 & \leq \la\Deltaslash\nut^\normal\ra \leq \Bigl( \frac{K_{1\Poin}}{K_{2\Poin}} - 1 \Bigr) \theta^\lambda , \qquad
&&& \frac{\theta^\lambda}{d_{n,p}} & \leq \la\nu^\normal\ra \leq \frac{K_{1\Poin}}{K_{2\Poin}} \frac{\theta^\lambda}{d_{n,p}} .
\endaligned
\ee
\end{proposition}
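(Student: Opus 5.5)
The plan is to combine the ratio bound obtained in the proof of \autoref{art2 -prop--radial} with the normalization \eqref{equa-H-normalization}. First we check $0<K_{2\Poin}<K_{1\Poin}<1$. The inequality $K_{1\Poin}<1$ is immediate, since $\tcnp\CPoin>0$. For $K_{2\Poin}<K_{1\Poin}$ it suffices that $(n-1)(1+a_{n,p})/((n^2-3n+3)b_{n,p})<1$. This should follow from the identity behind \eqref{art2 -ratiodnutnun}: there $\frac{n^2-3n+3}{n-1}-\frac{1+a_{n,p}}{b_{n,p}}=\frac{c_{n,p}}{d_{n,p}}>0$. For $K_{2\Poin}>0$ we rewrite it as $\frac{n-1}{n^2-3n+3}\bigl(\frac{1+a_{n,p}}{b_{n,p}}-\frac{c_{n,p}^2(\CPoin)^2}{8(1+a_{n,p})}\bigr)$, using $(\tcnp)^2=\frac{(n-1)c_{n,p}^2}{8(n^2-3n+3)(1+a_{n,p})}$. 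This is the relation implicit in the equality $8(1+a_{n,p})(\tcnp)^2=\frac{n-1}{n^2-3n+3}c_{n,p}^2$ used in step~3 of the previous proof. Positivity is then exactly the nonemptiness of the $\gamma$-interval \eqref{art2 -equa-choicegamma}, which is what \eqref{art2 -art2 -equa-4178-bis} guarantees. The main obstacle of the whole argument is this bookkeeping: matching $\beta_{n,p}$ and $\tcnp$ from the appendix definitions to these identities.

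Next, set $x\coloneqq\la\Deltaslash\nut^\normal\ra/\la\nu^\normal\ra$. Step~1 of \autoref{art2 -lem-nut-small} gives $0\le x\le \frac{(n-1)c_{n,p}}{(n^2-3n+3)K_{1\Poin}}$. We want to convert this into the form $x\le d_{n,p}(1-K_{2\Poin}/K_{1\Poin})$, that is,
\[
\frac{(n-1)c_{n,p}}{(n^2-3n+3)}\le d_{n,p}\,(K_{1\Poin}-K_{2\Poin}) = d_{n,p}\Bigl(1-\frac{(n-1)(1+a_{n,p})}{(n^2-3n+3)b_{n,p}}\Bigr).
\]
By the identity $\frac{n^2-3n+3}{n-1}-\frac{1+a_{n,p}}{b_{n,p}}=\frac{c_{n,p}}{d_{n,p}}$, the right-hand side equals $\frac{(n-1)c_{n,p}}{n^2-3n+3}$. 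So this step is an equality, and the $\CPoin$-dependence cancels in $K_{1\Poin}-K_{2\Poin}$. This cancellation is why the constants were defined the way they are.

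Finally, the normalization reads $\la\nu^\normal\ra(d_{n,p}-x)=\theta^\lambda$. Since $0\le x\le d_{n,p}(1-K_{2\Poin}/K_{1\Poin})<d_{n,p}$, we get $d_{n,p}-x\in[d_{n,p}K_{2\Poin}/K_{1\Poin},\,d_{n,p}]$. This gives $\theta^\lambda/d_{n,p}\le\la\nu^\normal\ra\le \frac{K_{1\Poin}}{K_{2\Poin}}\frac{\theta^\lambda}{d_{n,p}}$, where $\theta^\lambda>0$ and the sign of $\la\nu^\normal\ra$ agrees with that of $\theta^\lambda$. Multiplying the bound on $x$ by $\la\nu^\normal\ra$ and using the upper bound on $\la\nu^\normal\ra$ then yields $0\le\la\Deltaslash\nut^\normal\ra\le(K_{1\Poin}/K_{2\Poin}-1)\theta^\lambda$.
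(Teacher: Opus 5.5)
Your proposal is correct and follows the paper's proof essentially step by step. You use the identity $\frac{n^2-3n+3}{n-1}-\frac{1+a_{n,p}}{b_{n,p}}=\frac{c_{n,p}}{d_{n,p}}$ to obtain $K_{1\Poin}-K_{2\Poin}=\frac{(n-1)c_{n,p}}{(n^2-3n+3)d_{n,p}}$, get $K_{2\Poin}>0$ from the second entry of the minimum defining $\beta_{n,p}$ (strictly, this is one half of the nonemptiness condition for the $\gamma$-interval, not all of it, but the implication you need holds), and combine the normalization with the ratio bound from Step~1 of \autoref{art2 -lem-nut-small} to obtain the two-sided bounds.
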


\begin{proof}
We recall that $K_{1\Poin} = 1 - (\tcnp)^2 (\CPoin)^2$.
A short calculation shows that
\bse
\be
K_{2\Poin} = \frac{(n-1)(1+a_{n,p})}{(n^2 -3n+3)b_{n,p}} - (\tcnp)^2 (\CPoin)^2
= K_{1\Poin} - \frac{(n-1)c_{n,p}}{(n^2 -3n+3)d_{n,p}}
< K_{1\Poin} < 1
\ee
since $c_{n,p},d_{n,p}>0$ in the exponent range of interest. Moreover, \eqref{art2 -art2 -equa-4178-bis} and the second term in the minimum defining $\beta_{n,p}$ imply
\be
(\CPoin)^2
<\frac{8(1+a_{n,p})^2}{b_{n,p}c_{n,p}^2},
\ee
which is equivalent to $K_{2\Poin}>0$.

Next, the normalization of the silhouette function $\theta^\lambda = d_{n,p}\la\nu^\normal\ra - \la\Deltaslash\nu^\normal\ra$ together with
the upper bound~\eqref{art2 -equa-4206} and { the non-negativity $\la\Deltaslash\nut^\normal\ra/\la\nu^\normal\ra\geq0$ imply}
\bel{art2 -zetadnukk}
\frac{\theta^\lambda}{d_{n,p}\la\nu^\normal\ra}
= 1 - \frac{\la\Deltaslash\nut^\normal\ra}{d_{n,p}\la\nu^\normal\ra}
\geq 1 - \frac{(n-1)c_{n,p}}{(n^2 -3n+3) d_{n,p}K_{1\Poin}} = \frac{K_{2\Poin}}{K_{1\Poin}} .
\ee
\ese
This lower bound is positive because $K_{2\Poin}>0$; hence $\la\nu^\normal\ra>0$, and inverting the inequality gives the stated upper bound on~$\la\nu^\normal\ra$. The bounds $0\leq\la\Deltaslash\nut^\normal\ra/\la\nu^\normal\ra\leq d_{n,p}(1-K_{2\Poin}/K_{1\Poin})$ then translate into the stated bounds on $\la\Deltaslash\nut^\normal\ra$. Finally, the lower bound on $\la\nu^\normal\ra$ follows from the first equality in~\eqref{art2 -zetadnukk}, the non-negativity of $\la\Deltaslash\nut^\normal\ra/\la\nu^\normal\ra$, and $\la\nu^\normal\ra>0$. 
\end{proof}


\section{Harmonic and radial stability for the localized momentum operator}
\label{section-4} 
 
\subsection{Harmonic stability under a Korn-Poincaré inequality}
\label{art2 -section=3.1}

\subsubsection{Structure matrices}

We now turn our attention to the harmonic momentum operator $\ssB^\lambda$ defined on the weighted spherical domain $(\Lambda,d\chi)$, and geometric inequalities for the quadratic functional~$\ssrmB^\lambda$ given in~\eqref{ssBalpha-quaform-0}, above.
Before proceeding, we introduce several objects that are necessary to deal with vectors fields.

The \emph{structure matrices}\footnote{The matrix $\matV$ is introduced here for convenience but is only used later, in \autoref{section-5}.} are defined from weighted integrals of the coordinates $\xh_k = x_k/r$,
\bel{equa-matrix-STU}
\aligned
S^2 & \coloneqq (\la\xh_k \xh_l \ra) , \quad
T \coloneqq (\delta_{kl} + \la \xh_k \xh_l \ra),
\quad 
U \coloneqq  (\delta_{kl} - \la\xh_k \xh_l \ra ) ,
\quad
\matV \coloneqq (\la\xh_k \xh_l\ra - \la\xh_k\ra\la\xh_l\ra) .
\endaligned
\ee
They are constant, positive-definite matrices.  (In particular $S^2$ has a unique positive-definite square root~$S$.) In addition, observe that 
\be
U<T
\ee
in the sense that the difference $(T-U)$ is positive-definite, and thus $U^{-1} > T^{-1}$ since the difference
\be
U^{-1}-T^{-1} = T^{-1}(T-U)T^{-1} + T^{-1}(T-U)U^{-1}(T-U)T^{-1} > 0
\ee
is the sum of two positive-definite matrices.

For a vector $v\in\RR^n$ and matrix~$M$ such as $T^{-1}$ or~$U$, we denote
\bel{vM2}
|v|_M^2 \coloneqq v^k M_{kl} v^l .
\ee


\subsubsection{Decomposition into average and fluctuation}

It will also be necessary to decompose vector fields into certain averages and fluctuations.
The standard basis vectors of $\RR^n$ seen as vector fields on~$\RR^n$ restricted to the sphere~$\Sphe^{n-1}$ are (for $k=1,\dots,n$)
\be
\xistar_k = (\xistarperp_k,\xistarpar_k) = (\xh_k, \nablaslash \xh_k) ,
\quad\text{with components}\quad
(\xistarpar_k)_l = \delta_{kl} - \xh_k \xh_l .
\ee
We introduce, for a vector $\xi=(\xiperp,\xipar)$, its \emph{fluctuations}~$\xi^{\fluc}$ and \emph{averages} $\projP^k(\xi)$,
\bel{equa-tildemoment}
\xi^{\fluc} = \xi - \projP^k(\xi) \xistar_k , \qquad
\projP^k(\xi) = (T^{-1})^{kl} \, \bigl\la 2 \xh_l \xiperp + \xipar{}_l \bigr\ra .
\ee
In particular, given the components of~$\xistar_m$,
\be
\projP^k(\xistar_m) = (T^{-1})^{kl} \la 2 \xh_l \xh_m + \delta_{lm} - \xh_l \xh_m \ra = \delta^k_m ,
\qquad
(\xistar_m)^{\fluc} = 0 .
\ee

As an illustration, of how to use these concepts, $L^2$-type squared norms split into contributions from these averages and fluctuations.

\begin{lemma}[Average-fluctuation identity]
  For any vector field $\xi=(\xiperp,\xipar)$ one has the following identity:
  \be
  \fint_{\Lambda} \bigl( 2 |\xiperp|^2 + |\xipar|^2 \bigr) d\chi
  = \bigl| \la 2\xh\xiperp + \xipar\ra \bigr|_{T^{-1}}^2 + \fint_{\Lambda} \bigl( 2 |\xi^{\fluc\perp}|^2 + |\xi^{\fluc\parallel}|^2 \bigr) d\chi .
  \ee
\end{lemma}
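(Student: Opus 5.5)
We read the identity as a Pythagorean decomposition. Introduce the weighted inner product
\[
\langle \xi,\rho\rangle_\star \coloneqq \fint_\Lambda \bigl( 2\,\xiperp\rho^\perp + \xipar\cdot\rho^\parallel \bigr)\,d\chi
\]
on pairs $(\xiperp,\xipar)$. The left-hand side of the statement is $\langle\xi,\xi\rangle_\star$. The plan is to show that $\xi\mapsto\projP^k(\xi)\,\xistar_k$ is the $\langle\cdot,\cdot\rangle_\star$-orthogonal projection onto the span of $\xistar_1,\dots,\xistar_n$. The identity then follows by expanding $\xi=\projP^k(\xi)\xistar_k+\xi^\fluc$.

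\textbf{Step 1: Gram matrix.} We compute $\langle \xi,\xistar_l\rangle_\star$ for an arbitrary $\xi$. Since $\xistarpar_l$ has components $\delta_{lm}-\xh_l\xh_m$ and $\xipar$ is tangent ($\xh\cdot\xipar=0$), we get $\xipar\cdot\xistarpar_l=\xipar_l$. Hence
\[
\langle \xi,\xistar_l\rangle_\star = \bigl\la 2\xh_l\xiperp+\xipar_l\bigr\ra .
\]
Taking $\xi=\xistar_m$ gives
\[
\langle\xistar_m,\xistar_l\rangle_\star = \la 2\xh_l\xh_m+\delta_{lm}-\xh_l\xh_m\ra = T_{lm},
\]
consistent with the computation $\projP^k(\xistar_m)=\delta^k_m$ in the text. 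So the Gram matrix of the $\xistar_k$ is exactly $T$, which is positive definite and hence invertible.

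\textbf{Step 2: Orthogonality.} Write $v_l\coloneqq\la 2\xh_l\xiperp+\xipar_l\ra$, so that $\projP^k(\xi)=(T^{-1})^{kl}v_l$. Then
\[
\langle \xi^\fluc,\xistar_l\rangle_\star = v_l-\projP^k(\xi)\,T_{kl} = v_l - v_l = 0 .
\]
Thus $\xi^\fluc$ is $\langle\cdot,\cdot\rangle_\star$-orthogonal to every $\xistar_l$.

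\textbf{Step 3: Expansion.} Expanding $\langle\xi,\xi\rangle_\star$ with $\xi=\projP^k(\xi)\xistar_k+\xi^\fluc$, the cross terms vanish by Step~2. The remaining terms are
\[
\projP^k(\xi)\,T_{kl}\,\projP^l(\xi) = v_k\,(T^{-1})^{kl}\,v_l = |v|^2_{T^{-1}},
\]
plus $\langle\xi^\fluc,\xi^\fluc\rangle_\star$. This is exactly the claimed identity.

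The only point requiring care is in Step~1: we must use the tangency of $\xipar$ to simplify $\xipar\cdot\xistarpar_l$ to $\xipar_l$. This relies on regarding $\xipar$ as an $\RR^n$-valued field orthogonal to $\xh$. Everything else is finite-dimensional linear algebra, so I do not expect a real obstacle.
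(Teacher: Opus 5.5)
Your proof is correct and follows the paper's argument. The paper substitutes $\xi=\xi^{\fluc}+\projP^k(\xi)\xistar_k$, discards the cross term because $\la 2\xh_k\xi^{\fluc\perp}+\xi^{\fluc\parallel}{}_k\ra=0$, and identifies the remaining term $\projP^k(\xi)\projP^l(\xi)T_{kl}$ with $|\la 2\xh\xiperp+\xipar\ra|^2_{T^{-1}}$; your version only spells out what the paper leaves implicit, namely the tangency step $\xipar\cdot\xistarpar_l=\xipar_l$ and the fact that the Gram matrix of the $\xistar_k$ is $T$.
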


\begin{proof} 
Using $\xi=\xi^\fluc + \projP^k(\xi) \xistar_k$, and the explicit components $\xistarperp_k=\xh_k$ and $\xistarpar_{k\ l}=\delta_{kl}- \xh_k\xh_l$, we find
\be
\aligned
\fint_{\Lambda} \bigl( 2 |\xiperp|^2 + |\xipar|^2 \bigr) d\chi
& = \fint_{\Lambda} \bigl( 2 |\xi^{\fluc\perp}|^2 + |\xi^{\fluc\parallel}|^2 \bigr) d\chi
+ 2 \projP^k(\xi) \fint_{\Lambda} \bigl( 2 \xi^{\fluc\perp}\xistarperp_k + \xi^{\fluc\parallel}\cdot\xistarpar_k \bigr) d\chi \\
& \quad + \projP^k(\xi) \projP^l(\xi) \fint_{\Lambda} \bigl( 2 \xistarperp_k \xistarperp_l + \xistarpar_k \cdot \xistarpar_l \bigr) d\chi
\\
& = \fint_{\Lambda} \bigl( 2 |\xi^{\fluc\perp}|^2 + |\xi^{\fluc\parallel}|^2 \bigr) d\chi
+ \projP^k(\xi) \projP^l(\xi) T_{kl} ,
\endaligned
\ee
where the cross-term vanishes due to $\la 2\xh_k\xi^{\fluc\perp}+\xi^{\fluc\parallel}{}_k\ra =0$. This establishes the identity.
\end{proof}


\subsubsection{A weighted Korn inequality}

An important ingredient in proving coercivity of $\ssrmB^\lambda$ is the Korn inequality, which states that the quantity $\fint_{\Lambda}\bigl|\Sym(\nablaslash \xipar)\bigr|^2\,d\chi$ controls $\xipar$ modulo the Killing fields of the $(n-1)$-sphere. A basis of the latter is given by \emph{rotations} (on the sphere)
\be
\zeta =\xh_l\nablaslash\xh_m- \xh_m\nablaslash\xh_l ,
\qquad \text{with} \qquad
\zeta_k=\delta_{km}\xh_l- \delta_{kl}\xh_m .
\ee
General Killing fields take the following form for some constant anti-symmetric matrix~$L^{lm}$,
\bse
\bel{art2 -equa-k}
\zeta = L^{lm}(\xh_l\nablaslash\xh_m- \xh_m\nablaslash\xh_l) .
\ee
They enjoy the properties 
\bel{art2 -equa-killing}
\zeta_k = - 2L^{kl}\xh_l ,
\qquad
\Sym(\nablaslash\zeta)=0 .
\ee
\ese
These explicit expressions will be useful later on. At this stage, we simply point out the inequality \eqref{art2 -equa--weighted-Korn}, below, proving that these fields are the main obstacle  to reaching coercivity statements. Note in passing that in the limit of small domain size, we can achieve arbitrarily small values of $C_{0\Korn}^\lambda$ while keeping $C_{1\Korn}^\lambda$~bounded.
(See \autoref{sectionN3- 2}.)

\begin{proposition}[A weighted Korn inequality]
\label{art2 -equa--weighted-Korn}
In any localization domain $(\Lambda, d\chi = \lambda^{2 \expoP} d\xh)$, the following Korn inequality holds for a pair of positive constants $(C_{1\Korn}^\lambda , C_{0\Korn}^\lambda)$ and for any vector field $\xi^\parallel$ tangent to~$\Lambda$: 
\bel{art2 -equa--1133-copie}
\aligned
& \fint_{\Lambda}   \bigl|\Sym(\nablaslash \xi^{\parallel} )\bigr|^2 \, d\chi 
\geq \min_{\zeta} \fint_{\Lambda} 
\Bigl(
{1 \over (C_{1\Korn}^\lambda)^2}  | \nablaslash(\xi^{\parallel} - \zeta) |^2  
+ {1 \over (C_{0\Korn}^\lambda)^2} | \xi^{\parallel} - \zeta |^2 
\Bigr) \, d\chi, 
\endaligned
\ee
where the minimum is taken over all Killing fields~$\zeta$ in~\eqref{art2 -equa-k}. 
\end{proposition}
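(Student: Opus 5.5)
We prove the inequality by contradiction and compactness, after noting that it is invariant under subtracting a Killing field. Replacing $\xi^\parallel$ by $\xi^\parallel-\zeta$ does not change the left-hand side, because $\Sym(\nablaslash\zeta)=0$ by \eqref{art2 -equa-killing}.

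Since the Killing fields form a finite-dimensional space, we may fix $\zeta$ to be the weighted $L^2(d\chi)$-orthogonal projection of $\xi^\parallel$ onto that space. We then write $w=\xi^\parallel-\zeta$, which is $L^2(d\chi)$-orthogonal to all Killing fields. It therefore suffices to prove
\[
\fint_\Lambda |\Sym(\nablaslash w)|^2\,d\chi \gtrsim \fint_\Lambda \bigl(|\nablaslash w|^2+|w|^2\bigr)\,d\chi
\]
for all such $w$. The constants $C_{1\Korn}^\lambda, C_{0\Korn}^\lambda$ are then read off from the implied constant.

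\textbf{Step 1: a weighted Korn inequality without the orthogonality constraint.} We first establish a Gårding-type estimate
\[
\|\nablaslash w\|^2_{L^2_{-\expoP}} \lesssim \|\Sym(\nablaslash w)\|^2_{L^2_{-\expoP}}+\|w\|^2_{L^2_{-\expoP}}.
\]
In the interior this follows from the pointwise Bochner-type identity on the sphere. Integrating by parts,
\[
\int |\nablaslash w|^2 \phi = \int \bigl(2|\Sym\nablaslash w|^2-(\nablaslash\cdot w)^2+\mathrm{Ric}(w,w)\bigr)\phi+(\text{terms with }\nablaslash\phi),
\]
where $\mathrm{Ric}=(n-2)\gslash$. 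We apply this with $\phi=\lambda^{2\expoP}$. The term $(\nablaslash\cdot w)^2\le (n-1)|\Sym\nablaslash w|^2$ is harmless.

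The cross terms $\nablaslash\lambda^{2\expoP}=2\expoP\lambda^{2\expoP-1}\nablaslash\lambda$ are the delicate part. They produce integrals of the form $\int \lambda^{2\expoP-1}|w||\nablaslash w|$, which must be absorbed. For this we use a weighted Hardy inequality near $\del\Lambda$: since $\lambda$ vanishes linearly with nonzero gradient there, $\int\lambda^{2\expoP-2}|w|^2\lesssim\int\lambda^{2\expoP}(|\nablaslash w|^2+|w|^2)$, and this holds because $2\expoP-2>-1$. We then apply Cauchy--Schwarz with a small parameter.

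Absorbing $\nablaslash w$ in this way is the step I expect to be the main obstacle. The Hardy constant can be large, so a direct absorption may fail. If it does, I will instead localize: use a partition of unity, flatten the boundary collar, and invoke the known Korn inequality for power-of-distance weights $d^{2\expoP}$. That inequality holds because such weights are Muckenhoupt $A_2$-like in the normal variable only for restricted exponents; when that too is unavailable, I will use the trace-free/Korn identity on thin slabs $\{\lambda\sim 2^{-j}\}$ with rescaling. On each dyadic slab the weight is comparable to a constant, and the unweighted Korn inequality on a bounded Lipschitz piece, with constants uniform under rescaling, gives the estimate. Summing over slabs with the weight yields Step 1.

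\textbf{Step 2: compactness.} Suppose the desired inequality fails. Then there is a sequence $w_j$, orthogonal to all Killing fields, with $\|w_j\|_{H^1_{-\expoP}}=1$ and $\|\Sym\nablaslash w_j\|_{L^2_{-\expoP}}\to0$.

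The embedding $H^1_{-\expoP}(\Lambda)\hookrightarrow L^2_{-\expoP}(\Lambda)$ is compact. This follows from Rellich on compact subsets together with the Hardy inequality, which gives uniform smallness of $\int_{\{\lambda<\varepsilon\}}\lambda^{2\expoP}|w|^2$. Passing to a subsequence, $w_j\to w$ in $L^2_{-\expoP}$. Step 1 applied to differences shows $w_j$ is Cauchy in $H^1_{-\expoP}$.

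The limit satisfies $\Sym\nablaslash w=0$ on the connected set $\Lambda$. Hence $w$ is the restriction of a Killing field of $\Sphe^{n-1}$, by the standard rigidity of Killing fields on connected domains. Together with the orthogonality, this forces $w=0$, which contradicts $\|w\|_{H^1_{-\expoP}}=1$.

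Finally, replacing the specific projection $\zeta$ by the minimum over all Killing fields only decreases the right-hand side, which gives \eqref{art2 -equa--1133-copie}.
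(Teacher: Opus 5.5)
Your reduction (fixing $\zeta$ as the $L^2(d\chi)$-projection onto Killing fields) and your Step~2 are sound. They follow the same contradiction--compactness pattern the paper uses for the Korn--Poincar\'e inequality, together with the weighted Rellich embedding \eqref{equa-section5-weighted-rellich}. The paper itself states this proposition without proof and only uses it as an input. Its entire content is therefore your Step~1, the weighted second Korn inequality $\|\nablaslash w\|_{L^2_{-\expoP}}^2\lesssim\|\Sym(\nablaslash w)\|_{L^2_{-\expoP}}^2+\|w\|_{L^2_{-\expoP}}^2$, and that step is not established.

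In the Bochner route the difficulty is not a large Hardy constant; it is that the weight is exactly critical for Hardy. The cross terms have size $2\expoP\int\lambda^{2\expoP-1}|\nablaslash\lambda|\,|w|\,|\nablaslash w|\,d\xh$. The normal Hardy inequality, in its asymptotically sharp form $\int\lambda^{2\expoP-2}|\nablaslash\lambda|^2|w|^2\,d\xh\le\frac{4}{(2\expoP-1)^2}\int\lambda^{2\expoP}|\nablaslash w|^2\,d\xh+C\int\lambda^{2\expoP}|w|^2\,d\xh$, then returns $\int\lambda^{2\expoP}|\nablaslash w|^2\,d\xh$ with a coefficient of order $4\expoP/(2\expoP-1)$, times the combinatorial constant of the cross terms. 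This exceeds $1$ for every $\expoP$, however you choose the small parameter.

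Rewriting $w^b\nablaslash_b w_a=2w^b\Sym(\nablaslash w)_{ab}-\frac12\nablaslash_a|w|^2$ and integrating by parts does not help. It leaves $\expoP(2\expoP-1)\int\lambda^{2\expoP-2}|\nablaslash\lambda|^2|w|^2\,d\xh$ on the side to be bounded, which is the same circular term.

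Neither fallback closes the gap.
\begin{itemize}
\item Power weights $t^\alpha$ lie in $A_2$ only for $-1<\alpha<1$, so $\lambda^{2\expoP}$ with $2\expoP\geq4$ is outside Muckenhoupt theory. Invoking an unspecified weighted Korn inequality for such weights amounts to assuming the statement.
\item On a dyadic slab or Whitney cube of size $\ell\sim\lambda$, the scale-invariant second Korn inequality reads $\|\nablaslash w\|^2\lesssim\|\Sym(\nablaslash w)\|^2+\ell^{-2}\|w\|^2$. Summing with the weights $\ell^{2\expoP}$ reproduces exactly the uncontrolled quantity $\int\lambda^{2\expoP-2}|w|^2\,d\xh$.
\item Working instead modulo a separate Killing field on each cube removes that term. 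But then these fields must be patched along chains of cubes with weighted bounds, which is the real work and is not addressed.
\end{itemize}

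One clean way to supply Step~1 uses the paper's assumption that $\expoP$ is an integer. Set $\widetilde\Lambda=\{(x,y)\in\Lambda\times\RR^{2\expoP}\mid |y|<\lambda(x)\}$ in the product manifold $\Sphe^{n-1}\times\RR^{2\expoP}$. Take boundary coordinates $(z,t)$ with $\lambda=t\,g(z,t)$ and $g>0$, which is possible because $\lambda$ vanishes linearly with $|\nablaslash\lambda|>0$. By the implicit function theorem, locally $\widetilde\Lambda=\{t>h(z,|y|)\}$ with $h$ smooth. Hence $\widetilde\Lambda$ is a bounded Lipschitz domain, locally a convex cone in $(t,y)$ times the tangential variables.

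Let $\widetilde\nabla$ be the product Levi-Civita connection. The $y$-independent lift $\widetilde w=(w,0)$ satisfies, pointwise,
\begin{itemize}
\item $|\widetilde\nabla\widetilde w|=|\nablaslash w|$,
\item $|\Sym(\widetilde\nabla\widetilde w)|=|\Sym(\nablaslash w)|$.
\end{itemize}
Moreover $\int_{\widetilde\Lambda}f\,d\xh\,dy=|B_1^{2\expoP}|\int_\Lambda f\,\lambda^{2\expoP}\,d\xh$ for $y$-independent $f$, where $|B_1^{2\expoP}|$ is the volume of the unit ball of $\RR^{2\expoP}$.

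The classical unweighted second Korn inequality on the Lipschitz domain $\widetilde\Lambda$ is then exactly your Step~1; the curved metric only contributes zero-order terms in $w$. Rellich on $\widetilde\Lambda$ also gives the compact embedding. With this input, your Step~2 completes the proof.
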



\subsubsection{A weighted Korn-Poincaré inequality} 

While \autoref{art2 -equa--weighted-Korn} above applies to a \emph{tangent} vector field, our next inequality \emph{couples} a scalar field $\xi^\perp$ and a vector field $\xi^\parallel$. It states the coercivity of the harmonic functional~$\ssrmB^{\lambda}$ for $a_{n,p}=0$ under the vanishing-average condition $\bigl\la 2 \, \xh_l\, \xi^{\perp} + \xi_{l}^{\parallel} \bigr\ra  = 0$.

\begin{proposition}[A weighted Korn-Poincaré inequality]
\label{art2 -lem-poinckorn}
For any localization function~$\lambda$, there exist constants $\CzeroKP,\ConeKP>0$ such that for any scalar-vector field $\xi=(\xi^\perp, \xi^\parallel)$,
\bel{art2 -equa--1133}
\aligned
\bigl.\ssrmB^{\lambda}[\xi]\bigr|_{a_{n,p}=0} 
& = \fint_{\Lambda} \Bigl(
 (n-1) \Bigl(\xi^{\perp} + \frac{1}{n-1} \nablaslash\cdot \xi^{\parallel}\Bigr)^2
+ \frac{1}{2} \Bigl|\nablaslash \xi^{\perp} -  \xi^{\parallel} \Bigr|^2
  + \bigl|\Sym(\nablaslash \xi^{\parallel} )^\circ\bigr|^2
\Bigr) \, d\chi 
\\
& \geq
\frac{1}{(\ConeKP)^2}
\bigl\|\nablaslash\xiperp,\nablaslash\xipar\bigr\|_{L^2_{- \expoP}(\Lambda)}^2
+ \frac{1}{(\CzeroKP)^2}
\bigl\|\xiperp,\xipar\bigr\|_{L^2_{- \expoP}(\Lambda)}^2
\\[-.2ex]
& \text{provided }
\bigl\la 2 \, \xh_l\, \xiperp + \xipar{}_l \bigr\ra  = 0 .
\endaligned
\ee
\end{proposition}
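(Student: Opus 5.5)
The plan has two parts: first verify the displayed identity, then prove coercivity by a compactness (contradiction) argument.

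\emph{The identity.} It is purely pointwise and needs no integration by parts. Expanding the right-hand side gives
\[
(n-1)(\xiperp)^2+2\xiperp\,\nablaslash\cdot\xipar+\tfrac{1}{n-1}(\nablaslash\cdot\xipar)^2+\tfrac12|\nablaslash\xiperp|^2-\xipar\cdot\nablaslash\xiperp+\tfrac12|\xipar|^2+|\Sym(\nablaslash\xipar)^\circ|^2 .
\]
The trace decomposition on the $(n-1)$-sphere gives $|\Sym(\nablaslash\xipar)|^2=|\Sym(\nablaslash\xipar)^\circ|^2+\frac{1}{n-1}(\nablaslash\cdot\xipar)^2$. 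Substituting this, the expansion reproduces the integrand of \eqref{ssBalpha-quaform-0} with $a_{n,p}=0$ term by term. In particular $\ssrmB^\lambda|_{a_{n,p}=0}\ge 0$, and it vanishes exactly when three conditions hold:
\[
\xipar=\nablaslash\xiperp,\qquad (\nablaslash^2\xiperp)^\circ=0,\qquad \xiperp=-\tfrac{1}{n-1}\Deltaslash\xiperp .
\]
Together these give $\nablaslash^2\xiperp=-\xiperp\,\gslash$ on the connected set $\Lambda$. The solutions of this equation are the restrictions of linear functions $c^k\xh_k$, so the null space is $\{c^k\xistar_k\}$. For such a field, $\la 2\xh_l\xiperp+\xipar{}_l\ra=T_{lk}c^k$, and $T$ is positive definite. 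Hence the constraint in \eqref{art2 -equa--1133} forces $c=0$, so the form is definite on the constrained subspace.

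\emph{A Gårding-type bound.} Next I would establish
\[
\|\xiperp,\xipar\|_{H^1_{-\expoP}}^2\lesssim \ssrmB^\lambda|_{a_{n,p}=0}[\xi]+\|\xiperp,\xipar\|_{L^2_{-\expoP}}^2 .
\]
The form directly controls $\|\Sym(\nablaslash\xipar)^\circ\|$, $\|\nablaslash\xiperp-\xipar\|$ and $\|\xiperp+\frac{1}{n-1}\nablaslash\cdot\xipar\|$. From the last of these, $\|\nablaslash\cdot\xipar\|$ is bounded by the form plus $\|\xiperp\|_{L^2}$, which controls the full $\|\Sym(\nablaslash\xipar)\|$. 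The weighted Korn inequality of \autoref{art2 -equa--weighted-Korn} then bounds $\|\nablaslash(\xipar-\zeta)\|$ for the minimizing Killing field $\zeta$. The Killing fields form a finite-dimensional space, and on it all norms are equivalent to the $L^2_{-\expoP}$ norm, so $\|\nablaslash\zeta\|\lesssim\|\zeta\|\le\|\xipar\|+\|\xipar-\zeta\|$. Together these bound $\|\nablaslash\xipar\|$ by the form plus $\|\xipar\|_{L^2}$. Finally, $\|\nablaslash\xiperp\|\le\|\nablaslash\xiperp-\xipar\|+\|\xipar\|$.

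\emph{Contradiction argument.} Suppose the constrained inequality fails. Then there is a sequence $\xi_j$ satisfying the constraint, with $\|\xi_j\|_{L^2_{-\expoP}}=1$ and form tending to $0$. (The $L^2$ coercivity is the real issue; gradient coercivity then follows from the Gårding bound, which gives the constants $\ConeKP,\CzeroKP$.) By the Gårding bound, $(\xi_j)$ is bounded in $H^1_{-\expoP}$. Assuming weighted Rellich compactness, a subsequence converges weakly in $H^1_{-\expoP}$ and strongly in $L^2_{-\expoP}$ to some $\xi$. Weak lower semicontinuity of the nonnegative quadratic form gives form$[\xi]=0$, and the constraint passes to the limit since $\xh_l$ is bounded. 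By the kernel analysis above $\xi=0$, contradicting $\|\xi\|_{L^2_{-\expoP}}=1$.

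\emph{Main obstacle.} The step I expect to be hardest is the compact embedding $H^1_{-\expoP}(\Lambda)\hookrightarrow L^2_{-\expoP}(\Lambda)$ for the degenerate weight $\lambda^{2\expoP}$, which vanishes to order $2\expoP$ at $\partial\Lambda$. I would first try to prove it with a weighted Hardy-type estimate near $\partial\Lambda$, using $|\nablaslash\lambda|>0$ there, to show uniform smallness of the $L^2_{-\expoP}$ mass in a collar $\{\lambda<\epsilon\}$, and then apply standard Rellich compactness on the interior region $\{\lambda\ge\epsilon\}$. The same localization would justify the Korn step if \autoref{art2 -equa--weighted-Korn} needs to be applied with its constants for this weight.
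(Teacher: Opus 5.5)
Your proposal is correct and follows essentially the paper's route. Both arguments combine a compactness/contradiction argument based on weighted Rellich compactness, the identification of the null space via $\nablaslash^2\xiperp=-\xiperp\gslash$ and $T_{lk}c^k=0$, and the weighted Korn inequality together with finite-dimensionality of the Killing fields; the only difference is that you prove a G\aa{}rding bound first and normalize in $L^2_{-\expoP}$, whereas the paper normalizes in $H^1_{-\expoP}$ and uses Korn at the end to upgrade weak to strong $H^1_{-\expoP}$ convergence.
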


\begin{proof}
\bse
We proceed by contradiction. If \eqref{art2 -equa--1133} did not hold, we could find two sequences $\xiperp_\eps$ and $\xipar_\eps$ such that
$\bigl\la 2 \, \xh_l\, \xiperp_\eps + \xipar_{\eps\,l} \bigr\ra  = 0$ and
\bel{art2 -equa-1134-9}
\aligned
&\bigl\|\xiperp_\eps,\xipar_\eps\bigr\|_{H^1_{- \expoP}(\Lambda)}^2 = 1 ,
\\
&\lim_{\eps \to 0} \biggl(
\Bigl\| \xiperp_\eps + \frac{1}{n-1}
\nablaslash\cdot\xipar_\eps\Bigr\|_{L^2_{- \expoP}(\Lambda)}^2
  + \Bigl\| \nablaslash\xiperp_\eps - \xipar_\eps
\Bigr\|_{L^2_{- \expoP}(\Lambda)}^2
+ \Bigl\| \Sym(\nablaslash\xipar_\eps)^\circ
\Bigr\|_{L^2_{- \expoP}(\Lambda)}^2
\biggr)
= 0 .
\endaligned
\ee
The first condition gives a uniform bound in the weighted Hilbert space $H^1_{- \expoP}(\Lambda)$. After extracting a subsequence, $(\xiperp_\eps,\xipar_\eps)$ therefore converges weakly in this space, and strongly\footnote{This is a consequence of the compact embedding $H^1_{- \expoP}(\Lambda)\Subset L^2_{- \expoP}(\Lambda)$, cf.~\eqref{equa-section5-weighted-rellich}, below.} in $L^2_{-\expoP}(\Lambda)$, to a limit $(\xiperp_0,\xipar_0)$. The average condition also passes to the weak limit.
By virtue of the lower semi-continuity of the weighted norm we deduce that 
\be
\Bigl\| \xiperp_0 + \frac{1}{n-1} \nablaslash\cdot\xipar_0\Bigr\|_{L^2_{- \expoP}(\Lambda)}^2
+ \Bigl\| \nablaslash\xiperp_0 - \xipar_0 \Bigr\|_{L^2_{- \expoP}(\Lambda)}^2
+ \Bigl\| \Sym(\nablaslash\xipar_0)^\circ \Bigr\|_{L^2_{- \expoP}(\Lambda)}^2
\leq 0 .
\ee
As a result, we get the pointwise identities (almost everywhere on $\Lambda$)
\be
\xiperp_0 + \frac{1}{n-1} \nablaslash\cdot\xipar_0 = 0 , \quad
\nablaslash\xiperp_0 - \xipar_0 = 0 , \quad
\Sym(\nablaslash\xipar_0)^\circ = 0 .
\ee
The second equation gives $\xipar_0 = \nablaslash\xiperp_0$, so that $\xiperp_0\in H^2_{-\expoP}(\Lambda)$.  The first identity then gives $\Deltaslash\xiperp_0 = -(n-1)\xiperp_0$ and the last gives $(\nablaslash^2\xiperp_0)^\circ=0$, hence altogether
\be
\nablaslash^2\xiperp_0=-\xiperp_0\gslash.
\ee
On a connected domain of the round sphere, its solution are the ambient linear functions $\xiperp_0 = L^k \xh_k$ for some constant vector $L\in\RR^n$.  Hence, $\xipar_0 = \nablaslash\xiperp_0 = L^k\nablaslash\xh_k$.
The average condition $\bigl\la 2 \, \xh_l\, \xiperp_0 + \xipar_{0\,l} \bigr\ra  = 0$ then becomes $0 = L^k \bigl\la 2 \, \xh_l\, \xh_k + \nablaslash_l\xh_k \bigr\ra = L^k T_{kl}$, which implies $L^k=0$.

Altogether, $\xi_0=0$, so $\xi_\eps$ converges weakly to zero in $H^1_{- \expoP}(\Lambda)$, and strongly in $L^2_{-\expoP}(\Lambda)$.  Together with the first and last terms in the limit in~\eqref{art2 -equa-1134-9}, this implies that
\be
\bigl\| \Sym(\nablaslash\xipar_\eps) \bigr\|_{L^2_{-\expoP}(\Lambda)}
\lesssim \|\xiperp_\eps\|_{L^2_{-\expoP}(\Lambda)} + \Bigl\| \xiperp_\eps + \frac{1}{n-1} \nablaslash\cdot\xipar_\eps \Bigr\|_{L^2_{-\expoP}(\Lambda)} + \Bigl\| \Sym(\nablaslash\xipar_\eps)^\circ \Bigr\|_{L^2_{-\expoP}(\Lambda)}
\to 0 .
\ee
By \autoref{art2 -equa--weighted-Korn}, there exist Killing fields $\zeta_\eps$ such that $\xipar_\eps- \zeta_\eps\to0$ strongly in $H^1_{- \expoP}(\Lambda)$. The sequence $\zeta_\eps$ is bounded; since the space of conformal Killing fields is finite-dimensional, a subsequence converges strongly in $H^1_{- \expoP}(\Lambda)$. Its limit must coincide with the weak limit of $\xipar_\eps$, and hence vanishes. Therefore $\xipar_\eps\to0$ strongly in $H^1_{- \expoP}(\Lambda)$. The second term in \eqref{art2 -equa-1134-9} now gives $\nablaslash\xiperp_\eps\to0$. Moreover, $\la\xiperp_\eps\ra\to0$ by weak convergence, so the weighted Poincaré inequality yields $\xiperp_\eps\to0$ strongly in $L^2_{- \expoP}(\Lambda)$. Thus $\xi_\eps\to0$ strongly in $H^1_{- \expoP}(\Lambda)$, contradicting the normalization in \eqref{art2 -equa-1134-9}.
\ese
\end{proof} 


\subsubsection{Main statement of harmonic stability}

We now apply our weighted Korn-Poincaré inequality in \autoref{art2 -lem-poinckorn} and control the functional~$\ssrmB^{\lambda}$.

\begin{theorem}[Harmonic stability for the momentum operator $\ssB^\lambda$] 
\label{art2 -lem-39harmmom}
Fix $p\in(p_n^\flat,n- 2)$ and a localization function $\lambda$, with $d\chi=\lambda^{2\expoP}d\xh$. Assume that $a_{n,p}$ is sufficiently small, specifically
\bel{harmmom-anp-small}
0 < a_{n,p} < \sqrt{2} (\CzeroKP)^{-1}. 
\ee
Then the \underline{momentum harmonic stability}~\eqref{equa-stable-M-414} holds and, moreover, denoting $\eps = 2^{-1/2}\CzeroKP a_{n,p}$,
\be
\aligned
\ssrmB^{\lambda}[\xi]
& \geq (1 - \eps) \ssrmB^{\lambda}[\xi]\bigr|_{a_{n,p}=0}
\quad \text{provided } \, \bigl\la 2 \, \xh_l\, \xi^{\perp} + \xi_{l}^{\parallel} \bigr\ra  = 0 .
\endaligned
\ee   
\end{theorem}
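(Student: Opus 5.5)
The plan is to write $\ssrmB^\lambda[\xi]$ as its value at $a_{n,p}=0$ plus an explicit $a_{n,p}$-linear correction. We then bound that correction by a small multiple of the $a_{n,p}=0$ functional, using the Korn--Poincaré inequality of \autoref{art2 -lem-poinckorn}. Reading off \eqref{ssBalpha-quaform-0}, the only $a_{n,p}$-dependent terms are $-\frac{a_{n,p}}{2}\xipar\cdot\nablaslash\xiperp$ and $+\frac{a_{n,p}}{2}|\xipar|^2$. Hence
\be
\ssrmB^{\lambda}[\xi] = \ssrmB^{\lambda}[\xi]\bigr|_{a_{n,p}=0} + \frac{a_{n,p}}{2}\fint_\Lambda \bigl(|\xipar|^2 - \xipar\cdot\nablaslash\xiperp\bigr)\,d\chi
= \ssrmB^{\lambda}[\xi]\bigr|_{a_{n,p}=0} + \frac{a_{n,p}}{2}\fint_\Lambda \xipar\cdot\bigl(\xipar - \nablaslash\xiperp\bigr)\,d\chi .
\ee
This correction is naturally expressed through the combination $\nablaslash\xiperp-\xipar$, which already appears as a square in the $a_{n,p}=0$ identity \eqref{art2 -equa--1133}.

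Next we apply Cauchy--Schwarz in $L^2_{-\expoP}(\Lambda)$:
\be
\Bigl|\fint_\Lambda \xipar\cdot(\xipar-\nablaslash\xiperp)\,d\chi\Bigr| \le \|\xipar\|_{L^2_{-\expoP}(\Lambda)}\,\|\nablaslash\xiperp-\xipar\|_{L^2_{-\expoP}(\Lambda)} .
\ee
We control the second factor by $\sqrt{2}\,\bigl(\ssrmB^\lambda[\xi]|_{a_{n,p}=0}\bigr)^{1/2}$, which follows directly from the identity in \eqref{art2 -equa--1133} since all its terms are non-negative. For the first factor we use the average condition $\la 2\xh_l\xiperp+\xipar_l\ra=0$ and the lower bound in \autoref{art2 -lem-poinckorn}. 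This gives $\|\xipar\|_{L^2_{-\expoP}(\Lambda)}\le \CzeroKP\bigl(\ssrmB^\lambda[\xi]|_{a_{n,p}=0}\bigr)^{1/2}$.

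Combining the two bounds, the correction is at least $-\frac{a_{n,p}}{2}\sqrt2\,\CzeroKP\,\ssrmB^\lambda[\xi]|_{a_{n,p}=0} = -\eps\,\ssrmB^\lambda[\xi]|_{a_{n,p}=0}$, with $\eps=2^{-1/2}\CzeroKP a_{n,p}$ exactly as in the statement. This yields $\ssrmB^\lambda[\xi]\ge(1-\eps)\ssrmB^\lambda[\xi]|_{a_{n,p}=0}$. Under \eqref{harmmom-anp-small} we have $\eps<1$, and the factor $1-\eps$ is positive.

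Harmonic stability \eqref{equa-stable-M-414} then follows by applying \autoref{art2 -lem-poinckorn} once more. Its right-hand side controls $\|\xiperp\|_{H^1_{-\expoP}}^2+\|\xipar\|_{L^2_{-\expoP}}^2$ and also $\|\nablaslash\xipar\|_{L^2_{-\expoP}}^2$, which pointwise dominates $|\Sym(\nablaslash\xipar)|^2$. The main point to verify is the bookkeeping of constants: the factor $\tfrac12$ in front of $|\nablaslash\xiperp-\xipar|^2$ supplies the $\sqrt2$, and we must check that the $a_{n,p}$ terms combine exactly into $\xipar\cdot(\xipar-\nablaslash\xiperp)$ with no leftover $\xiperp\nablaslash\cdot\xipar$ contribution. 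Checking this against \eqref{ssBalpha-quaform-0}, the coefficient $2$ of $\xiperp\nablaslash\cdot\xipar$ is indeed $a_{n,p}$-independent.
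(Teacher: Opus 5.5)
Your proof is correct and follows the paper's argument essentially verbatim. You split off the $a_{n,p}$-linear term $-\frac{a_{n,p}}{2}\fint_\Lambda(\nablaslash\xiperp-\xipar)\cdot\xipar\,d\chi$ and bound it by Cauchy--Schwarz, using the $\frac12|\nablaslash\xiperp-\xipar|^2$ term of the Korn--Poincar\'e identity together with its $(\CzeroKP)^{-2}$ lower bound, which gives exactly $\eps=2^{-1/2}\CzeroKP a_{n,p}$; your explicit deduction of \eqref{equa-stable-M-414} from $1-\eps>0$ and the pointwise bound $|\Sym(\nablaslash\xipar)|\le|\nablaslash\xipar|$ simply spells out what the paper leaves implicit.
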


\begin{proof}
We have
\be
\ssrmB^{\lambda}[\xi] = \ssrmB^{\lambda}[\xi]\Bigr|_{a_{n,p}=0} - \frac{a_{n,p}}{2} \fint_{\Lambda} (\nablaslash\xiperp- \xipar)\cdot\xipar \, d\chi ,
\ee
where the first term is coercive thanks to~\eqref{art2 -equa--1133} hence dominates the second one provided $a_{n,p}$ is small enough.
For a quantitative result, we control $\nablaslash \xiperp -  \xipar$ by the first line in~\eqref{art2 -equa--1133} and $\xipar$ by the second to get
\be
\biggl| \fint_{\Lambda} (\nablaslash\xiperp- \xipar)\cdot\xipar \, d\chi \biggr|
\leq \bigl\|\nablaslash\xiperp- \xipar\bigr\|_{L^2_{- \expoP}(\Lambda)} \|\xipar\|_{L^2_{- \expoP}(\Lambda)}
\leq \sqrt{2} \CzeroKP \ssrmB^{\lambda}[\xi]\Bigr|_{a_{n,p}=0} .
\qedhere
\ee
\end{proof}


\subsection{Radial stability under a Korn-Poincaré inequality}
\label{art2 -section=3.2}

\subsubsection{Structure matrices}

Assume henceforth that harmonic stability holds, so that $\ker(\ssB^\lambda)$ is $n$-dimensional, and let $\{\xi^{(j)}\}_{j=1}^n$ be any basis of this space. Recall our notation~\eqref{equa-the-matrix}, namely
\be
(\Xi^\notreM)^{(j)}{}_k
\coloneqq
\bigl\la - \nablaslash_k \xi^{(j) \perp} + 2 a_{n,p} \xh_k\, \xi^{(j) \perp} \bigr \ra
+(1+a_{n,p}) \la \xi^{(j) \parallel}{}_k \ra.
\ee
Radial stability is the statement that this matrix is invertible.  The matrix $\Xi^\notreM$ depends on the chosen basis of the kernel, whereas its invertibility does not.
We also use the matrix product
\be
(\Xi^\notreM T^{-1})^{(j)l} \coloneqq (\Xi^\notreM)^{(j)}{}_{k} (T^{-1})^{kl}
\ee
with the usual convention for matrix multiplication.

\subsubsection{Silhouette fluctuations and structure matrices}

Before radial stability has been established, no preferred basis is available.  Nevertheless, as the basis is eventually chosen to consist of the silhouette vector fields, we shall denote it $\{\xi^{\normal(i)}\}_{i=1}^n$; the superscript $\normal$ is at this stage only a provisional label and does not yet impose the normalization~\eqref{equa-M-normalization}.
It is useful to decompose the structure matrix as $\Xi^\notreM = - \zeroXi + a_{n,p} \matQ \, T$, namely
\bel{Xi-pieces}
\aligned
(\Xi^\notreM)^{(j)}{}_k
& \coloneqq - \zeroXi^{(j)}{}_k + a_{n,p}  \matQ^{(j)l} \, T_{lk} ,
\\
\zeroXi^{(i)}{}_k & \coloneqq \bigl\la \nablaslash_k \xi^{\normal (i)\perp} - \xi^{\normal (i)\parallel}{}_k\bigr\ra ,
\\
\matQ^{(j)k} & \coloneqq  \bigl\la 2 \xh_l \xi^{\normal (j) \perp} + \xi^{\normal (j) \parallel}{}_l\bigr\ra (T^{-1})^{lk} = \projP^k(\xi^{\normal (j)}) ,
\endaligned
\ee
in terms of $\projP^k$ defined in~\eqref{equa-tildemoment}.

For the corresponding fluctuations, a direct calculation gives the identity 
\bel{art2 -ssrmbxisil} 
\ssrmB^{\lambda}[\xi^{\normal (i) \fluc}, \xi^{\normal (j) \fluc}]
= \frac{a_{n,p}}{2} (\zeroXi \matQ^t)^{(i)(j)}
= \frac{a_{n,p}}{2} \zeroXi^{(i)}{}_k \matQ^{(j)k} .
\ee
Observe that the right-hand side of \eqref{art2 -ssrmbxisil} depends on the silhouette fields themselves, not just their fluctuation. 
Interestingly, $\zeroXi$ has the same expression in terms of fluctuations, namely 
\be
\zeroXi^{(i)}{}_k = \bigl\la \nablaslash_k \xi^{\normal (i)\fluc\perp} - \xi^{\normal (i)\fluc\parallel}{}_k\bigr\ra.
\ee
The above identity \eqref{art2 -ssrmbxisil} is analogous to the one derived in \eqref{art2 -equa-428} for the Hamiltonian  operator, but is more involved.
We seek inequalities relating the matrices $\matQ,\zeroXi,T,U,S^2$.

In the calculations below, we also use the transpose notation $\zeroXi^t = (\zeroXi_k{}^{(j)}{})$ and $\matQ^t = (\matQ^{k (j)})$, and we also write $\matQ^{-1} = \bigl( (\matQ^{-1})_{k(j)}\bigr)$. 
Hence, for instance, in \eqref{art2 -equa-matrix-formu}, below, we use the convention 
\be
\zeroXi U^{-1} \zeroXi^t
= \bigl( (\zeroXi U^{-1} \zeroXi^t)^{(i)(j)} \bigr) = \bigl(\zeroXi^{(i)}{}_k (U^{-1})^{kl}  \zeroXi^{(j)}{}_l\bigr).
\ee

\subsubsection{Main statement of radial stability}

\begin{theorem}[Radial stability for the momentum operator $\ssB^\lambda$] 
\label{art2 -coro-radial-momen}
Fix $p\in(p_n^\flat,n- 2)$ and a localization function $\lambda$, with $d\chi=\lambda^{2\expoP}d\xh$. Assume the notation and inequalities in \eqref{art2 -equa--1133-copie} and \eqref{art2 -equa--1133}, together with
\bel{art2 -coro-radial-momen-anprange}
0 < a_{n,p} < \sqrt{2} (\CzeroKP)^{-1} \, |S^{- 2}|^{-1}.
\ee
Then the matrix inequalities $0<(1- \eps)\matQ^{-1}\zeroXi\leq a_{n,p}U$ and $\matQ^{-1}\Xi^\notreM\geq a_{n,p}S^2$ hold, where $\eps = 2^{-1/2}\CzeroKP a_{n,p}$. Matrix inequalities are understood in terms of symmetric parts. Namely, for all (constant) non-zero vectors $\wvect = (\wvect^k) \in\RR^n$ one has
\bse
\begin{align}
\label{art2 -VMQ1VaVUV}
0 < \wvect^k (\matQ^{-1})_{k(i)} \zeroXi^{(i)}{}_l \wvect^l & \leq \frac{1}{1- \eps} a_{n,p} \wvect^k U_{kl} \wvect^l .
\\
\label{art2 -avxi}
0 < a_{n,p} \wvect^k (S^2)_{kl} \wvect^l & \leq \wvect^k (\matQ^{-1})_{k(i)} (\Xi^\notreM)^{(i)}{}_l \wvect^l ,
\end{align}
\ese
As a consequence, the \underline{radial stability} condition \eqref{equa-Xi-invertible} in \autoref{def-radial-Mstab} holds.
\end{theorem}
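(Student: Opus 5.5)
The plan is to test the fluctuation identity \eqref{art2 -ssrmbxisil} against a well-chosen combination of kernel elements, and to bound $\zeroXi$ from both sides using the coercivity of \autoref{art2 -lem-39harmmom} together with a Cauchy--Schwarz inequality that produces $U$.

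\textbf{Step 1: $\matQ$ is invertible.} The condition on $a_{n,p}$ in \eqref{art2 -coro-radial-momen-anprange} implies \eqref{harmmom-anp-small}, because $|S^{-2}|^{-1}\le 1$; indeed $S^2\le \Id$, since the $\la\xh_k\xh_l\ra$ are averages of projectors. Hence \autoref{art2 -lem-39harmmom} applies. If $c_{(j)}\matQ^{(j)k}=0$, then $\xi=c_{(j)}\xi^{\normal(j)}$ lies in $\ker(\ssB^\lambda)$ and satisfies the average condition. Then $0=\ssrmB^\lambda[\xi]\gtrsim\|\xi\|^2$, so $\xi=0$ and therefore $c=0$.

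\textbf{Step 2: choice of test field.} Given $\wvect\neq0$, set $c_{(j)}=\wvect^k(\matQ^{-1})_{k(j)}$ and $\xi\coloneqq c_{(j)}\xi^{\normal(j)\fluc}$. Since $\matQ^{(j)k}c_{(j)}=\wvect^k$, bilinearity of \eqref{art2 -ssrmbxisil} gives
\[
\ssrmB^\lambda[\xi]=\tfrac{a_{n,p}}{2}\,X,\qquad X\coloneqq \wvect^k(\matQ^{-1})_{k(i)}\zeroXi^{(i)}{}_l\wvect^l .
\]
Let $v\coloneqq\nablaslash\xi^\perp-\xi^\parallel$. By the remark after \eqref{art2 -ssrmbxisil}, $\la v_l\ra\wvect^l=c_{(i)}\zeroXi^{(i)}{}_l\wvect^l=X$.

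\textbf{Step 3: the two-sided bound on $X$.} The field $\xi$ satisfies the average condition because fluctuations have $\projP^k=0$. So \autoref{art2 -lem-39harmmom} and the form \eqref{art2 -equa--1133} of $\ssrmB^\lambda|_{a_{n,p}=0}$ give
\[
\tfrac{a_{n,p}}{2}X\ge(1-\eps)\,\tfrac12\,\|v\|^2_{L^2_{-\expoP}(\Lambda)} .
\]
Since $v$ is tangent to the sphere, $v\cdot\xh=0$. Hence
\[
X=\la v\cdot(\wvect-(\wvect\cdot\xh)\xh)\ra\le\|v\|\,\bigl\la|\wvect|^2-(\wvect\cdot\xh)^2\bigr\ra^{1/2}=\|v\|\,|\wvect|_U .
\]
Combining the two displays gives $X\ge0$ and $(1-\eps)X^2\le a_{n,p}X|\wvect|_U^2$, which is \eqref{art2 -VMQ1VaVUV} up to strict positivity.

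\textbf{Step 4: strict positivity (the main obstacle).} Suppose $X=0$. Then $\ssrmB^\lambda[\xi]=0$, so by coercivity $\xi^{\fluc}$-combination vanishes. This means the kernel element $c_{(j)}\xi^{\normal(j)}$ equals $\wvect^k\xistar_k$. To rule this out I would compute $\ssB^\lambda[\xistar_k]$ directly from \eqref{equa--12}. At $a_{n,p}=0$ the operator is self-adjoint and $\xistar_k\in\ker((\ssB^\lambda)^\ast)$, so $\ssB^\lambda[\xistar_k]$ is $a_{n,p}$ times an explicit nonzero field. Pairing it with $\wvect^l\xistar_l$ should give a multiple of $a_{n,p}|\wvect|^2_{T}$-type quantity, which is nonzero; this contradicts $\wvect\neq0$. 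This explicit computation is where I expect the real work lies.

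\textbf{Step 5: the bound on $\matQ^{-1}\Xi^\notreM$.} From \eqref{Xi-pieces},
\[
\matQ^{-1}\Xi^\notreM=-\matQ^{-1}\zeroXi+a_{n,p}T .
\]
Using the upper bound of Step 3 and $T-U=2S^2$, the symmetric part satisfies
\[
\matQ^{-1}\Xi^\notreM\ge a_{n,p}\Bigl(2S^2-\tfrac{\eps}{1-\eps}U\Bigr).
\]
It remains to show $S^2\ge\frac{\eps}{1-\eps}U$. Since $U=\Id-S^2$, on each eigenvector of $S^2$ with eigenvalue $s$ this reduces to $s(1-\eps)\ge\eps(1-s)$, that is, $s\ge\eps$. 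This holds because $\eps<|S^{-2}|^{-1}\le s$ by \eqref{art2 -coro-radial-momen-anprange}, yielding \eqref{art2 -avxi}. A matrix whose symmetric part is positive definite is invertible, so $\matQ^{-1}\Xi^\notreM$ is invertible. Since $\matQ$ is invertible, $\detbf(\Xi^\notreM)\neq0$, which is \eqref{equa-Xi-invertible}.
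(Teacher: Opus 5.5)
Your Steps 1--3 and 5 are correct and essentially follow the paper. Step 3 is slightly more direct: the paper first proves $a_{n,p}\,v^t\zeroXi\matQ^t v\geq(1-\eps)\,v^t\zeroXi U^{-1}\zeroXi^t v$ and then completes a matrix square, whereas you combine $X\leq\|v\|\,|\wvect|_U$ with $(1-\eps)\|v\|^2\leq a_{n,p}X$ directly.

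The gap is in Step 4, where you placed the real work: the test you propose cannot detect anything. You want to contradict $\wvect^k\xistar_k\in\ker(\ssB^\lambda)$ by pairing $\ssB^\lambda[\wvect^k\xistar_k]$ with $\wvect^l\xistar_l$. But the fact you invoke in the same sentence, $\xistar_l\in\ker((\ssB^\lambda)^\ast)$, says exactly that $\fint_\Lambda\xistar_l\cdot\ssB^\lambda[\eta]\,d\chi=0$ for every $\eta$. Concretely, set $f=\wvect\cdot\xh$. From \eqref{equa--12} one finds $\ssB^\lambda[\wvect^k\xistar_k]=\frac{a_{n,p}}{2}\bigl(\lambda^{-2\expoP}\nablaslash\cdot(\lambda^{2\expoP}\nablaslash f),\,\nablaslash f\bigr)$. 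One integration by parts then gives, for the pairing with $(f,\nablaslash f)$, the value $\frac{a_{n,p}}{2}\bigl(-\|\nablaslash f\|^2+\|\nablaslash f\|^2\bigr)=0$. No nonzero $|\wvect|_T^2$-type quantity appears. As written, the strict lower bound $0<\wvect^k(\matQ^{-1})_{k(i)}\zeroXi^{(i)}{}_l\wvect^l$ is therefore not established.

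The repair is short: test against something outside the cokernel. The tangential component $\frac{a_{n,p}}{2}\nablaslash f$ of $\ssB^\lambda[\wvect^k\xistar_k]$ is visibly nonzero. In the weak formulation, take $\rho=(0,\nablaslash f)$. Then $\fint_\Lambda\rho\cdot\ssB^\lambda[\wvect^k\xistar_k]\,d\chi=\frac{a_{n,p}}{2}\|\nablaslash f\|^2_{L^2_{-\expoP}(\Lambda)}=\frac{a_{n,p}}{2}\,\wvect^tU\wvect>0$, since $a_{n,p}>0$ and $U$ is positive definite. This is how the paper argues; there it is phrased as the invertibility of $\zeroXi$, and positivity then follows from the matrix inequality above.

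Your Step 5, and hence the conclusion $\detbf(\Xi^\notreM)\neq0$, uses only the upper bound from Step 3. So the radial stability statement itself does not depend on this repair; only the strict positivity in the first matrix inequality does.
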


\begin{proof} {\it 1. Invertibility of the structure matrices.} Harmonic stability makes the moment map $\xi\mapsto\la2\xh\xi^\perp+\xi^\parallel\ra$ injective on $\ker(\ssB^\lambda)$.  Since this kernel has dimension $n$, the matrix $\matQ$ is invertible.  To prove that $\zeroXi$ is invertible, suppose that $\zeroXi^t v=0$.  The identity~\eqref{art2 -ssrmbxisil} and harmonic coercivity imply that the fluctuation of $v_{(i)}\xi^{\normal(i)}$ vanishes.  Thus this kernel element is of the form $c^k\xistar_k$.  Since
\be
\nablaslash(c^k\xh_k)=c^k\xistarpar_k,
\ee
direct substitution in the definition of $\ssB^\lambda$ gives
\be
\ssB^\lambda[c^k\xistar_k]
=\frac{a_{n,p}}2
\left(
\lambda^{- 2\expoP}\nablaslash\cdot\bigl(\lambda^{2\expoP}\nablaslash(c^k\xh_k)\bigr),
\nablaslash(c^k\xh_k)
\right).
\ee
Since $a_{n,p}>0$, such a field belongs to $\ker(\ssB^\lambda)$ only when $c=0$.  Hence $v=0$, and $\zeroXi$ is invertible.

\vskip.2cm
\bse
\noindent{\it 2. Estimates for the averages.} For any non-zero linear combination $\xi=\sum_iv_{(i)}\xi^{\normal (i) \fluc}$ the coercivity established earlier in \autoref{art2 -lem-39harmmom} implies
\be
\ssrmB^{\lambda} [\xi, \xi] 
\geq {1- \eps \over 2} \big| \bigl\la \nablaslash \xi^{\perp} - \xi^{\parallel}{}\bigr\ra \bigr|^2 _{U^{-1}}
\ee
with $\eps = 2^{-1/2} \CzeroKP a_{n,p}$.
Using matrix notation and the calculation of $\ssrmB^{\lambda}$ applied to pairs of silhouette vector fields, with the notation in \eqref{art2 -ssrmbxisil} we thus deduce that 
\bel{art2 -equa-matrix-formu}
a_{n,p} \wvect^t \zeroXi \matQ^t \wvect
\geq (1- \eps) \wvect^t \zeroXi U^{-1} \zeroXi^t \wvect , \qquad \wvect \neq 0.
\ee

\vskip.2cm

\noindent{\it 3. Matrix inequalities.} To prove the lower bound in~\eqref{art2 -VMQ1VaVUV} (positivity of~$\matQ^{-1} \zeroXi$), we take $v \coloneqq  (\matQ^t)^{-1} \wvect$ in~\eqref{art2 -equa-matrix-formu}; its right-hand side is positive because $\zeroXi$ is invertible and $U^{-1}$ is positive-definite. To prove the upper bound, we write
\be
\aligned
& \matQ \Bigl(\frac{a_{n,p}}{1- \eps} U - \matQ^{-1} \zeroXi\Bigr)^t \matQ^t
\\
& = \Bigl( \zeroXi \matQ^t - \frac{1- \eps}{a_{n,p}} \zeroXi U^{-1} \zeroXi^t\Bigr) + (1- \eps) a_{n,p}^{-1} \Bigl(\frac{a_{n,p}}{1- \eps} \matQ U - \zeroXi\Bigr) U^{-1} \Bigl(\frac{a_{n,p}}{1- \eps} \matQ U - \zeroXi\Bigr)^t .
\endaligned
\ee
The first term has non-negative symmetric part by \eqref{art2 -equa-matrix-formu}, and so does the second term because $U^{-1}$ is positive-definite. Their sum therefore has non-negative symmetric part, which establishes the upper bound in \eqref{art2 -VMQ1VaVUV}.
 
\medskip

\noindent{\it 4. Radial stability.}
Using $T=\Id+S^2$, $U=\Id-S^2$, and $\Xi^\notreM = -\zeroXi + a_{n,p} \matQ \, T$, the upper bound in~\eqref{art2 -VMQ1VaVUV} can be restated as
\be
\wvect^k (\matQ^{-1})_{k(i)} (\Xi^\notreM)^{(i)}{}_l \wvect^l
\geq \frac{1}{1- \eps} a_{n,p} \wvect^k \Bigl((2 - \eps) (S^2)_{kl} - \eps \delta_{kl}\Bigr) \wvect^l .
\ee
Then, for the range of values~\eqref{art2 -coro-radial-momen-anprange} of~$a_{n,p}$ we have $\eps\leq \wvect^t S^2 \wvect/|\wvect|^2<1$ and thus we have
\be
(2 - \eps) \wvect^t S^2 \wvect - \eps |\wvect|^2 \geq (1- \eps) \wvect^t S^2 \wvect , \qquad 1- \eps > 0 ,
\ee
which establishes~\eqref{art2 -avxi}.
As a result, $\matQ^{-1}\Xi^\notreM$ is greater than a positive-definite matrix.
Thus, there cannot be any non-zero vector $\wvect$ in the kernel $\ker\Xi^\notreM$: for such a vector the right-hand side of~\eqref{art2 -avxi} would vanish, while the left-hand side would be positive.
We conclude that $\Xi^\notreM$ is invertible.
\ese
\end{proof}


\subsection{Estimates for the momentum silhouettes}

We return to the identity \eqref{art2 -ssrmbxisil} for the momentum fluctuations and continue to rely on the coercivity of $\ssrmB^{\lambda}$.  The matrix $\matQ=(\matQ^{(j)k})$ is invertible by the preceding proof, and the estimate \eqref{art2 -equa8d0} below involves this matrix.  The preceding matrix inequalities are covariant under a change of basis of the kernel.  Since $\Xi^\notreM$ is now known to be invertible, we may therefore replace the provisional basis by the uniquely normalized silhouette basis satisfying~\eqref{equa-M-normalization}, motivated by the ADM momentum at the asymptotic end.

\begin{proposition}[Fluctuations of the momentum silhouette fields]
In the setup of \autoref{art2 -coro-radial-momen} and especially under the smallness condition~\eqref{art2 -coro-radial-momen-anprange} on~$a_{n,p}$,
the fluctuations of the silhouette vector fields $\xi=v_{(j)}\xi^{\normal(j)}$ for any constant vector $v=(v_{(i)})$ satisfy, for $\eps=2^{-1/2} \CzeroKP a_{n,p}$,
\bel{art2 -equa8d0}
\aligned
\frac{1}{(\ConeKP)^2} \bigl\|\nablaslash\xi^{\fluc\perp},\nablaslash\xi^{\fluc\parallel}\bigr\|_{L^2_{- \expoP}(\Lambda)}^2
+ \frac{1}{(\CzeroKP)^2} \bigl\|\xi^{\fluc\perp},\xi^{\fluc\parallel}\bigr\|_{L^2_{- \expoP}(\Lambda)}^2
\leq \frac{a_{n,p}^2}{(1- \eps)^2} \bigl|v_{(j)} \matQ^{(j)\bullet}\bigr|_U^2 .
\endaligned
\ee
\end{proposition}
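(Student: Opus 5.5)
We combine the fluctuation identity \eqref{art2 -ssrmbxisil} with the harmonic coercivity of \autoref{art2 -lem-39harmmom} and the Korn--Poincar\'e inequality \autoref{art2 -lem-poinckorn}, following the matrix argument in the proof of \autoref{art2 -coro-radial-momen}. Fix a constant vector $v$ and set $\xi = v_{(j)}\xi^{\normal(j)}$, so that by linearity $\xi^{\fluc} = v_{(j)}\xi^{\normal(j)\fluc}$ and $\la 2\xh_l\xi^{\fluc\perp}+\xi^{\fluc\parallel}{}_l\ra=0$. Then \autoref{art2 -lem-39harmmom} followed by \eqref{art2 -equa--1133} gives
\be
(1-\eps)\Bigl(\frac{1}{(\ConeKP)^2}\bigl\|\nablaslash\xi^{\fluc\perp},\nablaslash\xi^{\fluc\parallel}\bigr\|^2_{L^2_{-\expoP}(\Lambda)}+\frac{1}{(\CzeroKP)^2}\bigl\|\xi^{\fluc\perp},\xi^{\fluc\parallel}\bigr\|^2_{L^2_{-\expoP}(\Lambda)}\Bigr)\le \ssrmB^\lambda[\xi^\fluc].
\ee
By bilinearity of \eqref{art2 -ssrmbxisil}, the right-hand side equals $\frac{a_{n,p}}{2}\, v^t\zeroXi\matQ^t v$.

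It remains to bound $v^t\zeroXi\matQ^t v$. We use the inequality already established in Step~2 of the proof of \autoref{art2 -coro-radial-momen}, namely $\ssrmB^\lambda[\xi^\fluc]\ge\frac{1-\eps}{2}|\zeroXi^t v|^2_{U^{-1}}$, which is \eqref{art2 -equa-matrix-formu}:
\be
a_{n,p}\, v^t\zeroXi\matQ^t v\ge (1-\eps)\,|\zeroXi^t v|^2_{U^{-1}}.
\ee
On the other hand, Cauchy--Schwarz in the $U^{-1}$/$U$ pairing gives
\be
v^t\zeroXi\matQ^t v=(\zeroXi^t v)\cdot(\matQ^t v)\le|\zeroXi^t v|_{U^{-1}}\,|\matQ^t v|_U.
\ee
Combining the two displays, either $\zeroXi^t v=0$ or $|\zeroXi^t v|_{U^{-1}}\le \frac{a_{n,p}}{1-\eps}|\matQ^t v|_U$. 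Substituting back yields
\be
v^t\zeroXi\matQ^t v\le \frac{a_{n,p}}{1-\eps}\,|\matQ^t v|_U^2.
\ee
Here $|\matQ^t v|_U=|v_{(j)}\matQ^{(j)\bullet}|_U$, as in the statement.

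Putting the steps together gives
\be
\frac{1}{(\ConeKP)^2}\bigl\|\nablaslash\xi^{\fluc\perp},\nablaslash\xi^{\fluc\parallel}\bigr\|^2_{L^2_{-\expoP}(\Lambda)}+\frac{1}{(\CzeroKP)^2}\bigl\|\xi^{\fluc\perp},\xi^{\fluc\parallel}\bigr\|^2_{L^2_{-\expoP}(\Lambda)}\le\frac{a_{n,p}^2}{2(1-\eps)^2}\,|\matQ^t v|_U^2.
\ee
This is \eqref{art2 -equa8d0}, with an extra harmless factor $\tfrac12$.

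The main obstacle is bookkeeping in the first step. One must check that coercivity is applied to the fluctuation $\xi^\fluc$, which satisfies the zero-moment condition, rather than to $\xi$ itself. One must also check that the left side of \eqref{art2 -ssrmbxisil} pairs fluctuations while its right side involves $\zeroXi$, which can equivalently be computed from fluctuations, so that the Cauchy--Schwarz step is legitimate.
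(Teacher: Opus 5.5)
Your proof is correct and follows the paper's argument: coercivity of $\ssrmB^\lambda$ on the zero-moment fluctuation via \autoref{art2 -lem-39harmmom} and \autoref{art2 -lem-poinckorn}, then the identity \eqref{art2 -ssrmbxisil}, then the bound $v^t\zeroXi\matQ^t v\le\frac{a_{n,p}}{1-\eps}|\matQ^t v|_U^2$. The only difference is how you get that last bound. You derive it from \eqref{art2 -equa-matrix-formu} by a $U^{-1}$/$U$ Cauchy--Schwarz inequality, whereas the paper quotes the upper bound in \eqref{art2 -VMQ1VaVUV} with $\wvect=\matQ^t v$, which it proved by completing a square; the extra factor $\tfrac12$ you obtain is also implicit in the paper's own computation.
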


\begin{proof}
\bse
For any (non-zero) constant vector $v_{(j)}$, in the (scalar) equation
\be
2 \ssrmB^{\lambda}[v_{(i)}\xi^{\normal(i)\fluc}, v_{(j)}\xi^{\normal(j)\fluc}] 
= v_{(i)}v_{(j)} a_{n,p} \matQ^{(i)k} \zeroXi^{(j)}{}_k,
\ee
the left-hand side is coercive thanks to \autoref{art2 -lem-39harmmom} and \autoref{art2 -lem-poinckorn}, which apply to the vector field $\xi^\fluc$ since $ \bigl\la 2 \, \xh_l\, \xi^{\normal (j) \fluc\perp} + \xi^{\normal (j) \fluc\parallel} \bigr\ra  = 0$.
For the right-hand side, we apply \eqref{art2 -VMQ1VaVUV} with $\wvect^k$ replaced by $v_{(j)} \matQ^{(j)k}$, namely 
\be
0 < v_{(j)} v_{(i)}  \matQ^{(j)k} \zeroXi^{(i)}{}_k  
\leq \frac{a_{n,p}}{1- \eps} \bigl|v_{(j)} \matQ^{(j)\bullet}\bigr|_U^2 .
\qedhere
\ee
\ese
\end{proof} 

 
\section{Application to the Einstein constraint equations and the small aperture limit}
\label{section-5}

\subsection{Localization problem of interest}
\label{sectionN3-localization}

This section reconnects the spherical analysis of Sections~\ref{section-2} to~\ref{section-4} with the localization problem for the Einstein constraint equations.  We recall the geometric origin of the two operators, derive their harmonic--spherical decompositions, and establish the weighted Poincar\'e estimates needed for spherical domains of arbitrarily small diameter.  When these domains are realized as conical slices, this is the small-aperture regime of the localization problem.

Let $\Mbf$ be an $n$-dimensional manifold, with $n\geq3$, and let $(g,k)$ be an initial data set.  In the constraint equations~\eqref{eq:ee11}, we replace $k$ by the contravariant momentum tensor
\be
h\coloneqq\bigl(k-(\Tr_gk)g\bigr)^{\sharp\sharp}.
\ee
In these variables, the vacuum constraints are $\notreH(g,h)=0$ and $\notreM(g,h)=0$ in terms of the maps
\bel{equa-section5-constraint-map}
\aligned
\notreH(g,h)
&\coloneqq R_g+\frac1{n-1}(\Tr_gh)^2 -|h|_g^2,
\qquad
&
\notreM(g,h)
&\coloneqq\bfDiv_gh.
\endaligned
\ee
Given seed data $(g_s,h_s)$, the localization problem consists in finding a correction $(\gdiff,\hdiff)$, supported in a chosen gluing domain $\Omega$, such that $(g_s+\gdiff,h_s+\hdiff)$ satisfies the constraints and retains the prescribed exterior data away from~$\Omega$.
The variational construction in~\cite{LL-optimal-main} chooses corrections in the range of weighted formal adjoint operators (cf.~\eqref{equa-section5-weighted-adjoint-Ansatz} below).

We now specialize to one asymptotically Euclidean end.  Write $r=|x|$ and $\xh=x/r$.  A conical localization region is determined by a connected spherical domain $\Lambda\subset\Sbb^{n-1}$ and, outside a compact set, has the form
\bel{equa-section5-cone}
\Omega_R=\{x\in\RR^n:r>R,\ \xh\in\Lambda\}
=\bigcup_{r>R}\Lambda_r,
\qquad
\Lambda_r=r\Lambda.
\ee
The angular localization function $\lambda$ is positive on $\Lambda$, vanishes linearly at $\del\Lambda$, and is extended homogeneously along the rays, so $\lambda(r\xh)=\lambda(\xh)$.  For a projection exponent $p\in(0,n- 2)$ and a localization exponent $\expoP\geq2$, we use
\bel{omegabfp}
\omega_p=\lambda^{\expoP}r^{\frac n2 -p},
\qquad
d\chi=\lambda^{2\expoP}d\xh.
\ee
The exponent $p$ determines the radial decay selected by the projection, whereas $\expoP$ determines the order of vanishing at the lateral boundary.  The value $p=n- 2$ is the harmonic threshold; throughout the stability theorems we impose $p\in(p_n^\flat,n- 2)$.

In an asymptotically Euclidean end, the constraint equations $\notreH(g,h)=0$ and $\notreM(g,h)=0$ can be expressed in terms of their linearization at the Euclidean data $(\delta,0)$, with radially-decaying sources, and the adjoint operators involved in selecting $(\gdiff,\hdiff)$ likewise reduce to Euclidean ones.
At the Euclidean data $(\delta,0)$, the linearized operators and their formal adjoints are
\bel{equa-section5-linearized-constraints}
\aligned
d\notreH_{(\delta,0)}[\gdiff,\hdiff]
&=\del_i\del_j\gdiff_{ij}- \Delta(\Tr_\delta\gdiff),
&
d\notreH_{(\delta,0)}^*[u]
&=\Hess u-(\Delta u)\delta,
\\
d\notreM_{(\delta,0)}[\gdiff,\hdiff]_i
&=\del_j\hdiff_i{}^j,
&
d\notreM_{(\delta,0)}^*[Z]_{ij}
&=- \frac12(\del_iZ_j+\del_jZ_i).
\endaligned
\ee
The corrections are then parametrized at leading order by
\bel{equa-section5-weighted-adjoint-Ansatz}
\gdiff =\omega_p^2d\notreH_{(\delta,0)}^*[u],
\qquad
\hdiff =\omega_{p+1}^2d\notreM_{(\delta,0)}^*[Z].
\ee
Composing the linearized constraints with these weighted adjoints produces a fourth-order scalar equation for $u$ and a second-order vector equation for~$Z$.  This ``squaring'' converts the underdetermined constraint system into elliptic equations with unique solutions.

The Euclidean model operators resulting from~\eqref{equa-section5-weighted-adjoint-Ansatz} are the localized Hamiltonian and momentum operators
\bel{equa:acalew0}
\aligned
\notreH^\lambda[u]
&\coloneqq \omega_p^{- 2}\Bigl(
(n-1)\Delta(\omega_p^2\Delta u)
+\del_i\del_j(\omega_p^2)\del_i\del_j u
- \Delta(\omega_p^2)\Delta u
\Bigr),
\\
\notreM^\lambda[Z]^i
&\coloneqq- \frac12(\Delta Z_i+\del_j\del_iZ_j)
-(\del_j\log\omega_{p+1})(\del_jZ_i+\del_iZ_j).
\endaligned
\ee
In the full asymptotically Euclidean problem, the corresponding operators differ from~\eqref{equa:acalew0} by variable-coefficient terms with additional radial decay.  The Euclidean operators therefore determine the indicial structure and the stability conditions that control the asymptotic problem.  The inner boundary conditions at $r=R$, which are required for the global projection construction, do not enter the analysis at infinity carried out here.

The construction in~\cite{LL-optimal-main} shows that super-harmonic localization follows once the Hamiltonian and momentum equations both satisfy harmonic, radial, and shell stability.  The role of the present paper is to verify these abstract hypotheses through functional inequalities for the localization function $\lambda$. The first two stability levels are purely spherical and were treated in Sections~\ref{section-2} to~\ref{section-4}.  Shell stability also involves the radial evolution and will be established in Sections~\ref{section-6} to~\ref{section-9}.


\subsection{Localized harmonic--spherical decompositions}
\label{sectionN2 -1}

The scale invariance of $\lambda$ allows us to separate radial derivatives from derivatives tangent to the unit sphere.  We write
\be
\vartheta\coloneqq r\del_r,
\qquad
a_{n,p}=2(n- 2 -p),
\ee
and use $\nablaslash$ and $\Deltaslash$ for the Levi--Civita connection and Laplace--Beltrami operator of the round metric $\gslash$.  The constants $b_{n,p}$ and $c_{n,p}$ are listed in \autoref{sec-appendix-A}.

\subsubsection{Hamiltonian decomposition}

The localized Hamiltonian operator admits the decomposition
\bse
\label{equa--488- 2}
\be
\label{equa-key-decompose-H-repeat}
r^4\notreH^\lambda[u]
=\Arr[u]+\Ars^\lambda[u]+\ssA^\lambda[u],
\ee
where
\bel{equa-62b}
\aligned
\Arr[u]
&\coloneqq(n-1)\vartheta(\vartheta+a_{n,p})
\bigl(\vartheta^2+a_{n,p}\vartheta-b_{n,p}\bigr)u,
\\
\Ars^\lambda[u]
&\coloneqq\lambda^{- 2\expoP}(\vartheta+a_{n,p})\Bigl(
2\vartheta\nablaslash\cdot(\lambda^{2\expoP}\nablaslash u)
+(n- 2)\vartheta\bigl(\lambda^{2\expoP}\Deltaslash u
+\Deltaslash(\lambda^{2\expoP}u)\bigr)
\\
&\hspace{5.4cm}
+\frac{c_{n,p}}{a_{n,p}}
\bigl(\Deltaslash(\lambda^{2\expoP}u)
- \lambda^{2\expoP}\Deltaslash u\bigr)
\Bigr),
\\
\ssA^\lambda[u]
&\coloneqq\lambda^{- 2\expoP}\Bigl(
(n- 2)\Deltaslash(\lambda^{2\expoP}\Deltaslash u)
+\nablaslash^a\nablaslash^b
\bigl(\lambda^{2\expoP}\nablaslash_a\nablaslash_bu\bigr)
\\
&\hspace{5.4cm}
- 2(a_{n,p}+1)\nablaslash\cdot
(\lambda^{2\expoP}\nablaslash u)
-c_{n,p}\Deltaslash(\lambda^{2\expoP}u)
\Bigr).
\endaligned
\ee
\ese
Here $\Arr$ is purely radial and independent of $\lambda$, $\Ars^\lambda$ contains both radial and spherical derivatives, and $\ssA^\lambda$ is purely spherical.  If $\nu$ is a function of~$\xh$, then $\vartheta(r^{-a_{n,p}}\nu)=-a_{n,p}r^{-a_{n,p}}\nu$.  Every term in $\Arr+\Ars^\lambda$ therefore vanishes on this profile, and we have the intrinsic characterization
\bel{equa-asymptoOper-reapeat}
\ssA^\lambda[\nu]
=r^{4+a_{n,p}}\notreH^\lambda[r^{-a_{n,p}}\nu].
\ee
Thus, $\ssA^\lambda$ is the operator seen by the borderline harmonic profile selected by the weight.  Its kernel determines the Hamiltonian silhouette.

\subsubsection{Momentum decomposition}
\label{section=9.1}

For a vector field $Z$ on the cone, we use the radial--tangential splitting
\be
Z_i=\xh_iZ^\perp+Z_i^\parallel,
\qquad
\xh^iZ_i^\parallel=0.
\ee
The momentum operator is correspondingly represented by a $2\times2$ block operator.  One has
\bel{equa-section5-momentum-decomposition}
r^2\notreM^{\lambda\bullet\bullet}[Z]
=\Brr^{\bullet\bullet}[Z]
+\Brs^{\lambda\bullet\bullet}[Z]
+\ssB^{\lambda\bullet\bullet}[Z],
\ee
where the non-zero components of radial and mixed derivative operators are
\bel{equa-Bmoment}
\aligned
\Brr^{\perp\perp}[Z^\perp]
&=- \vartheta(\vartheta+a_{n,p})Z^\perp,
&
\Brs^{\lambda\parallel\perp}[Z^\perp]_a
&=- \frac12(\vartheta+a_{n,p})\nablaslash_aZ^\perp,
\\
\Brs^{\lambda\perp\parallel}[Z^\parallel]
&=- \frac12\lambda^{- 2\expoP}(\vartheta+a_{n,p})
\nablaslash\cdot(\lambda^{2\expoP}Z^\parallel),
&
\Brr^{\parallel\parallel}[Z^\parallel]_a
&=- \frac12\vartheta(\vartheta+a_{n,p})Z_a^\parallel.
\endaligned
\ee
All unlisted components of $\Brr$ and~$\Brs$ vanish.
The components of the purely spherical derivative operators are
\bel{Brs-expr-main}
\aligned
\ssB^{\lambda\perp\perp}[\xi^\perp]
&=(n-1)\xi^\perp
- \frac12\lambda^{- 2\expoP}\nablaslash\cdot
(\lambda^{2\expoP}\nablaslash\xi^\perp),
\\
\ssB^{\lambda\parallel\perp}[\xi^\perp]_a
&=- \frac12\nablaslash_a\xi^\perp
- \lambda^{- 2\expoP}\nablaslash_a
(\lambda^{2\expoP}\xi^\perp),
\\
\ssB^{\lambda\perp\parallel}[\xi^\parallel]
&=\frac{1+a_{n,p}}2\lambda^{- 2\expoP}
\nablaslash\cdot(\lambda^{2\expoP}\xi^\parallel)
+\nablaslash\cdot\xi^\parallel,
\\
\ssB^{\lambda\parallel\parallel}[\xi^\parallel]_a
&=\frac{1+a_{n,p}}2\xi_a^\parallel
- \lambda^{- 2\expoP}\nablaslash^b
\Bigl(\lambda^{2\expoP}\Sym(\nablaslash\xi^\parallel)_{ab}\Bigr).
\endaligned
\ee
As in the Hamiltonian case, the factors $\vartheta+a_{n,p}$ show that, for every pair $\xi=(\xi^\perp,\xi^\parallel)$ that only depends on~$\xh$,
\bel{equa-section5-momentum-harmonic-profile}
\ssB^{\lambda\bullet\bullet}[\xi]
=r^{2+a_{n,p}}\notreM^{\lambda\bullet\bullet}
[r^{-a_{n,p}}\xi].
\ee
The operator $\ssB^\lambda$ is the momentum operator of main interest.  Its $n$-dimensional kernel gives the momentum silhouette vector fields. Sections~\ref{section-2} to~\ref{section-4} involve only the purely spherical operators $\ssA^\lambda$ and $\ssB^\lambda$; the radial and mixed pieces $\Arr,\Ars^\lambda,\Brr,\Brs^\lambda$ enter only in the construction of the shell functionals.


\subsection{From the constraint equations to the stability criteria}
\label{sectionN3-stability-bridge}

\subsubsection{On the origin of stability conditions}

The harmonic stability conditions arise very naturally in the study of the harmonic operators $\ssA^\lambda$ and $\ssB^\lambda$ on spherical domains.  While the radial stability conditions can be expressed in terms of the kernels of these operators, they only arise naturally once accounting for the radial and mixed operators $\Arr,\Ars^\lambda,\Brr,\Brs^\lambda$.
To motivate them, as well as the shell stability conditions, we briefly outline some aspects of the localization problem, concentrating on Hamiltonian operators for definiteness.  See \cite[Section 1.3]{LL-optimal-main} for additional details.

A key step is to understand the decay of solutions of $\notreH^\lambda[u] = 0$.
By a suitable integration on spherical shells $\Lambda_r = r\Lambda \subset \Omega_R$, the average $\la u\ra(r)$ on~$\Lambda_r$ satisfies an ordinary differential equation of the schematic form (cf.~\eqref{equa-defb-b-b}, below)
\bel{radial-eq-easy}
\bigl( - \bnotreH_{1} \vartheta(\vartheta+a_{n,p}) + \bnotreH_{0} \bigr) \, \la u\ra 
= \Kappa'[\ut] + C^u r^{-a_{n,p}}, 
\ee
where $\ut = u - \la u\ra$ are the fluctuations of~$u$ on~$\Lambda_r$ and $\Kappa'[\ut]$ is a linear functional in~$\ut$.
The radial decay of homogeneous solutions is controlled by characteristic exponents of the radial differential operator on the left-hand side of~\eqref{radial-eq-easy}.
The \emph{radial stability condition} is equivalent to
\be
\bnotreH_{1} \bnotreH_{0} > 0 ,
\ee
which states that these exponents are outside the interval $[-a_{n,p},0]$.
Thus, provided $\Kappa'[\ut]$ is controlled, the shell average identity~\eqref{radial-eq-easy} implies that $\la u\ra$ decays at least at the harmonic decay rate of the term $C^u r^{-a_{n,p}}$.

Fluctuations are controlled by several quadratic functionals defined as integrals over~$\Lambda_r$, and satisfying another radial differential equation.  The shell functional $\Phi^\notreH[u](r)$ involves up to $2$~derivatives of~$u$ and is non-negative, and the shell dissipation functionals $\Psi^\notreH_\beta[u](r)$ derivatives up to order~$3$.
Solving the equations yields
\bel{intro-Phi-solve}
\Phi^\notreH[u](r) + \sum_{\beta=a,2a} \text{radial integral of } \Psi^\notreH_\beta[u] = \Cstar r^{-2a} ,
\ee
with suitably positive integration weights.  If $\Psi^\notreH_\beta$ were coercive, then~\eqref{intro-Phi-solve} would control $u$ with $r^{-a}$~decay.  Instead, just as for the harmonic functional~$\ssrmA^\lambda$, the dissipations $\Psi^\notreH_\beta$ fail to be coercive due to the averages~$\la u\ra$.
It is only by combining the shell functionals together with the shell average identity that the desired decay is reached.

In short, the discussion above separates three logically distinct estimates.  Harmonic stability is a coercive estimate for the purely spherical operators modulo their distinguished finite-dimensional modes.  Radial stability asserts that the equations obtained by averaging the radial--spherical decomposition are non-degenerate on these modes.  Shell stability controls the remaining fluctuations by a quadratic functional whose radial identity has essentially a non-negative dissipation.
 These are exactly the hypotheses used in~\cite{LL-optimal-main} to obtain super-harmonic estimates for the weighted potentials $(u,Z)$ and hence for the metric and momentum corrections in~\eqref{equa-section5-weighted-adjoint-Ansatz}.

\subsubsection{Geometric constants}

Theorem~\ref{art2 -thm:informal-suff-stab} reduces the verification of these hypotheses to geometric constants associated with the weight $\lambda$.  For the Hamiltonian operator, harmonic and radial stability follow when $(n- 2 -p)C_\Poin$ is sufficiently small, while shell stability follows when
\bel{equa-section5-hamiltonian-smallness}
\diam(\Lambda)+\CPoin
+\CPoinTwo
\quad\hbox{is sufficiently small}.
\ee
For the momentum operator, the three stability conditions follow when $a_{n,p}=2(n- 2 -p)$ is sufficiently small relative to the weighted Poincar\'e, Korn, Korn--Poincar\'e, and Hardy constants of the fixed localization pair.  Consequently, two complementary parameters are available: the exponent $p$ can be moved toward the harmonic threshold, and the size of the spherical localization domain can be reduced.

The standard Poincar\'e constant controls mean-free scalar modes.  A second-order inequality is needed in the Hamiltonian shell estimate because the dissipation contains the spherical Hessian, whereas the error terms contain the spherical gradient.

We now turn to this limit of a small spherical region.
On a shrinking cap, ordinary Hessian coercivity cannot have a small constant: restrictions of ambient affine functions become almost affine in rescaled coordinates.  The correction involving the average gradient and the matrix $\matG$ in~\eqref{equa-Poin2} records these modes in an explicit term, leaving a coercive fluctuation estimate. The next two subsections first establish the three required inequalities on a fixed spherical domain and then determine their behavior under shrinking.

%


\subsection{First- and second-order Poincar\'e inequalities}
\label{sectionN3- 2}

The rest of this section concerns Poincaré-type inequalities.  Here in \autoref{sectionN3- 2} we simply prove that the inequalities hold for any localization function, for some constants.  In \autoref{sectionN3- 2 -complete} we describe how these constants scale in the small-domain limit. We prove these scaling properties in \autoref{section=5-derivation}.

\begin{proposition}[Three weighted Poincar\'e inequalities on a spherical domain]
\label{prop:three-weighted-poincare}
Let $\Lambda\subset\Sbb^{n-1}$ be a connected domain with smooth boundary. Assume that $\lambda\in C^\infty(\overline\Lambda)$ is positive in $\Lambda$, vanishes linearly on $\del\Lambda$, and is comparable to the spherical distance to $\del\Lambda$ in a boundary collar. Then the following statements hold.
\bei
\item[(i)] There is a finite constant $\CPoin$ such that, 
for every $\nu\in H^1_{- \expoP}(\Lambda)$, 
\bel{equa-section5-fixed-first-poincare}
\|\nu- \langle\nu\rangle\|_{L^2_{- \expoP}}
\leq \CPoin
\|\nablaslash\nu\|_{L^2_{- \expoP}}.
\ee

\item[(ii)] There is a finite constant $C_{\mathbf{Hess}}^\lambda$ such that, for every $\nu\in H^2_{- \expoP}(\Lambda)$, 
\bel{equa-section5-fixed-hessian-poincare}
\|\nablaslash\nu\|_{L^2_{- \expoP}}
\leq C_{\mathbf{Hess}}^\lambda
\|\nablaslash^2\nu\|_{L^2_{- \expoP}}.
\ee

\item[(iii)] 
The matrix 
\be
\matG=\Id+\langle\xh\otimes\xh\rangle - 2 \langle\xh\rangle \otimes \langle\xh\rangle
\ee
 is positive definite, and there is a finite optimal constant $\CPoinTwo$ for which
\bel{equa-section5-fixed-corrected-poincare}
\|\nablaslash\nu\|_{L^2_{- \expoP}}^2
\leq (\CPoinTwo)^2
\|\nablaslash^2\nu\|_{L^2_{- \expoP}}^2
+\langle\nablaslash\nu\rangle^t\matG
\langle\nablaslash\nu\rangle.
\ee
\eei
\end{proposition}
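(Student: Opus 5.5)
The plan is to prove all three statements by compactness–contradiction, which is why the proposition asserts only finiteness and not explicit values. The one genuinely analytic input is the weighted Rellich property: the embeddings $H^1_{-\expoP}(\Lambda)\Subset L^2_{-\expoP}(\Lambda)$ and $H^2_{-\expoP}(\Lambda)\Subset H^1_{-\expoP}(\Lambda)$ are compact. This is the step I expect to be the main obstacle, because the measure $d\chi=\lambda^{2\expoP}d\xh$ degenerates at $\del\Lambda$.

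To prove compactness, I will use the boundary collar, where $\lambda$ is comparable to $t=\dbf_{\gslash}(\cdot,\del\Lambda)$, and normal coordinates $(t,y)$ with $t\in(0,\delta)$ and $y\in\del\Lambda$. For $f$ in $H^1_{-\expoP}$, a one-dimensional weighted Hardy-type estimate along each normal segment will give
\be
\int_0^\delta t^{2\expoP}f^2\,dt\lesssim \delta^2\int_0^{2\delta} t^{2\expoP}|\del_tf|^2\,dt+\delta\int_\delta^{2\delta}t^{2\expoP}f^2\,dt .
\ee
I will derive this by integrating $\del_t\bigl(\psi(t)\,t^{2\expoP+1}f^2\bigr)$ with a cutoff $\psi$ supported in $[0,2\delta)$. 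The key point is that the power $t^{2\expoP+1}$ kills the contribution at $t=0$. Consequently, for a bounded sequence in $H^1_{-\expoP}$, the mass in the collar $\{t<\delta\}$ is $O(\delta)$ uniformly, while on $\{t>\delta\}$ the weight is bounded above and below and the classical Rellich theorem applies. A diagonal argument then gives strong $L^2_{-\expoP}$ convergence of a subsequence. Applying the same argument to $\nablaslash\nu$ gives $H^2_{-\expoP}\Subset H^1_{-\expoP}$.

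For (i), I argue by contradiction. Suppose there are $\nu_k$ with $\la\nu_k\ra=0$, $\|\nu_k\|_{L^2_{-\expoP}}=1$ and $\|\nablaslash\nu_k\|_{L^2_{-\expoP}}\to0$. Then $\nu_k$ is bounded in $H^1_{-\expoP}$, so by compactness a subsequence converges strongly in $L^2_{-\expoP}$ to some $\nu_0$. The limit satisfies $\nablaslash\nu_0=0$ weakly, so $\nu_0$ is constant since $\Lambda$ is connected. The constraint $\la\nu_0\ra=0$ then forces $\nu_0=0$, contradicting $\|\nu_0\|_{L^2_{-\expoP}}=1$.

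For (ii), the extra ingredient is that on a domain of the round sphere, $\nablaslash^2\nu=0$ forces $\nablaslash\nu=0$. I will show this with the Ricci identity. If the Hessian vanishes, then $0=\nablaslash_a\nablaslash_b\nablaslash_c\nu-\nablaslash_b\nablaslash_a\nablaslash_c\nu=-R_{abc}{}^d\nablaslash_d\nu$. With $R_{abcd}=\gslash_{ac}\gslash_{bd}-\gslash_{ad}\gslash_{bc}$, contracting over $a,c$ gives $(n-2)\nablaslash_b\nu=0$, and $n\ge3$. The contradiction argument then proceeds as before. Take $\|\nablaslash\nu_k\|_{L^2_{-\expoP}}=1$ and $\|\nablaslash^2\nu_k\|_{L^2_{-\expoP}}\to0$, and normalize $\la\nu_k\ra=0$, which is harmless since only derivatives appear. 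By (i), $\nu_k$ is bounded in $H^2_{-\expoP}$. Compactness gives $\nablaslash\nu_k\to\nablaslash\nu_0$ strongly, with $\nablaslash^2\nu_0=0$, hence $\nablaslash\nu_0=0$, contradicting the normalization. Notably, no average term is needed here.

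For (iii), I first check that $\matG$ is positive definite. For $v\in\RR^n$ I write
\be
v^t\matG v=\bigl(|v|^2-\la\xh\cdot v\ra^2\bigr)+\bigl(\la(\xh\cdot v)^2\ra-\la\xh\cdot v\ra^2\bigr).
\ee
The second bracket is nonnegative by Jensen. The first is positive for $v\ne0$, because $|\xh\cdot v|\le|v|$ with equality only on a null set for the nonatomic measure $d\chi$. Since the extra term in \eqref{equa-section5-fixed-corrected-poincare} is nonnegative, (ii) shows that the inequality holds with $(\CPoinTwo)^2\le (C_{\mathbf{Hess}}^\lambda)^2$. The optimal constant is then the finite supremum of $\bigl(\|\nablaslash\nu\|^2-\la\nablaslash\nu\ra^t\matG\la\nablaslash\nu\ra\bigr)/\|\nablaslash^2\nu\|^2$. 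This supremum is well defined because $\nablaslash^2\nu=0$ forces $\nablaslash\nu=0$ by the argument in (ii).
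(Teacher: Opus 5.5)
Your treatment of (i), (ii) and (iii) follows the paper's argument. It uses contradiction via the two weighted compact embeddings. It uses the curvature obstruction to a nonzero parallel gradient on the round sphere; your contracted Ricci identity $(n-2)\nablaslash_b\nu=0$ is the same observation as the paper's $\Riem(X,w)w=|w|^2X$. And it uses positivity of $\matG$ together with $\CPoinTwo\le C^\lambda_{\mathbf{Hess}}$. The only place you go beyond the paper is the compactness step, which the paper simply attributes to a one-dimensional weighted compactness theorem. That step, as you wrote it, does not work.

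\textbf{The collar estimate.} Your displayed collar inequality is false. For $f\equiv1$ its left side is $\delta^{2\expoP+1}/(2\expoP+1)$, while its right side is of order $\delta^{2\expoP+2}$. The cutoff computation you describe actually produces $\delta^{-1}\int_\delta^{2\delta}t^{2\expoP+1}f^2\,dt\simeq\int_\delta^{2\delta}t^{2\expoP}f^2\,dt$, with no factor $\delta$ in front. The corrected right side therefore contains the mass on $(\delta,2\delta)$, which is merely bounded along your sequence. So the uniform $O(\delta)$ smallness of the collar mass does not follow, and that smallness is exactly what your diagonal argument needs.

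\textbf{A repair.} Work on a.e.\ normal segment, with $0<t<\delta\le\delta_0/2$ and $s\in(\delta_0/2,\delta_0)$.
\begin{itemize}
\item[1.] Write $f(t)=f(s)-\int_t^s\partial_\tau f\,d\tau$.
\item[2.] Apply Cauchy--Schwarz with $\int_t^\infty\tau^{-2\expoP}d\tau=t^{1-2\expoP}/(2\expoP-1)$, which is finite since $2\expoP>1$.
\item[3.] Average in $s$ and integrate against $t^{2\expoP}dt$ over $(0,\delta)$. This gives
\[
\int_0^\delta t^{2\expoP}f^2\,dt\lesssim\Bigl(\frac{\delta}{\delta_0}\Bigr)^{2\expoP+1}\int_{\delta_0/2}^{\delta_0}\tau^{2\expoP}f^2\,d\tau+\delta^2\int_0^{\delta_0}\tau^{2\expoP}|\partial_\tau f|^2\,d\tau .
\]
\item[4.] Integrate over $\partial\Lambda$. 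The collar Jacobian and $\lambda^{2\expoP}/t^{2\expoP}$ are bounded above and below, so the collar mass is $O(\delta^2)$ uniformly on bounded subsets of $H^1_{-\expoP}(\Lambda)$.
\end{itemize}
Alternatively, set $g(\delta)=\int_0^\delta t^{2\expoP}f^2\,dt$. The corrected cutoff estimate can be rearranged into $g(\delta)\le\theta\,g(2\delta)+C\delta^2\|\partial_tf\|^2$ with $\theta<1$, and iterating over dyadic scales gives the same uniform smallness.

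With either fix the rest of your compactness argument goes through, including its application to $\nablaslash\nu$. The proposal is then complete.
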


Clearly, we have
\be
\CPoinTwo
\leq C_{\mathbf{Hess}}^\lambda.
\ee
Proposition~\ref{prop:three-weighted-poincare} is a fixed-domain result only.  The constant $C_{\mathbf{Hess}}^\lambda$ generally deteriorates when $\Lambda$ shrinks, because the limiting Euclidean Hessian has the affine functions in its kernel.  The point of the corrected inequality is that, after the affine modes are recorded by $\langle\nablaslash\nu\rangle^t\matG\langle\nablaslash\nu\rangle$, its constant can be bounded by a multiple of the spherical-domain diameter; this is proven next. 

\begin{proof}
We first record the compactness property used twice below.  Under the assumptions on $\lambda$, one has
\bel{equa-section5-weighted-rellich}
H^1_{- \expoP}(\Lambda)\Subset L^2_{- \expoP}(\Lambda),
\qquad
H^2_{- \expoP}(\Lambda)\Subset H^1_{- \expoP}(\Lambda).
\ee
Indeed, this is the usual Rellich theorem on every subset compactly contained in $\Lambda$.  In a boundary chart $(z,t)$, where $t$ is the distance to the boundary, the density is uniformly comparable, together with its smooth tangential factors, to $t^{2\expoP}dz\,dt$.  The one-dimensional weighted compactness theorem in the $t$ variable, followed by the ordinary tangential Rellich theorem, gives~\eqref{equa-section5-weighted-rellich} on the chart.  The comparison constants are uniform on a finite boundary atlas, and a partition of unity completes the proof.

\bse
To prove item~\textup{(i)}, suppose that~\eqref{equa-section5-fixed-first-poincare} fails.  After subtracting the averages and normalizing, there are functions $\nu_j$ such that
\be
\langle\nu_j\rangle=0,
\qquad
\|\nu_j\|_{L^2_{- \expoP}}=1,
\qquad
\|\nablaslash\nu_j\|_{L^2_{- \expoP}}\longrightarrow0.
\ee
By the first compact embedding in~\eqref{equa-section5-weighted-rellich}, a subsequence converges strongly in $L^2_{- \expoP}$ to a function $\nu$.  We have $\nablaslash\nu=0$ weakly, so connectedness implies that $\nu$ is constant.  Its weighted average is zero, hence $\nu=0$, in contradiction with the normalization.

For item~\textup{(ii)}, suppose that~\eqref{equa-section5-fixed-hessian-poincare} fails.  We may subtract $\langle\nu_j\rangle$ and normalize a sequence so that
\be
\|\nablaslash\nu_j\|_{L^2_{- \expoP}}=1,
\qquad
\|\nablaslash^2\nu_j\|_{L^2_{- \expoP}}\longrightarrow0.
\ee
Item~\textup{(i)} bounds $\nu_j$ in $H^1_{- \expoP}$, while the second embedding in~\eqref{equa-section5-weighted-rellich} gives, after extraction, strong convergence in $H^1_{- \expoP}$ to a function $\nu$ satisfying
\be
\nablaslash^2\nu=0,
\qquad
\|\nablaslash\nu\|_{L^2_{- \expoP}}=1.
\ee
\ese
Set $\wvect=\nablaslash\nu$.  The identity $\nablaslash \wvect=0$ means that $\wvect$ is parallel.  On the unit sphere,
\be
\Riem(X,Y)\wvect=\langle Y,\wvect\rangle X- \langle X,\wvect\rangle Y.
\ee
At a point at which $\wvect\neq0$, choose a non-zero vector $X$ orthogonal to $\wvect$ and set $Y=\wvect$.  Parallelness gives $\Riem(X,\wvect)\wvect=0$, whereas the preceding curvature identity gives $\Riem(X,\wvect)\wvect=|\wvect|^2X\neq0$.  Therefore $\wvect=0$ throughout $\Lambda$, which contradicts the normalization.

Finally, for $v\in\RR^n$ we have (with $\matV$ given in~\eqref{equa-matrix-STU})
\be
v^t\matG v
=|v|^2 -(v\cdot\langle\xh\rangle )^2+v^t\matV v, \qquad \matV = S^2 - \langle\xh\rangle\otimes\langle\xh\rangle.
\ee
The covariance matrix $\matV$ is non-negative and $|\langle\xh\rangle |<1$ because $\Lambda$ is open.  Hence $\matG$ is positive definite.  Adding the non-negative correction term in~\eqref{equa-section5-fixed-corrected-poincare} to the right-hand side of~\eqref{equa-section5-fixed-hessian-poincare} proves item~\textup{(iii)} with $C_{\Poin2}^\lambda \leq C_{\mathbf{Hess}}^\lambda$.
\end{proof}


\subsection{The small-domain limit of Poincar\'e constants}
\label{sectionN3- 2 -complete}

We seek here to prove (iii) in \autoref{prop:Poincare-scaling}.  For the reader's convenience, we present the geometric set-up again here.

Let $d=n-1$.  Fix a bounded, connected domain $\breve\Lambda\Subset B_1(0)\subset\RR^d$ with smooth boundary and a smooth defining function $\breve\lambda$ which is positive in $\breve\Lambda$, vanishes linearly on $\del\breve\Lambda$, and is comparable there to the Euclidean distance to the boundary. We use
\bel{equa-scaled-base-measure}
d\breve\mu
\coloneqq
\frac{\breve\lambda^{2\expoP}dy}
{\int_{\breve\Lambda}\breve\lambda^{2\expoP}dy},
\qquad
\langle f\rangle_0\coloneqq\int_{\breve\Lambda}f\,d\breve\mu,
\ee
and denote by $\breve C_\Poin$ the optimal Euclidean weighted Poincar\'e constant
\bel{equa-scaled-base-poincare}
\|f- \langle f\rangle_0\|_{L^2(d\breve\mu)}
\leq\breve C_\Poin\|Df\|_{L^2(d\breve\mu)}
\ee
with $D$ the coordinate derivatives on~$\RR^d$.
For $0<\alpha\leq1$, define a family of spherical domains by
\bel{equa-scaled-cap-chart}
\Phi_\alpha(y)
\coloneqq\bigl(\alpha y,\sqrt{1- \alpha^2|y|^2}\bigr),
\qquad
\Lambda_{(\alpha)}=\Phi_\alpha(\breve\Lambda),
\qquad
\lambda_{(\alpha)}\circ\Phi_\alpha =\breve\lambda.
\ee
We also consider averages over $(\Lambda_{(\alpha)},\lambda_{(\alpha)})$, namely
\be
\la f\ra_\alpha = \frac{\int_{\Lambda_{(\alpha)}} f \lambda_{(\alpha)}^{2\expoP} d\xh}{\int_{\Lambda_{(\alpha)}} \lambda_{(\alpha)}^{2\expoP} d\xh} .
\ee
The first and second moments of coordinates~$\xh_k$ allow us to write structure matrices analogous to~\eqref{equa-matrix-STU},
\bel{equa-matrix-STU-alpha}
\aligned
S_\alpha^2 & \coloneqq (\la\xh_k \xh_l \ra_\alpha) , \qquad
T_\alpha \coloneqq \Id + S_\alpha^2 ,
\qquad 
U_\alpha \coloneqq \Id - S_\alpha^2 ,
\\
\matV_\alpha & \coloneqq S_\alpha^2 - \la\xh\ra_\alpha \otimes \la\xh\ra_\alpha
= (\la\xh_k \xh_l\ra_\alpha - \la\xh_k\ra_\alpha\la\xh_l\ra_\alpha) ,
\qquad
G_\alpha \coloneqq U_\alpha + 2 \matV_\alpha ,
\endaligned
\ee
which are all positive definite.
We are interested in the $\alpha\to 0$ limit of Poincaré-type constants.

\begin{proposition}[The small-domain limit of Poincar\'e constants]
\label{lem:Poincare-scaling-complete}
For the family~\eqref{equa-scaled-cap-chart}, one has the following three properties.
\bei
\item[(i)] The first-order weighted Poincar\'e constant and the spherical diameter satisfy
\bel{equa-scaled-first-poincare}
\aligned
C_\Poin^{\lambda_{(\alpha)}}
&=\alpha\,\breve C_\Poin\bigl(1+\Obig(\alpha^2)\bigr),
\\
\diam(\Lambda_{(\alpha)})
&=\alpha\,\diam_{\mathrm E}(\breve\Lambda)
\bigl(1+\Obig(\alpha^2)\bigr).
\endaligned
\ee

\item[(ii)] For every fixed $\alpha>0$, the Hessian constant in
\bel{equa-fixed-hessian-poincare}
\|\nablaslash\nu\|_{L^2_{- \expoP}(\Lambda_{(\alpha)})}
\leq C_{\mathbf{Hess}}^{\lambda_{(\alpha)}}
\|\nablaslash^2\nu\|_{L^2_{- \expoP}(\Lambda_{(\alpha)})}
\ee 
is finite but $C_{\mathbf{Hess}}^{\lambda_{(\alpha)}}\to +\infty$ as $\alpha\to0$.

\item[(iii)]

The optimal constant in
\bel{equa-corrected-second-poincare}
\|\nablaslash\nu\|_{L^2_{- \expoP}}^2
\leq \bigl(C_{\Poin2}^{\lambda_{(\alpha)}}\bigr)^2
\|\nablaslash^2\nu\|_{L^2_{- \expoP}}^2
+\langle\nablaslash\nu\rangle^t\matG_\alpha
\langle\nablaslash\nu\rangle
\ee
satisfies
\bel{equa-corrected-second-scaling}
C_{\Poin2}^{\lambda_{(\alpha)}}
\lesssim\alpha.
\ee
All implicit constants depend only on $n$, $\expoP$, and the reference localization $\breve\lambda$.
\eei
\end{proposition}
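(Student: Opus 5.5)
The plan is to pull everything back to the reference domain $\breve\Lambda$ through the chart $\Phi_\alpha$ in~\eqref{equa-scaled-cap-chart} and to treat the three items by perturbation in $\alpha$.

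\textbf{Pullback and items (i), (ii).} A direct computation gives
\[
\Phi_\alpha^*\gslash=\alpha^2\Bigl(\delta+\frac{\alpha^2(y\cdot dy)^2}{1-\alpha^2|y|^2}\Bigr),
\qquad
d\xh=\alpha^d\bigl(1+\Obig(\alpha^2)\bigr)\,dy ,
\]
with errors uniform on $\overline{\breve\Lambda}\Subset B_1(0)$. Hence the normalized measures satisfy $d\chi/\aire=(1+\Obig(\alpha^2))\,d\breve\mu$, and $|\nablaslash\nu|^2_{\gslash}=\alpha^{-2}|Df|^2(1+\Obig(\alpha^2))$ for $f=\nu\circ\Phi_\alpha$. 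The Rayleigh quotient defining $C_\Poin^{\lambda_{(\alpha)}}$ is therefore $\alpha^2$ times the Euclidean quotient up to a factor $1+\Obig(\alpha^2)$. The averages also differ by relative $\Obig(\alpha^2)$ errors, which is harmless because the Poincaré constant is a minimum over constants. This gives the first line of~\eqref{equa-scaled-first-poincare}. The diameter statement follows from the same metric comparison, since geodesics stay in a cap of radius $\Obig(\alpha)$.

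For (ii), finiteness is \autoref{prop:three-weighted-poincare}(ii). For the blow-up I test with $\nu=\xh_1$, which satisfies $\nablaslash^2\xh_1=-\xh_1\gslash$. Then $|\nablaslash^2\nu|\le\sqrt d\,|\xh_1|\lesssim\alpha$, while $|\nablaslash\xh_1|^2=1-\xh_1^2\ge1-\alpha^2$. So $C_{\mathbf{Hess}}^{\lambda_{(\alpha)}}\gtrsim\alpha^{-1}$.

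\textbf{Item (iii).} I view $W_k\coloneqq\nablaslash_k\nu$, $k=1,\dots,n$, as $n$ ambient scalar functions. Because the sphere is umbilic, their tangential derivatives are $\nablaslash_aW_k=(\nablaslash^2\nu)_{ak}-\xh_k\nablaslash_a\nu$. Since $\xh\cdot W=0$, this gives the pointwise identity $\sum_k|\nablaslash W_k|^2=|\nablaslash^2\nu|^2+|\nablaslash\nu|^2$. Applying (i) componentwise with $a\coloneqq\la W\ra$ yields
\[
\|\nablaslash\nu\|^2-|a|^2\le (C_\Poin^{\lambda_{(\alpha)}})^2\bigl(\|\nablaslash^2\nu\|^2+\|\nablaslash\nu\|^2\bigr).
\]
Next, $a^t\matG_\alpha a=|a|^2+a^t\matV_\alpha a-(a\cdot\la\xh\ra_\alpha)^2$. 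The subtracted term is small: since $\xh\cdot W=0$,
\[
a\cdot\la\xh\ra_\alpha=\la W\cdot(\la\xh\ra_\alpha-\xh)\ra_\alpha,
\qquad\text{so}\qquad
(a\cdot\la\xh\ra_\alpha)^2\le\|\nablaslash\nu\|^2\,\operatorname{tr}\matV_\alpha,
\]
with $\operatorname{tr}\matV_\alpha=\Obig(\alpha^2)$. Combining these gives
\[
(1-\delta_\alpha)\|\nablaslash\nu\|^2\le C\alpha^2\|\nablaslash^2\nu\|^2+a^t\matG_\alpha a,
\qquad \delta_\alpha=\Obig(\alpha^2).
\]

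\textbf{Main obstacle.} The inequality~\eqref{equa-corrected-second-poincare} allows no constant in front of the $\matG_\alpha$ term, so the leftover $\delta_\alpha\|\nablaslash\nu\|^2$ cannot simply be divided out. It also cannot be absorbed by the Hessian for near-linear modes, where $\|\nablaslash^2\nu\|\lesssim\alpha\|\nablaslash\nu\|$. To handle this I will split $\nu=\ell+\mu$ with $\ell=L\cdot\xh$ an ambient linear function. I choose $L$ so that $\mu$ has the $W$-moments making the cross terms vanish, for instance $\la\nablaslash\mu\ra_\alpha=0$.

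For $\ell$ I compute exactly: $\la|\nablaslash\ell|^2\ra=|L|^2-L^tS_\alpha^2L$ and $\la\nablaslash\ell\ra=L-S_\alpha^2L$-type moment expressions, and $\|\nablaslash^2\ell\|^2=d\,\la(L\cdot\xh)^2\ra$. I expect $\matG_\alpha=U_\alpha+2\matV_\alpha$ to be designed precisely so that the linear modes satisfy~\eqref{equa-corrected-second-poincare} with room of order $\alpha^2$, using the Hessian term $d\la(L\cdot\xh)^2\ra$. The paper's remark that on the round sphere degree-one modes satisfy it with constant $1/n$ supports this.

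For $\mu$, with vanishing average gradient, the estimate above gives $\|\nablaslash\mu\|\lesssim\alpha\|\nablaslash^2\mu\|$. The remaining cross terms are then bounded by Cauchy--Schwarz and absorbed using the $\Obig(\alpha^2)$ slack from the linear part. Making this bookkeeping close, so that the coefficient of $\la\nablaslash\nu\ra^t\matG_\alpha\la\nablaslash\nu\ra$ stays exactly $1$, is the step I expect to be delicate.
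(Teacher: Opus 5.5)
Your treatment of (i) and (ii) is fine, and so is the first half of (iii). The ambient componentwise Poincaré argument correctly gives $\|\nablaslash\mu\|^2\le (C_\Poin^{\lambda_{(\alpha)}})^2\bigl(1-(C_\Poin^{\lambda_{(\alpha)}})^2\bigr)^{-1}\|\nablaslash^2\mu\|^2$ whenever $\la\nablaslash\mu\ra_\alpha=0$. Your split $\nu=\ell+\mu$ with $L=U_\alpha^{-1}\la\nablaslash\nu\ra_\alpha$ is exactly the paper's affine separation, for which $\mathcal Q_\alpha[\nu]=\|\nablaslash\mu\|^2+L^tR_\alpha L$ with $R_\alpha=U_\alpha-U_\alpha\matG_\alpha U_\alpha\geq0$.

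The gap is in the step you call bookkeeping. You must show $\|\nablaslash\mu\|^2+L^tR_\alpha L\lesssim\alpha^2\|\nablaslash^2\mu-\ell\,\gslash\|^2$, where the right-hand side is the Hessian of $\nu$, not of the two pieces separately. Your estimates only bound the left-hand side by $\alpha^2(\|\nablaslash^2\mu\|^2+\|\nablaslash^2\ell\|^2)$. The cross term $-2\la\ell\,\Deltaslash\mu\ra_\alpha$ in $\|\nablaslash^2\nu\|^2$ is of the same order as the diagonal terms, and Cauchy--Schwarz allows it to cancel them completely. For your route to close you would need a strict angle bound $|\la\ell\,\Deltaslash\mu\ra_\alpha|\le(1-\eta)\sqrt d\,\|\ell\|\,\|\nablaslash^2\mu\|$ with $\eta>0$ independent of $\alpha$. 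There is also no $\Obig(\alpha^2)$ slack to absorb an error. Writing $L=(L^\top,L^\perp)\in\RR^d\oplus\RR e_n$, both $L^tR_\alpha L$ and $\alpha^2\|\nablaslash^2\ell\|^2=\alpha^2 d\,L^tS_\alpha^2L$ are comparable to $\alpha^4|L^\top|^2+\alpha^2|L^\perp|^2$.

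That angle bound is a rigidity statement, and it is the content of Step~6 of the paper's derivation. Set $\mu\circ\Phi_\alpha=\alpha h$, $\beta=\alpha^2L^\top$, $\gamma=\alpha L^\perp$. Up to lower-order errors, $\alpha^2\|\nablaslash^2\nu\|^2\approx\|D^2h-(\gamma+\beta\cdot y)I_d\|^2_{L^2(d\breve\mu)}$, so near-cancellation means $D^2h\approx(\gamma+\beta\cdot y)I_d$. Excluding this needs two ingredients your proposal does not contain.
\begin{itemize}
\item Since $d=n-1\geq2$, the equation $\partial_{ab}h=(\gamma+\beta\cdot y)\delta_{ab}$ forces $\beta=0$ and $Dh=\gamma y+a$, by commuting third derivatives.
\item Both kinds of orthogonality contained in $\la\nablaslash\mu\ra_\alpha=0$ are needed. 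The tangential components give $a=-\gamma m$. The normal component, $\la y\cdot Dh\ra_0\to0$, gives $\gamma\operatorname{tr}\mathsf C=0$. Here $m=\la y\ra_0$ and $\mathsf C=\la y\otimes y\ra_0-m\otimes m$.
\end{itemize}
The normal condition is essential. The function $h=\tfrac\gamma2|y|^2-\gamma m\cdot y$ satisfies $\la Dh\ra_0=0$, yet the Hessian of $\mu=\alpha h\circ\Phi_\alpha^{-1}$ cancels, to leading order, that of $\ell=\gamma\alpha^{-1}\xh_n$. It is excluded only because $\la y\cdot Dh\ra_0=\gamma\operatorname{tr}\mathsf C\neq0$.

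The paper closes this by contradiction and compactness. It normalizes $\mathcal Q_{\alpha_j}[\nu_j]=1$ with $\alpha_j\|\nablaslash^2\nu_j\|\to0$, passes to a weak limit, and derives the limiting equation. It concludes $\beta_j,\gamma_j\to0$, and then $Dh_j\to0$. A direct proof along your lines would need a quantitative substitute, for instance controlling $\Deltaslash\mu$ modulo constants through the divergence of the traceless Hessian together with both orthogonality conditions. As written, item (iii) is not established.
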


The proposition gives the three smallness estimates used in item~\textup{(iii)} of Proposition~\ref{prop:Poincare-scaling}:
\bel{equa-section5-combined-small-aperture}
\diam(\Lambda_{(\alpha)})
+C_\Poin^{\lambda_{(\alpha)}}
+C_{\Poin2}^{\lambda_{(\alpha)}}
\lesssim\alpha.
\ee
The use of $\matG_\alpha$ is essential here.  It is intrinsic, depends only on the first two weighted moments of the spherical domain, and converges quadratically to a projector orthogonal to~$\la\xh\ra_\alpha$. Formula~\eqref{equa-G-exact-remainder}, derived in the proof below, gives instead an exact non-negative decomposition into a fluctuation term and a controlled affine remainder.


\subsection{Derivation of the small-domain limit}
\label{section=5-derivation}

We define the first and second moments of the reference pair by
\bel{equa-base-first-second-moments}
m\coloneqq\langle y\rangle_0,
\qquad
M\coloneqq\langle y\otimes y\rangle_0 .
\ee

\vskip.2cm

\noindent{\it 1. Reference Poincar\'e inequality.}
The weighted compactness argument in Proposition~\ref{prop:three-weighted-poincare}, applied to $(\breve\Lambda,\breve\lambda)$, proves that $\breve C_\Poin<+\infty$.  More explicitly, a sequence with zero $d\breve\mu$-average, unit $L^2(d\breve\mu)$ norm, and gradient converging to zero has a strongly convergent subsequence.  Its limit is both constant and of zero average, which contradicts its unit norm.

\vskip.2cm

\noindent{\it 2. Pullback of the metric and proof of item~\textup{(i)}.}
Write $s_\alpha(y)=\sqrt{1- \alpha^2|y|^2}$. A direct calculation gives
\bel{equa-scaled-metric-formulas}
\aligned
(\Phi_\alpha^*\gslash)_{ab}
&=\alpha^2\left(\delta_{ab}
+\frac{\alpha^2y_ay_b}{s_\alpha^2}\right),
&
(\Phi_\alpha^*\gslash)^{ab}
&=\alpha^{- 2}(\delta^{ab}- \alpha^2y^ay^b),
\\
\Phi_\alpha^*(d\chi)
&=\alpha^d\breve\lambda^{2\expoP}s_\alpha^{-1}dy,
&
s_\alpha^{-1}
&=1+\Obig(\alpha^2)
\quad\hbox{uniformly on $\breve\Lambda$}.
\endaligned
\ee
After normalization of the measure, the Rayleigh quotient is therefore
\bel{equa-scaled-rayleigh-quotient}
\frac{(C_\Poin^{\lambda_{(\alpha)}})^2}{\alpha^2}
=\sup_{f\not\equiv\mathrm{const}}
\frac{\displaystyle\inf_{c\in\RR}
\int_{\breve\Lambda}|f-c|^2s_\alpha^{-1}d\breve\mu}
{\displaystyle\int_{\breve\Lambda}
(\delta^{ab}- \alpha^2y^ay^b)
\partial_af\partial_bf\,s_\alpha^{-1}d\breve\mu}.
\ee
The numerator and denominator differ from their values at $\alpha =0$ by relative factors $1+\Obig(\alpha^2)$, uniformly in $f$.  Taking the supremum proves the first expansion in~\eqref{equa-scaled-first-poincare}.  Expanding the spherical distance between $\Phi_\alpha(y)$ and $\Phi_\alpha(y')$, uniformly for $y,y'\in\overline{\breve\Lambda}$, gives the diameter expansion.

\vskip.2cm

\noindent{\it 3. Fixed-domain Hessian coercivity.}
For every $\alpha>0$, the pair $(\Lambda_{(\alpha)},\lambda_{(\alpha)})$ satisfies the assumptions of Proposition~\ref{prop:three-weighted-poincare}.  Item~\textup{(ii)} of that proposition proves~\eqref{equa-fixed-hessian-poincare}.  Notice that this argument only gives finiteness for fixed $\alpha$.  We now consider ambient linear functions
\be
\nu_c(\xh) = c^k\xh_k.
\ee
They have
\bel{equa-coordinate-function-identities}
\nablaslash\nu_c=(\Id- \xh\otimes\xh)c,
\qquad
\nablaslash^2\nu_c=- \nu_c\gslash ,
\ee
hence the squared $L^2$~norm of their gradient and Hessian are
\be
\|\nablaslash\nu_c\|_{L^2_{-\expoP}(\Lambda_{(\alpha)})}^2 = c^t U_\alpha c ,
\qquad
\|\nablaslash^2\nu_c\|_{L^2_{-\expoP}(\Lambda_{(\alpha)})}^2 = (n-1) c^t S^2_\alpha c ,
\ee
in terms of the structure matrices $S^2_\alpha=\la\xh\xh\ra_\alpha$ and $U_\alpha = \Id - S^2_\alpha$.
With respect to the splitting $\RR^n=\RR^d\oplus\RR e_n$, the metric and measure expansions yield
\bel{equa-S-small-cap-expansion}
S_\alpha^2
=\begin{pmatrix}
\alpha^2M+\Obig(\alpha^4)&\alpha m+\Obig(\alpha^3)\\
\alpha m^t+\Obig(\alpha^3)&
1- \alpha^2\operatorname{tr}M+\Obig(\alpha^4)
\end{pmatrix}.
\ee
Thus, for $c$ orthogonal to~$e_n$, the gradient squared norm is of order one and the Hessian squared norm of order~$\alpha^2$, leading to a divergent lower bound for the Hessian constant,
\be
C_{\mathbf{Hess}}^{\lambda_{(\alpha)}}\gtrsim \frac{1}{\alpha} , \qquad \alpha \to 0 .
\ee

\vskip.2cm

\noindent{\it 4. Moment expansions and intrinsic correction.}
\bse
Next, we consider $\matG_\alpha = U_\alpha + 2\matV_\alpha$, where $\matV_\alpha = S^2_\alpha - \la\xh\ra_\alpha\otimes\la\xh\ra_\alpha$.  Since both $U_\alpha$ and $\matV_\alpha$ are positive-definite, $\matG_\alpha$~is as well.
As a complement to~\eqref{equa-S-small-cap-expansion}, we expand
\be
\langle\xh\rangle _\alpha
=\left(\alpha m,
1- \frac12\alpha^2\operatorname{tr}M\right)+\Obig(\alpha^3).
\ee
We next compare $\matG_\alpha$ with the orthogonal projection onto the hyperplane perpendicular to~$\langle\xh\rangle _\alpha$, that is,
\be
P_\alpha
= \Id - {\langle\xh\rangle_{(\alpha)} \otimes \langle\xh\rangle_{(\alpha)}
\over \bigl| \langle\xh\rangle_{(\alpha)}\bigr|^2} .
\ee
We have $\operatorname{tr}\matV_\alpha =1-|\langle\xh\rangle _\alpha|^2$, 
which gives the identity
\be
\matG_\alpha-P_\alpha
=\matV_\alpha
+ \bigl( \operatorname{tr}\matV_\alpha \bigr) \, {\langle\xh\rangle_{(\alpha)} \otimes \langle\xh\rangle_{(\alpha)} \over \big| \langle\xh\rangle_{(\alpha)} \bigr|^2}
\ee
from which it is immediate to see
\bel{equa-G-projector-comparison}
\|\matG_\alpha-P_\alpha\|
\lesssim\diam(\Lambda_{(\alpha)})^2
\ee
since $|\langle\xh\rangle _\alpha|$ remains uniformly bounded away from zero for small~$\alpha$.
\ese

\vskip.2cm

\noindent{\it 5. Separation of the affine component.}
\bse
With $\wvect_\alpha[\nu]\coloneqq\langle\nablaslash\nu\rangle_\alpha$, let us set
\bel{equa-G-affine-decomposition}
c_\alpha =U_\alpha^{-1}\wvect_\alpha[\nu],
\qquad
w=\nu-c_\alpha\cdot\xh.
\ee
The coordinate identities give
\be
\langle\nablaslash w\rangle_\alpha=0,
\qquad
\|\nablaslash\nu\|_{L^2_{- \expoP}(\Lambda_{(\alpha)})}^2
=\|\nablaslash w\|_{L^2_{- \expoP}(\Lambda_{(\alpha)})}^2
+c_\alpha^tU_\alpha c_\alpha.
\ee
\ese
Consequently, we have the exact identity
\bel{equa-G-exact-remainder}
\aligned
\mathcal Q_\alpha[\nu]
&\coloneqq
\|\nablaslash\nu\|_{L^2_{- \expoP}(\Lambda_{(\alpha)})}^2
- \wvect_\alpha[\nu]^t\matG_\alpha \wvect_\alpha[\nu]
\\
&=\|\nablaslash w\|_{L^2_{- \expoP}(\Lambda_{(\alpha)})}^2
+c_\alpha^tR_\alpha c_\alpha,
\qquad
R_\alpha\coloneqq U_\alpha-U_\alpha \matG_\alpha U_\alpha
\\
&=\|\nablaslash w\|_{L^2_{- \expoP}(\Lambda_{(\alpha)})}^2
+c_\alpha^t\Bigl(
S_\alpha^4U_\alpha
+2(U_\alpha\langle\xh\rangle _\alpha)
\otimes(U_\alpha\langle\xh\rangle _\alpha)
\Bigr)c_\alpha.
\endaligned
\ee
Here we used $U_\alpha =\Id-S_\alpha^2$, which commutes with $S_\alpha^2$.  In particular, $R_\alpha$ and $\mathcal Q_\alpha$ are non-negative.

\bse
Let
\be
\mathsf C=M-m\otimes m
\ee
be the covariance matrix of the reference domain.  It is positive definite because $\breve\Lambda$ is open.  Write $c=(c^\top,c^\perp)\in\RR^d\oplus\RR e_n$ and introduce
\be
\beta =\alpha^2c^\top,
\qquad
\gamma =\alpha c^\perp.
\ee
Using~\eqref{equa-S-small-cap-expansion} in~\eqref{equa-G-exact-remainder}, we obtain
\bel{equa-G-remainder-scaling}
c^tR_\alpha c
=|\mathsf C\beta|^2
+\operatorname{tr}(\mathsf C)(m\cdot\beta+\gamma)^2
+o(1)(|\beta|^2+|\gamma|^2).
\ee
The remainder is uniform on the unit sphere in the $(\beta,\gamma)$ variables.  Hence, for all sufficiently small $\alpha$,
\bel{equa-section5-R-two-sided}
c_0\bigl(\alpha^4|c^\top|^2+\alpha^2|c^\perp|^2\bigr)
\leq c^tR_\alpha c
\leq C_0\bigl(\alpha^4|c^\top|^2+\alpha^2|c^\perp|^2\bigr),
\ee
with constants depending only on the reference pair.
\ese

\vskip.2cm

\noindent{\it 6. Contradiction argument for the corrected estimate.}
Suppose that~\eqref{equa-corrected-second-scaling} fails.  After normalization, there are $\alpha_j\to0$ and functions $\nu_j$ such that
\bel{equa-G-contradiction-normalization}
\mathcal Q_{\alpha_j}[\nu_j]=1,
\qquad
\alpha_j\|\nablaslash^2\nu_j\|_{L^2_{- \expoP}}
\longrightarrow0.
\ee
Define $c_j$ and $w_j$ by~\eqref{equa-G-affine-decomposition}, subtract the average of $w_j$, and write
$w_j\circ\Phi_{\alpha_j}=\alpha_jh_j$. 
Equations~\eqref{equa-G-exact-remainder}, \eqref{equa-scaled-metric-formulas}, and~\eqref{equa-section5-R-two-sided} give
\be
\|Dh_j\|_{L^2(d\breve\mu)}
+|\beta_j|+|\gamma_j|\lesssim1,
\qquad
\beta_j=\alpha_j^2c_j^\top,
\quad
\gamma_j=\alpha_jc_j^\perp.
\ee
The condition $\langle\nablaslash w_j\rangle=0$, tested against the tangential and normal ambient coordinate directions, gives
\bel{equa-G-orthogonality-limits}
\langle Dh_j\rangle_0\longrightarrow0,
\qquad
\langle y\cdot Dh_j\rangle_0\longrightarrow0.
\ee
After extracting a subsequence, we may assume that
\be
\beta_j\to\beta,
\qquad
\gamma_j\to\gamma,
\qquad
h_j\rightharpoonup h
\quad\hbox{in }H^1(d\breve\mu).
\ee
\bse
The pullback Hessian identity reads
\be
\Phi_{\alpha_j}^*(\nablaslash^2\nu_j)_{ab}
=\alpha_j\Bigl(
\partial_{ab}h_j
- \Gamma^c_{\alpha_j,ab}\partial_ch_j
- \alpha_j(c_j\cdot\Phi_{\alpha_j})
(\delta_{ab}+\Obig(\alpha_j^2))
\Bigr),
\ee
where
\be
\Gamma^c_{\alpha,ab}
=\alpha^2y^c\delta_{ab}+\Obig(\alpha^4).
\ee
The second relation in~\eqref{equa-G-contradiction-normalization} therefore implies
\bel{equa-G-Hessian-limit}
D^2h_j-(\gamma_j+\beta_j\cdot y)I_d
\longrightarrow0
\quad\hbox{in }L^2(d\breve\mu).
\ee
The weak limit satisfies
\be
\partial_{ab}h=(\gamma+\beta\cdot y)\delta_{ab}
\quad\hbox{in the sense of distributions}.
\ee
\ese
\bse
Since $d=n-1\geq2$, choose $a\neq c$.  Differentiating the $(a,a)$ equation with respect to $y_c$ and commuting derivatives with the $(a,c)$ equation gives
\be
\beta_c=\partial_c\partial_{aa}h
=\partial_a\partial_{ac}h=0.
\ee
Thus $\beta =0$ and $Dh=\gamma y+a$ for some constant vector~$a$.  Passing to the limit in~\eqref{equa-G-orthogonality-limits}, we obtain
\be
a =- \gamma m,
\qquad
0=\langle y\cdot Dh\rangle_0
=\gamma(\operatorname{tr}M-|m|^2)
=\gamma\operatorname{tr}\mathsf C.
\ee
Since $\mathsf C$ is positive definite, $\gamma =0$ and $a =0$.  Every convergent subsequence of $(\beta_j,\gamma_j)$ therefore has zero limit, so $\beta_j,\gamma_j\to0$.
\ese

Equation~\eqref{equa-G-Hessian-limit} now gives $D^2h_j\to0$.  Applying~\eqref{equa-scaled-base-poincare} componentwise to $Dh_j$, together with the first relation in~\eqref{equa-G-orthogonality-limits}, yields $Dh_j\to0$.  Finally,~\eqref{equa-G-exact-remainder} and~\eqref{equa-section5-R-two-sided} imply
${
\mathcal Q_{\alpha_j}[\nu_j] \to 0}$ 
contradicting~\eqref{equa-G-contradiction-normalization}.  This proves~\eqref{equa-corrected-second-scaling} for all sufficiently small $\alpha$.  On every compact interval $\alpha_0\leq\alpha\leq1$, the pulled-back metrics and measures form a uniformly equivalent smooth family; the fixed-domain argument in Proposition~\ref{prop:three-weighted-poincare} then gives a uniform corrected constant.  Enlarging the implicit constant completes the proof for the whole range $0<\alpha\leq1$.


\compactpart{Decay structure toward the sphere at infinity}

\section{Shell stability for the localized Hamiltonian operator}
\label{section-6}

\subsection{Organization of the remainder of this paper}

There remains to introduce and establish shell stability conditions.  In \autoref{section-6}, we define the Hamiltonian shell functional and dissipation, and control fluctuation operators in terms of $\diam(\Lambda)$, $C_\Poin$, and $C_{\Poin2}$.  Provided these geometric constants are sufficiently small, this reduces shell stability to a semi-coercivity property of the dissipation.  In \autoref{section-8}, we derive the functionals explicitly and prove that its parameter can be chosen to ensure the necessary semi-coercivity.  In \autoref{section-7}, we formulate momentum shell stability and estimate the momentum fluctuation operator, using the weighted Korn and Korn--Poincar\'e inequalities already established in \autoref{section-4}.  Finally, \autoref{section-9} combines these inequalities with the weighted Hardy estimate to prove semi-coercivity of the momentum dissipation.  These four sections complete the proof of our main \autoref{art2 -thm:informal-suff-stab}, which confirms that, in small domains and with $p$~close to harmonic, the analytic hypotheses required by the optimal localization theorem for the Einstein constraints hold.

We retain the notation introduced in \eqref{equa-defineG} and~\eqref{equa-matrix-STU}: in short, these are five positive-definite matrices
\be
S^2=\la\xh\xh\ra , \quad
T=\Id+S^2 , \quad
U=\Id-S^2 , \quad
\matV=S^2-\la\xh\ra\la\xh\ra , \quad
\matG=U+2\matV .
\ee


\subsection{Two identities} 

\subsubsection{Shell functional}

We are interested in the radial decay of general solutions $u\colon\Omega_R \to \RR$ to 
\bel{equa-solutionH} 
\notreH^{\lambda}[u] = E, 
\ee
where $E\colon\Omega_R \to \RR$ is a given scalar field. Recall that the decomposition~\eqref{equa-key-decompose-H-repeat} separates the fourth-order operator into the purely radial fourth-order part $\Arr$, the mixed radial--spherical part $\Ars^\lambda$, and the purely spherical fourth-order part $\ssA^\lambda$. Let us introduce the following quadratic functional,
referred to as the \emph{Hamiltonian shell functional}, given by
\bel{eq-shell-func}
\aligned
\Phi^\notreH[u] 
& = \frac{1}{2(n-1)} \fint_{\Lambda_r} \Bigl(
(n-1)\bigl(\vartheta^2 u + a_{n,p} \vartheta u - \cstun u\bigr)^2
+ (n- 2) ( \Deltaslash u)^2
+ \bigl|\nablaslash^2 u\bigr|^2 \\
& \qquad\qquad
+ 2 (1+a_{n,p}+\cstun) \,  |\nablaslash u|^2
- 2 (c_{n,p} + (n- 2)\cstun) u \Deltaslash u
+ \cstdeux \, u^2
\Bigr) \, d\chi .
\endaligned
\ee
This form is derived in \autoref{section-8} starting from a general sum of squares Ansatz with $13$ free parameters and imposing cancellations that enable the present discussion to go through.

For sufficiently large $\cstdeux\geq 0$, the integrand is manifestly non-negative hence the functional is non-negative.
Specific values of $\cstun, \cstdeux$ are selected in \autoref{section-8}, which are uniform in $p\in(p_n^\flat,n-2)$ and are suitable for the class of domains with small aperture.  Other values are selected in the proof of stability of the isotropic localization function $\lambda\equiv 1$ in \cite[Theorem~4.10]{LL-optimal-main}.
We thus keep $\cstun,\cstdeux$ as free parameters at this stage.

The shell functional obeys a crucial dissipation property\footnote{The word ``dissipation'' is used here in a loose sense.} obtained by multiplying~\eqref{equa-solutionH} by
\bel{test-function-theta2u}
{1 \over n-1} r^4 \bigl( \vartheta^2 u + a_{n,p} \vartheta u - \cstun u \bigr) .
\ee

\begin{definition}
\label{def-Hamil-shell}
Consider the Hamiltonian operator. 
The second-order radial differential identity satisfied by the shell functional~\eqref{eq-shell-func}, 
derived in \cite{LL-optimal-main} for any solution $u\colon\Omega_R \to \RR$ to~\eqref{equa-solutionH}, is called the (Hamiltonian) \underline{shell functional identity}  and takes the form
\bel{main-func-identity}
- (\vartheta + a_{n,p}) (\vartheta + 2a_{n,p}) \Phi^\notreH[u]
+ \Chi^\notreH[u] = \Mu^\notreH[u,E]. 
\ee 
\end{definition}

\bse
\label{equa-condition-monotone}
Detailed expressions of the various functionals, which play no role in the present section, are postponed to the next section.
\bei
\item The \emph{radial differential operator} $(\vartheta + a_{n,p}) (\vartheta + 2a_{n,p})$ arises when seeking to ``bridge'' the variational decay rate $r^{-a_{n,p}/2}$ and the harmonic decay rate $r^{-a_{n,p}}$. 

\item The \emph{bare dissipation functional} $\Chi^\notreH$ is an integral functional in the variables $\vartheta^j \nablaslash^k u$ for $j+k\leq 4$ with $j \leq 3$ and $k \leq 2$. It features products of second-order and fourth-order derivatives of~$u$ (and no quadratic term in these latter derivatives) hence cannot enjoy positivity properties.

\item To deal with these linear terms, we introduce the two (shifted) \emph{dissipation functionals} 
\bel{equa-310c} 
\Psi^\notreH_{\beta}
\coloneqq \Chi^\notreH - (\vartheta +\beta) \Upsilon^\notreH, 
\qquad \beta \in \{ a_{n,p}, 2 a_{n,p}\}, 
\ee
in which we have subtracted a \emph{radial integration functional} $\Upsilon^\notreH[u]$. 
\autoref{art2 -lema-gammagamma} will state a semi-coercivity property of~$\Psi^\notreH_\beta[u]$.

\item The \emph{source functional} $\Mu^\notreH$ is the negative of the weighted product of the factor in~\eqref{test-function-theta2u} with the source~$E$, namely 
\bel{Psi-gendef} 
\Mu^\notreH[u,E] \coloneqq 
\frac{1}{n-1} \fint_{\Lambda_{r}}  \bigl(- \vartheta ( \vartheta + a_{n,p}) u  + \cstun u \bigr) \, r^4 E \, d\chi.  
\ee 
\eei
\ese


\subsubsection{Shell average}

The monotonicity and coercivity properties will typically hold only \emph{modulo averages}, and it is essential to consider also the radial evolution of the average of the solution. 
By contracting the Hamiltonian equation~\eqref{equa-solutionH} with an element of the co-kernel (namely $1$) and any non-trivial element of the kernel of the harmonic Hamiltonian operator, 
we find  
a fourth-order system of two {\it coupled} differential equations satisfied by the averages $\la u\ra$ and $\la\Deltaslash u\ra$, which can be integrated twice. Specifically, using the notation 
\be
\vartheta = r \, \del_r
\ee
and performing a radial integration twice, we find a second-order equation (cf.~ \cite{LL-optimal-main}). 

\begin{definition}
\label{def-shellH}
Consider the Hamiltonian operator. 
The identity obtained in~\cite{LL-optimal-main} by contracting~\eqref{equa-solutionH} with the relevant kernel and cokernel elements and then integrating the resulting radial system twice is called the (Hamiltonian) \underline{shell average identity} and takes the form
\bel{equa-defb-b-b}
\bigl( - \bnotreH_{1} \vartheta(\vartheta+a_{n,p}) + \bnotreH_{0} \bigr) \, \la u\ra 
= \bigl( -(n- 2) \vartheta + {c_{n,p} / a_{n,p}} \bigr) \Kappa^\notreH[\ut]
+ \Nu^\notreH[E] + C^u r^{-a_{n,p}}, 
\ee
where the constants $\bnotreH_{1}$ and $\bnotreH_{0}$ can be computed explicitly.
\end{definition}

 In particular, the radial stability condition can be reformulated as 
\bel{equa-b2 -positive}
\bnotreH_{1} \, \bnotreH_{0}>0.  
\ee
We distinguish the following objects.

\bei 

\item The \emph{fluctuation operator} $\Kappa^\notreH$ is a linear functional consisting of an integral over $\Lambda_{r}$.   
Interestingly, this operator is bilinear in the \emph{fluctuations} of a solution $u$, defined as 
\bel{equa-tildenotation}
\ut \coloneqq u - \la u \ra ,
\ee
and in the fluctuations $\nut^\normal$ of~$\nu^\normal$.
Thanks to a favorable bounds on~$\nut^\normal$, $\Kappa^\notreH$ can be thought of as a {\it lower-order term.}
For instance, it vanishes identically when $\Lambda$ is the whole sphere and $\lambda$ is taken to be constant.

\item The \emph{source term} $\Nu^\notreH[E]$ is an integral operator acting on the function $E$. 
The constant $C^u$ in the harmonic term depends on the solution $u$.

\eei   
  

\subsection{Shell stability}
\label{section=6.3}

\subsubsection{Main definition}

With a slight abuse of notation we introduce the norm 
\bel{equa-normsH}
\bigl( \norm{u}^\notreH \bigr)^2
\coloneqq \| \vartheta^2 u\|^2_{L^2_{- \expoP}(\Lambda_{r})} + \| \vartheta u\|^2_{H^1_{- \expoP}(\Lambda_{r})} + \| u\|^2_{H^2_{- \expoP}(\Lambda_{r})} 
\ee
of $u$ and its radial derivatives on each spherical shell $\Lambda_{r}$ of each asymptotic end.
We often simplify the notation and do not specify the variable $r$ explicitly.  The third stability condition involves a \emph{radial Hardy constant} denoted by~$c^{\notreH}_{\textnormal{radial}}$ (and explicited in~\eqref{def-cnotreHradial}) whose necessity arises from an interplay of the Hamiltonian shell identity~\eqref{main-func-identity} and the ODE~\eqref{equa-defb-b-b} for the average $\la u\ra$.
It is a constant arising in a weighted radial Hardy-type inequality, derived earlier in~\cite{LL-optimal-main}. 

\begin{definition}
\label{def-shell-Hstab}  
Consider the Hamiltonian operator. 
A localization function $\lambda$ on $\Sbb^{n-1}$ satisfying the Hamiltonian harmonic and radial stability conditions is said to satisfy the Hamiltonian \underline{shell stability condition} if the decomposition \eqref{main-func-identity} of the  functional \eqref{eq-shell-func} enjoys the following properties.
\bse
\label{equa-last-twoH} 
\bei 

\item \emph{Continuity of the shell functionals.} For any scalar field $u\colon\Omega_R \to \RR$ and each spherical shell~$\Lambda$, and for $\beta\in\{a_{n,p},2a_{n,p}\}$, one has
\bel{equa-conditionH}
\aligned
0 \leq \Phi^\notreH[u] & \lesssim \bigl( \norm{u}^\notreH\bigr)^2 ,
\qquad
\\
\bigl|\Psi^\notreH_{\beta}[u]\bigr|
& \lesssim \bigl( \norm{\vartheta u}^\notreH \bigr)^2 + \bigl( \norm{u}^\notreH \bigr)^2 .
\endaligned
\ee

\item \emph{Semi-coercivity of the shell dissipation.}
There exists a constant $\gamma^\notreH_{\textnormal{shell}} >0$ such that for $\beta\in\{a_{n,p},2a_{n,p}\}$, any scalar field $u\colon\Omega_R \to \RR$, and any spherical shell~$\Lambda =\Lambda_r$, one has
\begin{align}
\label{equa-conditionH2}
\Psi^\notreH_{\beta}[u]
+ \gamma^\notreH_{\textnormal{shell}} \,  \Bigl( \la u\ra^2
- c^{\notreH}_{\textnormal{radial}} \bigl( \Kappa^\notreH[\ut] \bigr)^2\Bigr) 
& \gtrsim
\bigl( \norm{\vartheta u}^\notreH \bigr)^2 + \bigl( \norm{u}^\notreH \bigr)^2 .
\end{align}
\eei 
\ese
The localization function is called \emph{Hamiltonian-stable} when the conditions \eqref{equa-H- 210}, \eqref{equa-H211}, and \eqref{equa-last-twoH} hold. 
\end{definition}

An interesting observation is that shell stability implies harmonic stability. Yet, in our presentation we found it convenient to introduce harmonic stability separately first.


\subsubsection{Fundamental estimate of the dissipation functional}

The continuity conditions~\eqref{equa-conditionH} are immediate given the explicit expressions of the functionals, so the key condition is the semi-coercivity property~\eqref{equa-conditionH2}.
Our analysis of the Hamiltonian shell functional (cf.~\autoref{section-8}) will provide us with the following structure.
Importantly, all constants are \emph{uniform} in the harmonic limit $p\to n- 2$, but not necessarily when $p$ approaches its lower bound $p\to p_n^\flat$.
Since the latter limit is of no interest to us, we shall omit the $p$-dependence of the constants $\gamma^{(m)}_n$: by restricting for example $p$ to the subinterval $[(n-2)/2, n-2)$ these constants can be taken to only depend on the dimension~$n$.
We refer to~\autoref{section-8} for a proof of \autoref{art2 -lema-gammagamma}.

\begin{proposition}[Semi-coercivity of the Hamiltonian dissipation functional]
\label{art2 -lema-gammagamma}
For a projection exponent $p\in(p_n^\flat, n-2)$, there exist positive constants $\cstun,\cstdeux$ and $\onegamma,\twogamma,\threegamma$ such that in a given asymptotic end
the dissipation functionals~$\Psi^\notreH_\beta$ defined in~\eqref{equa-310c}
 obey the semi-coercivity property
\bel{art2 -equa-98f0}
\Psi^\notreH_\beta[u]
\geq \onegamma \bigl(\norm[\big]{\vartheta u}^\notreH\bigr)^2
+ \twogamma \bigl(\norm[\big]{u}^\notreH\bigr)^2
-\threegamma\fint_{\Lambda}u^2\,d\chi
\qquad \beta\in\{a_{n,p},2a_{n,p}\}.
\ee 
\end{proposition}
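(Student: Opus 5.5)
The plan is to compute $\Psi^\notreH_\beta$ explicitly and then reduce its semi-coercivity to positivity of a finite-dimensional quadratic form in the radial order of derivatives, with the spherical structure handled by Bochner-type identities.

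\textbf{Step 1: explicit dissipation.} Multiply $\notreH^\lambda[u]=E$ by the test factor \eqref{test-function-theta2u} and use the decomposition \eqref{equa-key-decompose-H-repeat}. Integrate by parts on $\Lambda_r$ with respect to $d\chi$; there are no boundary terms because $\lambda$ vanishes to order $\expoP\ge2$ at $\del\Lambda$. Write every term as a total radial derivative, namely $(\vartheta+a_{n,p})(\vartheta+2a_{n,p})$ applied to $\Phi^\notreH$ or $\vartheta+\beta$ applied to an auxiliary quadratic $\Upsilon^\notreH$, plus a remainder $\Chi^\notreH$. The cross terms coupling fourth-order and second-order derivatives, such as $\vartheta^3u\cdot\vartheta u$, are placed into $\Upsilon^\notreH$. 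The outcome is that $\Psi^\notreH_\beta=\Chi^\notreH-(\vartheta+\beta)\Upsilon^\notreH$ is a quadratic form in the variables $\vartheta^ju$, $\vartheta^j\nablaslash u$ and $\vartheta^j\nablaslash^2u$ with $j\le 2$. The fourth-order and mixed pieces $\Arr$ and $\Ars^\lambda$ produce terms in $\vartheta^2u$, $\vartheta\nablaslash u$ and $\vartheta u$, while $\ssA^\lambda$ applied to the test factor gives $\ssrmA^\lambda$-type terms in $\vartheta u$ and $u$.

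\textbf{Step 2: algebraic positivity.} Group $\Psi^\notreH_\beta$ into blocks indexed by spherical order: the $(\Deltaslash\cdot)^2$ block, the traceless Hessian block, the $|\nablaslash\cdot|^2$ block, and the zeroth-order block, each involving $u$, $\vartheta u$ and $\vartheta^2u$. Several of the coupling terms are of the form $u\,\Deltaslash u$ weighted by $c_{n,p}$. Treat these by completing squares exactly as in \autoref{art2 -lem-ssrmAcoer}, and convert $\fint|\nablaslash^2 u|^2$ using the weighted Bochner identity. The curvature term of the unit sphere is $(n-2)|\nablaslash u|^2$, and the error terms involving $\nablaslash\lambda$ must be absorbed. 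Then choose $\cstun$ large, and $\cstdeux$ larger still, so that each block matrix is positive definite up to the zeroth-order entry. Any negative contribution that survives only in $\fint u^2$ is placed into $-\threegamma\fint u^2$, which is allowed by the form of the statement.

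\textbf{Step 3: uniformity in $p$.} Check that all matrix entries are continuous in $p$ on $[(n-2)/2,n-2]$ and remain positive at $a_{n,p}=0$. By compactness this gives constants $\onegamma$, $\twogamma$ and $\threegamma$ depending only on $n$.

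\textbf{Main obstacle.} The hardest step is the Bochner step. Integrating $\fint(\Deltaslash u)^2$ versus $\fint|\nablaslash^2u|^2$ by parts against $\lambda^{2\expoP}$ produces terms involving $\nablaslash\lambda/\lambda$, which are not controlled by the norm. I would avoid the conversion entirely by keeping $(\Deltaslash u)^2$ and $|\nablaslash^2 u|^2$ as independent variables and requiring positivity with both present, as the 13-parameter Ansatz of \autoref{section-8} is designed to do. Only $\nablaslash$-divergence identities without weight derivatives would then be used. If this fails, I would fall back on a weighted Hardy inequality to absorb the boundary-weight terms.
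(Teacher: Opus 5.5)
\textbf{There is a gap in Step~2.} Your outline has the shape of the paper's proof: explicit dissipation, completing squares shell by shell, $\cstun$ first and $\cstdeux$ larger, the $u^2$ loss paid by $\threegamma$, and continuity in $p$. But Step~2 only asserts the positivity that constitutes the proposition, and the set-up you give for it is wrong in two places that matter.

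First, $\Psi^\notreH_\beta$ is a quadratic form in $\vartheta^j\nablaslash^k u$ with $j+k\leq 3$ and $k\leq 2$. It contains $\vartheta^3u$, through $(\vartheta^2w)^2$ with $w\coloneqq(\vartheta+a_{n,p})u$, and it contains no $\vartheta^2\nablaslash^2u$. A form in your variables ($j\leq 2$) could never dominate $\|\vartheta^3u\|^2_{L^2_{-\expoP}}$, which is part of $(\norm{\vartheta u}^\notreH)^2$.

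Second, removing the fourth-order terms does not determine $\Upsilon^\notreH$. Its lower-order part is free, and it must be tied to $\cstun$. Moving cross terms such as $\vartheta^3u\,\vartheta u$ into $\Upsilon^\notreH$ is the right instinct, but the relevant coefficient is $-2\cstun$: the paper's lower-order part is $a_{n,p}(\vartheta w)^2-2\cstun\,w\,\vartheta w$. If you keep $a_{n,p}(\vartheta w)^2$ but drop $-2\cstun\,w\,\vartheta w$, the coefficient of $(\vartheta w)^2$ becomes $b_{n,p}-\cstun+a_{n,p}(a_{n,p}-\beta)$. Now take shell data with $u=\vartheta u=0$, $\vartheta^2u$ a nonzero constant, and $\vartheta^3u=-a_{n,p}\vartheta^2u$, so that $w=\vartheta^2w=0$. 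All other terms vanish, and $\Psi^\notreH_\beta[u]=\bigl(b_{n,p}-\cstun+a_{n,p}(a_{n,p}-\beta)\bigr)(\vartheta^2u)^2$. This is negative for large $\cstun$, while the right side of the estimate is positive. So ``take $\cstun$ large'' is not by itself a mechanism.

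Moreover, the scalar variables $u,\Deltaslash u,\vartheta^ju,\vartheta\Deltaslash u$ do not split into separate blocks. They are coupled by a term $\frac{\cstun c_{n,p}}{(n-1)a_{n,p}}\,w\,\Deltaslash u$ and by a $u\,\vartheta w$ term whose coefficient grows linearly with $\cstdeux$, both growing with the large parameters.

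\textbf{What closes the argument.} Take $\Upsilon^\notreH$ to be a functional of $w$ alone, so that it vanishes on harmonic profiles $r^{-a_{n,p}}\nu$. Since $(\vartheta+a_{n,p})(\vartheta+2a_{n,p})\Phi^\notreH$ also vanishes on these profiles, the dissipation splits exactly as
\[
\Psi^\notreH_\beta[u]=\PsiHquabeta[w]+\fint_{\Lambda_r}\bigl(w_1\,\Deltaslash u-w_2\,u\bigr)\,d\chi+\frac{\cstun}{n-1}\,\ssrmA^\lambda[u],
\]
where $w_1,w_2$ are explicit combinations of $w,\vartheta w,\Deltaslash w$. The three pieces are handled as follows.

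The quadratic part $\PsiHquabeta[w]$ is coercive in $\norm{w}^\notreH$, by completing squares, once $\cstun$ and $\cstdeux$ pass explicit thresholds.

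The coupling $w_2u$ is paid for in $\fint u^2\,d\chi$. This is why $\threegamma$ grows with $\cstdeux$.

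After Cauchy--Schwarz, the coupling $w_1\Deltaslash u$ costs a constant times $\fint(\Deltaslash u)^2\,d\chi$. The room to absorb it comes from $\frac{\cstun}{n-1}\ssrmA^\lambda[u]$, whose integrand is pointwise at least $|(\nablaslash^2u)^\circ|^2+|\nablaslash u|^2+(n-2)(\Deltaslash u)^2-\frac{n-1}{4}c_{n,p}^2u^2$. Taking $\cstun$ above a threshold and then $\cstdeux$ large pushes that cost below $\frac{(n-2)\cstun}{n-1}$. This is the actual reason for the order of choices.

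Since $\norm{w}^\notreH$ together with $\|u\|_{H^2_{-\expoP}}$ controls $\norm{\vartheta u}^\notreH+\norm{u}^\notreH$, this yields the estimate.

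\textbf{Your main obstacle does not arise.} Every term of $\ssA^\lambda$ and $\Ars^\lambda$ is in divergence form against $\lambda^{2\expoP}$, so no derivative of $\lambda$ survives the integrations by parts on $\Lambda_r$. The only spherical identity needed is the pointwise splitting $|\nablaslash^2u|^2=|(\nablaslash^2u)^\circ|^2+\frac{1}{n-1}(\Deltaslash u)^2$. No Bochner identity or Hardy inequality enters.
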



\subsubsection{Main result for this section} 

The remainder of this section is devoted to deducing from \autoref{art2 -lema-gammagamma} the following result, which is part of \autoref{art2 -thm:informal-suff-stab}.
The proof relies on a detailed control of the fluctuation term~$\Kappa^\notreH[\ut]$.
For the small-domain family, the required estimate on $\CPoinTwo$ is supplied by \autoref{lem:Poincare-scaling-complete}. In addition, remarkably, the upper bound in~\eqref{art2 -equa-hypopo} only depends on the dimension~$n$.

\begin{theorem}[Shell stability for the localized Hamiltonian operator]
\label{art2 -prop-dj39}
Fix $p\in(p_n^\flat,n- 2)$ and consider the Hamiltonian operator. Provided the two weighted Poincaré constants and the diameter of~$\Lambda$ are sufficiently small in the sense that
\bel{art2 -equa-hypopo} 
\CPoin 
+ \CPoinTwo 
+ \diam(\Lambda) \leq c_2^\notreH(n)
\ee
for a sufficiently small positive constant depending only on~$n$, then the semi-coercivity property~\eqref{equa-conditionH2} (in \autoref{def-shell-Hstab}) holds. 
\end{theorem}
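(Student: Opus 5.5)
The argument starts from Proposition~\ref{art2 -lema-gammagamma}. It already gives
\[
\Psi^\notreH_\beta[u]\geq \onegamma(\norm{\vartheta u}^\notreH)^2+\twogamma(\norm{u}^\notreH)^2-\threegamma\fint_\Lambda u^2\,d\chi,
\]
so the task is to absorb the negative term $-\threegamma\fint u^2\,d\chi$ using $\gamma^\notreH_{\rm shell}\bigl(\la u\ra^2-c^\notreH_{\rm radial}(\Kappa^\notreH[\ut])^2\bigr)$. Split $u=\la u\ra+\ut$, so that $\fint u^2\,d\chi=\la u\ra^2+\|\ut\|^2_{L^2_{-\expoP}}$. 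Choose $\gamma^\notreH_{\rm shell}\ge\threegamma$; this absorbs the average part exactly. Two error terms remain: $-\threegamma\|\ut\|^2_{L^2_{-\expoP}}$ and $-\gamma^\notreH_{\rm shell}c^\notreH_{\rm radial}(\Kappa^\notreH[\ut])^2$. Each must be bounded by a small multiple of $(\norm{u}^\notreH)^2+(\norm{\vartheta u}^\notreH)^2$, with a smallness factor that depends only on the geometric constants in \eqref{art2 -equa-hypopo}.

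For the first error term, the weighted Poincar\'e inequality \eqref{equa-Poin1} gives $\|\ut\|^2_{L^2_{-\expoP}}\le(\CPoin)^2\|\nablaslash u\|^2_{L^2_{-\expoP}}\le(\CPoin)^2(\norm{u}^\notreH)^2$. Since $\threegamma,\twogamma$ depend only on $n$ (for $p$ kept away from $p_n^\flat$), this term is absorbed once $(\CPoin)^2\threegamma\le\twogamma/4$. The real difficulty is that $\gamma^\notreH_{\rm shell}$ is then fixed of size $\threegamma$, and $c^\notreH_{\rm radial}$ is likewise a structure constant. So we need $(\Kappa^\notreH[\ut])^2\le\delta\,\bigl((\norm{u}^\notreH)^2+(\norm{\vartheta u}^\notreH)^2\bigr)$ with $\delta\to0$ as $\CPoin+\CPoinTwo+\diam(\Lambda)\to0$.

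To get this, I would write out $\Kappa^\notreH[\ut]$ explicitly; it comes from contracting \eqref{equa-solutionH} with the cokernel element $1$ and the kernel element $\nu^\normal$. It is a sum of bilinear integrals pairing derivatives $\nablaslash^k\ut$, $k\le2$ (and possibly one $\vartheta$), with $\nut^\normal$, $\nablaslash\nut^\normal$, $\nablaslash^2\nut^\normal$, and with terms like $\lambda^{-2\expoP}\nablaslash(\lambda^{2\expoP})$ that are integrated by parts onto $\ut$. I would first move every derivative onto $\ut$ where possible. Cauchy--Schwarz and Proposition~\ref{art2 -lem-nut-small} then control each term by $\|\nablaslash^j\nut^\normal\|\,\|\nablaslash^k\ut\|$. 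The $\nut^\normal$ norms are only bounded by $c_{n,p}|\la\nu^\normal\ra|/K_{1\Poin}$, and $\la\nu^\normal\ra\lesssim\theta^\lambda/d_{n,p}$ by Proposition~\ref{art2 -cor-lem-nut-small}. The smallness therefore has to come from the $\ut$ side.

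The $\ut$ factors split by order. Zeroth-order factors are handled by $\CPoin$. First-order factors are handled by applying \eqref{equa-Poin2} to $\ut$: this gives $\|\nablaslash\ut\|^2\le(\CPoinTwo)^2\|\nablaslash^2u\|^2+\la\nablaslash u\ra^t\matG\la\nablaslash u\ra$. The remaining average term is the delicate one, and this is where $\diam(\Lambda)$ enters. I would pair $\la\nablaslash u\ra$ against the explicit structure: $\nablaslash\xh_k$ equals $(\Id-\xh\otimes\xh)e_k$, which is nearly constant on a domain of small diameter, and by \eqref{equa-G-projector-comparison} $\matG$ is within $O(\diam^2)$ of a projector. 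Combined with the structure of $\Kappa^\notreH$ (it vanishes on the isotropic sphere), I expect the coefficient multiplying $\la\nablaslash u\ra$ in $\Kappa^\notreH$ to be $O(\diam(\Lambda))$. Second-order factors of $\ut$ pair with $\nut^\normal$ itself, whose $L^2$ norm is $O(\CPoin)$ by step~3 of Proposition~\ref{art2 -lem-nut-small}.

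This bound on $\Kappa^\notreH$, and specifically the average-gradient contributions that ordinary Hessian coercivity cannot control, is the main obstacle. Once it is in place, we choose $c_2^\notreH(n)$ small enough that $\gamma^\notreH_{\rm shell}c^\notreH_{\rm radial}\delta\le\min(\onegamma,\twogamma)/4$. This yields \eqref{equa-conditionH2} with constant $\min(\onegamma,\twogamma)/2$.
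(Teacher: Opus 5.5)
Your skeleton matches the paper's. You absorb $-\threegamma\fint u^2\,d\chi$ by taking $\gamma^\notreH_{\textnormal{shell}}=\threegamma$ on $\la u\ra^2$ and the Poincar\'e inequality on $\ut$. You then show that $c^{\notreH}_{\textnormal{radial}}(\Kappa^\notreH[\ut])^2$ is a small multiple of $(\norm{\vartheta u}^\notreH)^2+(\norm{u}^\notreH)^2$.

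The gap is that the step you call the main obstacle is left as an expectation, and your mechanism cannot deliver it. The smallness cannot come from the $\ut$ side in the coupling $(\vartheta+a_{n,p})\nablaslash\ut\cdot\nablaslash\nut^\normal$. On a small domain, restrictions of ambient linear functions have small Hessian but $|\la\nablaslash u\ra|\simeq\|\nablaslash u\|$, so \eqref{equa-Poin2} applied to $\ut$ gains nothing. The coefficient of $\la\nablaslash u\ra$ is essentially $\la\nablaslash\nut^\normal\ra$, and nothing you wrote shows it is $O(\diam(\Lambda))$. The vanishing of $\Kappa^\notreH$ for $\lambda\equiv1$ is uninformative, since there $\nut^\normal\equiv0$.

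The missing idea is to extract smallness from the silhouette's equation. Testing $\ssA^\lambda[\nu^\normal]=0$ against the coordinate functions $\xh_m$ gives
\[
2(1+a_{n,p})\la\nablaslash_m\nut^\normal\ra=(n^2-3n+3)\la\xh_m\Deltaslash\nut^\normal\ra-(n-1)c_{n,p}\la\xh_m\nu^\normal\ra .
\]
After projecting with $\Pi^{\perp}_{\la\xh\ra}$, you may replace $\xh_m$ by $\xh_m-\la\xh_m\ra=O(\diam(\Lambda))$. Now apply \eqref{equa-Poin2} to $\nut^\normal$, not to $\ut$, using that $\matG$ is within $O(\diam(\Lambda)^2)$ of $\Pi^{\perp}_{\la\xh\ra}$. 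This yields $\|\nablaslash\nut^\normal\|\lesssim(\CPoinTwo+\diam(\Lambda))\|\nablaslash^2\nut^\normal\|+c_{n,p}\diam(\Lambda)\|\nu^\normal\|$, which is exactly where $\CPoinTwo$ and $\diam(\Lambda)$ enter.

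Also, $\Kappa^\notreH$ contains up to three radial derivatives, not at most one. For the term $\nut^\normal\,\vartheta^3\ut$, Poincar\'e on the $\ut$ side would need $\nablaslash\vartheta^3u$, which is not in the norm. There the small factor must instead be $\|\nut^\normal\|\leq\CPoin\|\nablaslash\nut^\normal\|$.

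Second, $c^{\notreH}_{\textnormal{radial}}$ is not a structure constant. It is built from $\bnotreH_{1},\bnotreH_{0}$, which scale like $\la\nu^\normal\ra$, and from $a_{n,p}$. The radial-stability bounds on $\la\Deltaslash\nut^\normal\ra/\la\nu^\normal\ra$ give $\bnotreH_{1}/\la\nu^\normal\ra\simeq c_{n,p}$ and $\bnotreH_{0}/\bnotreH_{1}\simeq1$. Hence the characteristic exponents are of order one, and $c^{\notreH}_{\textnormal{radial}}\lesssim a_{n,p}^{-2}\la\nu^\normal\ra^{-2}$, which blows up as $p\to n-2$.

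This factor is precisely what cancels the $c_{n,p}^2\la\nu^\normal\ra^2$ carried by $(\Kappa^\notreH[\ut])^2$, since $c_{n,p}/a_{n,p}$ is bounded. That cancellation is what makes the threshold depend only on $n$. If you treat $c^{\notreH}_{\textnormal{radial}}$ as fixed and bound $\la\nu^\normal\ra$ by $\theta^\lambda/d_{n,p}$, a factor $(\theta^\lambda)^2\propto\aire[\Lambda,\lambda]^{-2}$ survives. That factor is unbounded on small domains, so your final choice of $c_2^\notreH(n)$ would not be dimension-only.
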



\subsection{Derivation of the Hamiltonian shell stability}
\label{art2 -section=2.3}

\subsubsection{Basic estimate of the fluctuation operator}

The fluctuation operator is given in \cite[Definition 6.5]{LL-optimal-main}, namely,
\be
\aligned
\Kappa^\notreH[\ut]  
& \coloneqq \fint_{\Lambda} \Bigl(
(n-1)(\vartheta+a_{n,p}) \bigl(\vartheta^2+a_{n,p}\vartheta-b_{n,p}\bigr) (\nut^\normal \ut)
\\[-1ex]
& \qquad\quad + (\vartheta+a_{n,p})
\bigl(
- 2 \, \nablaslash \ut \cdot \nablaslash \nut^\normal
+ (n-2) ( \nut^\normal \Deltaslash \ut + \ut \, \Deltaslash\nut^\normal)
\bigr)
\\[-.5ex]
& \qquad\quad + \bigl( (n-2)(n-2-a_{n,p}) + 1\bigr) \bigl( \ut \, \Deltaslash\nut^\normal - \nut^\normal \, \Deltaslash\ut\bigr) \Bigr) d\chi .
\endaligned
\ee
This explicit expression reflects some fundamental structure of the Einstein operator.  Our estimate in terms of the fluctuations of the silhouette function~$\nu^\normal$ (\autoref{art2 -prop-k-fluctuations}) will be complemented next by $H^2$ estimates on~$\nut^\normal$ in \autoref{art2 -lem-nut-small}, together with a more detailed treatment of one term using the second-order Poincar\'e inequality to reach a more convenient control of fluctuations (\autoref{art2 -lem-kappanotreh-control}).  The constant~$c^{\notreH}_{\textnormal{radial}}$ is then bounded to complete the proof of shell stability.

\begin{proposition}[Hamiltonian fluctuation operator vs.~Hamiltonian silhouette function] 
\label{art2 -prop-k-fluctuations}
For a given asymptotic end, the Hamiltonian fluctuation operator is bounded as follows:
\bel{art2 -equa-bound-kernel}
\aligned
\bigl| \Kappa^\notreH[\ut]  \bigr| 
& \lesssim
\|  \nut^\normal \|_{L^2_{- \expoP}(\Lambda)} \Bigl( \norm{\vartheta\ut}^\notreH + \norm{\ut}^\notreH \Bigr)
+ \|  \nablaslash \nut^\normal \|_{L^2_{- \expoP}(\Lambda)} \hskip-.1cm \sum_{m=0,1} \bigl\| \vartheta^m\nablaslash \ut \bigr\|_{L^2_{- \expoP}(\Lambda)} 
\\
& \quad + \bigl\|{\Deltaslash\nut^\normal} \bigr\|_{L^2_{- \expoP}(\Lambda)} \hskip-.1cm \sum_{m=0,1} \| \vartheta^m \ut \bigr\|_{L^2_{- \expoP}(\Lambda)}, 
\endaligned
\ee 
in which the implied constant depends upon the dimension~$n$, only. 
\end{proposition}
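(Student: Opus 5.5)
The plan is to expand the explicit expression for $\Kappa^\notreH[\ut]$ term by term. First we remove all radial derivatives from the silhouette fluctuation, and then we bound each resulting bilinear integral with weighted Cauchy--Schwarz. The key structural fact is that $\nut^\normal$ depends only on $\xh$, so $\vartheta\nut^\normal=0$. Moreover, the measure $d\chi=\lambda^{2\expoP}d\xh$ and the average $\fint_{\Lambda}$ are $r$-independent on each shell $\Lambda_r=r\Lambda$, because $\lambda$ is extended homogeneously. Hence $\vartheta$ commutes with multiplication by $\nut^\normal$, $\nablaslash\nut^\normal$ and $\Deltaslash\nut^\normal$, and also with $\nablaslash$ and $\Deltaslash$ acting on $\ut$. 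For instance,
$(\vartheta+a_{n,p})(\vartheta^2+a_{n,p}\vartheta-b_{n,p})(\nut^\normal\ut)=\nut^\normal\,(\vartheta+a_{n,p})(\vartheta^2+a_{n,p}\vartheta-b_{n,p})\ut$, and similarly for the second line of the definition.

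After this step, $\Kappa^\notreH[\ut]$ is a finite sum, with coefficients depending only on $n$ and on $a_{n,p},b_{n,p}$, of the following five integrals for $0\le m\le 3$:
$\fint \nut^\normal\,\vartheta^m\ut\,d\chi$, $\fint \nablaslash\nut^\normal\cdot\vartheta^m\nablaslash\ut\,d\chi$, $\fint \nut^\normal\,\vartheta^m\Deltaslash\ut\,d\chi$, $\fint \Deltaslash\nut^\normal\,\vartheta^m\ut\,d\chi$, and $\fint \nut^\normal\,\Deltaslash\ut\,d\chi$. In the last three, $m\le1$. The coefficients $a_{n,p},b_{n,p}$ are bounded for $p\in(p_n^\flat,n-2)$, so they are absorbed into a dimensional constant, which is consistent with the claimed dependence on $n$ only.

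Each integral is estimated by Cauchy--Schwarz in $L^2_{-\expoP}(\Lambda)$.

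1. The terms pairing $\nut^\normal$ with $\vartheta^m\ut$ or with $\vartheta^m\Deltaslash\ut$ ($m\le1$) require $\|\vartheta^m\ut\|_{L^2}$ for $m\le 3$, together with $\|\Deltaslash\ut\|$ and $\|\vartheta\Deltaslash\ut\|$. Each of these is controlled by $\norm{\ut}^\notreH+\norm{\vartheta\ut}^\notreH$. Indeed, $\norm{\vartheta\ut}^\notreH$ contains $\|\vartheta^3\ut\|$ and $\|\vartheta\ut\|_{H^2}$, and $\norm{\ut}^\notreH$ contains $\|\vartheta^2\ut\|$ and $\|\ut\|_{H^2}$.

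2. The gradient pairings with $m\le1$ give the middle term of \eqref{art2 -equa-bound-kernel} directly.

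3. The $\Deltaslash\nut^\normal$ pairings give the last term.

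The main obstacle is the gradient term with $m=2$ or $3$. It arises from $\vartheta(\vartheta+a_{n,p})$ acting on $\nablaslash\ut\cdot\nablaslash\nut^\normal$, or more precisely from $\vartheta$ acting on the $(\vartheta+a_{n,p})$ factor. Rechecking the definition, the second line has only a single factor $(\vartheta+a_{n,p})$, so only $m\le1$ occurs there. This already matches the stated sum over $m=0,1$. In the first line, the cubic radial operator falls on the product $\nut^\normal\ut$ and produces only the terms $\nut^\normal\vartheta^m\ut$ with $m\le3$, which are treated in item 1. If a term of the form $\fint\nut^\normal\vartheta\Deltaslash\ut$ needed to be transferred to the gradient, I would integrate by parts on $\Lambda$ to obtain $\nablaslash\nut^\normal\cdot\nablaslash\vartheta\ut$. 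The weight $\lambda^{2\expoP}$ vanishes on $\del\Lambda$, so there is no boundary term, but a commutator $2\expoP\lambda^{-1}\nablaslash\lambda$ appears. To avoid this, I would keep such terms in the $\nut^\normal$ column, which is what \eqref{art2 -equa-bound-kernel} allows.

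Summing the five groups of estimates yields the inequality with an implied constant depending only on $n$.
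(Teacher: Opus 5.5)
Your proposal is correct and is essentially the paper's own proof. Since $\nut^\normal$ depends only on $\xh$, all radial operators in $\Kappa^\notreH$ act on $\ut$ alone, and each resulting pairing is bounded by Cauchy--Schwarz in $L^2_{-\expoP}(\Lambda)$; the coefficients $a_{n,p},b_{n,p}$ are bounded uniformly in $p$, and the norms of $\ut$ that appear are read off from the definition \eqref{equa-normsH}. Your detour about gradient terms with $m\geq2$ and a possible integration by parts is unnecessary, as you yourself conclude.
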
 


\begin{proof} 
In view of the expression of the operator $\Kappa^\notreH$ 
we find 
\be 
\aligned
\bigl| \Kappa^\notreH[\ut]  \bigr|
& \lesssim \|  \nut^\normal \|_{L^2_{- \expoP}(\Lambda)} \Bigl(
\bigl\| (\vartheta+a_{n,p}) \bigl(\vartheta^2+a_{n,p}\vartheta-b_{n,p}\bigr) \ut \|_{L^2_{- \expoP}(\Lambda)} \\
& \qquad\qquad\qquad\quad + \bigl\| (\vartheta+a_{n,p}) \Deltaslash \ut \bigr\|_{L^2_{- \expoP}(\Lambda)}
+ \bigl\| \Deltaslash \ut \bigr\|_{L^2_{- \expoP}(\Lambda)} \Bigr) \\
& \quad + \|  \nablaslash \nut^\normal \|_{L^2_{- \expoP}(\Lambda)} \, \bigl\|   (\vartheta+a_{n,p}) \nablaslash \ut   \bigr\|_{L^2_{- \expoP}(\Lambda)} 
\\
& \quad 
+ \bigl\| \Deltaslash\nut^\normal\bigr\|_{L^2_{- \expoP}(\Lambda)} \, \bigl\| \bigl( (n- 2) \vartheta + (n- 2)^2 + 1\bigr) \ut \bigr\|_{L^2_{- \expoP}(\Lambda)}, 
\endaligned
\ee
where the constants only depend on the coefficients $a_{n,p},b_{n,p},c_{n,p}/a_{n,p}$, all of which are bounded uniformly in $p\in(p_n^\flat,n-2)$. Finally, we use the notation~\eqref{equa-normsH} to obtain~\eqref{art2 -equa-bound-kernel}. 
\end{proof} 


\subsubsection{Improved control of the fluctuation operator}

We now leverage \autoref{art2 -lem-nut-small} and the second-order Poincaré inequality to simplify the bound on $\Kappa^\notreH[\ut]$.

\begin{proposition}[Control of the Hamiltonian fluctuation operator]
\label{art2 -lem-kappanotreh-control}
Provided the Poincaré constant~$\CPoin$ satisfies the radial-stability condition \eqref{art2 -art2 -equa-4178-bis}, namely $\CPoin < \beta_{n,p}$,
one has the bound
\be
\bigl| \Kappa^\notreH[\ut]  \bigr|
\lesssim c_{n,p} \bigl(\CPoin + \CPoinTwo + \diam(\Lambda)\bigr) |\la\nu^\normal\ra| \bigl( \norm{\vartheta\ut}^\notreH + \norm{\ut}^\notreH \bigr) .
\ee
\end{proposition}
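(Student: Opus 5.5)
The plan is to start from the bound of \autoref{art2 -prop-k-fluctuations} and estimate its three silhouette factors using \autoref{art2 -lem-nut-small}. That proposition applies because $\CPoin<\beta_{n,p}$. The three terms behave differently and need separate treatment.

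\emph{First term.} Step~3 of the proof of \autoref{art2 -lem-nut-small} gives $\|\nut^\normal\|_{L^2_{-\expoP}}\le 2\tcnp\CPoin K_{1\Poin}^{-1}|\la\nu^\normal\ra|$. Since $\tcnp\lesssim c_{n,p}$ and $K_{1\Poin}$ is bounded below, this term is directly $\lesssim c_{n,p}\CPoin|\la\nu^\normal\ra|\bigl(\norm{\vartheta\ut}^\notreH+\norm{\ut}^\notreH\bigr)$.

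\emph{Third term.} Here $\|\Deltaslash\nut^\normal\|\lesssim c_{n,p}|\la\nu^\normal\ra|$ is only bounded, so the smallness must come from $\ut$. Because $\lambda$ is homogeneous along rays, $\vartheta$ commutes with the weighted shell average. Hence $\la\vartheta^m\ut\ra=0$ for $m=0,1$, and the Poincaré inequality \eqref{equa-Poin1} gives
\[
\|\vartheta^m\ut\|_{L^2_{-\expoP}}\le\CPoin\|\nablaslash\vartheta^m\ut\|_{L^2_{-\expoP}}\le\CPoin\bigl(\norm{\vartheta\ut}^\notreH+\norm{\ut}^\notreH\bigr).
\]
The smallness factor here is again $\CPoin$.

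\emph{Middle term.} This is the delicate one, because neither $\|\nablaslash\nut^\normal\|$ nor $\|\vartheta^m\nablaslash\ut\|$ is small a priori. I will not use the crude bound of \autoref{art2 -prop-k-fluctuations} for it. Instead I return to the pairing $\fint(\vartheta+a_{n,p})\nablaslash\ut\cdot\nablaslash\nut^\normal\,d\chi$ and view both gradients as ambient $\RR^n$-valued functions. I then split $f\coloneqq\vartheta^m\ut$ as
\[
\fint\nablaslash f\cdot\nablaslash\nut^\normal\,d\chi=\fint\bigl(\nablaslash f-\la\nablaslash f\ra\bigr)\cdot\nablaslash\nut^\normal\,d\chi+\la\nablaslash f\ra\cdot\la\nablaslash\nut^\normal\ra .
\]
\begin{itemize}
\item For the fluctuation part, I apply \eqref{equa-Poin1} componentwise to the ambient vector $\nablaslash f$. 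Its ambient derivative equals $\nablaslash^2 f$ plus a second-fundamental-form term $\xh\otimes\nablaslash f$. The piece of the latter transverse to the tangent space differs from its average by $O(\diam(\Lambda))\,|\nablaslash f|$. This yields $\|\nablaslash f-\la\nablaslash f\ra\|\lesssim(\CPoin+\diam(\Lambda))\|f\|_{H^2_{-\expoP}}$. That quantity, multiplied by the bounded $\|\nablaslash\nut^\normal\|\lesssim c_{n,p}|\la\nu^\normal\ra|$, is acceptable.
\item For the average part, I need $|\la\nablaslash\nut^\normal\ra|$ to be small, since $|\la\nablaslash f\ra|$ is controlled by the norm. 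This is where the second-order Poincaré inequality \eqref{equa-Poin2} enters. It reads $\|\nablaslash\nut^\normal\|^2\le(\CPoinTwo)^2\|\nablaslash^2\nut^\normal\|^2+\la\nablaslash\nut^\normal\ra^t\matG\la\nablaslash\nut^\normal\ra$, and $\matG$ is close to the projector $P$ orthogonal to $\la\xh\ra$, up to $O(\diam^2)$.
\end{itemize}

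\emph{Bounding $\la\nablaslash\nut^\normal\ra$.} I will write $\la\nablaslash_k\nut^\normal\ra=\la\nablaslash\nut^\normal\cdot\nablaslash\xh_k\ra$ and integrate by parts against the affine test functions $\xh_k$, using $\nablaslash^2\xh_k=-\xh_k\gslash$. An integration by parts alone leaves a boundary-weight term $\nut^\normal\,\nablaslash\log\lambda^{2\expoP}\cdot\nablaslash\xh_k$, which is not obviously small. So I will first try to combine it with the kernel equation $\ssA^\lambda[\nu^\normal]=0$ tested against $\xh_k$. That equation gives an exact identity expressing this weighted term through $\la\xh_k\nut^\normal\ra$, $\la\xh_k\Deltaslash\nut^\normal\ra$ and $c_{n,p}\la\nu^\normal\ra\la\Deltaslash\xh_k\ra$. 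After subtracting $\xh_k\la\xh\ra$-type averages, each resulting piece is bounded by $(\CPoin+\diam(\Lambda))\|\nut^\normal\|_{H^2_{-\expoP}}$ or by $\|\nut^\normal\|_{L^2_{-\expoP}}$. This should give $|P\la\nablaslash\nut^\normal\ra|\lesssim c_{n,p}(\CPoin+\diam)|\la\nu^\normal\ra|$.

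The normal component of $\la\nablaslash\nut^\normal\ra$ along $\la\xh\ra$ is $\la\nablaslash\nut^\normal\cdot(\xh-\la\xh\ra)\ra$, since $\nablaslash\nut^\normal$ is tangent. It is $O(\diam)\|\nablaslash\nut^\normal\|$. Combined with $\CPoinTwo\|\nablaslash^2\nut^\normal\|$, this closes the estimate with factor $c_{n,p}(\CPoin+\CPoinTwo+\diam)|\la\nu^\normal\ra|$.

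Collecting the three terms gives the claimed bound. I expect the main obstacle to be this control of $\la\nablaslash\nut^\normal\ra$, in particular making the weight-derivative term produced by integration by parts small by means of the kernel equation rather than by a naive estimate.
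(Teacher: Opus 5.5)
Your proposal is correct, but for the middle term it takes a different route from the paper.

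\textbf{The paper's route.} It keeps the crude bound of \autoref{art2 -prop-k-fluctuations}, so it needs $\|\nablaslash\nut^\normal\|_{L^2_{-\expoP}(\Lambda)}$ itself to be small. To get this, it:
\begin{enumerate}
\item applies the second-order Poincar\'e inequality \eqref{equa-Poin2} to $\nut^\normal$;
\item absorbs the component of $\la\nablaslash\nut^\normal\ra$ along $\la\xh\ra$, using that $\matG$ lies within $O(\diam(\Lambda)^2)$ of the projector $\Pi^{\perp}_{\la\xh\ra}$;
\item controls $\Pi^{\perp}_{\la\xh\ra}\la\nablaslash\nut^\normal\ra$ by testing $\ssA^\lambda[\nu^\normal]=0$ against $\xh_m$.
\end{enumerate}

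\textbf{Your route.} You split the pairing $\fint_\Lambda\nablaslash f\cdot\nablaslash\nut^\normal\,d\chi$, with $f=\vartheta^m\ut$, into two parts.
\begin{itemize}
\item The fluctuation part is small by the first-order Poincar\'e inequality applied componentwise to the ambient vector $\nablaslash f$. It is paired with a merely bounded $\|\nablaslash\nut^\normal\|$.
\item The average--average part only needs $\la\nablaslash\nut^\normal\ra$ to be small. You bound its tangential part with the same $\xh_m$-tested kernel identity as the paper. You bound its normal part by tangency, $\la\xh\ra\cdot\la\nablaslash\nut^\normal\ra=\la(\la\xh\ra-\xh)\cdot\nablaslash\nut^\normal\ra=O(\diam(\Lambda))\,\|\nablaslash\nut^\normal\|$. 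This replaces the comparison between $\matG$ and the projector.
\end{itemize}

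\textbf{What each route buys.} In your route the second-order inequality is never actually used. Your appeal to \eqref{equa-Poin2} and the closing phrase about $\CPoinTwo\|\nablaslash^2\nut^\normal\|$ can simply be deleted. The argument then closes with the factor $c_{n,p}(\CPoin+\diam(\Lambda))$, which implies the stated bound. The paper's route yields the intermediate fact that $\|\nablaslash\nut^\normal\|$ itself is small, a property of the silhouette that is independent of~$\ut$.

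\textbf{Simplifications.}
\begin{itemize}
\item Since $\dslash_j\dslash_i f=(\Hessslash f)_{ji}-\xh_i\dslash_j f$, the componentwise Poincar\'e step gives $\|\nablaslash f-\la\nablaslash f\ra\|\le\CPoin\bigl(\|\nablaslash^2 f\|^2+\|\nablaslash f\|^2\bigr)^{1/2}$ directly. No $\diam(\Lambda)$ term is needed there.
\item The integration by parts against $\xh_k$ is a detour. The kernel identity tested against $\xh_k$ already expresses $2(1+a_{n,p})\la\nablaslash_k\nut^\normal\ra$ through $\la\xh_k\Deltaslash\nut^\normal\ra$ and $c_{n,p}\la\xh_k\nu^\normal\ra$.
\end{itemize}

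\textbf{Small-domain restriction.} As in the paper, extracting the normal component requires $|\la\xh\ra|$ to be bounded below, i.e.\ a small domain. Large domains follow from the crude bound, since then $\diam(\Lambda)\gtrsim1$.
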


\begin{proof}
The upper bound $\CPoin < \beta_{n,p}$ ensures that \autoref{art2 -lem-nut-small} applies, and provides an $L^2_{-\expoP}$ control of $\nablaslash^2\nut^\normal$ and $\nablaslash\nut^\normal$ by the average $|\la\nu^\normal\ra|$.  The Poincaré inequality then also gives an $L^2_{-\expoP}$ control of~$\nut^\normal$.
Thus, the bound in \autoref{art2 -prop-k-fluctuations} easily yields
\be
\bigl| \Kappa^\notreH[\ut]  \bigr| 
\lesssim c_{n,p} (1 + \CPoin) |\la\nu^\normal\ra| \bigl( \norm{\vartheta\ut}^\notreH + \norm{\ut}^\notreH \bigr) .
\ee
Our goal is to improve the bound in the limit of small $\CPoin + \CPoinTwo + \diam(\Lambda)$.  In particular, we will use the smallness of $\diam(\Lambda)$ to our advantage.

We return to the bound in \autoref{art2 -prop-k-fluctuations} in terms of ($L^2$ norms of) derivatives of $\nut^\normal$ and of $\ut$.
When combined with the (first-order) Poincaré inequality~\eqref{equa-Poin1} for $\nut^\normal$ and for~$\ut$, the bound implies
\be
\bigl| \Kappa^\notreH[\ut]  \bigr| 
\lesssim
\Bigl( \CPoin \bigl\| \nablaslash^2\nut^\normal, \nablaslash\nut^\normal \bigr\|_{L^2_{- \expoP}(\Lambda)}
+ \|  \nablaslash \nut^\normal \|_{L^2_{- \expoP}(\Lambda)} \Bigr) \Bigl( \norm{\vartheta\ut}^\notreH + \norm{\ut}^\notreH \Bigr) .
\ee
The first term features the favorable factor $\CPoin$ and we have a good control of up-to-second-order derivatives $\| \nablaslash^2\nut^\normal, \nablaslash\nut^\normal\|_{L^2_{- \expoP}(\Lambda)}$ by the average $|\la\nu^\normal\ra|$ thanks to \autoref{art2 -lem-nut-small}.  There remains to find an improved control of the second term $\nablaslash\nut^\normal$ that would include a favorable geometrical factor analogous to~$\CPoin$.
The key object is the \emph{second Poincaré inequality}~\eqref{equa-Poin2} proven in \autoref{section-5}, namely
\bel{Poin-deux-again}
\| \nablaslash\nut^\normal \|_{L^2_{- \expoP}(\Lambda)}^2
\leq  (\CPoinTwo)^2 \, \| \nablaslash^2\nut^\normal \|_{L^2_{- \expoP}(\Lambda)}^2 + |\la\nablaslash_\bullet\nut^\normal\ra|_{\matG}^2
\ee
for some positive constant $\CPoinTwo$. We are thus left with controlling the averages $\la\nablaslash_\bullet\nut^\normal\ra$.

Domains with $\diam(\Lambda)<\pi/2$ lie in some hemisphere hence have $\la\xh\ra\neq0$.
In such small domains, the matrix $\matG=\Id+\la\xh\xh\ra-2\la\xh\ra\la\xh\ra$ in~\eqref{Poin-deux-again} is quadratically close to the projector $\Pi^{\perp}_{\la\xh\ra}$ orthogonal to~$\la\xh_\bullet\ra$.  More precisely, in terms of $\matV=\la\xh\xh\ra-\la\xh\ra\la\xh\ra$ and its trace $\operatorname{tr}\matV = 1-|\la\xh\ra|^2$,
\be
\aligned
\bigl|(\matG - \Pi^{\perp}_{\la\xh\ra})_{kl}\bigr|
&=\Bigl|
\matV_{kl}
+\frac{\operatorname{tr}\matV}{|\la\xh\ra|^2}
\la\xh_k\ra\la\xh_l\ra\Bigr|
\\
&\leq \bigl|\la(\xh_k- \la\xh_k\ra)(\xh_l- \la\xh_l\ra)\ra\bigr|
+ \frac{|\la\xh_k\ra\la\xh_l\ra|}{|\la\xh\ra|^2}
\la|\xh- \la\xh\ra|^2\ra
\leq 2 \, \diam(\Lambda)^2 .
\endaligned
\ee 
In particular, for every $\wvect\in\RR^n$,
\be
\wvect^t\matG\wvect
\leq |\Pi^{\perp}_{\la\xh\ra}\wvect|^2
+C \, \diam(\Lambda)^2|\wvect|^2.
\ee
Applied to $\wvect=\la\nablaslash\nut^\normal\ra$, the last term is bounded by $C \, \diam(\Lambda)^2\|\nablaslash\nut^\normal\|_{L^2_{- \expoP}}^2$ and can be absorbed into the left-hand side of the second Poincaré inequality when the domain is sufficiently small.
We are thus interested in controlling $\Pi^{\perp}_{\la\xh\ra}\la\nablaslash_\bullet\nut^\normal\ra$, which are $n-1$ independent averages of~$\nablaslash\nut^\normal$.

For this we return to the equation $\ssA^\lambda[\nu^\normal]=0$ satisfied by~$\nu^\normal$.
Let us consider the integral
\be
\aligned
0 & = \fint_{\Lambda} \xh_m \ssA^\lambda[\nu^\normal] d\chi
\\
& = \fint_{\Lambda} \Bigl( (n- 2) \Deltaslash\xh_m \Deltaslash\nu^\normal + \nablaslash^2 \xh_m \cdot \nablaslash^2\nu^\normal + 2 (1+a_{n,p})\nablaslash\xh_m\cdot\nablaslash\nu^\normal - c_{n,p} (\Deltaslash\xh_m)\nu^\normal \Bigr) d\chi
\\
& = 2(1+a_{n,p}) \la\nablaslash_m\nut^\normal\ra - (n^2 -3n+3) \la\xh_m\Deltaslash\nut^\normal\ra + (n-1) c_{n,p} \la\xh_m\nu^\normal\ra .
\endaligned
\ee
Here we used that $\Deltaslash \xh_m = -(n-1)\xh_m$ and $\nablaslash^2 \xh_m = - \xh_m \, \gslash$, hence we have
$\nablaslash^2 \xh_m \cdot \nablaslash^2\nu^\normal = - \xh_m \Deltaslash\nu^\normal$, and we also replaced $\nu^\normal$ by $\nut^\normal$ in the terms involving $\nablaslash\nu^\normal$ and $\Deltaslash\nu^\normal$, since the averages vanish under these derivatives.

In a small domain, the factors $\xh_m$ are close to their average~$\la\xh_m\ra$.  Specifically, we deduce
\be
\aligned
& \Bigl| 2(1+a_{n,p}) \Pi^{\perp}_{\la\xh\ra\,lm} \la\nablaslash_m\nut^\normal\ra \Bigr|
\\
& = \Bigl| \Pi^{\perp}_{\la\xh\ra\,lm}\Bigl( (n^2 -3n+3) \bigl\la(\xh_m- \la\xh_m\ra)\Deltaslash\nut^\normal\bigr\ra - (n-1) c_{n,p} \bigl\la(\xh_m- \la\xh_m\ra)\nu^\normal\bigr\ra \Bigr) \Bigr|
\\
& \lesssim 
\diam(\Lambda) \Bigl( \|\Deltaslash\nut^\normal\|_{L^2_{- \expoP}(\Lambda)} + c_{n,p} \|\nu^\normal\|_{L^2_{- \expoP}(\Lambda)} \Bigr) ,
\endaligned
\ee
where we used Cauchy--Schwarz and the elementary bound $|\xh_m- \la\xh_m\ra|\lesssim \diam(\Lambda)$ on $\Lambda$. We were able to subtract the average $\la\xh_m\ra$ in both terms because of the projection~$\Pi^{\perp}_{\la\xh\ra}$. 


Altogether, the Poincaré inequality implies
\be
\| \nablaslash\nut^\normal \|_{L^2_{- \expoP}(\Lambda)}
\lesssim  \Bigl( \CPoinTwo + \diam(\Lambda) \Bigr) \, \| \nablaslash^2\nut^\normal \|_{L^2_{- \expoP}(\Lambda)}
+ c_{n,p} \diam(\Lambda) \|\nu^\normal\|_{L^2_{- \expoP}(\Lambda)} ,
\ee
and the bound on $\Kappa^\notreH$ becomes
\be
\aligned
\bigl| \Kappa^\notreH[\ut]  \bigr| 
\lesssim
\Bigl( \bigl(\CPoin + \CPoinTwo 
& + \diam(\Lambda)\bigr) \bigl\| \nablaslash^2\nut^\normal, \nablaslash\nut^\normal \bigr\|_{L^2_{- \expoP}(\Lambda)}
\\
& + c_{n,p} \diam(\Lambda) |\la\nu^\normal\ra| \Bigr) \Bigl( \norm{\vartheta\ut}^\notreH + \norm{\ut}^\notreH \Bigr) .
\endaligned
\ee
Using \autoref{art2 -lem-nut-small} to bound the $\dot{H}^2$ and $\dot{H}^1$ norms of $\nut^\normal$ by $c_{n,p}$ times the average $|\la\nu^\normal\ra|$, and recalling that $\|\nu^\normal\|_{L^2_{- \expoP}(\Lambda)} \leq \|\nut^\normal\|_{L^2_{- \expoP}(\Lambda)} + |\la\nu^\normal\ra|$, 
we reach the desired bound (with an implicit constant that depends on the dimension $n$, only).
\end{proof}


\subsubsection{Proof of \autoref{art2 -prop-dj39}}

\noindent{\it 1. A choice of constant.}
Our starting point is the semi-coercivity in \autoref{art2 -lema-gammagamma}, with its dimension-dependent constants~$\gamma^{(m)}_n$.  If the Poincaré constant is small enough in the sense that
\be
(\CPoin)^2 \leq (\twogamma - 1) / \threegamma ,
\ee
then by setting $\gamma^\notreH_{\textnormal{shell}} = \threegamma$ and applying the Poincaré inequality one gets
\bel{coer-629}
\Psi^\notreH_{\beta}[u]
+ \gamma^\notreH_{\textnormal{shell}} \,  \la u\ra^2
\geq \onegamma \bigl(\norm[\big]{\vartheta u}^\notreH\bigr)^2
+ \bigl(\norm[\big]{u}^\notreH\bigr)^2
\qquad \beta\in\{a_{n,p},2a_{n,p}\}.
\ee

\medskip
\bse
\noindent{\it 2. Control of the radial Hardy constant.}
We now bound the constant~$c^{\notreH}_{\textnormal{radial}}$, explicitly given in~\eqref{def-cnotreHradial}, to complement the control of the fluctuation operator (\autoref{art2 -lem-kappanotreh-control}), and the semi-coercivity of the dissipation (\autoref{art2 -lema-gammagamma}).

For a sufficiently small Poincaré constant, and specifically $\CPoin < \beta_{n,p}$, \autoref{art2 -prop--radial} states
\be
0 \leq \frac{\la\Deltaslash\nut^\normal\ra}{\la\nu^\normal\ra} < d^{\min}_{n,p}
= \min \Bigl(\frac{(n-1)c_{n,p}}{(n- 2)^2},\, d_{n,p}\Bigr) ,
\ee
which is enough to bound the exponents $\bnotreH_{1},\bnotreH_{0}$ appearing in the shell average identity~\eqref{equa-defb-b-b} and explicited in~\eqref{bH10}:
\bel{bnotre-ov-lanu}
0 < \frac{\bnotreH_{1}}{\la\nu^\normal\ra} \leq (n-1) c_{n,p} , \qquad
0 < \frac{\bnotreH_{0}}{\la\nu^\normal\ra} \leq (n-1) c_{n,p} b_{n,p} .
\ee

In fact, a stronger bound on $\la\Deltaslash\nut^\normal\ra/\la\nu^\normal\ra$ is provided by \autoref{art2 -lem-nut-small}:
\be
0 \leq \frac{\la\Deltaslash\nut^\normal\ra}{\la\nu^\normal\ra} \leq \frac{(n-1)c_{n,p}}{(n^2 -3n+3)K_{1\Poin}} ,
\qquad
K_{1\Poin} = 1 - (\tcnp)^2 (\CPoin)^2 .
\ee
This upper bound can be made arbitrarily close to $(n-1)c_{n,p}/(n^2 -3n+3)$ for sufficiently small~$\CPoin$, which leads to a positive lower bounds in~\eqref{bnotre-ov-lanu}.
For instance, for $\CPoin < \beta_{n,p} / \sqrt{n-1}$, one checks that $\la\Deltaslash\nut^\normal\ra / \la\nu^\normal\ra \leq c_{n,p}/(n-2)$, which leads to
\be
c_{n,p} \leq \frac{\bnotreH_{1}}{\la\nu^\normal\ra} \leq (n-1) c_{n,p} ,
\ee
and hence $\bnotreH_{1}/\la\nu^\normal\ra\simeq c_{n,p}$ uniformly in~$p$.  Likewise $\bnotreH_{0}/\la\nu^\normal\ra\simeq c_{n,p}$ uniformly in~$p$.

Next, the characteristic exponents of the differential operator $-\bnotreH_{1}\vartheta(\vartheta+a_{n,p})+\bnotreH_{0}$ are solutions of $\beta_- + \beta_+ = a_{n,p}$ and $-\beta_- \beta_+= \bnotreH_{0}/\bnotreH_{1}$, and this ratio obeys $\bnotreH_{0}/\bnotreH_{1}\simeq 1$ uniformly in~$p$.
Together these imply that $-\beta_- \simeq 1$ and $\beta_+ \simeq 1$.
The explicit expressions \eqref{equa-cplusminus} of $c_\pm$ and \eqref{def-cnotreHradial} of $c^{\notreH}_{\textnormal{radial}}$ then obey
\be
c_\pm \lesssim \frac{1}{a_{n,p}\la\nu^\normal\ra} , \qquad 
0 < c^{\notreH}_{\textnormal{radial}} \lesssim (c_\pm)^2 \lesssim \frac{1}{a_{n,p}^2\la\nu^\normal\ra^2} .
\ee
\ese

\medskip

\noindent{\it 3. Control of the fluctuation term.}
Once we assemble these results together with the bound in \autoref{art2 -lem-kappanotreh-control} on $\Kappa^\notreH$, the factors $c_{n,p}^2$ and $|\la\nu^\normal\ra|^2$ cancel out, and we obtain the following bound on the fluctuation term in~\eqref{equa-conditionH2} in \autoref{def-shell-Hstab}:
\be
c^{\notreH}_{\textnormal{radial}} \bigl( \Kappa^\notreH[\ut] \bigr)^2
\lesssim \bigl(\CPoin + \CPoinTwo + \diam(\Lambda)\bigr)^2 \Bigl( \norm{\vartheta\ut}^\notreH + \norm{\ut}^\notreH \Bigr)^2 ,
\ee
where the implicit constant depends on the dimension, only.
This is precisely controlled by the right-hand side of~\eqref{coer-629}, provided the geometric constants $\CPoin + \CPoinTwo + \diam(\Lambda)$ are smaller than some dimension-dependent constant, as stated in~\eqref{art2 -equa-hypopo}.
Altogether, in this regime, the Hamiltonian shell stability condition~\eqref{equa-conditionH2} holds.


\section{Shell stability for the localized momentum operator}
\label{section-7}

\subsection{Shell stability}

The following matrices are introduced (or repeated) at this point because they enter the momentum shell-stability functional and its proof in \autoref{section-9}:
\bel{equa-toutes-matrices}
\aligned
S^2 = \bigl((S^2)_{kl}\bigr) & \coloneqq (\la\xh_k\xh_l\ra) ,
& T = \Id + S^2 & \coloneqq  (\delta_{kl} + \la \xh_k\xh_l\ra),
\\
(\matV_{kl}) & \coloneqq  (\la \xh_k\xh_l\ra - \la\xh_k\ra\la\xh_l\ra),
& U = \Id - S^2 & \coloneqq  (\delta_{kl} - \la \xh_k\xh_l\ra),
\\
\matQ^{(j)k} & = \bigl\la 2 \xh_l \xi^{\normal (j) \perp} + \xi^{\normal (j) \parallel}{}_l\bigr\ra (T^{-1})^{lk} ,
& \zeroXi^{(i)}{}_k & \coloneqq \la \nablaslash_k \xi^{\normal (i)\perp} - \xi^{\normal (i)\parallel}{}_k\ra ,
\\
(\Xi^\notreM)^j_{l}
& \coloneqq 
\bigl\la - \nablaslash_l \xi^{\normal (j) \perp} + 2 a_{n,p} \xh_l\, \xi^{\normal (j) \perp} \bigr\ra
+(1+a_{n,p}) \la \xi^{\normal (j) \parallel}{}_l \ra. \mspace{-130mu}
\endaligned
\ee


\subsubsection{Main functional on spherical shells}
 
Our strategy is similar to the one for the Hamiltonian operator, but the structure of the momentum operator is different: it has a tensorial nature and only up to second-order derivatives.
We establish first the shell identity (in~\eqref{equ-shell-momen}, below) and the associated decomposition 
and then investigate the (semi-)coercivity properties of the relevant functionals. 
We introduce the \textbf{momentum shell functional} (as we call it) 
\bel{equa-quada-M}
\Phi^\notreM[Z] \coloneqq \frac{1}{2} \fint_{\Lambda} \bigl( 2 \, \Zperp{}^2 + |\Zpar|^2 \bigr) d\chi, 
\ee 
which obeys the \textbf{momentum shell identity} for the momentum
\bse
\label{equ-shell-momen}
\bel{main-func-identity-MM}
- (\vartheta + a_{n,p}) (\vartheta + 2a_{n,p}) \Phi^\notreM[Z]
+ \Chi^\notreM[Z] = \Mu^\notreM[Z].
\ee 
It involves the following terms, made explicit below. 

\bei 

\item  The {\bf bare dissipation functional} $\Chi^\notreM[Z]$ is an integral functional in the variables $Z$ and $\vartheta Z$, together with $\nablaslash Z$ and $\vartheta\nablaslash\Zperp$. Since it features $\vartheta\nablaslash\Zperp$ {\it linearly} (and not quadratically), it cannot enjoy positivity properties.
 
\item To eliminate these linear terms that would prevent the functionals from being coercive, we introduce the (shifted) {\bf dissipation functionals}  (for $\beta\in\{a_{n,p}, 2 a_{n,p}\}$)
\bel{equa-310c-MM} 
\Psi^\notreM_{\beta\iota} \coloneqq \Chi^\notreM - (\vartheta +\beta) \Upsilon^\notreM, 
\ee
in which we have subtracted a {\bf radial integration functional} $\Upsilon^\notreM[Z]$ in the variables $Z, \nablaslash \Zperp$. 

\item The {\bf source functional} $\Mu^\notreM$ is
\be
\Mu^\notreM[Z] 
\coloneqq 2 r^2 \fint_{\Lambda} Z\cdot F \, d\chi = \fint_{\Lambda} 2 \, \bigl( \Zperp \Fperp + \Zpar\cdot\Fpar \bigr) r^2 d\chi. 
\ee 
\eei 
\ese


\subsubsection{Stability condition}

From now on, we simplify the notation and no longer specify the variable $r$ explicitly.  As in the Hamiltonian case, continuity of the shell functionals in the following definition is immediate from~\eqref{equa-quada-M} and the construction of~$\Psi^\notreM$; it is included for clarity. We emphasize that the averages $\la 2\xh_l\Zperp+\Zpar_l\ra$ and the fluctuation operators $\Kappa^{\notreM(j)}[Z]$ in \eqref{equa-conditionM2} appear in the shell stability condition very differently from the Hamiltonian case.
Another major difference is that, due to infinite-dimensional space of conformal Killing vectors on the sphere~$\Sphe^2$, the lowest dimension $n=3$ is singled-out.
For this case, we introduce in~\eqref{def-Korn-loss} below a \emph{Korn loss functional} $\mathfrak{K}[Z]$ that measures the failure of an angular Korn inequality for $n=3$.

\begin{definition}
\label{def-shell-Mstab}   
Consider the momentum operator. 
A localization function satisfying the (momentum) harmonic and radial stability conditions \eqref{equa-stable-M-414} and~\eqref{equa-Xi-invertible} is said to satisfy the momentum \underline{shell stability condition} if the identity~\eqref{equ-shell-momen} enjoys the following properties.
\bse\label{equa-last-twoM}
\bei 
 
\item {\bf Continuity of the shell functionals.}
For any vector field $Z\colon\Omega_R\to\RR^n$ and any spherical shell $\Lambda$, and for $\beta\in\{a_{n,p},2a_{n,p}\}$, one has
\bel{equa-shell-M-coer}
0 \leq \Phi^\notreM[Z] \lesssim \|Z\|_{L^2_{- \expoP}(\Lambda)}^2 ,
\qquad
\bigl| \Psi^\notreM_{\beta}[Z] \bigr| \lesssim \|\vartheta Z\|_{L^2_{- \expoP}(\Lambda)}^2 + \|Z\|_{H^1_{- \expoP}(\Lambda)}^2 ,
\ee 

\item {\bf Semi-coercivity of the shell dissipation.}
There exists a Korn coefficient $\gamma_{\textnormal{Korn}}^{\notreM}>0$, only necessary in dimension $n=3$,
and there exists a positive shell coefficient\footnote{In Theorem~\ref{art2 -prop-semi-mdiss} below, we may take $\gamma_{\textnormal{shell}}^{\notreM}=a_{n,p}$.} $\gamma_{\textnormal{shell}}^{\notreM}>0$  such that for $\beta\in\{a_{n,p},2a_{n,p}\}$, for any vector field $Z\colon\Omega_R\to\RR^n$ and each spherical shell $\Lambda$, one has
\bel{equa-conditionM2}
\aligned
& \Psi^\notreM_{\beta}[Z] 
+ \gamma_{\textnormal{shell}}^{\notreM} \, \Bigl| \la 2 \,\xh \Zperp + \Zpar \ra - T(\Xi^\notreM)^{-1} \Kappa^\notreM[Z] \Bigr|^2
+ \delta_{n=3}\, \gamma_{\textnormal{Korn}}^{\notreM} \, \mathfrak{K}[Z]
\\
& \quad
\gtrsim \bigl\|\vartheta\Zperp,\, (\vartheta\Zpar+\nablaslash\Zperp),\, \Zperp\bigr\|_{L^2_{- \expoP}(\Lambda)}^2 + \bigl\|\Zpar\bigr\|_{H^1_{- \expoP}(\Lambda)}^2 .
\endaligned
\ee 
\eei 
\ese
The localization function is called \underline{momentum-stable} when the conditions~\eqref{equa-stable-M-414},~\eqref{equa-Xi-invertible}, and~\eqref{equa-last-twoM}  hold. 
\end{definition}


\subsubsection{A weighted conformal Korn inequality}

To motivate the loss functional~$\mathfrak{K}[Z]$ for $n=3$, we begin with a conformal Korn inequality that holds in all dimensions other than $n=3$.

Fix $n\geq 4$ for now.
An important ingredient in proving coercivity of $\ssrmB^\lambda$ is the conformal Korn inequality, which states that the quantity $\fint_{\Lambda}\bigl|\Sym(\nablaslash \xipar)^\circ\bigr|^2\,d\chi$ controls $\xipar$ modulo the conformal Killing fields of the $(n-1)$-sphere. A basis of the latter is given by
\bei

\item the \emph{collection of rotations} (on the sphere) $\zeta =\xh_l\nablaslash\xh_m- \xh_m\nablaslash\xh_l$ with components $\zeta_k=\delta_{km}\xh_l- \delta_{kl}\xh_m$, which are Killing fields, and 

\item the \emph{collection of translations} of~$\RR^n$ (projected to the sphere) $\zeta =\nablaslash\xh_l$ with components $\zeta_k=\delta_{kl}- \xh_k\xh_l$.
\eei
Namely, given some constant anti-symmetric matrix~$L^{lm}$ and some constant vector~$L^l$, the linear combination
\bse
\bel{art2 -equa-confk} 
\zeta = L^{lm}(\xh_l\nablaslash\xh_m- \xh_m\nablaslash\xh_l) + L^l \nablaslash\xh_l
\ee
enjoys the properties 
\bel{art2 -equa-conf-killing}
\zeta_k = L^k - 2L^{kl}\xh_l - L^l\xh_l \xh_k,
\qquad
\Sym(\nablaslash\zeta)^\circ=0,
\qquad
\nablaslash\cdot\zeta =-(n-1)L^k\xh_k.
\ee
\ese
These explicit expressions will be useful later on. At this stage, we simply point out the inequality \eqref{art2 -equa--1133-copie-2}, below, proving that these fields are the main obstacle  to reaching coercivity statements.
For brevity we do not distinguish the constant in front of $L^2$ and $\dot H^1$ norms.
(See \autoref{sectionN3- 2}.)

\begin{proposition}[A weighted conformal Korn inequality]
\label{art2 -lem-conf-korn}
In dimension $n\geq 4$,
for any localization function~$\lambda$, the following Korn inequality holds for a positive constant $C_{\Korn}^\lambda$ and for any vector field $\xi^\parallel$ tangent to~$\Lambda$: 
\bel{art2 -equa--1133-copie-2}
\aligned
& \fint_{\Lambda}   \bigl|\Sym(\nablaslash \xi^{\parallel} )^\circ\bigr|^2 \, d\chi 
\geq {1 \over (C_{\Korn}^\lambda)^2} \min_{\zeta} \fint_{\Lambda} 
\Bigl(
| \nablaslash(\xi^{\parallel} - \zeta) |^2  
+ | \xi^{\parallel} - \zeta |^2 
\Bigr) \, d\chi, 
\endaligned
\ee
where the minimum is taken over all conformal Killing fields $\zeta$ in \eqref{art2 -equa-confk}. 
\end{proposition}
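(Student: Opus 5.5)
The plan is to argue by contradiction and compactness, in the same way as for the Korn--Poincaré inequality (\autoref{art2 -lem-poinckorn}) and the fixed-domain Poincaré inequalities (\autoref{prop:three-weighted-poincare}). The conformal Killing fields~\eqref{art2 -equa-confk} form a finite-dimensional space~$\mathcal{C}$, of dimension $n(n-1)/2+n$. The minimum on the right-hand side of~\eqref{art2 -equa--1133-copie-2} is therefore attained by the $H^1_{-\expoP}$-orthogonal projection onto~$\mathcal{C}$. It thus suffices to prove
\[
\|\xi^\parallel\|_{H^1_{-\expoP}(\Lambda)}\lesssim \|\Sym(\nablaslash\xi^\parallel)^\circ\|_{L^2_{-\expoP}(\Lambda)}
\]
for every $\xi^\parallel$ that is $H^1_{-\expoP}$-orthogonal to~$\mathcal{C}$.

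\textbf{Step 1: a Gårding-type estimate.} I would first establish
\[
\|\nablaslash\xi^\parallel\|^2_{L^2_{-\expoP}}\lesssim \|\Sym(\nablaslash\xi^\parallel)^\circ\|^2_{L^2_{-\expoP}}+\|\xi^\parallel\|^2_{L^2_{-\expoP}},
\]
which is the weighted analogue of the classical estimate for the conformal Killing operator. Here the restriction $n-1\geq3$ is essential, because only then is $\Sym(\nabla\cdot)^\circ$ overdetermined elliptic.

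On compact subsets of~$\Lambda$ this estimate follows from the standard unweighted one, for instance via the identity
\[
\nablaslash^b\bigl(\Sym(\nablaslash\xi)^\circ\bigr)_{ab}=\tfrac12\Deltaslash\xi_a+\tfrac{n-3}{2(n-1)}\nablaslash_a(\nablaslash\cdot\xi)+\tfrac{n-2}{2}\xi_a ,
\]
whose principal symbol is injective precisely when $n-1\geq3$. Near $\del\Lambda$, I would localize with a partition of unity, as in the proof of~\eqref{equa-section5-weighted-rellich}. I would then integrate by parts against $\lambda^{2\expoP}$, writing the integral of $|\Sym(\nablaslash\xi)^\circ|^2$ as a weighted Bochner-type expression, namely $\tfrac12|\nablaslash\xi|^2+(\tfrac12-\tfrac1{n-1})(\nablaslash\cdot\xi)^2$ plus curvature terms. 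The error terms coming from derivatives of the weight take the form
\[
\expoP\,\lambda^{2\expoP-1}\nablaslash\lambda\cdot(\ldots),
\]
and these would be absorbed with Cauchy--Schwarz together with a weighted Hardy bound $\|\lambda^{-1}\xi\|_{L^2_{-\expoP}}\lesssim\|\xi\|_{H^1_{-\expoP}}$. That Hardy bound is valid because $\expoP\geq2$ and $\lambda$ vanishes linearly at the boundary. The divergence cross term has the good sign $\tfrac12-\tfrac1{n-1}>0$ exactly when $n\geq4$.

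\textbf{Step 2: contradiction.} Suppose the estimate fails. Then there is a sequence $\xi_j\perp\mathcal{C}$ with $\|\xi_j\|_{H^1_{-\expoP}}=1$ and $\|\Sym(\nablaslash\xi_j)^\circ\|_{L^2_{-\expoP}}\to0$. By the compact embedding~\eqref{equa-section5-weighted-rellich}, a subsequence converges weakly in $H^1_{-\expoP}$ and strongly in $L^2_{-\expoP}$ to some~$\xi_0$. Lower semicontinuity gives $\Sym(\nablaslash\xi_0)^\circ=0$ weakly, so $\xi_0$ is a conformal Killing field on the connected open set~$\Lambda$. By the Liouville-type rigidity for $\dim\geq3$ (interior elliptic regularity, then the finite-jet determination of conformal Killing fields), $\xi_0$ is the restriction of a global field of the form~\eqref{art2 -equa-confk}. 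Orthogonality passes to the weak limit, so $\xi_0=0$.

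Step 1 applied to $\xi_j$ then gives $\|\nablaslash\xi_j\|_{L^2_{-\expoP}}\to0$. Combined with the strong $L^2$ convergence $\xi_j\to0$, this contradicts the normalization $\|\xi_j\|_{H^1_{-\expoP}}=1$.

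\textbf{Main obstacle.} I expect the main difficulty to be Step 1 near $\del\Lambda$, where the weight degenerates. There one must check that the boundary-collar integration by parts yields error terms controllable by a weighted Hardy inequality and not worse. If the direct Bochner route is messy, the fallback is to flatten the collar, use coordinates $(z,t)$ with density $\simeq t^{2\expoP}dz\,dt$, and apply the Euclidean conformal Korn estimate on dyadic layers $t\sim2^{-k}$ with rescaling. This is legitimate because each layer is a fixed-shape Lipschitz domain, on which the unweighted inequality with a uniform constant is classical for dimension $\geq3$. Summing over the layers with weights $2^{-2\expoP k}$ would then recover the weighted bound, up to lower-order $L^2$ terms.
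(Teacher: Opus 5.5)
\textbf{Verdict: genuine gap in Step~1.} The paper states this inequality without proof (as it does for \autoref{art2 -equa--weighted-Korn}), so your argument has to stand on its own.

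\textbf{What works.} Your reduction to $\xi^\parallel$ orthogonal to the conformal Killing algebra is fine, and so is Step~2. That step is the compactness scheme of \autoref{art2 -lem-poinckorn}, and you invoke the Liouville rigidity for $n-1\geq 3$ correctly.

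\textbf{The gap in the Bochner route.} Everything rests on Step~1, the weighted G\r{a}rding (second Korn) inequality up to $\del\Lambda$. This is the entire analytic content of the proposition, and neither of your routes establishes it.
\begin{itemize}
\item Write $N=\nablaslash\log\lambda$. Integrating by parts against the weight produces $2\expoP(N\cdot\xi)(\nablaslash\cdot\xi)+(2\expoP^2-\expoP)(N\cdot\xi)^2$, plus lower-order terms. These sit alongside $\frac12|\nablaslash\xi|^2+(\frac12-\frac{1}{n-1})(\nablaslash\cdot\xi)^2$.
\item Hardy only bounds $\expoP\|\lambda^{-1}\xi\|_{L^2_{-\expoP}}$ by an $O(1)$ multiple of $\|\nablaslash\xi\|_{L^2_{-\expoP}}$. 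In one dimension the sharp factor is $2\expoP/(2\expoP-1)$.
\item So these terms have exactly the scale-invariant size of the main term, and Cauchy--Schwarz has nothing small to absorb them with. Pointwise, the form $(\frac12-\frac{1}{n-1})X^2+2XY+(2-\frac{1}{\expoP})Y^2$, with $X=\nablaslash\cdot\xi$ and $Y=\expoP\,N\cdot\xi$, is indefinite for every $n$.
\item The same identity holds for $n=3$ with zero divergence coefficient, and there the weighted G\r{a}rding estimate is false. On a cap with $\lambda$ comparable to the boundary distance, take a stereographic disc coordinate $z$ and set $\xi=f\,\del_z+\overline{f}\,\del_{\bar z}$ with $f=\sum_k k^{\expoP-1/2}z^k$. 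Then $\Sym(\nablaslash\xi)^\circ=0$ and $\xi\in L^2_{-\expoP}$, but $\nablaslash\xi\notin L^2_{-\expoP}$.
\item Hence a proof must use the coefficient $\frac12-\frac{1}{n-1}$ quantitatively against $O(1)$ errors. That coefficient is only $\frac16$ for $n=4$, and your sketch gives no mechanism for this.
\end{itemize}
Relatedly, $n\geq 4$ is not an ellipticity condition. The real symbols of $\Sym(\nablaslash\cdot)^\circ$ and of your second-order operator are injective for $n=3$ too; up to sign the latter has eigenvalues $\frac12|\eta|^2$ and $\frac{n-2}{n-1}|\eta|^2$. What fails at $n=3$ is injectivity of the complexified symbol, which is precisely what governs estimates up to the boundary.

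\textbf{The gap in the dyadic fallback.} It has the same defect.
\begin{itemize}
\item The layers $\{t\sim 2^{-k}\}$ are not of fixed shape after rescaling, since their aspect ratio is $2^k$.
\item On Whitney cubes $Q$ instead, the rescaled unweighted estimate carries the remainder $\ell(Q)^{-2}\|\xi\|_{L^2(Q)}^2$. These sum to $\|\lambda^{-1}\xi\|^2_{L^2_{-\expoP}}$.
\item That is a Hardy quantity comparable to $\|\nablaslash\xi\|^2_{L^2_{-\expoP}}$, not a lower-order term. Moreover $\xi\mapsto\lambda^{-1}\xi$ is not compact from $H^1_{-\expoP}$ to $L^2_{-\expoP}$ (take sequences concentrating at $\del\Lambda$), so Step~2 cannot dispose of it.
\end{itemize}

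\textbf{What would close Step~1.} You need a genuinely boundary-adapted argument. Possible routes:
\begin{itemize}
\item Local trace-free Korn inequalities \emph{modulo} conformal Killing fields on Whitney cubes, combined with a chaining argument that controls the differences between neighbouring local conformal Killing fields. This is the technique behind distance-power weighted Korn inequalities on John domains.
\item A flattened-collar analysis exploiting injectivity of the complexified symbol when $n-1\geq 3$.
\item A precise reference for such a weighted trace-free Korn inequality.
\end{itemize}
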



\subsubsection{Korn loss functional}

We now turn to $n=3$.
The set of \emph{local} conformal Killing vector fields on $\Sphe^2$ is well-known to be described by holomorphic functions, hence to be infinite-dimensional, in contrast to higher-dimensional spheres.
The rotation and translation vector fields~\eqref{art2 -equa-confk} constitute the set of \emph{global} conformal Killing vector fields, namely those that are defined on the whole sphere~$\Sphe^2$.

The analogue of \autoref{art2 -lem-conf-korn} requires a minimization over all conformal Killing vector fields on~$\Lambda$, which is infinite-dimensional except in the non-localized case $\Lambda=\Sphe^2$.  This infinite-dimensional obstruction space cannot be controlled in our application of the inequality.

Instead, we turn to the weighted Korn inequality in a $3$-dimensional domain
\be
\Omega_{r,2r} \coloneqq \{x \in \Omega_R \mid r<|x|<2r \} .
\ee
The inequality involves standard Killing vectors of Euclidean space, which are precisely the extensions to $\RR^3$ of the \emph{global} conformal Killing vector fields~\eqref{art2 -equa-confk} on~$\Sphe^2$.
The following inequality is a standard weighted Korn inequality.  For brevity we do not distinguish the constant in front of $L^2$ and $\dot H^1$ norms.
Note that we only seek to control the norm of $\xipar$ and not $\xiperp$, which could also be obtained.

\begin{proposition}[Korn inequality in a bounded domain]
\label{art2 -lem-korn-n3}
In dimension $n=3$,
for any localization function~$\lambda$, there exists a constant $C_{\Korn 3}^\lambda>0$ such that for any vector field $\xi=(\xiperp,\xipar)\colon\Omega_{r,2r}\to\RR^n$ one has
\be
\aligned
& \int_{\Omega_{r,2r}}   \bigl|\Sym(\del \xi )\bigr|^2 \, \lambda^{-2\expoP} dx 
\geq \frac{\aire[\Lambda,\lambda]}{(C_{\Korn 3}^\lambda)^2} \min_{\zeta} \int_r^{2r} \|\xipar - \zeta\|_{\unH^1_{-\expoP}(\Lambda_s)}^2 \frac{ds}{s} ,
\endaligned
\ee
where the minimum is taken over all rotation and translation fields~\eqref{art2 -equa-confk}.
The constant $C_{\Korn 3}^\lambda$ is independent of~$r$.
\end{proposition}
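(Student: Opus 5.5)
The statement is a weighted Korn inequality on the annular cone region $\Omega_{r,2r}$ in $\RR^3$. I would reduce it to a single fixed domain by scaling, then argue by compactness against the finite-dimensional space of Euclidean Killing fields.

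\emph{Scaling.} Since $\lambda$ is homogeneous of degree zero, the dilation $x\mapsto rx$ maps $\Omega_{1,2}$ onto $\Omega_{r,2r}$. Both sides of the inequality transform by the same power of $r$. On the left, $|\Sym(\del\xi)|^2\,dx$ scales like $r^{n-2}$ when $\xi$ is rescaled as $\xi(rx)$. On the right, the shell norms $\|\cdot\|_{\unH^1_{-\expoP}(\Lambda_s)}$ are taken with respect to the angular derivatives $\nablaslash$ on $\Lambda_s$. These are naturally $s\,\del$-type, and $ds/s$ is scale invariant. I would check this bookkeeping first, possibly absorbing a harmless normalizing factor of $r$ into the convention for $\unH^1_{-\expoP}(\Lambda_s)$. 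It then suffices to prove the inequality for $r=1$, which gives a constant independent of $r$.

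\emph{Weighted Korn on $\Omega_{1,2}$ modulo rigid motions.} I would write the weight as $\lambda^{2\expoP}$; the displayed $\lambda^{-2\expoP}$ should be read in accordance with $d\chi$. Polar coordinates identify $\Omega_{1,2}\cong(1,2)\times\Lambda$, with a measure comparable to $t^{2\expoP}$ near the lateral boundary, where $t$ is the distance to $\del\Lambda$. I would establish the weighted second Korn inequality
\be
\|\del\xi-\del\zeta\|_{L^2(w)}+\|\xi-\zeta\|_{L^2(w)}\lesssim\|\Sym(\del\xi)\|_{L^2(w)}
\ee
for the minimizing Euclidean Killing field $\zeta(x)=Ax+b$, with $A$ antisymmetric. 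The argument by contradiction runs as in \autoref{art2 -lem-poinckorn}. Take a sequence normalized in $H^1(w)$ and orthogonal to the Killing fields, with $\|\Sym\del\xi_j\|\to0$. Use the weighted Rellich compactness \eqref{equa-section5-weighted-rellich}, extended to the product $(1,2)\times\Lambda$, to extract a strong $L^2(w)$ limit. The limit has $\Sym\del\xi=0$ on the connected set $\Omega_{1,2}$, so it is a rigid motion, and it is therefore zero by orthogonality. The remaining step is to upgrade to strong $H^1(w)$ convergence, which would give the contradiction. For this upgrade I need a \emph{first} Korn inequality with weight: $\|\del\xi\|_{L^2(w)}\lesssim\|\Sym\del\xi\|_{L^2(w)}+\|\xi\|_{L^2(w)}$.

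\emph{Main obstacle.} The weighted first Korn inequality is the hard step, because $w\sim t^{2\expoP}$ degenerates at $\del\Lambda$. I would first try localization. On compact subsets, the classical Korn inequality applies. Near the lateral boundary, I would flatten to a half-space and use the fact that $t^{2\expoP}$ is a Muckenhoupt-type power weight. Korn's inequality holds for $A_2$ weights via the representation $\del^2\xi=$ (first derivatives of $\Sym\del\xi$) and Calder\'on--Zygmund boundedness. Since $t^{2\expoP}$ is not $A_2$ for large $\expoP$, I would instead use the Ne\v{c}as-type inequality on John domains with weights $t^{\alpha}$. An alternative is to use the product structure: reduce to a one-dimensional weighted Hardy argument in $t$, with unweighted tangential Korn estimates. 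If either approach works, a partition of unity concludes.

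\emph{Passing to shell norms.} Finally, decompose $\xi-\zeta$ into normal and tangential parts on each shell $\Lambda_s$. Express $\nablaslash(\xipar-\zeta^\parallel)$ through $s\,\del(\xi-\zeta)$ together with zeroth-order terms in $\xi-\zeta$. These come from the second fundamental form of the sphere and are bounded. Then integrate in $s$ over $(1,2)$, using $ds/s\simeq ds$ there. The restriction of $\zeta$ to $\Lambda_s$ is exactly a rotation plus translation field \eqref{art2 -equa-confk}, rescaled by $s$. Hence the minimum over $\zeta$ on the right is bounded by the value at this particular $\zeta$, which completes the bound.
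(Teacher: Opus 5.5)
The paper gives no argument for this proposition; it only calls it ``a standard weighted Korn inequality''. So there is no competing route to compare with, and the issue is internal to your proposal.

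\textbf{What is sound.} The scaling bookkeeping is fine. So is the compactness argument modulo Euclidean rigid motions, using weighted Rellich on $(1,2)\times\Lambda$. The passage from an $H^1$ bound on $\Omega_{1,2}$ to shell norms of $\xipar-\zeta$ is also correct. Reading the weight as $\lambda^{2\expoP}$ is harmless: since $\lambda\le\lambda_0$, that version implies the displayed one.

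\textbf{The gap.} All the content of the proposition lies in the step you leave open, the weighted first Korn inequality
\[
\|\lambda^{\expoP}\del\xi\|_{L^2(\Omega_{1,2})}\lesssim\|\lambda^{\expoP}\Sym(\del\xi)\|_{L^2(\Omega_{1,2})}+\|\lambda^{\expoP}\xi\|_{L^2(\Omega_{1,2})}.
\]
You stop at ``if either approach works'', so as written the proof is conditional. Note also that the Calder\'on--Zygmund route fails for every admissible $\expoP$, not only large ones: $t^{a}$ is an $A_2$ weight only for $-1<a<1$, whereas here $a=2\expoP\ge4$.

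\textbf{How to close it.} Set $v=\lambda^{\expoP}\xi$ and apply the classical unweighted Korn inequality $\|\del v\|\lesssim\|\Sym(\del v)\|+\|v\|$ on the Lipschitz domain $\Omega_{1,2}$. We have $\Sym(\del v)=\lambda^{\expoP}\Sym(\del\xi)+\expoP\lambda^{\expoP-1}\Sym(\del\lambda\otimes\xi)$, with $|\del\lambda|\lesssim1$ on $\Omega_{1,2}$. So everything reduces to the Korn--Hardy bound $\|\lambda^{\expoP-1}\xi\|\lesssim\|\lambda^{\expoP}\Sym(\del\xi)\|+\|\lambda^{\expoP}\xi\|$. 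This only needs proof in a thin collar $\{t<\delta\}$ of the lateral boundary, where $t$ is the distance to that boundary and $\lambda\simeq t$.

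In the collar, use the identity $\int t^{2\expoP-2}f^2=-\frac{2}{2\expoP-1}\int t^{2\expoP-1}f\,\del_t f$, valid for $f$ vanishing at $t=\delta$.
\begin{itemize}
\item For the normal component, $\del_t\xi_t$ is a component of $\Sym(\del\xi)$ up to zeroth-order terms. This controls $\|t^{\expoP-1}\xi_t\|$ by $\|t^{\expoP}\Sym(\del\xi)\|$.
\item For a tangential component, write $\del_t\xi_\tau=2\Sym(\del\xi)_{t\tau}-\del_\tau\xi_t$, again up to zeroth-order terms. Integrating the last term by parts in $\tau$ turns it into $\int t^{2\expoP-1}\Sym(\del\xi)_{\tau\tau}\,\xi_t$, which the previous step already controls.
\item Curvature and Jacobian corrections all carry a factor $t^{2\expoP-1}\le\delta\,t^{2\expoP-2}$, so they are absorbed for $\delta$ small.
\end{itemize}

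The one place needing care is the edge where the cone meets the caps $|x|=1,2$. There the tangential integration by parts in the radial direction produces boundary terms. Remove them by first extending $\xi$ across the caps with a Nitsche-type reflection in $\rho=|x|-1$. Apply it to the coordinate components of the one-form $\xi$ in the variables $(|x|,\xh)$: use $3\xi(-\rho)-2\xi(-2\rho)$ on the angular part and $-3\xi(-\rho)+4\xi(-2\rho)$ on the radial part. Its strain is a combination of reflected strains up to zeroth-order terms, and it leaves the weight $\lambda(\xh)^{2\expoP}$ unchanged. With this lemma in hand, your compactness argument closes as you describe.
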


\begin{definition}
The \textbf{Korn loss functional} of a scalar-vector field $Z=(\Zperp,\Zpar)$ is defined, in dimension $n=3$, as
\bel{def-Korn-loss}
\mathfrak{K}[Z] \coloneqq \bigl\|\Sym(\del Z)\bigr\|_{\unL^2_{-\expoP}(\Lambda)}^2
- \frac{1}{(C_{\Korn 3}^\lambda)^2} \min_\zeta \|\Zpar - \zeta\|_{\unH^1_{-\expoP}}^2 ,
\ee
where the minimum is taken over \emph{global} conformal vector fields of the sphere,
and where $C_{\Korn 3}^\lambda$ is defined in \autoref{art2 -lem-korn-n3}.
\end{definition}

To relate this functional to others expressed in the orthogonal and parallel components, it is useful to decompose
\be
\aligned
\bigl|\Sym(\del Z )\bigr|^2
& = \bigl|\Sym(\nablaslash\Zpar)^\circ\bigr|^2
+ (n-1) \Bigl|\Zperp + \frac{1}{n-1} \nablaslash\cdot\Zpar\Bigr|^2
\\
& \quad + \bigl|(\vartheta-1)\Zpar + \nablaslash\Zperp\bigr|^2 + |\vartheta\Zperp|^2 .
\endaligned
\ee
Thanks to the first term, the higher-dimensional analogue of this functional (for a suitable choice of constant) would be non-negative by \autoref{art2 -lem-conf-korn}.  Here, it is generally negative, but its radial integral  is non-negative by  \autoref{art2 -lem-korn-n3}:
\bel{def-Korn-loss-integral}
\int_r^{2r} \mathfrak{K}[Z] \frac{ds}{s} = \int_r^{2r} \Bigl( \bigl\|\Sym(\del Z)\bigr\|_{\unL^2_{-\expoP}(\Lambda)}^2
- \frac{1}{(C_{\Korn 3}^\lambda)^2} \min_\zeta \|\Zpar - \zeta\|_{\unH^1_{-\expoP}}^2 \Bigr) \frac{ds}{s} \geq 0 .
\ee
This integrated positivity property is sufficient for the Einstein gluing problem in~\cite{LL-optimal-main}.


\subsection{Control of the fluctuation operator}
\label{art2 -section=3.3}

\subsubsection{Momentum averages}

For clarity in the presentation, we work throughout this section and \autoref{section-9} in dimension
\be
n\geq 4 .
\ee
The proof for $n=3$ is obtained by tracking all uses of the conformal Korn inequality~\eqref{art2 -korn-ineq-11}, below, and adding the corresponding multiple of the loss functional~$\mathfrak{K}[Z]$.

This section and the next are devoted to deriving momentum shell stability as a consequence of geometric inequalities.  We bound the fluctuation operator~$\Kappa^\notreM$ here in \autoref{art2 -section=3.3}, and, in \autoref{art2 -section=3.4}, we prove semi-coercivity of the dissipation functionals~$\Psi^\notreM_\beta$.
Motivated by this later section, we introduce now a quadratic functional that controls some first-order derivatives of~$Z$, as well as its $L^2_{- \expoP}$ norm with some small coefficient~$a_{n,p}^{1/2}$,
\bel{art2 -equa-eofz}
\aligned
\Ecal[Z]
& \coloneqq   
\bigl\|  (\vartheta-1) \Zpar + \nablaslash\Zperp  \bigr\|_{L^2_{- \expoP}(\Lambda_r)}^2 
+ \bigl\| \vartheta\Zperp  \bigr\|_{L^2_{- \expoP}(\Lambda_r)}^2 
+ \bigl\| \Zperp + \nablaslash\cdot\Zpar / (n-1) \bigr\|_{L^2_{- \expoP}(\Lambda_r)}^2 
\\
& \, \quad + \bigl\|\Sym(\nablaslash\Zpar)^\circ\bigr\|_{L^2_{- \expoP}(\Lambda_r)}^2 
+  a_{n,p} \, \bigl\|\Zpar\bigr\|_{L^2_{- \expoP}(\Lambda_r)}^2
+  a_{n,p} \, \bigl\|\Zperp\bigr\|_{L^2_{- \expoP}(\Lambda_r)}^2 .
\endaligned
\ee
In this section, we provide a simplified expression of the fluctuation operator~$\Kappa^\notreM[Z]$ up to some error terms that are controlled in terms of~$\Ecal[Z]$.

Throughout this section and the next, we fix once and for all a conformal Killing vector $\zeta_Z=\zeta(\Zpar) = L_Z^l \nablaslash\xh_l + 2L_Z^{lm}\xh_l\nablaslash\xh_m$ as before (with antisymmetric $L_Z^{lm}=-L_Z^{ml}$), chosen so that the Korn inequality of \autoref{art2 -lem-conf-korn} is satisfied, namely
\bel{art2 -korn-ineq-11}
{1 \over (C_{\Korn}^\lambda)^2} \Bigl( \bigl\| \nablaslash(\Zpar - \zeta_Z) \bigr\|_{L^2_{- \expoP}}^2  
+ \bigl\| \Zpar - \zeta_Z \bigr\|_{L^2_{- \expoP}}^2 \Bigr)
\leq \bigl\|\Sym(\nablaslash\Zpar)^\circ\bigr\|_{L^2_{- \expoP}}^2 \leq \Ecal[Z] .
\ee
First, we deduce an approximation for the average that appears in the shell stability condition.

\begin{lemma}[Expression of the momentum averages]
\label{art2 -prop-average-m}
In any localization domain, the averages of $Z$ are expressed as follows in terms of $L_Z^\bullet$ and~$L_Z^{\bullet\bullet}$ defined above,
\be
\Bigl| \bigl\la 2\xh_l\Zperp+\Zpar{}_l\bigr\ra -  T_{kl} L_Z^k - 2 L_Z^{kl} \la\xh_k\ra\Bigr|
\lesssim \bigl( 1 + C_{\Korn}^\lambda \bigr) \Ecal[Z]^{1/2} .
\ee
\end{lemma}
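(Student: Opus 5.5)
The plan is to compare $Z$ with the exact conformal Killing profile determined by $\zeta_Z$, and to control the difference through the two terms of $\Ecal[Z]$ together with the Korn bound~\eqref{art2 -korn-ineq-11}. Specifically, I would set
\be
\zeta^\perp_Z \coloneqq -\frac{1}{n-1}\nablaslash\cdot\zeta_Z = L_Z^k\xh_k .
\ee
This identity follows from~\eqref{art2 -equa-conf-killing}, since the rotation part is divergence-free. I would then split
\be
\bigl\la 2\xh_l\Zperp+\Zpar{}_l\bigr\ra
= \bigl\la 2\xh_l\zeta^\perp_Z+\zeta_{Z\,l}\bigr\ra
+ \bigl\la 2\xh_l(\Zperp-\zeta^\perp_Z)+(\Zpar-\zeta_Z)_l\bigr\ra ,
\ee
and treat the two pieces separately.

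\emph{Step 1: exact average of the model field.} Write $\zeta_Z = L_Z^l\nablaslash\xh_l + 2L_Z^{lm}\xh_l\nablaslash\xh_m$. The ambient components are $(\nablaslash\xh_l)_k=\delta_{kl}-\xh_k\xh_l$. In the rotation part, the normal projection $L_Z^{lm}\xh_l\xh_m$ vanishes by antisymmetry. Hence
\be
\zeta_{Z\,l} = L_Z^l - L_Z^k\xh_k\xh_l + 2L_Z^{kl}\xh_k .
\ee
Adding $2\xh_l L_Z^k\xh_k$ and averaging gives
\be
L_Z^l + \la\xh_k\xh_l\ra L_Z^k + 2L_Z^{kl}\la\xh_k\ra = T_{kl}L_Z^k + 2L_Z^{kl}\la\xh_k\ra .
\ee
This is exactly the main term in the statement, so the model piece contributes no error. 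The only point needing care is the index and sign convention of $L^{kl}$ compared with~\eqref{art2 -equa-conf-killing}; I would double-check it, but it is pure algebra.

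\emph{Step 2: the remainder.} Since $d\chi$ is normalized and $|\xh|\le1$, Cauchy--Schwarz (Jensen) bounds the remainder by
\be
2\|\Zperp-\zeta^\perp_Z\|_{L^2_{-\expoP}} + \|\Zpar-\zeta_Z\|_{L^2_{-\expoP}} .
\ee
The second term is at most $C_\Korn^\lambda\Ecal[Z]^{1/2}$ by~\eqref{art2 -korn-ineq-11}. For the first, write
\be
\Zperp-\zeta^\perp_Z = \Bigl(\Zperp+\frac{\nablaslash\cdot\Zpar}{n-1}\Bigr) - \frac{1}{n-1}\nablaslash\cdot(\Zpar-\zeta_Z) .
\ee
The first bracket is one of the terms in~\eqref{art2 -equa-eofz}, so its norm is at most $\Ecal[Z]^{1/2}$. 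The divergence satisfies $|\nablaslash\cdot w|\le\sqrt{n-1}\,|\nablaslash w|$, so the gradient part of~\eqref{art2 -korn-ineq-11} bounds it by $C_\Korn^\lambda\Ecal[Z]^{1/2}$. Summing gives $(1+C_\Korn^\lambda)\Ecal[Z]^{1/2}$ up to a dimensional constant.

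There is no genuine obstacle here. The step requiring the most attention is Step~1, namely getting the tangential projection of the rotation fields and the antisymmetry conventions right, so that the main term matches $T_{kl}L_Z^k+2L_Z^{kl}\la\xh_k\ra$ exactly rather than up to a constant multiple. Note also that the argument uses $\Ecal[Z]$ only through the combination $\Zperp+\nablaslash\cdot\Zpar/(n-1)$ and the conformal Korn bound, so it is valid on any localization domain, with no smallness assumption.
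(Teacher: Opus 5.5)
Your proposal is correct and is essentially the paper's proof: your $\Zperp-\zeta^\perp_Z$ is the paper's $\sigma_Z=\Zperp-L_Z^k\xh_k$, your Step~1 is the identity $T_{kl}L_Z^k+2L_Z^{kl}\la\xh_k\ra=\la 2\xh_l L_Z^k\xh_k+\zeta_{Z\,l}\ra$, and your Step~2 gives the same bound $2\|\sigma_Z\|+\|\Zpar-\zeta_Z\|\lesssim(1+C_\Korn^\lambda)\Ecal[Z]^{1/2}$, using the same Korn and divergence estimates. Your sign and index conventions for the rotation part, $2L_Z^{kl}\xh_k$ in the $l$-th component, are consistent with the paper's component formula for conformal Killing fields, so the main term matches exactly.
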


\begin{proof}
\bse
We introduce the difference
\be
\sigma_Z \coloneqq \Zperp - L_Z^k \xh_k = \Zperp + \frac{1}{n-1} \nablaslash\cdot\zeta_Z ,
\ee  
which is controlled as
\bel{art2 -sigmaz-bnd}
\aligned
\|\sigma_Z\|_{L^2_{- \expoP}(\Lambda_r)}
& \leq \Bigl\|\Zperp + \frac{1}{n-1} \nablaslash\cdot\Zpar\Bigr\|_{L^2_{- \expoP}(\Lambda_r)}
+ \frac{1}{n-1} \Bigl\|\nablaslash\cdot(\Zpar- \zeta_Z)\Bigr\|_{L^2_{- \expoP}(\Lambda_r)}
\\
& \leq \bigl(1+(n-1)^{-1/2} C_{\Korn}^\lambda\bigr) \Ecal[Z]^{1/2} .
\endaligned
\ee
Observe that $T_{kl}L_Z^k+2L_Z^{kl}\la\xh_k\ra = - 2(n-1)^{-1}\la\xh_l\nablaslash\cdot\zeta_Z\ra+\la\zeta_{Z\,l}\ra$, so
\be
\aligned
\bigl| \la 2\xh_l\Zperp+\Zpar{}_l\ra -  T_{kl} L_Z^k - 2 L_Z^{kl} \la\xh_k\ra\bigr|
& = \bigl| \la 2\xh_l\sigma_Z\ra + \la(\Zpar- \zeta_Z)_l\ra\bigr|
\\
& \leq 2 \|\sigma_Z\|_{L^2_{- \expoP}} + \|\Zpar- \zeta_Z\|_{L^2_{- \expoP}} ,
\endaligned
\ee
which combines with the Korn inequality and the control of $\sigma_Z$ to yield the desired bound.
\ese
\end{proof}


\subsubsection{Fluctuation operator}

An approximation of $\Kappa^\notreM[Z]$ is established next, following the same proof strategy as for the smallness of the Hamiltonian fluctuation operator $\Kappa^\notreH[Z]$ stated in \autoref{art2 -prop-k-fluctuations}.

\begin{proposition}[Control of the momentum fluctuation operator]
\label{art2 -prop-k-fluctuations-m}
In any localization domain, for all sufficiently small $a_{n,p}$ (depending on the localization domain), the fluctuation operator is bounded as follows ($l=1,\dots,n$)
\bel{art2 -equa-kmomentineq} 
\aligned
\bigl| \bigl(T (\Xi^\notreM)^{-1} \Kappa^\notreM[Z]\bigr)_l - 2 L_Z^{kl} \la\xh_k\ra \bigr|
& \lesssim (1 + C_{\Korn}^\lambda) (\CzeroKP+\ConeKP) |S^{- 2}| \,  \Ecal[Z]^{1/2} .
\endaligned
\ee
\end{proposition}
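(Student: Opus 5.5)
The plan is to mirror the Hamiltonian argument of \autoref{art2 -prop-k-fluctuations} and \autoref{art2 -lem-kappanotreh-control}. I would use the explicit expression of $\Kappa^\notreM[Z]$ from~\cite{LL-optimal-main}, which I have not reproduced here. It is obtained by contracting the momentum equation with the silhouette fields $\xi^{\normal(j)}$ and the cokernel elements $\xistar_k$. I take it to be a linear functional of $Z$ and $\vartheta Z$, with coefficients given by the fluctuations $\xi^{\normal(j)\fluc}$ and their first angular derivatives, up to terms involving only averages of $\xi^{\normal(j)}$. Whether the mean parts of $\xi^{\normal(j)}$ really enter only through averages is an assumption about that formula.

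\textbf{Splitting $Z$.} Using the conformal Killing field $\zeta_Z$ fixed in~\eqref{art2 -korn-ineq-11} and $\sigma_Z = \Zperp - L_Z^k\xh_k$ from the proof of \autoref{art2 -prop-average-m}, I would write
\be
Z = \bigl(L_Z^k\xh_k,\ \zeta_Z\bigr) + \bigl(\sigma_Z,\ \Zpar-\zeta_Z\bigr).
\ee
The second piece is controlled in $L^2_{-\expoP}$ by $(1+C_{\Korn}^\lambda)\Ecal[Z]^{1/2}$, by~\eqref{art2 -sigmaz-bnd} and~\eqref{art2 -korn-ineq-11}. Its $\vartheta$-derivatives and $\nablaslash\sigma_Z$ are also controlled, through the terms $\|(\vartheta-1)\Zpar+\nablaslash\Zperp\|$ and $\|\vartheta\Zperp\|$ in $\Ecal[Z]$. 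For the first piece I would not estimate anything. Instead I would evaluate $\Kappa^\notreM$ on it exactly, which is a finite-dimensional computation in the parameters $L_Z^k$ and $L_Z^{kl}$ (taken constant on each shell, with the radial dependence absorbed into the remainder). The identities~\eqref{art2 -equa-conf-killing} should turn it into averages of the form $\la\xh\ra$ and $\la\xh\xh\ra$ contracted with the structure matrices.

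\textbf{Exact contributions.} I expect two facts.
\begin{itemize}
\item The translation part $L_Z^k\xistar_k$ gives no contribution beyond what is already encoded in $\Xi^\notreM$, because $(\xistar_k)^\fluc=0$.
\item The rotation part gives exactly $\Xi^\notreM T^{-1}$ applied to $\bigl(2L_Z^{kl}\la\xh_k\ra\bigr)_l$.
\end{itemize}
Multiplying by $T(\Xi^\notreM)^{-1}$ would then produce the subtracted term $2L_Z^{kl}\la\xh_k\ra$ in~\eqref{art2 -equa-kmomentineq}.

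\textbf{Remainder.} For the rest of $\Kappa^\notreM[Z]$ I would use Cauchy--Schwarz. The coefficient norms are bounded by~\eqref{art2 -equa8d0} as
\be
\|\xi^{\normal\fluc}\|_{H^1_{-\expoP}} \lesssim (\CzeroKP+\ConeKP)\,\frac{a_{n,p}}{1-\eps}\,\bigl|v_{(j)}\matQ^{(j)\bullet}\bigr|_U .
\ee
This gives a bound of the form $a_{n,p}(\CzeroKP+\ConeKP)(1+C_{\Korn}^\lambda)\,|\matQ|\,\Ecal[Z]^{1/2}$ for the remainder of $\Kappa^\notreM[Z]$. It remains to apply $T(\Xi^\notreM)^{-1}$. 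From~\eqref{art2 -avxi}, namely $\matQ^{-1}\Xi^\notreM\geq a_{n,p}S^2$ in the sense of symmetric parts, I get
\be
\bigl|(\Xi^\notreM)^{-1}\matQ\bigr| \leq \frac{|S^{-2}|}{a_{n,p}} .
\ee
The factor $a_{n,p}$ then cancels, and since $|T|\le 2$ this yields the claimed constant $(1+C_{\Korn}^\lambda)(\CzeroKP+\ConeKP)|S^{-2}|$. The smallness of $a_{n,p}$ is needed here, through~\eqref{art2 -coro-radial-momen-anprange}, so that $\eps<1$ and \autoref{art2 -coro-radial-momen} applies.

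\textbf{Main obstacle.} The hard step is the exact evaluation of $\Kappa^\notreM$ on the conformal Killing part. In particular, the translation contribution must cancel exactly rather than merely be small. Otherwise $L_Z^k$, which is not controlled by $\Ecal[Z]$, would remain. I would first try to verify this cancellation directly: compute $\ssB^\lambda[\xistar_k]$ (as in step~1 of the proof of \autoref{art2 -coro-radial-momen}) and use the definition~\eqref{Xi-pieces} of $\zeroXi$ and $\matQ$ to recognize the resulting averages as entries of $\Xi^\notreM$.
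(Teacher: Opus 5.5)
Your last step matches the paper. You bound $\|\xi^\fluc\|_{H^1_{-\expoP}}$ by the silhouette-fluctuation estimate and combine $(\Xi^\notreM)^{-1}$ with $\matQ$ through $\matQ^{-1}\Xi^\notreM\geq a_{n,p}S^2$, so that $a_{n,p}$ cancels. The translations also drop out, but trivially, so they are not the obstacle: $\Kappa^\notreM$ only sees $Z^\fluc$, and $(\xistar_k)^\fluc=0$. The gap is in your two intermediate claims.

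\textbf{First claim.} $\Ecal[Z]$ controls only the combinations $\vartheta\Zperp$ and $(\vartheta-1)\Zpar+\nablaslash\Zperp$. It does not control $\vartheta\sigma_Z$, $\vartheta(\Zpar-\zeta_Z)$ or $\nablaslash\sigma_Z$. Since $\zeta_Z$ is chosen shell by shell, nothing controls $\vartheta L_Z$.

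\textbf{Second claim.} The Killing part does not evaluate to the target. Set $\xi=\alpha^kT_{kl}((\Xi^\notreM)^{-1})^l{}_{(j)}\xi^{\normal(j)}$, let $\Kappa^\notreM[\xi,\cdot]$ be the matching combination, and let $\psi=2L_Z^{lm}\xh_l\nablaslash\xh_m$. Testing $\ssB^\lambda[\xi]=0$ against $\psi$, a direct computation gives
\[
\Kappa^\notreM\bigl[\xi,(0,\psi)\bigr]
= 2\alpha^l L_Z^{kl}\la\xh_k\ra
- 2\bigl((\vartheta-1)L_Z^{lk}\bigr)\la\xh_l\,\xi^{\fluc\parallel}{}_k\ra .
\]
Take $Z=(0,r\psi_0)$ with $\psi_0$ a fixed rotation field. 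Then $\Ecal[Z]=a_{n,p}\|Z\|^2_{L^2_{-\expoP}}$, the Korn remainder $(\sigma_Z,\Zpar-\zeta_Z)$ vanishes identically, and $|L_Z^{\bullet\bullet}|\gtrsim a_{n,p}^{-1/2}\Ecal[Z]^{1/2}$. Freezing $L_Z$, as you propose, leaves the error $2L_Z^{lk}\la\xh_l\xi^{\fluc\parallel}{}_k\ra$, which has that size. It is cancelled only by the $\vartheta L_Z$ contribution you meant to absorb into an $O(\Ecal[Z]^{1/2})$ remainder. So neither piece of your splitting is controlled on its own.

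\textbf{The missing regrouping.} Regroup before decomposing. Using $\nablaslash\Zperp-\nablaslash Z^{\fluc\perp}=\Zpar-Z^{\fluc\parallel}$, the paper writes
\[
\begin{aligned}
\Kappa^\notreM[\xi,Z]
&= -2\la\xi^{\fluc\perp}\,\vartheta\Zperp\ra
- \bigl\la\xi^{\fluc\parallel}\cdot\bigl((\vartheta-1)\Zpar+\nablaslash\Zperp\bigr)\bigr\ra
\\
&\quad - a_{n,p}\bigl\la 2\xi^{\fluc\perp}\Zperp+\xi^{\fluc\parallel}\cdot\Zpar\bigr\ra
+ \bigl\la(\nablaslash\xi^{\fluc\perp}-\xi^{\fluc\parallel})\cdot Z^{\fluc\parallel}\bigr\ra .
\end{aligned}
\]
Every derivative of $Z$ now enters through combinations that appear in $\Ecal[Z]$, so the first three terms are $O(\Ecal[Z]^{1/2}\|\xi^\fluc\|)$.

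\textbf{The rotational part.} The Korn splitting is then inserted only into the derivative-free $Z^{\fluc\parallel}$. There the translations cancel, and the rotation contributes $\la(\nablaslash\xi^{\fluc\perp}-\xi^{\fluc\parallel})\cdot(0,\psi)^{\fluc\parallel}\ra$. The identity from testing $\ssB^\lambda[\xi]=0$ against $\psi$ (using $\nablaslash\cdot\psi=0$ and $\Sym(\nablaslash\psi)=0$) turns this into $2\alpha^lL_Z^{kl}\la\xh_k\ra+2a_{n,p}L_Z^{lk}\la\xi^{\fluc\parallel}{}_k\xh_l\ra$. The leftover now carries an explicit factor $a_{n,p}$.

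\textbf{The coefficient bound.} The leftover is controlled by the covariance bound $|L_Z^{\bullet\bullet}|\lesssim|\matV^{-1/2}|(a_{n,p}^{-1/2}+1+C_{\Korn}^\lambda)\Ecal[Z]^{1/2}$. This follows from $\|L_Z^k\xh_k\|^2+\|\zeta_Z\|^2=\sum_k(L_Z^k+2L_Z^{lk}\la\xh_l\ra)^2+4L_Z^{kl}L_Z^{ml}\matV_{km}$. It produces a factor $a_{n,p}^{1/2}|\matV^{-1/2}|$. This, and not only $\eps<1$, is why $a_{n,p}$ must be small depending on the domain.

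Your plan lacks all three of these ingredients: the regrouping, the moment identity from the equation, and the bound on the rotation coefficients.
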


We restate the definition of the fluctuation operators here (cf.~\cite[Definition 9.7]{LL-optimal-main}) in the notation of the present paper, so that the proof below is self-contained.

\begin{definition}
\label{def:KappaM}
The \textbf{fluctuation operators} $\Kappa^{\notreM(j)}$ are linear functionals of the fluctuation~$Z^\fluc$ and its derivatives given by
\be
\aligned
\Kappa^{\notreM(j)}[Z^\fluc]
&= \bigl\la \nablaslash \xi^{\normal (j) \fluc\perp}
\cdot Z^{\fluc\parallel}\bigr\ra
- \bigl\la\xi^{\normal (j) \fluc\parallel}
\cdot \nablaslash Z^{\fluc\perp} \bigr\ra
\\
&\quad
- (\vartheta+a_{n,p}) \bigl\la
2 \xi^{\normal (j) \fluc\perp} Z^{\fluc\perp}
+ \xi^{\normal (j) \fluc\parallel}\cdot Z^{\fluc\parallel}\bigr\ra.
\endaligned
\ee
\end{definition}


\subsubsection{Proof of \autoref{art2 -prop-k-fluctuations-m}}

{\bf 1. \it Setup.}
The fluctuation operator $\Kappa^{\notreM(j)}$ given in \autoref{def:KappaM} is associated to a silhouette vector~$\xi^{\normal(j)}$.
We consider a linear combination~$\xi$ of silhouette vectors, parametrized by a constant vector $\alpha\in\RR^n$, and the corresponding linear combination $\Kappa^\notreM[\xi,Z]$ of fluctuation operators, which is a scalar:
\bel{art2 -xichoice-alphatxiinvxi}
\xi = \alpha^k T_{kl} ((\Xi^\notreM)^{-1})^l{}_{(j)} \xi^{\normal(j)} \in \ker(\ssB^\lambda) , \qquad
\Kappa^\notreM[\xi,Z] = \alpha^k T_{kl} ((\Xi^\notreM)^{-1})^l{}_{(j)} \Kappa^{\notreM(j)}[Z] .
\ee
(By taking $\alpha$ to be standard basis vectors of~$\RR^n$, we retrieve each component of the vector $T (\Xi^\notreM)^{-1} \Kappa^\notreM[Z]$ used in the proposition.)
Our strategy is to isolate contributions bounded by each term in $\Ecal[Z]$ in turn.
It is essential to distinguish terms that involve the silhouette fluctuations $\xi^{\normal (j) \fluc}$ (with  the tilde notation~\eqref{equa-tildemoment}), which are suppressed compared to the averages. We also keep track here of the fluctuation~$Z^{\fluc}$, and in particular its parallel component~$Z^{\fluc\parallel}$.

\medskip

\bse
\noindent{\bf 2. \it Reducing to the parallel fluctuation.}
We first set aside radial derivatives of~$Z$, and terms with~$a_{n,p}$,
\be
\aligned
\Kappa^\notreM[\xi,Z]
& = \bigl\la \nablaslash \xi^{\fluc\perp} \cdot Z^{\fluc\parallel} - \xi^{\fluc\parallel} \cdot \nablaslash Z^{\fluc\perp} \bigr\ra
- (\vartheta+a_{n,p}) \bigl\la 2 \xi^{\fluc\perp} Z^{\perp} + \xi^{\fluc\parallel} \cdot Z^{\parallel}\bigr\ra
\\
& = - 2 \bigl\la \xi^{\fluc\perp} \vartheta Z^{\perp} \bigr\ra
- \bigl\la \xi^{\fluc\parallel} \cdot ((\vartheta-1) Z^{\parallel} + \nablaslash Z^{\perp}) \bigr\ra
\\
& \quad - 2 a_{n,p} \bigl\la \xi^{\fluc\perp} Z^{\perp} \bigr\ra - a_{n,p} \bigl\la \xi^{\fluc\parallel} \cdot Z^{\parallel}\bigr\ra 
+ \bigl\la(\nablaslash \xi^{\fluc\perp} - \xi^{\fluc\parallel}) \cdot Z^{\fluc\parallel} \bigr\ra ,
\endaligned
\ee
where we used that $\nablaslash\Zperp - \nablaslash Z^{\fluc\perp} = \Zpar - Z^{\fluc\parallel}$.
The first four terms are bounded in terms of $\xi^{\fluc}$ and $\Ecal[Z]^{1/2}$ thanks to $a_{n,p}\lesssim a_{n,p}^{1/2}$. Hence, we have
\be
\aligned
\bigl| \Kappa^\notreM[\xi,Z] - \bigl\la(\nablaslash \xi^{\fluc\perp} - \xi^{\fluc\parallel}) \cdot Z^{\fluc\parallel} \bigr\ra \bigr|
& \lesssim \Ecal[Z]^{1/2} \, \|\xi^{\fluc}\|_{L^2_{- \expoP}(\Lambda_r)}, 
\endaligned
\ee
where the implied constant is only dimension-dependent.
\ese

\medskip

\bse
\noindent{\bf 3. \it Reducing to rotation Killing vectors.}
So far we have not used all terms in~$\Ecal[Z]$.
We exhibit in $Z^{\fluc\parallel}$ the differences $\Zpar- \zeta_Z$ and $\sigma_Z$ (controlled in \eqref{art2 -korn-ineq-11} and~\eqref{art2 -sigmaz-bnd}, respectively):
\be
\aligned
Z^{\fluc\parallel}
& = \Zpar - (T^{-1})^{kl} \, \bigl\la 2 \xh_l \Zperp + \Zpar{}_l \bigr\ra \nablaslash\xh_k
\\
& = \Bigl( (\Zpar- \zeta_Z) - (T^{-1})^{kl} \, \la\Zpar{}_l- \zeta_{Z\,l}\ra \nablaslash\xh_k \Bigr)
\\
& \quad
- 2 \bigl\la \xh_l \sigma_Z \bigr\ra (T^{-1})^{kl} \nablaslash \xh_k
+ L_Z^{lm} X^k{}_{lm} \nablaslash \xh_k ,
\endaligned
\ee
in terms of the following combination of coordinates (which is antisymmetric in $l\leftrightarrow m$)
\be
\aligned
X^k{}_{lm}
& = \delta^k_m \xh_l - \delta^k_l \xh_m + (T^{-1})^{kl} \la\xh_m\ra - (T^{-1})^{km} \la\xh_l\ra
\\
& = \delta^k_m (\xh_l- \la\xh_l\ra)
+ (T^{-1})^{kq} \Bigl\la(\xh_q- \la\xh_q\ra)(\xh_m- \la\xh_m\ra)\Bigr\ra \la\xh_l\ra
- (l\leftrightarrow m) 
\\
& = \Obig(\diam(\Lambda)) .
\endaligned
\ee
Bounding all terms except $L_Z^{lm} X^k{}_{lm} \nablaslash \xh_k$ by a multiple of $\Ecal[Z]^{1/2}$, one gets
\bel{art2 -xiz-to-rotation}
\aligned
\bigl| \Kappa^\notreM[\xi,Z] - L_Z^{lm} \bigl\la(\nablaslash \xi^{\fluc\perp} - \xi^{\fluc\parallel})_k X^k{}_{lm} \bigr\ra \bigr|
& \lesssim (1 + C_{\Korn}^\lambda) \Ecal[Z]^{1/2} \, \|\xi^{\fluc}\|_{H^1_{- \expoP}(\Lambda_r)} .
\endaligned
\ee
\ese

\medskip

\bse
\noindent{\bf 4. \it First moments of the silhouette vectors.}
We analyze further the left-hand side of~\eqref{art2 -xiz-to-rotation}.
Note that $\nablaslash \xi^{\fluc\perp} - \xi^{\fluc\parallel} = \nablaslash\xiperp - \xipar$ hence we are interested in finding bounds on the averages $\la(\nablaslash \xiperp - \xipar)_k X^k{}_{lm}\ra$, which can be thought of as moments of~$\xi$.
To get an equation for these moments, we consider the (rotation) Killing vector field $\psi = 2 L_Z^{lm} \xh_l\nablaslash\xh_m$.
It obeys $\nablaslash\cdot\psi=0$ and $\Sym(\nablaslash\psi)=0$.
Expanding $\ssB^{\lambda}[\xi]=0$ using the explicit form of~$\ssB^{\lambda}$ in \eqref{Brs-expr-main} yields
\be
0 = 2 \fint_\Lambda \psi \cdot \ssB^\lambda[\xi] d\chi
= 2 a_{n,p} L_Z^{lk} \bigl\la \xipar{}_k \xh_l \bigr\ra
- 2 L_Z^{lk} \bigl\la(\nablaslash\xi^{\fluc\perp} - \xi^{\fluc\parallel})_k \xh_l\bigr\ra .
\ee
By decomposing the vector~$\xi$ into its fluctuations $\xi^{\fluc}$ and averages $P^k(\xi) = (T^{-1})^{kl} \la 2\xh_l\xiperp+\xipar{}_l\ra$ this allows us to write
\be
\aligned
& L_Z^{lm} \bigl\la(\nablaslash \xi^{\fluc\perp} - \xi^{\fluc\parallel})_k X^k{}_{lm} \bigr\ra
\\
& = 2 a_{n,p} L_Z^{lk} \bigl\la \xi^{\fluc\parallel}{}_k \xh_l \bigr\ra - 2 L_Z^{lm} \la\xh_m\ra (T^{-1})^{lk} \bigl\la- \nablaslash_k \xiperp + 2a_{n,p}\xh_k\xiperp + (1+a_{n,p}) \xipar{}_k \bigr\ra .
\endaligned
\ee
We recognize the same average as that defining~$\Xi^\notreM$, so that the expression in terms of $\alpha$ (see~\eqref{art2 -xichoice-alphatxiinvxi}) simplifies:
\bel{art2 -LZlmbiglanxi}
L_Z^{lm} \bigl\la(\nablaslash \xi^{\fluc\perp} - \xi^{\fluc\parallel})_k X^k{}_{lm} \bigr\ra
= 2 a_{n,p} L_Z^{lk} \bigl\la \xi^{\fluc\parallel}{}_k \xh_l \bigr\ra + 2 \alpha^l L_Z^{kl} \la\xh_k\ra .
\ee
\ese
The second term is the approximation of $\Kappa^\notreM$ that we aim for, so there remains to control the first term.

\medskip

\bse
\noindent{\bf 5. \it Control of the conformal Killing field.}
From the bound $a_{n,p}\|\Zperp,\Zpar\|_{L^2_{- \expoP}(\Lambda)}^2\leq\Ecal[Z]$ one gets a control of coefficients $L_Z^\bullet$ and $L_Z^{\bullet\bullet}$, using \eqref{art2 -korn-ineq-11} and~\eqref{art2 -sigmaz-bnd},
\bel{art2 -bothbounded34}
\aligned
\|\zeta_Z\|_{L^2_{- \expoP}}
& \leq \|\Zpar\|_{L^2_{- \expoP}} + \|\Zpar- \zeta_Z\|_{L^2_{- \expoP}}
\leq \bigl( a_{n,p}^{-1/2} + C_{\Korn}^\lambda \bigr) \Ecal[Z]^{1/2} ,
\\
|SL_Z^\bullet| = \|L_Z^k \xh_k\|_{L^2_{- \expoP}}
& \leq \|\Zperp\|_{L^2_{- \expoP}} + \|\sigma_Z\|_{L^2_{- \expoP}}
\leq \bigl( a_{n,p}^{-1/2} + 1 + (n-1)^{-1/2} C_{\Korn}^\lambda \bigr) \Ecal[Z]^{1/2} ,
\endaligned
\ee
which are both bounded by $2(\Ecal[Z]/a_{n,p})^{1/2}$ for sufficiently small $a_{n,p}$ compared to the geometric constants.
We now use the explicit expression of~$\zeta_Z$ to evaluate the (squared) norms, and rewrite it in terms of the covariance matrix $\matV =(\la\xh_k\xh_l\ra- \la\xh_k\ra\la\xh_l\ra)$, which is positive-definite,
\be
\aligned
\|L_Z^k\xh_k\|_{L^2_{- \expoP}}^2 + \|\zeta_Z\|_{L^2_{- \expoP}}^2
& = (L_Z^k)^2 + 4 L_Z^k L_Z^{lk}\la\xh_l\ra + 4L_Z^{kl}L_Z^{ml}\la\xh_k\xh_m\ra
\\
& = (L_Z^k + 2 L_Z^{lk}\la\xh_l\ra)^2 + 4L_Z^{kl}L_Z^{ml}\matV_{km} .
\endaligned
\ee
The second term here provides a control of all rotation coefficients $L_Z^{\bullet\bullet}$ in terms of the matrix norm of $\matV^{-1}$ and $\Ecal[Z]$ and, taking a square root, (as usual, implicit constants depend only on the dimension)
\bel{art2 -LZbullbull-contro}
|L_Z^{\bullet\bullet}| \lesssim \, | \matV^{-1/2} | \,  \bigl( a_{n,p}^{-1/2} + 1 + C_{\Korn}^\lambda \bigr) \Ecal[Z]^{1/2} .
\ee
\ese

\medskip

\bse
\noindent{\bf 6. \it Fluctuations of the silhouette vector.}
From \eqref{art2 -xiz-to-rotation}, \eqref{art2 -LZlmbiglanxi} and~\eqref{art2 -LZbullbull-contro} we obtain
\bel{art2 -fluctuations-silh-v}
\aligned
\bigl| \Kappa^\notreM[\xi,Z] - 2 \alpha^l L_Z^{kl} \la\xh_k\ra \bigr|
& \lesssim \Bigl((1 + C_{\Korn}^\lambda)
\Bigl[1 + a_{n,p}
\, | \matV^{-1/2} | \, \Bigr]
\\[- 2pt]
&\hspace{2.5cm}{}
+ a_{n,p}^{1/2}
\, | \matV^{-1/2} | \, \Bigr)
\Ecal[Z]^{1/2}\,\|\xi^{\fluc}\|_{H^1_{- \expoP}(\Lambda_r)} .
\endaligned
\ee
For small enough $a_{n,p}$ the coefficient in parentheses reduces to geometric constants.  To finish up, we must understand the norm of~$\xi^\fluc$.
We rely on~\eqref{art2 -equa8d0} with $v_{(j)}=\alpha^k T_{kl} ((\Xi^\notreM)^{-1})^l{}_{(j)}$ and $\eps=2^{-1/2}\CzeroKP a_{n,p}$
\be
\aligned
\|\xi^{\fluc}\|_{H^1_{- \expoP}(\Lambda)}
& \leq (\CzeroKP+\ConeKP) \frac{a_{n,p}}{1- \eps} \bigl|\alpha^k T_{kl} ((\Xi^\notreM)^{-1})^l{}_{(j)} \matQ^{(j)\bullet}\bigr|_U
\\
& \lesssim a_{n,p} (\CzeroKP+\ConeKP) |\alpha| \bigl|((\Xi^\notreM)^{-1})^\bullet{}_{(j)} \matQ^{(j)\bullet}\bigr|_U
\endaligned
\ee
for sufficiently small $a_{n,p}$.
We conclude by recalling the lower bound $(\matQ^{-1} \Xi^\notreM)^{\text{sym}} \geq a_{n,p} S^2 > 0$ on the symmetric part of $\matQ^{-1}\Xi^\notreM$ in \autoref{art2 -coro-radial-momen}, which implies that the smallest singular value of $\matQ^{-1}\Xi^\notreM$ (which is greater or equal to that of its symmetric part) is greater or equal to that of~$a_{n,p}S^2$.  Thus, the operator norm of $(\Xi^\notreM)^{-1}\matQ$ is bounded by $a_{n,p}^{-1} \, |S^{- 2}|$, and we deduce
\be
\|\xi^{\fluc}\|_{H^1_{- \expoP}(\Lambda)}
\lesssim (\CzeroKP+\ConeKP) |\alpha| \, |S^{- 2}|, 
\ee
which combines with~\eqref{art2 -fluctuations-silh-v} to establish the overall bound~\eqref{art2 -equa-kmomentineq} provided $a_{n,p}$ is small enough, depending on $\CzeroKP$, $C_{\Korn}^\lambda$, $\, | \matV^{-1/2} | \, $, and the dimension.
\ese
This completes the proof of \autoref{art2 -prop-k-fluctuations-m}.


\section{Proof of the Hamiltonian shell stability}
\label{section-8}

\subsection{Class of functionals of interest}

\subsubsection{Shell functional}

\bse
We now turn our attention to the derivation of a functional associated with the localized Hamiltonian operator and the study of its coercivity properties.
This section is completely insensitive to the choice of localization domain and function.  However, the appropriate choice of some parameters (denoted by $\cstun,\cstdeux$, or $c_2,c_{13}$ below) may depend on the class of localization domains of interest.
Our strategy is to begin with a sufficiently general sum-of-squares Ansatz for $\Phi^\notreH$ that is quadratic in $u$ and its first and second-order derivatives, is manifestly non-negative without restriction on the function~$\lambda^{2\expoP}$, and only involves scalar parameters. Namely, for a collection of constants $c_1,\dots,c_{13}\in\RR$, we consider
\be
\aligned
\Phi^\notreH[u] 
& = \frac{1}{2} \fint_{\Lambda_r} \Bigl(
\bigl(\vartheta^2 u + c_1 \vartheta u - c_2 u + c_3\Deltaslash u\bigr)^2
+ c_4^2 \, \bigl| \nablaslash\vartheta u + c_5 \nablaslash u \bigr|^2
+ c_6^2 \, \bigl( \vartheta u + c_7 u + c_8 \Deltaslash u\bigr)^2 
\\
& \qquad\qquad
+ c_9^2 \bigl( \Deltaslash u + c_{10} u \bigr)^2
+ c_{11}^2 \, \bigl|(\nablaslash^2 u)^\circ\bigr|^2
+ c_{12}^2 \,  |\nablaslash u|^2
+ c_{13}^2 \, u^2
\Bigr) \, d\chi, 
\endaligned
\ee
where $(\nablaslash^2 u)^\circ$ denotes the traceless part of the Hessian. If the coefficients of $|\nablaslash u|^2$ and $u^2$ were denoted by independent real parameters rather than by the literal squares $c_{12}^2$ and $c_{13}^2$, they could be allowed to be slightly negative: suitable Poincar\'e inequalities would still make the integrated combination with $c_{11}^2\bigl|(\nablaslash^2u)^\circ\bigr|^2$ non-negative. The admissible values would, however, depend on the weight~$\lambda^{2\expoP}$, which we prefer to avoid at this stage.

Once this functional is chosen, the main identity~\eqref{main-func-identity} can be seen as a \emph{definition} of the functionals $\Mu^\notreH$ and $\Chi^\notreH$, and a partial definition of $\Upsilon^\notreH$ and~$\Psi^\notreH$ defined in the course of this section. The functional $\Phi^\notreH$ is {\it non-negative} by construction. Our main task is to evaluate $\Psi^\notreH$ and put it in a form where its positivity properties (for a suitable class of~$\lambda$) can be stated in terms of a Poincaré inequality. Throughout this section, we strive to choose some of the constants in such a way as to reach the desired positivity structure and, simultaneously, to simplify our calculations.  This will lead us to the choice
\bel{art2 -equa-classPhi-b}
\aligned
c_1 & = c_5 = c_7 = a_{n,p}, \qquad
c_3 = c_4 = c_6 = c_8 = 0, \qquad
c_9^2 = \frac{n^2 -3n+3}{(n-1)^2}, \qquad
\\
c_{10}
& = - \frac{n-1}{n^2 -3n+3} \bigl(c_{n,p}+(n- 2)c_2\bigr), \qquad
c_{11}^2 = \frac{1}{n-1}, \qquad
c_{12}^2 = \frac{2}{n-1} (1+a_{n,p}+c_2) ,
\endaligned
\ee
together with a sufficiently large $c_2>0$ and a sufficiently large $c_{13}>0$ (depending on~$c_2$).
In \autoref{section-6} we prefer to recast these in terms of two parameters $\cstun,\cstdeux>0$ with
\be
\cstun = c_2 , \qquad \frac{\cstdeux}{n-1} = c_{13}^2 + c_9^2 c_{10}^2 .
\ee
\ese


\subsubsection{Source functional}

\bse
The expression of $(\vartheta+a_{n,p})(\vartheta+2a_{n,p}) \Phi^\notreH[u]$ involves terms quadratic in derivatives of order at most three of~$u$, which we will eventually incorporate in the definition of~$\Psi^\notreH[u]$, but also terms involving fourth derivatives of~$u$. Among these, we find $\vartheta^4 u\,(\vartheta^2 u+a_{n,p}\vartheta u-c_2 u+c_3\Deltaslash u)$, and these fourth-order radial derivatives (cf.~\autoref{Rem-foot}, below) must be cancelled by the Hamiltonian operator by setting
\be
\aligned
\Mu^\notreH[u,E]
& = - \frac{1}{n-1} \fint_{\Lambda_r}  \bigl(\vartheta^2 u + a_{n,p} \vartheta u - c_2 u + c_3 \Deltaslash u \bigr) \, r^4 E \, d\chi \\
& = - \frac{1}{n-1} \fint_{\Lambda_r}  \bigl(\vartheta^2 u + a_{n,p} \vartheta u - c_2 u + c_3 \Deltaslash u \bigr) \, r^4 \notreH^{\lambda}[u] \, d\chi.
\endaligned
\ee
While the harmonic-spherical decomposition~\eqref{equa--488- 2} of the Hamiltonian involves derivatives of the weight~$\lambda^{2\expoP}$, all of them can be eliminated by integrating by parts on the shell~$\Lambda_r$, leading to a quadratic functional
\bel{art2 -Chi-def}
\Chi^\notreH[u] = (\vartheta + a_{n,p}) (\vartheta + 2a_{n,p}) \Phi^\notreH[u] + \Mu^\notreH[u,E]
\ee
consisting of (the integral of) a quadratic combination of derivatives of~$u$. The result is manifestly free from $\vartheta^4 u$ but contains other fourth-order derivatives of~$u$. In particular, it contains a term of the form $c_3\fint\Deltaslash u\Deltaslash^2 u d\chi$ which has no suitable positivity properties. This motivates us to select $c_3=0$. This choice is reflected in the absence of the Laplacian term $c_3\Deltaslash u$ in the expression of $\Mu^\notreH$ given earlier in~\eqref{Psi-gendef}. Another essential consequence is the absence of terms with third-order angular derivatives of~$u$.
\ese
%


\subsubsection{Radial integration functional}

\bse
The fourth-order derivatives that remain in~\eqref{art2 -Chi-def} all include at least one radial derivative and appear in terms of the form $\fint D^ju\,\vartheta D^3u\,d\chi$ for $j\leq 2$, where $D^k$ stands for $k$-th order (radial or angular) derivatives. Such terms are accounted for by including $\fint D^ju D^3u\,d\chi$ in the expression of~$\Upsilon^\notreH$. Specifically, an explicit calculation yields  
\be
\aligned
\Upsilon^\notreH[u]
& = \fint_{\Lambda_r} \Bigl(
c_4^2 \, (\vartheta+c_5) \nablaslash u \, \vartheta^2 \nablaslash u
+ \Bigl(c_{11}^2 - \frac{1}{n-1}\Bigr) \nablaslash^2 u \, \vartheta\nablaslash^2 u
\\
& \qquad\qquad - \Bigl( \frac{n- 2}{n-1} \, \bigl( 2\vartheta^2 u + (c_1+n- 2) \vartheta u - c_2 u \bigr) + \frac{1}{n-1}\vartheta u \\
& \qquad\qquad\qquad - c_6^2 c_8 \, \bigl( \vartheta u + c_7 u + c_8 \, \Deltaslash u \bigr) - c_9^2 \, (\Deltaslash u + c_{10} u) + \frac{c_{11}^2 + n- 2}{n-1} \Deltaslash u\Bigr) \vartheta \Deltaslash u \\
& \qquad\qquad + \upsilon^\notreH\bigl(\{\vartheta^j\nablaslash^k u,\,j+k\leq 2\}\bigr)
\Bigr) \, d\chi,
\endaligned
\ee
where $\upsilon^\notreH$ is an arbitrary quadratic form (depending on $14$~constants) at this stage since it does not involve the highest derivatives of~$u$. In terms of this functional, the functionals $\Psi^\notreH_\beta$ are then defined by
\be
\Psi^\notreH_\beta[u] \coloneqq \Chi^\notreH[u] - (\vartheta+\beta) \Upsilon^\notreH[u], \qquad\beta \in \{a_{n,p}, 2a_{n,p}\},
\ee
and are integrals of quadratic forms in $\vartheta^j\nablaslash^k u$ for $j+k\leq 3$ and $k\leq 2$. We are interested in their positivity properties. In principle, we can evaluate these quadratic forms for the most general choice of $26$ constants in $\upsilon^\notreH$ and $c_1,c_2,c_4,\dots,c_{13}$, and seek suitable choices afterwards, but we find it convenient to restrict our attention at once to choices that are inspired by the harmonic-spherical decomposition.

For this reason, we impose that $\Upsilon^\notreH[u]$ vanishes when $u$ is a function with harmonic decay.  The functional must thus be expressible in terms of
$
w \coloneqq (\vartheta + a_{n,p}) u
$
and of its radial and angular derivatives. This fixes some of the constants,
\be
\aligned
c_5 & = a_{n,p}, \qquad
c_9^2 = \frac{n^2 -3n+3}{(n-1)^2} - c_6^2 c_8^2, \qquad
c_{11}^2 = \frac{1}{n-1},
\\
c_{10}
& = - \frac{n- 2}{(n-1)c_9^2} \Bigl(c_2 + a_{n,p} (c_1 - a_{n,p}) + \frac{c_{n,p}}{n- 2}\Bigr)
+ \frac{c_6^2 c_8}{c_9^2} (a_{n,p} - c_7) ,
\endaligned
\ee
together with the condition $c_9^2\neq 0$, which will be satisfied by our choices later on.
After redefining the quadratic form~$\upsilon^\notreH$ to absorb many lower-order terms, we have 
\be
\aligned
\Upsilon^\notreH[u]
& = \fint_{\Lambda_r} \biggl(
c_4^2 \, \nablaslash w \, \vartheta \nablaslash w
- \biggl( 2 \frac{n- 2}{n-1} \, \vartheta w + \frac{n- 2}{n-1} (c_1-a_{n,p}) w + \frac{c_{n,p}}{(n-1)a_{n,p}} w - c_6^2 c_8 w \biggr) \Deltaslash w \\
& \qquad\qquad + \upsilon^\notreH\bigl(w,\vartheta w,\nablaslash w\bigr)
\biggr) \, d\chi.
\endaligned
\ee
Besides fixing some of the constants $c_i$, our choice reduced the freedom in~$\upsilon^\notreH$ to only $4$ constants, that is, 
\be
\upsilon^\notreH\bigl(w,\vartheta w,\nablaslash w\bigr)
= c_{14} (\vartheta w)^2 + c_{15} w\vartheta w + c_{16} w^2 + c_{17} |\nablaslash w|^2.
\ee
There remains to choose the $12$ constants $c_1,c_2,c_4,c_6,c_7,c_8,c_{12},\dots,c_{17}$.
\ese
%


\subsubsection{\texorpdfstring{Bilinear terms in the variables $u,w$}{Bilinear terms in the variables u and w}}

\bse
For functions~$u$ with $r^{-a_{n,p}}$ harmonic decay, we have chosen $\Upsilon^\notreH$ to vanish, so that
$\Psi^\notreH_\beta[u] = \Chi^\notreH[u]$ for $u = r^{-a_{n,p}} \nu$ and 
for any angular function~$\nu$ on~$\Lambda$,
without dependence on~$\beta$. The main identity~\eqref{main-func-identity} for such a function~$u$ reduces to
\be
\Psi^\notreH_\beta[u] = \frac{ c_2 + a_{n,p}(c_1-a_{n,p})}{n-1} \, \ssrmA^\lambda[u] \qquad \text{ for } u = r^{-a_{n,p}} \nu ,
\ee
for any angular function~$\nu$ on~$\Lambda$, where we recall $\ssrmA^\lambda[u] = \fint_{\Lambda_r} u \, \ssA^\lambda[u] \, d\chi$. Thus, in this case, the functionals of interest $\Psi^\notreH_\beta$ reduce to the asymptotic functional, whose coercivity was studied earlier. For more general functions~$u$, this means that the functionals can be written as
\bel{art2 -e011}
\Psi^\notreH_\beta[u]
= \PsiHquabeta[w]
+ \PsiHbil[u;w]
+ \frac{ c_2 + a_{n,p}(c_1-a_{n,p})}{n-1} \ssrmA^\lambda[u], 
\ee
where $\PsiHquabeta$ are quadratic functionals in~$w$ and $\PsiHbil$ is a bilinear form in $u,w$ involving up to two derivatives of~$w$ and up to two \emph{angular} derivatives of~$u$. The $\beta$ dependence only appears in the first term since $\Upsilon^\notreH$ only involves~$w$.

To better separate the functional into independent contributions, we will strive to make most terms in~$\PsiHbil[u;w]$ vanish. We consider first the terms involving second derivatives of~$w$, which read
\be
\aligned
\PsiHbil[u;w] & = \fint_{\Lambda_r} \biggl(
c_6^2 c_8 \, \Deltaslash u \vartheta^2 w
+ \frac{a_{n,p}-c_1}{n-1} \Bigl( \nablaslash^2 u\nablaslash^2 w + (n- 2) \Deltaslash u \Deltaslash w \Bigr)
- \frac{c_{n,p}c_2}{(n-1)a_{n,p}} u \Deltaslash w \\
& \qquad\quad
+ c_6^2 (c_7-a_{n,p}) u \vartheta^2 w
+ \Bigl( c_{12}^2 - \frac{2}{n-1} (1+a_{n,p}+c_2) \Bigr) \nablaslash u\,\vartheta\nablaslash w
\biggr) d\chi + \ldots,
\endaligned
\ee
where dots denote terms with at most first derivatives of~$w$. All of these terms except $u\Deltaslash w$ can be eliminated by a suitable choice of four constants,
\be
c_1 = c_7 = a_{n,p}, \qquad
c_8=0, \qquad
c_{12}^2 = \frac{2}{n-1} (1+a_{n,p}+c_2).
\ee
The $u\Deltaslash w$ term cannot be eliminated.  Indeed, the coefficient of the quadratic term in~$u$ in~\eqref{art2 -e011} must be taken to be positive, so that (with our choice $c_1=a_{n,p}$) one must have $c_2>0$.
\ese

\begin{remark}
\label{Rem-foot}
The identities in this section are derived here in the smooth setting.  We applied them in \cite{LL-optimal-main}
with a source-term $E\in H^{2*}_{n+2 -p,- \expoP}(\Omega_R)$ and a solution $u,\vartheta u\in H^2_{n- 2 -p,- \expoP}(\Omega_R)$,  
for which the identities hold in the sense of distributions on $(R,+\infty)$.  For instance, a quadratic term such as $\fint_{\Lambda_r} \vartheta^3 u\,\Deltaslash\vartheta u\,d\chi$ is a product of two functions in $L^2_{n- 2 -p,- \expoP}(\Omega_R)$, integrated on a spherical shell~$\Lambda_r$, hence is integrable on $[R,+\infty)$ with the measure $r^{a_{n,p}-1}dr$.  In contrast, a term such as $\fint_{\Lambda_r}\vartheta^2 u\,\Deltaslash\vartheta^2 u\,d\chi$ involving fourth-order derivatives is defined solely as a distribution, by using that $\varphi\vartheta^2 u\in H^1_{n- 2 -p,- \expoP}(\Omega_R)$ for a suitable test function $\varphi=\varphi(r)$ on $[R,+\infty)$.  Altogether, $\Mu^\notreH$~is constructed in~\cite{LL-optimal-main} for $c_3=0$. 
\end{remark}


\subsection{Semi-coercivity property}

\subsubsection{Quadratic terms in the variable \texorpdfstring{$w$}{w}}

We now show coercivity of the terms that only depend on $w=(\vartheta+a_{n,p})u$, for a suitable choice of some of the constants motivated in the proof that follows.

\begin{lemma}[Semi-coercivity of the Hamiltonian dissipation functional. I]
For each fixed $\beta\in\{a_{n,p},2a_{n,p}\}$, for $c_4=c_{17}=0$, $c_6=c_{16}=0$, $c_{14}=a_{n,p}$, $c_{15}=c_6^2 - 2c_2$, and for sufficiently large $c_2\geq c_2^{\min}(\beta)+1$ and $c_{13}^2\geq c_{13}^{2\,\min}(c_2,\beta)+1$ as specified in the proof, one has
\be
\PsiHquabeta[w] \gtrsim (\norm{w}^\notreH)^2
= \| \vartheta^2 w\|^2_{L^2_{- \expoP}(\Lambda)} + \| \vartheta w\|^2_{H^1_{- \expoP}(\Lambda)} + \| w\|^2_{H^2_{- \expoP}(\Lambda)}
\ee
with a positive implicit constant depending on $n,p,c_2,c_{13}$. For the simultaneous choices of $c_2$ and $c_{13}$ made after \eqref{art2 -c2c13-new}, the constant can be chosen uniformly in $p$ and to depend only on the dimension, as shown below.
\end{lemma}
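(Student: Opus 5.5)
With the listed choices, we first write $\PsiHquabeta[w]$ explicitly as an integral over $\Lambda_r$ of a quadratic form in $w$, $\vartheta w$, $\vartheta^2 w$, $\nablaslash w$, $\vartheta\nablaslash w$ and $\nablaslash^2 w$. We set $c_4=c_6=0$, so the squares involving $c_4,c_6$ drop out of $\Phi^\notreH$. Moreover, $c_{17}=c_{16}=0$ and $c_{14}=a_{n,p}$, $c_{15}=-2c_2$ fix $\upsilon^\notreH$. The $w$-part of $\Chi^\notreH-(\vartheta+\beta)\Upsilon^\notreH$ then contains the following pieces. First, the radial square $(\vartheta^2 w)^2$ comes from $(\vartheta+a)(\vartheta+2a)$ acting on $\frac12\fint(\vartheta w - c_2 u)^2$, after rewriting $\vartheta^2u+a\vartheta u=\vartheta w$. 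Second, the angular terms $|\vartheta\nablaslash w|^2$, $(\Deltaslash w)^2$, $|(\nablaslash^2w)^\circ|^2$ arise from the $c_9,c_{11},c_{12}$ squares differentiated radially. Third, there are lower-order products.

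The plan is to treat the angular block and the radial block separately. For the angular block, the cross terms $\vartheta w\,\Deltaslash w$ from $\Upsilon^\notreH$ are integrated by parts on $\Lambda_r$ into $\nablaslash\vartheta w\cdot\nablaslash w$, or absorbed by Cauchy--Schwarz with a small parameter. The point is that the highest-order angular terms appear with coefficient $\frac{n^2-3n+3}{(n-1)^2}(\Deltaslash w)^2+\frac1{n-1}|(\nablaslash^2 w)^\circ|^2$, multiplied by a positive factor of order $a_{n,p}$ or $\beta$. Here I would use, exactly as in the proof of \autoref{art2 -lem-ssrmAcoer}, the splitting of the Hessian into trace and traceless parts, completing the square in the $\Deltaslash w$–$w$ coupling. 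The $|\nablaslash w|^2$ coefficient grows linearly in $c_2$ through $c_{12}^2=\frac{2}{n-1}(1+a_{n,p}+c_2)$. Taking $c_2\ge c_2^{\min}(\beta)$ therefore makes every $|\nablaslash w|^2$ and $|\nablaslash\vartheta w|^2$ coefficient dominate the cross terms. No Poincar\'e inequality is needed, because $c_{13}^2$ will absorb the $L^2$ terms.

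For the radial block, the quadratic form in $(w,\vartheta w,\vartheta^2 w)$ has constant coefficients, polynomial in $a_{n,p},\beta,c_2,c_{13}$. We check positive definiteness by Sylvester's criterion. The $c_{15}=-2c_2$ choice is meant to cancel the dangerous $c_2\,w\vartheta w$ cross term produced by $(\vartheta+\beta)$ acting on the $(\vartheta w-c_2 w)^2$-type square. The remaining $w^2$ coefficient is affine in $c_{13}^2$ with positive slope, proportional to $\beta\cdot 2a_{n,p}$ plus terms from $(\vartheta+a)(\vartheta+2a)$. So after fixing $c_2$, choosing $c_{13}^2\ge c_{13}^{2\min}(c_2,\beta)$ makes the $3\times3$ determinant positive.

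Adding the two blocks gives $\PsiHquabeta[w]\gtrsim(\norm{w}^\notreH)^2$. For uniformity in $p$, I would track powers of $a_{n,p}$. Every positive coefficient must stay bounded below as $a_{n,p}\to0$, and the choices \eqref{art2 -c2c13-new} should make $c_2,c_{13}$ depend only on $n$, since $a_{n,p},b_{n,p},c_{n,p}$ are bounded on $[(n-2)/2,n-2)$.

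I expect the main obstacle to be this last point. Several coercive coefficients produced by $(\vartheta+a)(\vartheta+2a)$ or by $(\vartheta+\beta)$ are naively proportional to $a_{n,p}$ and degenerate in the harmonic limit. The first thing I would try is to exploit the term $c_2 u$, through $w$ and $c_{15}$, to supply order-one coefficients independent of $a_{n,p}$ for the $(\vartheta w)^2$ and $|\nablaslash w|^2$ terms, and to verify the Sylvester minors with $a_{n,p}=0$ inserted, so that they remain positive by continuity.
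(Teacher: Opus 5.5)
Your proof has a genuine gap: the splitting into an angular block and a radial block, with a $3\times3$ Sylvester check on $(w,\vartheta w,\vartheta^2 w)$, does not work, because the blocks are coupled. With the stated choices the tensor part is already diagonal, $\frac{1}{n-1}|(\nablaslash^2 w)^\circ|^2+\frac{2}{n-1}|\nablaslash\vartheta w|^2+\frac{2}{n-1}(c_2+1+a_{n,p})|\nablaslash w|^2$. The scalar part, however, is a pointwise quadratic form in the \emph{four} quantities $(\vartheta^2 w,\Deltaslash w,\vartheta w,w)$, and it contains three couplings you do not account for.
\begin{itemize}
\item The term $\frac{2(n-2)}{n-1}\,\vartheta^2 w\,\Deltaslash w$, whose coefficient is unaffected by every remaining free constant. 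Note that $c_{14},c_{15}$ only remove the couplings of $\vartheta^2 w$ with $\vartheta w$ and with $w$; the $w\,\vartheta w$ term survives with coefficient $2(\beta-a_{n,p})c_2$, so your description of the role of $c_{15}$ is also off.
\item The term $\frac{2d_1}{n-1}\,\vartheta w\,\Deltaslash w$, with $d_1=c_{n,p}/a_{n,p}+(n-2)\beta$.
\item The term $-\frac{2d_2}{n-1}\,w\,\Deltaslash w$, with $d_2=(n-2)c_2+\frac{c_{n,p}}{2a_{n,p}}(2a_{n,p}-\beta)$, which grows with $c_2$.
\end{itemize}
The term $\vartheta^2 w\,\Deltaslash w$ cannot be integrated by parts, since that creates $\nablaslash\vartheta^2 w$, which is not in $\norm{w}^\notreH$. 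It must therefore be absorbed jointly by $(\vartheta^2 w)^2$ and $(\Deltaslash w)^2$.

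Here your bookkeeping of coefficient sizes is wrong in a way that matters. The coefficients of $(\vartheta^2 w)^2$ and $(\Deltaslash w)^2$ are $1$ and $\frac{n^2-3n+3}{(n-1)^2}=\frac{(n-2)^2}{(n-1)^2}+\frac{1}{n-1}$. Both are of order one, not multiplied by anything of order $a_{n,p}$ or $\beta$. Likewise, $c_{13}^2$ enters the $w^2$ coefficient with slope $1$. If the $(\Deltaslash w)^2$ coefficient were $O(a_{n,p})$, as you claim, this cross term could not be absorbed at all.

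You should also drop your integration-by-parts option for $\vartheta w\,\Deltaslash w$. Against $d\chi=\lambda^{2\expoP}d\xh$ it produces $2\expoP\fint_{\Lambda_r}\vartheta w\,\nablaslash w\cdot\nablaslash\log\lambda\,d\chi$. This is not bounded by $\norm{w}^\notreH$ with a $\lambda$-independent constant, whereas the lemma's constant depends only on $n,p,c_2,c_{13}$.

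The missing step is the explicit pointwise completion of squares, taken in this order:
\[
\begin{aligned}
\psiHscabeta[w]
&=\Bigl(\vartheta^2 w+\tfrac{n-2}{n-1}\Deltaslash w\Bigr)^2
+\tfrac{1}{n-1}\bigl(\Deltaslash w+d_1\vartheta w-d_2 w\bigr)^2
\\
&\quad+\bigl(c_2-c_2^{\min}\bigr)\Bigl(\vartheta w+\tfrac{d_3}{c_2-c_2^{\min}}\,w\Bigr)^2
+\bigl(c_{13}^2-c_{13}^{2\min}\bigr)w^2 .
\end{aligned}
\]
The first square absorbs the only coupling of $\vartheta^2 w$ and leaves just the margin $\frac{1}{n-1}(\Deltaslash w)^2$. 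That margin must then carry the $\vartheta w\,\Deltaslash w$ term against $(\vartheta w)^2$, whose coefficient is $c_2+b_{n,p}+a_{n,p}(a_{n,p}-\beta)$. This, and not the gradient terms, is the origin of the condition $c_2>c_2^{\min}(\beta)=\frac{d_1^2}{n-1}+a_{n,p}(\beta-a_{n,p})-b_{n,p}$. The constant $c_{13}$ is chosen afterwards because $d_2$ and $d_3=(\beta-a_{n,p})c_2+\frac{d_1d_2}{n-1}$ depend on $c_2$.

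The degeneracy you anticipate in the harmonic limit does not occur. The ratio $c_{n,p}/a_{n,p}=1+(n-2)(n-2-a_{n,p})$ stays bounded, and all coefficients extend continuously to $p\in[p_n^\flat,n-2]$. The unit margins $c_2-c_2^{\min}\geq1$ and $c_{13}^2-c_{13}^{2\min}\geq1$ of the paper's choice then give coercivity uniformly in $p$.
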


The uniformity in~$p$ is asserted only after the simultaneous quantitative choice of $c_2$ and $c_{13}$ made in~\eqref{art2 -c2c13-new}: taking these constants arbitrary large alone would not suffice. The restriction $c_6=c_{16}=0$ simplifies Step~3 of the proof below.

\begin{proof}
{\it 1. Evaluating $\PsiHquabeta$.}
\bse
The integrand in~$\PsiHquabeta[w]$ is a quadratic form in $\vartheta^j\nablaslash^k w$ for $j+k\leq 2$, which splits into contributions from derivatives that are tensors on the sphere, and those that are scalars,
\be
\aligned
\PsiHquabeta[w] & = \fint_{\Lambda_r} \Bigl( \psiHtenbeta\bigl((\nablaslash^2 w)^\circ, \vartheta\nablaslash w, \nablaslash w\bigr) + \psiHscabeta\bigl(\vartheta^2 w, \Deltaslash w, \vartheta w,w\bigr) \Bigr) d\chi .
\endaligned
\ee
The tensorial terms $\psiHtenbeta[w] = \psiHtenbeta\bigl((\nablaslash^2 w)^\circ, \vartheta\nablaslash w, \nablaslash w\bigr)$ read
\be
\aligned
\psiHtenbeta[w] 
& = \frac{1}{n-1} \bigl|(\nablaslash^2 w)^\circ\bigr|^2
+ \frac{2}{n-1} |\nablaslash\vartheta w|^2
+ ((3 a_{n,p} - \beta) c_4^2 - 2 c_{17}) \nablaslash w\nablaslash\vartheta w \\
& \quad
+ \Bigl(\frac{2}{n-1} (c_2 + 1 + a_{n,p}) + a_{n,p}^2  c_4^2 - c_{17}\beta\Bigr) |\nablaslash w|^2 ,
\endaligned
\ee
and can be made manifestly non-negative by taking, for example, $c_4=c_{17}=0$. This choice is not important as these constants are absent from the scalar part of the dissipation functional.

The scalar terms are more involved, and we focus first on the contribution of $\vartheta^2 w$,
\be
\aligned
\psiHscabeta[w] & = \psiHscabeta\bigl(\vartheta^2 w, \Deltaslash w, \vartheta w,w\bigr) \\
& = (\vartheta^2 w)^2 + 2 \vartheta^2 w \Bigl( (a_{n,p} - c_{14}) \vartheta w + \frac{1}{2} (c_6^2 - c_{15} - 2 c_2) w + \frac{n- 2}{n-1} \Deltaslash w \Bigr) + \lot, 
\endaligned
\ee
where $\lot$ stands for a quadratic form in $\Deltaslash w$, $\vartheta w$ and~$w$. We choose the constants $c_{14}$ and $c_{15}$ to eliminate the cross-terms $\vartheta w\vartheta^2 w$ and $w\vartheta^2 w$. Altogether, these considerations leave us to finally choose $c_2,c_6,c_{13},c_{16}$, only, after we set 
\be
c_4 = c_{17} = 0 , \qquad c_{14} = a_{n,p} , \qquad c_{15} = c_6^2 - 2 c_2 .
\ee

\ese


\medskip

\bse
\noindent{\it 2. Full expression of the dissipation functional.}
With these choices of constants, we arrive at the first complete expression
\be
\Psi^\notreH_\beta[u]
= \PsiHquabeta[w]
+ \PsiHbil[u;w]
+ \frac{c_2}{n-1} \ssrmA^\lambda[u] .
\ee
The quadratic terms in~$u$ are $\ssrmA^\lambda[u]$,  and the bilinear terms in $u$ and $w$ are compactly expressed in terms of specific combinations of derivatives of~$w$, denoted $w_1$ and $w_2$,
\bel{art2 -PsiHbil-expr}
\aligned
\PsiHbil[u;w] & = \fint_{\Lambda_r} \bigl( w_1 \Deltaslash u - w_2 u \bigr) d\chi,
\\
w_1  & \coloneqq \frac{c_2 c_{n,p}}{(n-1)a_{n,p}} w - \frac{c_{n,p}}{n-1} \vartheta w , \quad
w_2 \coloneqq \frac{c_2 c_{n,p}}{(n-1)a_{n,p}} \Deltaslash w - c_{130} \vartheta w,
\\
c_{130} & \coloneqq c_{13}^2 + \frac{1}{n^2 -3n+3} \bigl(c_{n,p} + (n- 2)c_2\bigr)^2 - c_2(b_{n,p}-c_2).
\endaligned
\ee
\ese
The quadratic terms in~$w$ are grouped into tensor terms and scalar terms, specifically 
\bse
\label{art2 -PsiHquabeta-def}
\be
\aligned
\PsiHquabeta[w] & \coloneqq  \fint_{\Lambda_r} \Bigl( \psiHtenbeta[w] + \psiHscabeta[w] \Bigr) d\chi ,
\\
\psiHtenbeta[w]
&: = \frac{1}{n-1} \biggl(
\bigl|(\nablaslash^2 w)^\circ\bigr|^2
+2 |\nablaslash\vartheta w|^2
+2 (c_2 + 1 + a_{n,p}) |\nablaslash w|^2
\biggr) ,
\endaligned
\ee
together with 
\be
\aligned
\psiHscabeta[w]
& \coloneqq  \Bigl(\vartheta^2 w + \frac{n- 2}{n-1} \Deltaslash w\Bigr)^2 
+ \frac{1}{n-1} (\Deltaslash w)^2
+ \frac{2 (c_{n,p}/a_{n,p} + (n - 2) \beta)}{n-1} \vartheta w \Deltaslash w
\\
& \quad
+ \bigl(c_2 + b_{n,p} + a_{n,p}(a_{n,p} - \beta) \bigr) (\vartheta w)^2 \\
& \quad
- \frac{1}{n-1} \bigl(2 (n- 2) c_2 + (2a_{n,p}- \beta)c_{n,p}/a_{n,p}\bigr) w \Deltaslash w \\
& \quad
+ \bigl(2 (\beta - a_{n,p}) c_2 + (3 a_{n,p} - \beta) c_6^2 - 2 c_{16}\bigr) w \vartheta w
\\
& \quad
+ \bigl( c_{130} + c_2 b_{n,p} + a_{n,p}^2 c_6^2  - c_{16}\beta\bigr) w^2 .
\endaligned
\ee
\ese


\medskip

\bse
\noindent{\it 3. Hierarchy of constants.}
The constants $c_6$ and $c_{16}$ are of limited interest in achieving positivity properties, so we arbitrarily set them to zero to shorten our expressions, that is, 
$
c_6 = c_{16} = 0 .
$
The scalar terms can be rewritten (under the condition~\eqref{art2 -c2c13-cond} below) as
\bel{art2 -psiHscabeta-re}
\aligned
\psiHscabeta[w]
& = \Bigl(\vartheta^2 w + \frac{n- 2}{n-1} \Deltaslash w\Bigr)^2
+ \frac{1}{n-1} \bigl(\Deltaslash w + d_1(\beta) \vartheta w - d_2(c_2,\beta) w\bigr)^2 \\
& \quad + \bigl(c_2 - c_2^{\min}(\beta) \bigr) \Bigl(\vartheta w + \frac{d_3(c_2,\beta)}{c_2 - c_2^{\min}(\beta)} w\Bigr)^2
\\
& \quad
+ \bigl(c_{13}^2 - c_{13}^{2\min}(c_2,\beta) \bigr) w^2 
\endaligned
\ee
in terms of a set of constants that depend only on~$\beta$ and~$c_2$, that is, 
\be
\aligned
d_1 = d_1(\beta) \, & \! \coloneqq
\frac{c_{n,p}}{a_{n,p}} + (n - 2) \beta ,
\\
c_2^{\min} = c_2^{\min}(\beta) \, & \! \coloneqq
\frac{d_1(\beta)^2}{n-1}
+ a_{n,p}(\beta - a_{n,p}) - b_{n,p} ,
\\
d_2 = d_2(c_2,\beta) \, & \! \coloneqq (n- 2) c_2 + \frac{c_{n,p}}{2a_{n,p}} (2a_{n,p}- \beta) ,
\\
d_3 = d_3(c_2,\beta) \, & \! \coloneqq (\beta - a_{n,p}) c_2 + \frac{1}{n-1} d_1(\beta)d_2(c_2,\beta) ,
\\
c_{13}^{2\min} = c_{13}^{2\min}(c_2,\beta) \, & \! \coloneqq
\frac{d_2(c_2,\beta)^2}{n-1}
+ \frac{d_3(c_2,\beta)^2}{c_2 - c_2^{\min}(\beta)}
- \frac{\bigl(c_{n,p} + (n- 2)c_2\bigr)^2}{n^2 -3n+3}
- c_2^2 .
\endaligned
\ee
The quadratic form $\psiHscabeta$ is thus positive-definite for $c_2$ and $c_{13}$ sufficiently large, and more precisely if
\bel{art2 -c2c13-cond}
c_2 > c_2^{\min}(\beta) , \qquad
c_{13}^2 > c_{13}^{2\min}(c_2,\beta) .
\ee
Since $\psiHtenbeta$ is also positive-definite,\footnote{We have $d_1(2a_{n,p}) \geq d_1(a_{n,p})=n^2 -4n+5$, which leads to $c_2^{\min}(2a_{n,p}) \geq c_2^{\min}(a_{n,p})=(n-3)((n- 2)^3+2n-3)/(n-1)+(n-3)a_{n,p}\geq 0$.  Thus,~\eqref{art2 -c2c13-cond} implies $c_2>0$, so that the coefficient of $|\nablaslash w|^2$ in $\psiHtenbeta$ is positive.} the functional $\PsiHquabeta[w]$ is also positive-definite, thus concluding the proof.
\ese
\end{proof}


\subsubsection{Semi-coercivity of the shell dissipation functional}

\bse
We are interested in the coercivity of $\Psi^\notreH_\beta[u]$ modulo the average $\la u\ra$. To this aim, we work out a lower bound on $\Psi^\notreH_\beta$, starting from
\bel{art2 -eq662}
\Psi^\notreH_\beta[u] \geq \PsiHquabeta[w] - \bigl| \PsiHbil[u;w] \bigr| + \frac{c_2}{n-1} \ssrmA^\lambda[u].
\ee
The term $\ssrmA^\lambda[u]$ is already coercive (modulo the average), via (essentially) the harmonic stability condition. 

Equipped with a compact form~\eqref{art2 -psiHscabeta-re} of $\psiHscabeta$, we bound the combinations $w_1,w_2$ of derivatives of~$w$ that appear in the bilinear terms~\eqref{art2 -PsiHbil-expr}:
\be
|w_1| \leq d_4 \psiHscabeta[w]^{1/2} , \qquad
|w_2| \leq d_5 \psiHscabeta[w]^{1/2} ,
\ee
with constants
\be
\aligned
d_4 = d_4(c_{13},c_2,\beta) \, & \! \coloneqq \frac{|c_{n,p}|}{(n-1) \sqrt{c_2 -c_2^{\min}}}
+ \frac{1}{(n-1) \sqrt{c_{13}^2 -c_{13}^{2\min}}} \Bigl|\frac{c_2 c_{n,p}}{a_{n,p}} + \frac{c_{n,p} d_3}{c_2 -c_2^{\min}} \Bigr| ,
\endaligned
\ee
and 
\be
\aligned
d_5 = d_5(c_{13},c_2,\beta) \, & \! \coloneqq \frac{|c_{n,p}|c_2}{a_{n,p}\sqrt{n-1}}
+ \frac{1}{\sqrt{c_2 -c_2^{\min}}} \biggl| c_{130} + \frac{c_2 c_{n,p}}{(n-1)a_{n,p}} d_1 \biggr| \\
& \quad + \frac{1}{\sqrt{c_{13}^2 -c_{13}^{2\min}}} \biggl|\frac{c_2 c_{n,p}}{(n-1)a_{n,p}} d_2 + \Bigl( c_{130} + \frac{c_2 c_{n,p}}{(n-1)a_{n,p}} d_1 \Bigr) \frac{d_3}{c_2 -c_2^{\min}} \biggr| .
\endaligned
\ee
We deduce that
\be
\aligned
\bigl| \PsiHbil[u;w] \bigr|
& \leq \fint_{\Lambda_r} \psiHscabeta[w]^{1/2} \Bigl( d_4 |\Deltaslash u| + d_5 |u| \Bigr) d\chi
\\
& \leq \frac{1}{2} \PsiHquabeta[w]
+ d_4^2 \fint_{\Lambda_r} (\Deltaslash u)^2 d\chi
+ d_5^2 \fint_{\Lambda_r} u^2 d\chi .
\endaligned
\ee

Inserting into~\eqref{art2 -eq662} this bound on $\PsiHbil[u;w]$ yields
\bel{art2 -eq667}
\Psi^\notreH_\beta[u]
\geq \frac{1}{2} \PsiHquabeta[w]
+ \frac{c_2}{n-1} \ssrmA^\lambda[u]
- d_4^2 \fint_{\Lambda_r} (\Deltaslash u)^2 d\chi
- d_5^2 \fint_{\Lambda_r} u^2 d\chi .
\ee
The explicit expression~\eqref{ssAalpha-quaform-0} yields the following lower bound, which is precisely the amount of coercivity needed to absorb the unfavorable Laplacian term in~\eqref{art2 -eq667}:
\be
\aligned
\ssrmA^\lambda[u]
& \geq \fint_{\Lambda_r} \Bigl( \bigl|(\nablaslash^2 u)^\circ\bigr|^2 + |\nablaslash u|^2 + \Bigl(n- 2 + \frac{1}{n-1}\Bigr) (\Deltaslash u)^2
- c_{n,p} u \Deltaslash u
\Bigr) \, d\chi
\\
& \geq \fint_{\Lambda_r} \Bigl(
\bigl|(\nablaslash^2 u)^\circ\bigr|^2 + |\nablaslash u|^2
+ (n- 2) (\Deltaslash u)^2
- \frac{n-1}{4} c_{n,p}^2 u^2 \Bigr) \, d\chi .
\endaligned
\ee
Substitution into~\eqref{art2 -eq667} eliminates the unfavorable $d_4^2$ term and gives the next lemma.
\ese

\begin{lemma}[Semi-coercivity of the Hamiltonian dissipation functional. II]
One has 
\bel{art2 -eq667bis}
\aligned
\Psi^\notreH_\beta[u]
& \geq \frac{1}{2} \PsiHquabeta[w]
+ \Bigl( \frac{c_2}{n-1} - \frac{d_4^2}{n- 2} \Bigr)
\fint_{\Lambda_r} \Bigl(
\bigl|(\nablaslash^2 u)^\circ\bigr|^2 + |\nablaslash u|^2
+ (n- 2) (\Deltaslash u)^2 \Bigr) \, d\chi
\\
& \quad
- \Bigl( \frac{1}{4} c_2 c_{n,p}^2 + d_5^2 \Bigr) \fint_{\Lambda_r} u^2 d\chi .
\endaligned
\ee
\end{lemma}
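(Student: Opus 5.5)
The plan is to start from inequality \eqref{art2 -eq667}, which already contains the terms $\frac12\PsiHquabeta[w]$, $-d_4^2\fint(\Deltaslash u)^2\,d\chi$ and $-d_5^2\fint u^2\,d\chi$. The only task is to split the coefficient $\frac{c_2}{n-1}$ of $\ssrmA^\lambda[u]$ so that part of it absorbs the Laplacian deficit. The key input is the pointwise lower bound on $\ssrmA^\lambda[u]$ displayed just before the statement. I will first re-derive it from \eqref{ssAalpha-quaform-0}. Decompose $|\nablaslash^2u|^2=|(\nablaslash^2u)^\circ|^2+\frac1{n-1}(\Deltaslash u)^2$. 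Drop part of the gradient term using $2(1+a_{n,p})\geq1$. Then apply Young's inequality
\[
c_{n,p}|u\,\Deltaslash u|\leq \frac1{n-1}(\Deltaslash u)^2+\frac{n-1}4c_{n,p}^2u^2 ,
\]
which uses exactly the extra $\frac1{n-1}(\Deltaslash u)^2$. The result is
\[
\ssrmA^\lambda[u]\geq \fint\Bigl(|(\nablaslash^2u)^\circ|^2+|\nablaslash u|^2+(n-2)(\Deltaslash u)^2\Bigr)d\chi-\frac{n-1}4c_{n,p}^2\fint u^2\,d\chi .
\]
Note that $\ssrmA^\lambda[u]$ may be negative, because of the $u\Deltaslash u$ term. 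So I will not split $\ssrmA^\lambda$ itself; I will apply this lower bound to the whole term $\frac{c_2}{n-1}\ssrmA^\lambda[u]$, using $c_2>0$ (ensured by \eqref{art2 -c2c13-cond} and the footnote there). Writing $Q[u]\coloneqq\fint(|(\nablaslash^2u)^\circ|^2+|\nablaslash u|^2+(n-2)(\Deltaslash u)^2)\,d\chi$, this gives
\[
\Psi^\notreH_\beta[u]\geq\frac12\PsiHquabeta[w]+\frac{c_2}{n-1}Q[u]-\frac{c_2c_{n,p}^2}{4}\fint u^2\,d\chi-d_4^2\fint(\Deltaslash u)^2\,d\chi-d_5^2\fint u^2\,d\chi .
\]

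Next I bound the unfavorable term trivially by $Q$: since $(n-2)(\Deltaslash u)^2\leq$ the integrand of $Q[u]$, we have $d_4^2\fint(\Deltaslash u)^2\,d\chi\leq \frac{d_4^2}{n-2}Q[u]$, using $n\geq3$. Subtracting yields the coefficient $\frac{c_2}{n-1}-\frac{d_4^2}{n-2}$ in front of $Q[u]$ and the coefficient $\frac14c_2c_{n,p}^2+d_5^2$ in front of $\fint u^2\,d\chi$. This is exactly \eqref{art2 -eq667bis}.

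The computation is routine; no sign condition on $\frac{c_2}{n-1}-\frac{d_4^2}{n-2}$ is needed for the inequality itself. The only point requiring care, and the step I would check most carefully, is verifying the Young step with the correct constants. Its positivity will matter only later, when $d_4\to$ small is arranged by taking $c_{13}$ large.
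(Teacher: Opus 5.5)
Your proposal is correct and is essentially the paper's own argument: the trace decomposition of the Hessian, the bound $2(1+a_{n,p})\geq 1$, and Young's inequality give the lower bound on $\ssrmA^\lambda[u]$; this bound is substituted into \eqref{art2 -eq667} using $c_2>0$, and the $d_4^2$ term is then absorbed by $\frac{d_4^2}{n-2}$ times the $(n-2)(\Deltaslash u)^2$ term. Only your closing aside is slightly inaccurate: the summand $|c_{n,p}|/\bigl((n-1)\sqrt{c_2-c_2^{\min}}\bigr)$ of $d_4$ does not depend on $c_{13}$, so positivity of the coefficient is arranged by first taking $c_2>c_2^!$ and then $c_{13}$ large as in \eqref{art2 -c2c13-new}, but this plays no role in the lemma itself.
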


This lemma is almost \autoref{art2 -lema-gammagamma}, but there remains to show that the constants here can be taken to be uniform in the harmonic limit $p\to n-2$.


\subsection{Closing the construction}

\subsubsection{\texorpdfstring{The last constants $c_2,c_{13}$}{The last constants c2 and c13}}

We must now choose the constants $c_2$, $c_{13}$ so as to make the coefficient of $\ssrmA^\lambda[u]$ positive, namely $d_4 < \sqrt{(n- 2) c_2 / (n-1)}$.
This (for both values $\beta\in\{a_{n,p},2a_{n,p}\}$) is equivalent to
\bel{art2 -c2c13-new}
\aligned
c_2 & > c_2^! \coloneqq \max_{\beta\in\{a_{n,p},2a_{n,p}\}} \Biggl( \frac{c_2^{\min}(\beta)}{2} + \sqrt{\frac{(c_2^{\min}(\beta))^2}{4} + \frac{c_{n,p}^2}{(n-1)(n- 2)}} \Biggr) ,
\\
c_{13}^2 & > c_{13}^{2!}(c_2) \coloneqq \max_{\beta\in\{a_{n,p},2a_{n,p}\}} \Biggl( c_{13}^{2\min}(c_2,\beta)
+ \biggl(\frac{c_2 c_{n,p}/a_{n,p} + c_{n,p} d_3(c_2,\beta)/(c_2 -c_2^{\min}(\beta))}{\sqrt{(n-1)(n- 2) c_2} - c_{n,p}/\sqrt{c_2 -c_2^{\min}(\beta)}}\biggr)^2 \Biggr) .
\endaligned
\ee
The first inequality is satisfied for sufficiently large $c_2$ (depending only on $n,p$), and then the second one is satisfied for sufficiently large $c_{13}$ (depending on $n,p,c_2$).
Since~\eqref{art2 -c2c13-new} implies the inequalities $c_2>c_2^{\min}$ and $c_{13}^2>c_{13}^{2\min}$ previously stated in~\eqref{art2 -c2c13-cond}, any such choice of constants $c_2,c_{13}$ leads to the inequality~\eqref{art2 -eq667bis} with a positive coefficient for $\ssrmA^\lambda[u]$.
The (negative) coefficient of $\fint u^2d\chi$ in this inequality can then be evaluated, as well as the constant $\gamma^{(1)}$ coming from coercivity of the term $\PsiHquabeta[w]$ defined in~\eqref{art2 -PsiHquabeta-def}.


\subsubsection{Uniform control in the harmonic limit}

\bse
This establishes \autoref{art2 -lema-gammagamma}, apart from whether the coefficients~$\gamma^{(1)},\gamma^{(2)},\gamma^{(3)}$ can be taken to remain bounded as $p\to n- 2$.
A concrete choice, initially analyzed in the limit $p\to n- 2$, is given by
$c_2 = c_2^! + 1$ and $c_{13}^2 \coloneqq \max\{0,c_{13}^{2!}\}+1$. 
Let us consider the $p\to n- 2$ limit of various constants ($\beta\in\{a_{n,p},2a_{n,p}\}$)
\be
\aligned
d_1(\beta) & \to n^2 -4n+5 ,
\\
c_2^{\min}(\beta) , c_2^! , (c_2 - 1) & \to \frac{n-3}{n-1}\bigl((n- 2)^3+2n-3\bigr) \eqqcolon C_2(n) ,
\\
d_2(c_2,\beta) & \to (n- 2) (C_2(n)+1) ,
\\
d_3(c_2,\beta) & \to \frac{n- 2}{n-1} (n^2 -4n+5) (C_2(n)+1) .
\endaligned
\ee
We then check that $\PsiHquabeta[w]$ is uniformly coercive as $p\to n- 2$.  The coefficients in the tensorial part $\psiHtenbeta$ given in~\eqref{art2 -PsiHquabeta-def} are bounded below by $1/(n-1)$.
For the scalar part~$\psiHscabeta[w]$, given in~\eqref{art2 -psiHscabeta-re}, the factors $c_2 -c_2^{\min}(\beta) \geq c_2 -c_2^! = 1$ and $c_{13}^2 -c_{13}^{2\min}(c_2,\beta) \geq c_{13}^2 -c_{13}^{2!}=1$ are suitably bounded below while $d_1,d_2,d_3$ remain bounded in this limit.

We consider the limits of further constants (again for $\beta\in\{a_{n,p},2a_{n,p}\}$)
\be
\aligned
c_{13}^{2\min} & \to
\Biggl( \frac{(n- 2)^2}{n-1}
+ \frac{(n- 2)^2(n^2 -4n+5)^2}{(n-1)^2}
- \frac{2n^2 -7n+7}{n^2 -3n+3} \Biggr)
(C_2(n)+1)^2 \eqqcolon C_{132}(n) ,
\endaligned
\ee
while 
\be
\aligned
c_{13}^{2!} & \to C_{132}(n) + \frac{(n^2 -4n+5)^2}{(n-1)(n- 2)} (1+C_2(n)) ,
\\
\frac{c_2}{n-1} - \frac{d_4^2}{n- 2}
& \to \frac{(n- 2)(1+C_2(n))}{(n-1)(n- 2)+(n^2 -4n+5)^2 (1+C_2(n))} ,
\endaligned
\ee
which is positive (for $n\geq 3$).
Finally, we end by observing that $c_{130}$ (given in~\eqref{art2 -PsiHbil-expr}) remains finite in the $p\to n- 2$ limit, and that $d_5$ also does, thanks to the lower bounds on $c_2 -c_2^{\min}$ and $c_{13}^2 -c_{13}^{2\min}$.
The same choice is uniform on the full exponent interval. Indeed, the quotients $c_{n,p}/a_{n,p}$ and all the quantities in~\eqref{art2 -c2c13-new} extend continuously to the closed interval $p\in[p_n^\flat,n- 2]$. The added unit margins keep $c_2 -c_2^{\min}$, $c_{13}^2 -c_{13}^{2\min}$, and the denominator in the definition of $c_{13}^{2!}$ strictly positive. The resulting coercivity coefficients are therefore continuous and positive on this compact interval. Their minima are positive and depend only on~$n$, which proves the dimension-only uniformity asserted in \autoref{art2 -lema-gammagamma}.
\ese
%


\subsubsection{\texorpdfstring{Constants in dimension $n=3$}{Constants in dimension n=3}}

Concretely, for $n=3$, we find $C_2(3)=0$, $C_{132}(3) =  1/6$, and 
\be
\aligned
\gamma^{(2)} = \frac{c_2}{n-1} - \frac{d_4^2}{n- 2}
& \xrightarrow{n=3,p\to 1} \frac{1}{6} ,
\\
\gamma^{(3)} = d_5^2 + \frac{n-1}{4(n- 2)} d_4^2 c_{n,p}^2 & \xrightarrow{n=3,p\to 1} \Bigl( \sqrt{2} + 9/2 + \frac{11}{2\sqrt{3}} \Bigr)^2 \simeq 82.6 ,
\endaligned
\ee
whose ratio has a finite limit $\gamma^{(3)}/\gamma^{(2)}\simeq 496$.
This ratio is obtained for the particular choice of $c_2,c_{13}^2$ defined above.
A more refined approach is to minimize the ratio over $c_2,c_{13}$.  We have found numerically that $\gamma^{(3)}/\gamma^{(2)}$ can be taken to be arbitrarily close to the minimum ($\simeq 222$) of a rational function $(x+1)^2(x^2+1)^2/(x^2/2 - 1)$ for $x\in(\sqrt{2},+\infty)$, provided $p$ is close enough to $n- 2=1$ (in dimension $3$).

This completes the proof of \autoref{art2 -lema-gammagamma} and therefore the proof of the shell stability for the localized Hamiltonian operator stated in \autoref{art2 -prop-dj39}. 


\section{Proof of the momentum shell stability}
\label{section-9}

\subsection{A class of functionals}

\subsubsection{The shell identity}

Given a positive constant $\cperp$ (later taken to be $\cperp=1$), the momentum shell functional is defined as 
\be
\Phi^\notreM[Z] \coloneqq \frac{1}{2} \fint_{\Lambda_{r}} \bigl( 2 \, \cperp \, \Zperp{}^2 + |\Zpar|^2 \bigr) d\chi.
\ee
Evaluating its second derivative $- (\vartheta + a_{n,p}) (\vartheta + 2a_{n,p}) \Phi^\notreM[Z]$ yields (in particular) the terms $- 2 \, \cperp\Zperp\vartheta^2\Zperp- \Zpar\cdot\vartheta^2\Zpar$, which are evaluated in terms of lower-order derivatives thanks to the momentum equations. 
\bse
Specifically, the source functional reads 
\be
\Mu^\notreM[Z]
\coloneqq \fint_{\Lambda_r}  2 \, \bigl( \cperp\Zperp \xh_i + \Zpar_i\bigr) \, r^2 \notreM^\lambda[Z]_i \, d\chi, 
\ee
while the functional $\Chi^\notreM$ defined by the identity~\eqref{main-func-identity-MM} above reads 
\be
\aligned
\Chi^\notreM[Z]
& = (\vartheta + a_{n,p}) (\vartheta + 2a_{n,p}) \Phi^\notreM[Z] + \Mu^\notreM[Z]
\\
& = \fint_{\Lambda_{r}} \Bigl(
2 \cperp \bigl((\vartheta+a_{n,p}) \Zperp\bigr)^2
+ 2 \cperp \Zperp \Bigl( \Brs^{\lambda\perp\parallel}[\Zpar] + \ssB^{\lambda\perp\perp}[\Zperp] + \ssB^{\lambda\perp\parallel}[\Zpar] \Bigr)
\\
& \qquad\quad + \bigl|(\vartheta+a_{n,p}) \Zpar\bigr|^2
+ 2 \Zpar\cdot\bigl( \Brs^{\lambda\parallel\perp}[\Zperp] + \ssB^{\lambda\parallel\perp}[\Zperp] + \ssB^{\lambda\parallel\parallel}[\Zpar] \bigr)
\Bigr) d\chi, 
\endaligned
\ee
Hence, we have
\bel{art2 -equa-dk392} 
\aligned
\Chi^\notreM[Z]
& = \fint_{\Lambda_{r}} \Bigl(
2 \cperp \bigl((\vartheta+a_{n,p}) \Zperp\bigr)^2
+ \cperp |\nablaslash\Zperp|^2
+ 2(n-1) \cperp \Zperp{}^2 \\
& \qquad\quad
+ \bigl|(\vartheta+a_{n,p}) \Zpar\bigr|^2
+ 2 \, \bigl|\Sym(\nablaslash\Zpar) \bigr|^2
+ (a_{n,p}+1)|\Zpar|^2
\\
& \qquad\quad
+ 2 \, (\cperp+1) \Zperp\nablaslash\cdot\Zpar
+ \cperp(\vartheta-1) \Zpar\cdot\nablaslash\Zperp
- \Zpar\cdot\bigl(\vartheta+a_{n,p}+1\bigr) \nablaslash\Zperp
\Bigr) d\chi .
\endaligned
\ee
The first two lines produce the expected $H^1$-type norm of~$Z$ {\it except} for the ``missing'' anti-symmetric part of $\nablaslash\Zpar$. The next two terms are cross-terms with at most one derivative of~$Z$ in each factor.
\ese

\bse
The last term in \eqref{art2 -equa-dk392} spoils positivity of (the integrand of) the functional~$\Chi^\notreM$, and requires the introduction of the radial integration functional~$\Upsilon^\notreM$, which is the integral of a quadratic form in $Z$ and its derivatives. Specifically, a general functional of the form needed here, ensuring that
\be
\Psi^\notreM_\beta[Z] = \Chi^\notreM[Z] - (\vartheta +\beta) \Upsilon^\notreM[Z], 
\ee
involves at most first-order derivatives of~$Z$, is given by 
\be
\Upsilon^\notreM = \fint_{\Lambda_r} \bigl( - \Zpar\cdot\nablaslash\Zperp + \onec \Zperp{}^2 + \twoc |\Zpar|^2
\bigr) d\chi. 
\ee
Here, the constants $\onec,\twoc\in\RR$ remain to be determined. With this notation, we evaluate the two momentum dissipation functionals and find 
\bel{art2 -equa--115} 
\aligned
\Psi^\notreM_\beta[Z]
& = \fint_{\Lambda_{r}} \Bigl(
2 \, \cperp \bigl((\vartheta+a_{n,p}) \Zperp\bigr)^2
+ \cperp |\nablaslash\Zperp|^2
+ 2(n-1) \cperp \Zperp{}^2
- \onec \Zperp (2\vartheta +\beta) \Zperp
\\
& \qquad\quad
+ \bigl|(\vartheta+a_{n,p}) \Zpar\bigr|^2
+ 2 \, \bigl|\Sym(\nablaslash\Zpar) \bigr|^2
+ (a_{n,p}+1)|\Zpar|^2
- \twoc \Zpar \cdot (2\vartheta +\beta) \Zpar
\\
& \qquad\quad
+ 2 \, (\cperp+1) \Zperp\nablaslash\cdot\Zpar
+ \Bigl( (\cperp+1) \vartheta + (\beta-a_{n,p}-1- \cperp) \Bigr) \Zpar\cdot\nablaslash\Zperp
\Bigr) d\chi .
\endaligned
\ee
\ese


\subsubsection{Obstruction to having a non-negative integrand}

In~\eqref{art2 -equa--115}, let us consider first the terms that are quadratic in first derivatives of~$Z$, denoting the remaining terms as $\lot$, namely 
\be
\aligned
\Psi^\notreM_\beta[Z]
& = \fint_{\Lambda_{r}} \Bigl(
 2 \, \cperp (\vartheta\Zperp)^2
 + 2 \, \bigl|\Sym(\nablaslash\Zpar) \bigr|^2
\\[-.25ex]
&\qquad\qquad
+ \Bigl|\vartheta\Zpar + \frac{\cperp+1}{2} \nablaslash\Zperp\Bigr|^2
 - \frac{(\cperp-1)^2}{4} |\nablaslash\Zperp|^2
 + \lot\Bigr) d\chi .
\endaligned
\ee
This can be non-negative only if $\cperp = 1$, which we thus assume. Moreover, with this choice of $\cperp$, the sole derivatives of~$Z$ that are controlled by the available quadratic terms are $\vartheta\Zperp$, $\Sym(\nablaslash\Zpar)$, and $\vartheta\Zpar+\nablaslash\Zperp$.

The ($n$-component) vector fields $Z$ and $\vartheta Z$ are independent on a given sphere, so it is useful to ``complete the squares'' that involve $\vartheta Z$ and, consequently, focus attention on the remaining functional in~$Z$ and its angular derivatives. Specifically, we write 
\bel{art2 -equa--119}
\aligned
\Psi^\notreM_\beta[Z]
& = \Psi^\notreM_{\beta,1}[Z] + \Psi^\notreM_{\beta,0}[Z], 
\\
\Psi^\notreM_{\beta,1}[Z] & = \fint_{\Lambda_{r}} \Bigl(
\bigl|(\vartheta+a_{n,p}- \twoc) \Zpar + \nablaslash\Zperp\bigr|^2
+ \frac{1}{2} \bigl((2\vartheta+2a_{n,p}- \onec) \Zperp\bigr)^2 \Bigr) d\chi ,
\\
\Psi^\notreM_{\beta,0}[Z] & = \fint_{\Lambda_{r}} \biggl(
2 \, \bigl|\Sym(\nablaslash\Zpar) + \Zperp \gslash\bigr|^2
+ \bigl(1- \twoc^2 +a_{n,p} + \twoc (2a_{n,p} - \beta) \bigr)|\Zpar|^2
\\
& \qquad \qquad
+ \onec \Bigl( 2a_{n,p}- \beta - \frac{\onec}{2} \Bigr) \Zperp{}^2
- \bigl(2 - 2\twoc+3a_{n,p}- \beta\bigr) \underline{\Zpar \cdot \nablaslash\Zperp}
\biggr) d\chi .
\endaligned
\ee
Since the functional $\Psi^\notreM_{\beta,1}[Z]$ may vanish for certain choices of $\vartheta Z$, non-negativity of the integrand would require the remaining terms to be non-negative. However, the derivative $\nablaslash\Zperp$ only appears linearly in $\Psi^\notreM_{\beta,0}[Z]$.
For a given value of~$\beta$ this problem could be cured by choosing $\twoc$ in order to eliminate the $\Zpar \cdot \nablaslash\Zperp$ term, but we are interested in positivity simultaneously at the two values $\beta =a_{n,p}$ and $\beta =2a_{n,p}$.
 

\subsubsection{Completing the square}

We propose to resolve the above problem by {\it integrating by parts} the underlined term\footnote{A similar integration by parts could be done in~\eqref{art2 -equa-dk392} to change $- \Zpar\cdot\vartheta\nablaslash\Zperp$ into $\vartheta\Zperp\,\lambda^{- 2\expoP}\nablaslash\cdot(\lambda^{2\expoP}\Zpar)$, but the major difference is that this term has a prefactor of order~$1$, while \eqref{art2 -eq834} features a factor $d_\beta$ that will be taken to be small.} $\Zpar \cdot \nablaslash\Zperp$ and {\it completing the square in} $\Zperp$, as this scalar unknown then appears {\it without any derivative.} To proceed, we make a convenient choice of constants $\onec,\twoc$ and introduce a notation
\bel{art2 -dbetadef}
\onec = 0 , \qquad
\twoc = 1 , \qquad
d_\beta \coloneqq \frac{3a_{n,p}- \beta}{4(n-1)} .
\ee
We obtain 
\bel{art2 -eq834}
\aligned
\Psi^\notreM_{\beta,0}[Z] & = \Psi^\notreM_{\beta,\perp}[Z] + \Psi^\notreM_{\beta,\parallel}[\Zpar] ,
\\
\Psi^\notreM_{\beta,\perp}[Z] & = 2(n-1) \fint_{\Lambda_r} \Bigl(\Zperp + \frac{\nablaslash\cdot\Zpar}{n-1} + d_\beta \lambda^{- 2\expoP} \nablaslash\cdot(\lambda^{2\expoP}\Zpar) \Bigr)^2 d\chi ,
\\
\Psi^\notreM_{\beta,\parallel}[\Zpar]
& = \fint_{\Lambda_{r}} \biggl( 2 \, \bigl|\Sym(\nablaslash\Zpar)^\circ\bigr|^2 + 4(n-1) d_\beta \, |\Zpar|^2
\\
& \qquad \qquad - 4  d_\beta \, (\nablaslash\cdot\Zpar) \lambda^{- 2\expoP} \nablaslash\cdot(\lambda^{2\expoP}\Zpar)
- 2 (n-1)  d_\beta^2 \, \Bigl(\lambda^{- 2\expoP} \nablaslash\cdot(\lambda^{2\expoP}\Zpar) \Bigr)^2
\biggr) d\chi .
\endaligned
\ee
Altogether, the functional $\Psi^\notreM_\beta[Z]$ is the sum of positive functionals $\Psi^\notreM_{\beta,1}[Z]$ and~$\Psi^\notreM_{\beta,\perp}[Z]$ given in \eqref{art2 -equa--119} and \eqref{art2 -eq834}, and of~$\Psi^\notreM_{\beta,\parallel}[\Zpar]$, which has no sign in general.
We analyze this last contribution later on in \autoref{section=9-main}, using weighted Korn, Korn--Poincaré, and Hardy inequalities.


\subsection{A weighted Hardy inequality} 
\label{art2 -section=3.4}

Our final aim in this section is to establish the momentum shell stability condition, namely the semi-coercivity of the dissipation functionals~$\Psi^\notreM_\beta$.
We begin here by establishing a functional inequality that is useful to analyze an unfavorable term $\bigl(\lambda^{- 2\expoP}\nablaslash\cdot(\lambda^{2\expoP}\Zpar) \bigr)^2$ in these functionals.
We observe that $|\nablaslash\lambda|$ is positive on the boundary of~$\Lambda$, and that $\Hessslash\lambda$ remains finite thanks to our regularity assumption, 
hence $\lvert\Hessslash\lambda\rvert\lesssim|\nablaslash\lambda|$ in a neighborhood of the boundary.
For $m\geq 1$, the (rescaled) Hessian
\be
\lambda^{-m}\Hessslash_{ab}(\lambda^m) = m(m-1) \nablaslash_a(\log\lambda) \nablaslash_b(\log\lambda) + m \lambda^{-1} \Hessslash_{ab}\lambda
\ee
is thus the sum of a positive semi-definite matrix and of a term bounded by $|\nablaslash\log\lambda^m|$ near the boundary.
Away from the boundary, $\lambda^{-m}\lvert\Hessslash\lambda^m\rvert$ is bounded because $\lambda$ is smooth and positive.
We can thus introduce the Hardy-type constant $C_\Hardy^\lambda$ as the minimal constant satisfying the following pointwise condition for all tangent vectors~$\wvect$,
\bel{art2 -equa-chardy}
- \lambda^{- \expoP/2}\wvect^a \wvect^b \Hessslash_{ab}(\lambda^{\expoP/2})
\leq \Bigl( (C_\Hardy^\lambda)^2 + C_\Hardy^\lambda |\nablaslash\log\lambda^{\expoP/2}| \Bigr) \wvect^a \wvect^b \gslash_{ab} .
\ee

\begin{lemma}[A weighted Hardy inequality for vector fields]
\label{art2 -equainequ49}
In any localization domain $(\Lambda, d\chi = \lambda^{2 \expoP} d\xh)$,
for any vector field $\Zpar \in H^1_{- \expoP}(\Lambda)$ one has 
\be
\bigl\|\lambda^{- 2\expoP}\nablaslash\cdot(\lambda^{2\expoP}\Zpar) \bigr\|_{L^2_{- \expoP}(\Lambda_r)}^2
\leq \alpha_1 \, \bigl\| \Sym(\nablaslash \Zpar) \bigr\|_{L^2_{- \expoP}(\Lambda_r)}^2
+ \alpha_0 (C_\Hardy^\lambda)^2 \,  \bigl\| \Zpar \bigr\|_{L^2_{- \expoP}(\Lambda_r)}^2 
\ee
with constants $\alpha_1,\alpha_0$ that only depend on the dimension~$n$, and in particular remain uniformly bounded for large~$\expoP$.
\end{lemma}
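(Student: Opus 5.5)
The plan is to reduce the weighted divergence to an unweighted divergence of a rescaled field, control that by $\Sym$ through a Bochner-type identity, and handle the singular transport terms by a ground-state (Hardy) integration by parts. The constant \eqref{art2 -equa-chardy} enters only in this last step.

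\emph{Step 1: rescaling.} Write $\lambda^{-2\expoP}\nablaslash\cdot(\lambda^{2\expoP}\Zpar)=\nablaslash\cdot\Zpar+2\expoP\,\nablaslash\log\lambda\cdot\Zpar$ and set $V\coloneqq\lambda^{\expoP}\Zpar$. Then
\be
\bigl\|\lambda^{-2\expoP}\nablaslash\cdot(\lambda^{2\expoP}\Zpar)\bigr\|^2_{L^2_{-\expoP}}\simeq\int_\Lambda\bigl(\nablaslash\cdot V+\expoP\,\nablaslash\log\lambda\cdot V\bigr)^2d\xh .
\ee
Here $V$ vanishes to high order at $\del\Lambda$ (for smooth $\Zpar$, then by density), so no boundary terms arise below.

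\emph{Step 2: Bochner identity for $V$.} For such $V$ on the round sphere,
\be
\int(\nablaslash\cdot V)^2=2\int|\Sym\nablaslash V|^2-\int|\nablaslash V|^2+(n-2)\int|V|^2 .
\ee
Since $|\nablaslash V|^2\ge(\nablaslash\cdot V)^2/(n-1)$, this bounds $\int(\nablaslash\cdot V)^2$ by a dimensional multiple of $\int|\Sym\nablaslash V|^2+\int|V|^2$. Next expand $\Sym\nablaslash V=\lambda^{\expoP}\Sym\nablaslash\Zpar+\expoP\lambda^{\expoP}\Sym(\nablaslash\log\lambda\otimes\Zpar)$. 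Up to the harmless terms $\|\Sym\nablaslash\Zpar\|^2_{L^2_{-\expoP}}$ and $\|\Zpar\|^2_{L^2_{-\expoP}}$, everything therefore reduces to a bound for the transport quantities $\expoP^2\int\lambda^{2\expoP}|\nablaslash\log\lambda|^2|\Zpar|^2$ and $\expoP^2\int\lambda^{2\expoP}(\nablaslash\log\lambda\cdot\Zpar)^2$. To keep the constants uniform in large $\expoP$, I would avoid estimating these separately. Instead I would expand the square in Step 1 directly and integrate the cross term $2\expoP\int\nablaslash\cdot V\,(\nablaslash\log\lambda\cdot V)$ by parts, aiming for cancellations between the $\expoP^2$ terms.

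\emph{Step 3: ground-state Hardy step (main obstacle).} With $h=\lambda^{\expoP/2}$, the cross-term integration by parts produces $-\int h^{-1}\Hessslash h(V,V)$ (times dimensional factors), plus a positive $|\nablaslash\log h|^2$-weighted term of favourable sign, plus a term in $\nablaslash_V V$. The key identity is $h^{-1}\Hessslash h=\Hessslash\log h+\nablaslash\log h\otimes\nablaslash\log h$. Using the pointwise definition \eqref{art2 -equa-chardy}, the Hessian term is bounded by
\be
(C_\Hardy^\lambda)^2\int|V|^2+C_\Hardy^\lambda\int|\nablaslash\log h|\,|V|^2 .
\ee
The last piece is absorbed by Young's inequality into the favourable $|\nablaslash\log h|^2|V|^2$ term, at the cost of another $(C_\Hardy^\lambda)^2\int|V|^2$. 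The $\nablaslash_V V$ term splits into a part in $\Sym\nablaslash V$ and a part in $\nablaslash\cdot V$; the latter is reabsorbed into the left-hand side with a small dimensional factor. Undoing $V=\lambda^{\expoP}\Zpar$ then gives the claim with $\alpha_1,\alpha_0$ depending only on $n$.

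The delicate point is the bookkeeping of powers of $\expoP$, so that every $\expoP^2|\nablaslash\log\lambda|^2$ weight either cancels or carries a good sign. This is why the Hessian is taken of $\lambda^{\expoP/2}$ rather than $\lambda$: the choice $h=\lambda^{\expoP/2}$ balances the ground state exactly. Verifying this balance is the step I expect to be the main obstacle. If the direct computation fails to close, I would re-choose the exponent $m$ of the ground state $h=\lambda^m$ in Step 3 until the $\expoP^2$ terms match.
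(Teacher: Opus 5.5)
Your Steps~1--2 are correct, but they only restate the problem. Since $(\nablaslash\cdot\Zpar)^2\le(n-1)|\Sym(\nablaslash\Zpar)|^2$, the lemma amounts to the bound $\expoP^2\fint_\Lambda(\nablaslash\log\lambda\cdot\Zpar)^2\,d\chi\lesssim\|\Sym(\nablaslash\Zpar)\|^2_{L^2_{-\expoP}}+(C_{\Hardy}^{\lambda})^2\|\Zpar\|^2_{L^2_{-\expoP}}$, uniformly in $\expoP$. Step~3 is supposed to deliver this, but its signs are wrong.

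Write $q=\nablaslash\log\lambda$ and $h=\lambda^{\expoP/2}$, so that $-2\expoP\,\Hessslash\log\lambda=-4h^{-1}\Hessslash h+\expoP^2\,q\otimes q$. Your integration by parts is then the identity
\[
2\expoP\int_\Lambda(\nablaslash\cdot V)(q\cdot V)\,d\xh=-4\int_\Lambda h^{-1}\Hessslash h(V,V)\,d\xh+\expoP^2\int_\Lambda(q\cdot V)^2\,d\xh-2\expoP\int_\Lambda q\cdot\nablaslash_V V\,d\xh .
\]
The term $\expoP^2\int(q\cdot V)^2=4\int(\nablaslash\log h\cdot V)^2$, which you call favourable, has a plus sign on the right-hand side of an identity for the quantity you must bound from above. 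It doubles the $\expoP^2\int(q\cdot V)^2$ already present instead of cancelling it. It also cannot absorb the error $C_{\Hardy}^{\lambda}\int|\nablaslash\log h|\,|V|^2$ coming from \eqref{art2 -equa-chardy}.

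The $\nablaslash_VV$ term is not harmless either. Its ``$\nablaslash\cdot V$ part'' is the cross term itself, with a factor $\expoP$ rather than a small dimensional factor. Moreover, $\Sym\nablaslash V=\lambda^{\expoP}\bigl(\Sym(\nablaslash\Zpar)+\expoP\,\Sym(q\otimes\Zpar)\bigr)$ contains exactly the transport quantities you are trying to bound, so the argument is circular. A one-dimensional model shows this clearly: for $\lambda=t$, your expansion gives exactly $\int(V'+\expoP t^{-1}V)^2\,dt=\int(V')^2\,dt+\expoP(\expoP+1)\int t^{-2}V^2\,dt$. That is a sum of nonnegative terms with nothing to cancel, and regrouping through $h=\lambda^m$ for another $m$ cannot change it.

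The missing idea is that favourable terms of order $-\expoP^2$ arise only when a derivative falls on the weight $\lambda^{2\expoP}$, and your substitution $V=\lambda^{\expoP}\Zpar$ hides the weight. The paper expands the nonnegative quantity $\fint_\Lambda\bigl|\Sym\bigl(\nablaslash\Zpar+\tfrac{\expoP}{2}\nablaslash(\log\lambda)\Zpar\bigr)\bigr|^2d\chi$ and integrates its two cross terms by parts against $d\chi$.

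\begin{itemize}
\item This produces $-\tfrac{3\expoP^2}{8}\fint|\Zpar|^2|\nablaslash\log\lambda|^2d\chi$ and $-\tfrac{3\expoP^2}{4}\fint(\Zpar\cdot\nablaslash\log\lambda)^2d\chi$, which beat the $+\tfrac{\expoP^2}{8}$ terms of the square.
\item The second-derivative terms assemble into $-\lambda^{-\expoP/2}\Deltaslash\lambda^{\expoP/2}$ and $-\lambda^{-\expoP/2}\Hessslash(\lambda^{\expoP/2})$. These are bounded by \eqref{art2 -equa-chardy}, and the $C_{\Hardy}^{\lambda}|\nablaslash\log\lambda^{\expoP/2}|$ part is absorbed by Young into the surviving negative terms.
\item The lemma then follows from $(a+b)^2\le2a^2+2b^2$.
\end{itemize}

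Your plan of expanding the left-hand side directly can also be rescued, but only in the weighted variables. Set $D=\lambda^{-2\expoP}\nablaslash\cdot(\lambda^{2\expoP}\Zpar)$ and write $\fint D^2d\chi=\fint D\,\nablaslash\cdot\Zpar\,d\chi+2\expoP\fint D\,(q\cdot\Zpar)\,d\chi$. Integrate the last term by parts against $d\chi$, and split $q\cdot\nablaslash_{\Zpar}\Zpar=2\Sym(\nablaslash\Zpar)(\Zpar,q)-\tfrac12 q\cdot\nablaslash|\Zpar|^2$. When $q\cdot\nablaslash|\Zpar|^2$ is integrated by parts against $d\chi$, the weight contributes $-2\expoP^2\fint|q|^2|\Zpar|^2d\chi$. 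This outweighs the at most $\tfrac32\expoP^2\fint|q|^2|\Zpar|^2d\chi$ generated by rewriting $\Hessslash\log\lambda$ and $\Deltaslash\log\lambda$ through $h=\lambda^{\expoP/2}$. The remaining $\Sym(\nablaslash\Zpar)$ term is handled by Cauchy--Schwarz.
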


\begin{proof} 
\bse
It suffices by density to establish the estimate for smooth vector fields for which the integrations by parts below are legitimate. We will rely on positivity of (the integral of) $|\Sym(\nablaslash\Zpar+m\nablaslash(\log\lambda)\Zpar)|^2$ for the (somewhat arbitrary) value $m=\expoP/2$ (which obeys $m\geq 1$ thanks to $\expoP\geq 2$).
As a preliminary step, we upper bound the cross terms that appear when expanding the square.  First, by integrating by parts, we find
\be
\aligned
& \frac{\expoP}{2} \fint_{\Lambda} \nablaslash_a(\log\lambda) \Zpar_b \nablaslash^a \Zpar{}^b d\chi
\\
& = \frac{1}{2} \fint_{\Lambda} |\Zpar|^2
(- \lambda^{- \expoP/2}\Deltaslash\lambda^{\expoP/2}) d\chi
- \frac{3\expoP^2}{8} \fint_{\Lambda}
|\Zpar|^2 |\nablaslash\log\lambda|^2 d\chi
\\
& \leq \frac{n-1}{2} \Bigl(
(C_\Hardy^\lambda)^2 \|\Zpar\|_{L^2_{- \expoP}(\Lambda)}^2
+ \frac{\expoP}{2} C_\Hardy^\lambda
\fint_{\Lambda} |\nablaslash\log\lambda|\,|\Zpar|^2 d\chi
\Bigr)
- \frac{3\expoP^2}{8} \fint_{\Lambda}
|\Zpar|^2 |\nablaslash\log\lambda|^2 d\chi
\\
& \leq \frac{(n-1)(n+3)}{8} (C_\Hardy^\lambda)^2 \|\Zpar\|_{L^2_{- \expoP}(\Lambda)}^2
- \frac{\expoP^2}{4} \fint_{\Lambda} |\Zpar|^2 |\nablaslash\log\lambda|^2 d\chi .
\endaligned
\ee
A similar calculation yields
\be
\aligned
& \frac{\expoP}{2} \fint_{\Lambda} \Bigl(\nablaslash_a(\log\lambda) \Zpar_b \nablaslash^b \Zpar{}^a\Bigr) d\chi \\
& = \fint_{\Lambda}
\bigl(- \lambda^{- \expoP/2}\Hessslash_{ab}(\lambda^{\expoP/2})\bigr)
\Zpar{}^a \Zpar{}^b d\chi
- \frac{3\expoP^2}{4} \fint_{\Lambda}
(\Zpar\cdot\nablaslash\log\lambda)^2 d\chi
\\
&\quad
- \frac{\expoP}{2} \fint_{\Lambda}
(\nablaslash\cdot\Zpar)\,
\Zpar\cdot\nablaslash(\log\lambda) d\chi
\\
& \leq (C_\Hardy^\lambda)^2 \fint_{\Lambda}  |\Zpar|^2 d\chi
+ \frac{\expoP}{2} C_\Hardy^\lambda \fint_{\Lambda}
|\Zpar|^2 |\nablaslash(\log\lambda)| d\chi
\\
&\quad
+ \frac{1}{8} \fint_{\Lambda} |\nablaslash\cdot\Zpar|^2 d\chi
- \frac{\expoP^2}{4} \fint_{\Lambda} \bigl(\Zpar\cdot\nablaslash(\log\lambda)\bigr)^2 d\chi .
\\
& \leq 2 (C_\Hardy^\lambda)^2 \|\Zpar\|_{L^2_{- \expoP}(\Lambda)}^2
 +\frac{\expoP^2}{16} \fint_{\Lambda} |\Zpar|^2 |\nablaslash(\log\lambda)|^2 d\chi
\\
&\quad
- \frac{\expoP^2}{4} \fint_{\Lambda}
\bigl(\Zpar\cdot\nablaslash(\log\lambda)\bigr)^2 d\chi
 + \frac{n-1}{8}
 \bigl\|\Sym(\nablaslash\Zpar)\bigr\|_{L^2_{- \expoP}}^2 .
\endaligned
\ee
Then we obtain
\be
\aligned
0 & \leq \fint_{\Lambda} \Bigl| \Sym\Bigl(\nablaslash\Zpar + \frac{\expoP}{2} \nablaslash(\log\lambda) \Zpar\Bigr) \Bigr|^2 d\chi
\\
& \leq \fint_{\Lambda} \bigl| \Sym(\nablaslash\Zpar) \bigr|^2 d\chi
+ \frac{\expoP^2}{8} \fint_{\Lambda} \Bigl( |\nablaslash(\log\lambda)|^2 |\Zpar|^2 + \bigl(\Zpar\cdot\nablaslash(\log\lambda)\bigr)^2 \Bigr) d\chi \\
& \quad + \frac{\expoP}{2} \fint_{\Lambda} \Bigl( \nablaslash_a(\log\lambda) \Zpar_b \nablaslash^b \Zpar{}^a + \nablaslash_a(\log\lambda) \Zpar_b \nablaslash^a\Zpar{}^b \Bigr) d\chi
\\
& \leq \frac{n+7}{8} \bigl\| \Sym(\nablaslash\Zpar) \bigr\|_{L^2_{- \expoP}(\Lambda)}^2
+ \frac{n^2+2n+13}{8} (C_\Hardy^\lambda)^2 \|\Zpar\|_{L^2_{- \expoP}(\Lambda)}^2 \\
& \quad - \frac{\expoP^2}{16} \fint_{\Lambda} |\Zpar|^2 |\nablaslash\log\lambda|^2 d\chi
- \frac{\expoP^2}{8} \fint_{\Lambda} \bigl(\Zpar\cdot\nablaslash(\log\lambda)\bigr)^2 d\chi .
\endaligned
\ee
The resulting bound on $\Zpar\cdot\nablaslash(\log\lambda)$ yields
\be
\aligned
& \bigl\|\lambda^{- 2\expoP}\nablaslash\cdot(\lambda^{2\expoP}\Zpar) \bigr\|_{L^2_{- \expoP}(\Lambda)}^2
\leq 8 \expoP^2 \fint_{\Lambda} \bigl(\Zpar\cdot\nablaslash(\log\lambda)\bigr)^2 d\chi + 2 \fint_{\Lambda} (\nablaslash\cdot\Zpar)^2 d\chi
\\
& \leq 8 (n^2+2n+13) (C_\Hardy^\lambda)^2 \|\Zpar\|_{L^2_{- \expoP}(\Lambda)}^2
+ (10n+54) \fint_{\Lambda} \bigl| \Sym(\nablaslash\Zpar) \bigr|^2 d\chi .
\endaligned
\qedhere
\ee
\ese
\end{proof}  


\subsection{Main result of this section} 
\label{section=9-main}

By analyzing the explicit form \eqref{art2 -equa--119}, \eqref{art2 -eq834} of the dissipation functional $\Psi^\notreM_\beta[Z]$ (with $\onec=0$ and $\twoc=1$) using weighted Korn and Hardy inequalities, we reach the following result.
It is instructive to compare the proof below to that of coercivity of $\ssrmB^{\lambda}$ in \autoref{art2 -lem-poinckorn} (and \autoref{art2 -lem-39harmmom}).
The functional $\ssrmB^{\lambda}$ included a $|\nablaslash\xiperp- \xipar|^2$ term allowing us to control rotation Killing vectors.
The functional $\Psi^\notreM_\beta[Z]$ does not include such a term, but includes the $L^2$ norm of~$\Zpar$.  In addition, the averages $T_{kl} \projP^k(Z)=\la 2\xh_k\Zperp+\Zpar_k\ra$ are not assumed to vanish and must thus be tracked throughout.
We emphasize that our statement below proves slightly more than is required for shell stability; namely, we cover some $\beta<a_{n,p}$ and some $\beta>2a_{n,p}$, and therefore (by linearity of $\Psi^\notreM_\beta$ with respect to~$\beta$) the whole interval between them. 

\begin{theorem}[Shell stability for the localized momentum operator] 
\label{art2 -prop-semi-mdiss}
Fix $p\in(p_n^\flat,n- 2)$ and consider the momentum operator associated with a localization function $\lambda$.
For every fixed $\lambda$, there is a threshold $a_*^\lambda>0$ such that the following estimates hold whenever $0<a_{n,p}\leq a_*^\lambda$. No smallness assumption is imposed on the localization domain itself.
The momentum dissipation functionals satisfy, for $\beta=\frac12a_{n,p}$ and $\beta=\frac52a_{n,p}$ and with an implicit constant depending only on the dimension,
\be
\aligned
& \Psi^\notreM_\beta[Z]
+ a_{n,p} \, \Bigl| \la 2 \,\xh \Zperp + \Zpar \ra - T(\Xi^\notreM)^{-1} \Kappa^\notreM[Z] \Bigr|^2
\gtrsim
\Ecal[Z], \qquad n \geq 4 .
\endaligned
\ee
Here, $\Ecal[Z]$ defined in~\eqref{art2 -equa-eofz}.
In dimension $n=3$, an additional term $\gamma_{\textnormal{Korn}}^{\notreM} \, \mathfrak{K}[Z]$ must be included on the left-hand side, with a constant $\gamma_{\textnormal{Korn}}^{\notreM}$ that depends on the geometry and exponents.
Moreover, there exists a constant $c>0$ (depending on the geometry and exponents) such that
\bel{Ecoer-H30}
\Ecal[Z]
\geq
c\Bigl( \bigl\|\vartheta\Zperp,\, (\vartheta\Zpar+\nablaslash\Zperp),\, \Zperp\bigr\|_{L^2_{- \expoP}(\Lambda)}^2 + \bigl\|\Zpar\bigr\|_{H^1_{- \expoP}(\Lambda)}^2 \Bigr) .
\ee
In particular, the \underline{momentum shell stability} condition in \autoref{def-shell-Mstab} holds.
\end{theorem}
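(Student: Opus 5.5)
The plan starts from the decomposition already obtained with $\onec=0$, $\twoc=1$:
\be
\Psi^\notreM_\beta=\Psi^\notreM_{\beta,1}+\Psi^\notreM_{\beta,\perp}+\Psi^\notreM_{\beta,\parallel}.
\ee
Here the first two pieces are non-negative and $d_\beta=(3a_{n,p}-\beta)/(4(n-1))$ equals $\frac{5}{8(n-1)}a_{n,p}$ for $\beta=\frac12a_{n,p}$ and $\frac{1}{8(n-1)}a_{n,p}$ for $\beta=\frac52a_{n,p}$. In both cases $d_\beta\simeq a_{n,p}>0$; this positivity is why these two values of $\beta$ are used. The expected loss is then only of size $a_{n,p}$, matching the weight in front of the average term. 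The proof has four steps.

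\textbf{Step 1: the parallel part.} By Cauchy--Schwarz and \autoref{art2 -equainequ49}, the two negative terms of $\Psi^\notreM_{\beta,\parallel}$ are bounded by
\be
C\,d_\beta\bigl(\|\Sym(\nablaslash\Zpar)\|^2+(C_\Hardy^\lambda)^2\|\Zpar\|^2\bigr)+C\,d_\beta\|\nablaslash\cdot\Zpar\|^2 .
\ee
Using the Korn decomposition \eqref{art2 -korn-ineq-11}, write $\Zpar=\zeta_Z+(\Zpar-\zeta_Z)$.
\bei
\item For the remainder $\Zpar-\zeta_Z$, these norms are controlled by $(C_\Korn^\lambda)^2\|\Sym(\nablaslash\Zpar)^\circ\|^2$. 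For $a_{n,p}$ small relative to $C_\Korn^\lambda$ and $C_\Hardy^\lambda$, this is absorbed by $2\|\Sym(\nablaslash\Zpar)^\circ\|^2$.
\item For the conformal Killing part $\zeta_Z$, use $\Sym(\nablaslash\zeta_Z)^\circ=0$ and $\nablaslash\cdot\zeta_Z=-(n-1)L_Z^k\xh_k$. The dangerous terms are then of size $d_\beta|L_Z|^2$ and $d_\beta C_\Hardy^2|\zeta_Z|^2$.
\eei
These must be compared with the favorable term $4(n-1)d_\beta\|\Zpar\|^2\simeq a_{n,p}\|\Zpar\|^2$. Since the translation coefficients $L_Z^k$ also enter $\Psi^\notreM_{\beta,\perp}$ through $\Zperp\approx L_Z^k\xh_k$ (the quantity $\sigma_Z$ of \autoref{art2 -prop-average-m}), the conclusion of this step is
\be
\Psi^\notreM_\beta\gtrsim \Ecal[Z]-C\,a_{n,p}\bigl(|L_Z^\bullet|^2+|L_Z^{\bullet\bullet}|^2\bigr),
\ee
with $C$ depending on $n$ and the geometric constants. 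Here $\Psi^\notreM_{\beta,1}$ supplies the radial terms of $\Ecal$, $\Psi^\notreM_{\beta,\perp}$ supplies $\|\Zperp+\nablaslash\cdot\Zpar/(n-1)\|^2$ up to an $O(d_\beta)$ Hardy error, and $2\|\Sym(\nablaslash\Zpar)^\circ\|^2$ is kept.

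\textbf{Step 2: recovering the finite-dimensional modes.} The loss of $a_{n,p}$ times $|L_Z|^2$ must be repaid by the average term, and this is the main obstacle. Combine \autoref{art2 -prop-average-m} with \autoref{art2 -prop-k-fluctuations-m}. In the difference
\be
\la2\xh\Zperp+\Zpar\ra-T(\Xi^\notreM)^{-1}\Kappa^\notreM[Z],
\ee
the rotation contributions $2L_Z^{kl}\la\xh_k\ra$ cancel exactly, leaving $T L_Z^\bullet+O(\Ecal^{1/2})$. Hence the average term controls the translation part $a_{n,p}|L_Z^\bullet|^2$ up to $a_{n,p}\Ecal$. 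The rotation part $L_Z^{\bullet\bullet}$ is not seen by the averages and must be controlled differently. The idea is that rotations are Killing, so they enter $\Psi^\notreM_{\beta,\parallel}$ only through the positive term $4(n-1)d_\beta|\zeta|^2$ and the Hardy term. Indeed, $\nablaslash\cdot\zeta=0$ for a rotation, and $\lambda^{-2\expoP}\nablaslash\cdot(\lambda^{2\expoP}\zeta)=2\expoP\,\zeta\cdot\nablaslash\log\lambda$, whose integrated square against the positive term needs a sharper estimate. I would first try to show that in the expansion the Hardy term enters only with coefficient $d_\beta^2\ll d_\beta$, so that for a pure rotation the net coefficient is
\be
4(n-1)d_\beta-O(d_\beta^2)C_\Hardy^2>0 .
\ee
The term $-4d_\beta(\nablaslash\cdot\Zpar)\,\lambda^{-2\expoP}\nablaslash\cdot(\lambda^{2\expoP}\Zpar)$ vanishes on the rotation part. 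If so, the loss in Step 1 only involves $|L_Z^\bullet|$, which Step 2's average term controls, with cross terms handled by Young's inequality. Throughout, use \eqref{art2 -LZbullbull-contro}, which gives $|L_Z^{\bullet\bullet}|^2\lesssim|\matV^{-1}|\Ecal/a_{n,p}$, to keep all errors of size $O(a_{n,p})\cdot\Ecal/a_{n,p}\cdot(\text{small})$.

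\textbf{Step 3: the coercivity estimate \eqref{Ecoer-H30}.} This is independent of the above. Apply the conformal Korn inequality (\autoref{art2 -lem-conf-korn}), together with the fact that $a_{n,p}\|Z\|^2$ appears in $\Ecal$, to obtain $\|\Zpar\|_{H^1}^2\lesssim \Ecal/a_{n,p}$. The remaining components of the norm, namely $\vartheta\Zpar+\nablaslash\Zperp$, $\vartheta\Zperp$ and $\Zperp$, are obtained by the triangle inequality, with $c\simeq a_{n,p}$. Finally, shell stability follows with $\gamma^\notreM_{\textnormal{shell}}=a_{n,p}$, extended to $\beta\in\{a_{n,p},2a_{n,p}\}$ by linearity in $\beta$.

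\textbf{Step 4: dimension $n=3$.} Repeat the argument, replacing each use of the conformal Korn inequality by adding $\gamma_{\textnormal{Korn}}^\notreM\mathfrak{K}[Z]$.
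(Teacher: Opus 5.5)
The gap is in Step~1, and Step~2 does not close it. You bound the cross term $-4d_\beta(\nablaslash\cdot\Zpar)\,\lambda^{-2\expoP}\nablaslash\cdot(\lambda^{2\expoP}\Zpar)$ by a symmetric Cauchy--Schwarz. This puts $\|\lambda^{-2\expoP}\nablaslash\cdot(\lambda^{2\expoP}\Zpar)\|^2$ into the loss with a coefficient of order $d_\beta\simeq a_{n,p}$. That is the same order as the only favorable zeroth-order terms, namely $4(n-1)d_\beta\|\Zpar\|^2$ and the average term.

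After \autoref{art2 -equainequ49}, the conformal Killing part then costs about $d_\beta(C_\Hardy^\lambda)^2\|\zeta_Z\|^2+d_\beta\|\nablaslash\cdot\zeta_Z\|^2$. This is $a_{n,p}$ times $|L_Z|^2$, with constants that are geometric or grow with $n$. They are not small: no smallness of the domain is assumed, and $C_\Hardy^\lambda$ even grows like $\alpha^{-1}$ on the caps $\Lambda_{(\alpha)}$. The average term cannot pay for this. Its coefficient is fixed at $a_{n,p}$ with a dimension-only implicit constant, and it returns at most about $a_{n,p}|TL_Z^\bullet|^2\le 4a_{n,p}|L_Z^\bullet|^2$. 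So your intermediate bound $\Psi^\notreM_\beta\gtrsim\Ecal-Ca_{n,p}(|L_Z^\bullet|^2+|L_Z^{\bullet\bullet}|^2)$, with geometric $C$, does not close.

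Three further problems with Step~2:
\begin{itemize}
\item Your rotation remedy is conditional. The claim that the cross term vanishes on rotations ignores the mixed terms between $\nablaslash\cdot(\Zpar-\text{rotation})$ and $\lambda^{-2\expoP}\nablaslash\cdot(\lambda^{2\expoP}\,\text{rotation})$. These are again $O(d_\beta)$.
\item Inserting \eqref{art2 -LZbullbull-contro} into a loss $Ca_{n,p}|L_Z^{\bullet\bullet}|^2$ gives an error of size $C|\matV^{-1}|\,\Ecal$, not a small multiple of $\Ecal$.
\item Without integrating by parts, no weighting in Young's inequality handles the cross term between $-(n-1)L_Z^k\xh_k$ and $\lambda^{-2\expoP}\nablaslash\cdot(\lambda^{2\expoP}\Zpar)$. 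Either $|L_Z^\bullet|^2$ gets an $O(1)$ coefficient, or the Hardy quantity, which is only $\lesssim a_{n,p}^{-1}(1+(C_\Hardy^\lambda)^2)\Ecal$, gets an $O(a_{n,p})$ one.
\end{itemize}

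The missing idea is to let the Hardy quantity appear only with coefficient $d_\beta^2\simeq a_{n,p}^2$, and to strip the weight derivative from the translation part by integration by parts. Write $\nablaslash\cdot\Zpar=\nablaslash\cdot(\Zpar-\zeta_Z)-(n-1)L_Z^k\xh_k$.
\begin{itemize}
\item On the Korn remainder, use the unbalanced inequality $4d_\beta\sqrt{n-1}\,C_\Korn^\lambda\,xy\le x^2+4(n-1)(C_\Korn^\lambda)^2d_\beta^2y^2$. Here $x=\|\Sym(\nablaslash\Zpar)^\circ\|$ is absorbed by $2\|\Sym(\nablaslash\Zpar)^\circ\|^2$, and $y=\|\lambda^{-2\expoP}\nablaslash\cdot(\lambda^{2\expoP}\Zpar)\|$.
\item On the translation part, $-(n-1)\fint_\Lambda L_Z^k\xh_k\,\lambda^{-2\expoP}\nablaslash\cdot(\lambda^{2\expoP}\Zpar)\,d\chi=(n-1)L_Z^k\la\Zpar{}_k\ra$ involves no derivative of $\lambda$.
\end{itemize}
The average term supplies $\frac56a_{n,p}|TL_Z^\bullet|^2$ up to $O(a_{n,p})\Ecal$, by \autoref{art2 -prop-average-m} and \autoref{art2 -prop-k-fluctuations-m}. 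Combining this with $4(n-1)d_\beta\|\Zpar\|^2$ and the translation identity yields $\frac{a_{n,p}}{12}\|\Zpar\|^2+\frac{a_{n,p}}{60}|L_Z^\bullet|^2$. This uses only $T>\Id$ and $4(n-1)d_\beta\le\frac52a_{n,p}$. The $|L_Z^\bullet|^2$ term then gives $a_{n,p}\|\Zperp\|^2$ via $\sigma_Z$.

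All remaining Hardy terms carry $d_\beta^2$. By \autoref{art2 -equainequ49} together with $\|\Sym(\nablaslash\Zpar)\|^2+\|\Zpar\|^2\lesssim a_{n,p}^{-1}\Ecal$, they cost only $O(a_{n,p})(1+(C_\Korn^\lambda)^2)(1+(C_\Hardy^\lambda)^2)\Ecal$. Rotations need no separate treatment, since they enter only through the full $\Zpar$ in $\|\Zpar\|^2$ and in $L_Z^k\la\Zpar{}_k\ra$.

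Your Steps~3 and~4 are fine, as is the cancellation of $2L_Z^{kl}\la\xh_k\ra$ in the average term.
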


\begin{proof}
\bse
{\it 1. Preliminaries.}
For definiteness, we immediately fix the two values of $\beta$ for which we will establish the semi-coercivity property 
\be
\beta =\frac{1}{2}a_{n,p} , \quad \text{and} \quad \beta =\frac{5}{2}a_{n,p}.
\ee
Note in passing that this covers a larger interval of values than the range appearing in the definition of shell stability. The coefficient defined in~\eqref{art2 -dbetadef} then evaluates to $d_\beta =5a_{n,p}/(8(n-1))$ and $d_\beta =a_{n,p}/(8(n-1))$, respectively.
We will consider each term of the decomposition $\Psi^\notreM_\beta[Z] = \Psi^\notreM_{\beta,1}[Z] + \Psi^\notreM_{\beta,\perp}[Z] + \Psi^\notreM_{\beta,\parallel}[\Zpar]$.  Recall the Korn inequality~\eqref{art2 -korn-ineq-11} which bounds the difference $\Zpar- \zeta_Z$ for a conformal Killing vector~$\zeta_Z$ that was fixed at the beginning of \autoref{art2 -section=3.3}.  Recall as well the bound \eqref{art2 -bothbounded34} on the translation components~$L_Z^\bullet$ of~$\zeta_Z$ in terms of~$\Ecal[Z]$.
Observe finally that it suffices to prove
\bel{art2 -sufficespsiecal}
\Psi^\notreM_\beta[Z]
+ \frac{5}{6} a_{n,p} |T_{k\bullet} L_Z^k|^2
\gtrsim \Ecal[Z] .
\ee
Indeed, we have 
\be
\aligned
& \Psi^\notreM_\beta[Z]
+ a_{n,p} \, \Bigl| \la 2 \,\xh \Zperp + \Zpar \ra - T(\Xi^\notreM)^{-1} \Kappa^\notreM[Z] \Bigr|^2
\\
& \geq \Bigl( \Psi^\notreM_\beta[Z] + \frac{5}{6} a_{n,p} |T_{k\bullet} L_Z^k|^2 \Bigr)
- 5 a_{n,p} \, \Bigl| \la 2 \,\xh \Zperp + \Zpar \ra - T(\Xi^\notreM)^{-1} \Kappa^\notreM[Z] - T_{k\bullet} L_Z^k \Bigr|^2 ,
\endaligned
\ee
in which the first term controls $\Ecal[Z]$ by our assumption~\eqref{art2 -sufficespsiecal}, and the second is bounded by $a_{n,p}\Ecal[Z]$ times geometric constants thanks to \autoref{art2 -prop-average-m} and \autoref{art2 -prop-k-fluctuations-m}.  For sufficiently small $a_{n,p}$ the second term is thus dominated by the first and their sum controls $\Ecal[Z]$ as desired.
\ese

\medskip

\bse
\noindent{\it 2. Parallel dissipation.}
We analyze first the most interesting contribution to the dissipation, which only involves~$\Zpar$,
\be
\aligned
\Psi^\notreM_{\beta,\parallel}[\Zpar]
& = \fint_{\Lambda_{r}} \biggl( 2 \, \bigl|\Sym(\nablaslash\Zpar)^\circ\bigr|^2 + 4(n-1) d_\beta \, |\Zpar|^2
\\
& \qquad \qquad - 4  d_\beta \, (\nablaslash\cdot\Zpar) \lambda^{- 2\expoP} \nablaslash\cdot(\lambda^{2\expoP}\Zpar)
- 2 (n-1)  d_\beta^2 \, \Bigl(\lambda^{- 2\expoP} \nablaslash\cdot(\lambda^{2\expoP}\Zpar) \Bigr)^2
\biggr) d\chi .
\endaligned
\ee
Splitting $\nablaslash\cdot\Zpar=\nablaslash\cdot(\Zpar- \zeta_Z)-(n-1)L^l\xh_l$, integrating by parts, and using the Korn inequality yields
\be
\aligned
& \fint_{\Lambda_{r}} \bigl( (\nablaslash\cdot\Zpar) \lambda^{- 2\expoP} \nablaslash\cdot(\lambda^{2\expoP}\Zpar) \bigr) d\chi
\\
& \leq \bigl\| \nablaslash\cdot(\Zpar- \zeta_Z) \bigr\|_{L^2_{- \expoP}(\Lambda_r)}
 \bigl\| \lambda^{- 2\expoP}
\nablaslash\cdot(\lambda^{2\expoP}\Zpar)
\bigr\|_{L^2_{- \expoP}(\Lambda_r)}
- (n-1) \fint_{\Lambda_{r}}
L^l \xh_l \lambda^{- 2\expoP}
\nablaslash\cdot(\lambda^{2\expoP}\Zpar)\,d\chi .
\\
& \leq \sqrt{n-1} \, C_{\Korn}^\lambda
\bigl\|\Sym(\nablaslash\Zpar)^\circ\bigr\|_{L^2_{- \expoP}(\Lambda_r)}
 \bigl\|\lambda^{- 2\expoP}
\nablaslash\cdot(\lambda^{2\expoP}\Zpar)
\bigr\|_{L^2_{- \expoP}(\Lambda_r)}
+ (n-1) L^l \la\Zpar\cdot\nablaslash\xh_l\ra
\\
& \leq
\frac{1}{4d_\beta} \,
\bigl\|\Sym(\nablaslash\Zpar)^\circ\bigr\|_{L^2_{- \expoP}}^2
+ d_\beta (n-1) (C_{\Korn}^\lambda)^2
\bigl\| \lambda^{- 2\expoP}
\nablaslash\cdot(\lambda^{2\expoP}\Zpar) \bigr\|_{L^2_{-\expoP}}^2
+ (n-1) L^l \la\Zpar{}_l\ra .
\endaligned
\ee
We arrive at the lower bound
\bel{art2 -PsiotrM56}
\aligned
\Psi^\notreM_{\beta,\parallel}[\Zpar] + \frac{5}{6} a_{n,p} |T_{k\bullet} L_Z^k|^2
& \geq \bigl\|\Sym(\nablaslash\Zpar)^\circ\bigr\|_{L^2_{- \expoP}(\Lambda_r)}^2
\\
& \quad
- d_\beta^2 (n-1) \bigl(2 + 4(C_{\Korn}^\lambda)^2\bigr)
\bigl\| \lambda^{- 2\expoP}
\nablaslash\cdot(\lambda^{2\expoP}\Zpar)
\bigr\|_{L^2_{- \expoP}(\Lambda_r)}^2
\\
& \quad + 4(n-1) d_\beta \, \Bigl( \|\Zpar\|_{L^2_{- \expoP}(\Lambda_r)}^2 - L^l \la\Zpar{}_l\ra \Bigr) + \frac{5}{6} a_{n,p} |T_{k\bullet} L_Z^k|^2 .
\endaligned
\ee
\ese

\medskip

\bse
\noindent{\it 3. Control of the non-derivative terms.}
We focus now on the last line in~\eqref{art2 -PsiotrM56}, which involves $\Zpar$ and~$L_Z^\bullet$.
By \eqref{art2 -dbetadef} we have $4(n-1)d_\beta = 3a_{n,p}- \beta\in\{5a_{n,p}/2,a_{n,p}/2\}$ for the values $\beta$ of interest, and we know $T>\Id$, thus we have
\be
\aligned
\quad & \unquad 4(n-1) d_\beta \, \Bigl( \|\Zpar\|_{L^2_{- \expoP}(\Lambda_r)}^2 - L^l \la\Zpar{}_l\ra \Bigr) + \frac{5}{6} a_{n,p} |T_{k\bullet} L_Z^k|^2
\\
& \geq 4 (n-1) d_\beta \, \Bigl(
\|\Zpar\|_{L^2_{- \expoP}(\Lambda_r)}^2
- L^l \la\Zpar{}_l\ra + \frac{1}{3} |L_Z^k|^2 \Bigr)
\\
& \geq 4 (n-1) d_\beta \, \Bigl( \frac{1}{6} \|\Zpar\|_{L^2_{- \expoP}(\Lambda_r)}^2 + \frac{1}{30} |L_Z^k|^2 \Bigr)
\geq \frac{1}{12} a_{n,p} \|\Zpar\|_{L^2_{- \expoP}(\Lambda_r)}^2 + \frac{1}{60} a_{n,p} |L_Z^\bullet|^2 .
\endaligned
\ee
The last term $|L_Z^\bullet|^2$ can be leveraged to get a control of $\Zperp$ modulo some small unfavorable multiple of~$\Ecal[Z]$: from \eqref{art2 -sigmaz-bnd},
\be
\aligned
\frac{1}{60} a_{n,p} |L_Z^\bullet|^2
&\geq \frac{1}{60} a_{n,p}
\|L_Z^k\xh_k\|_{L^2_{- \expoP}(\Lambda_r)}^2
\\
&\geq \frac{1}{120} a_{n,p}\|\Zperp\|_{L^2_{- \expoP}}^2
- \frac{1}{30} a_{n,p}
\Bigl(1+\frac{{(C_{\Korn}^\lambda)^2}}{n-1}\Bigr)\Ecal[Z] .
\endaligned
\ee
The coefficient above is $C_{\Korn}^\lambda$, because the estimate invoked is~\eqref{art2 -sigmaz-bnd}, obtained from Korn's inequality applied to $\Zpar- \zeta_Z$.
\ese

\medskip

\bse
\noindent{\it 4. Orthogonal and derivative dissipations.}
There  remain two other contributions in~$\Psi^\notreM_\beta$.
First, we have 
\be
\aligned
\Psi^\notreM_{\beta,\perp}[Z]
& = 2(n-1) \Bigl\|\Zperp + \frac{\nablaslash\cdot\Zpar}{n-1} + d_\beta \lambda^{- 2\expoP} \nablaslash\cdot(\lambda^{2\expoP}\Zpar) \Bigr\|_{L^2_{- \expoP}(\Lambda_r)}^2
\\
& \geq (n-1) \Bigl\|\Zperp + \frac{\nablaslash\cdot\Zpar}{n-1} \Bigr\|_{L^2_{- \expoP}(\Lambda_r)}^2
- 2(n-1) d_\beta^2 \Bigl\|\lambda^{- 2\expoP} \nablaslash\cdot(\lambda^{2\expoP}\Zpar) \Bigr\|_{L^2_{- \expoP}(\Lambda_r)}^2 .
\endaligned
\ee
The positive term gives the third term in $\Ecal[Z]$ (up to a scaling).
We return to the unfavorable term momentarily.
Next,
\be
\aligned
\Psi^\notreM_{\beta,1}[Z]
& = \bigl\|(\vartheta+a_{n,p}-1) \Zpar + \nablaslash\Zperp\bigr\|_{L^2_{- \expoP}(\Lambda_r)}^2
+ 2 \bigl\|(\vartheta+a_{n,p}) \Zperp\bigr\|_{L^2_{- \expoP}(\Lambda_r)}^2
\\
& \geq \frac{1}{n} \bigl\|(\vartheta-1) \Zpar + \nablaslash\Zperp\bigr\|_{L^2_{- \expoP}(\Lambda_r)}^2
+ \frac{1}{n-1/2} \bigl\|\vartheta\Zperp\bigr\|_{L^2_{- \expoP}(\Lambda_r)}^2
- \frac{a_{n,p}}{n-1} \Ecal[Z] .
\endaligned
\ee
The positive terms here coincide with the first two terms in $\Ecal[Z]$.
\ese

\medskip

\bse
\noindent{\it 5. Finalizing the semi-coercivity.}
Using $a_{n,p}/4\leq 2(n-1)d_\beta\leq 5a_{n,p}/4$ after collecting these inequalities together, we deduce that the dissipation functional obeys
\be
\aligned
& \Psi^\notreM_\beta[Z] + \frac{5}{6} a_{n,p} |T_{k\bullet}L_Z^k|^2 \\
& \geq
\frac{1}{n} \bigl\|  (\vartheta-1) \Zpar + \nablaslash\Zperp  \bigr\|_{L^2_{- \expoP}(\Lambda_r)}^2 
+ \frac{1}{n-1/2} \bigl\| \vartheta\Zperp  \bigr\|_{L^2_{- \expoP}(\Lambda_r)}^2 
+ (n-1) \Bigl\| \Zperp + \frac{\nablaslash\cdot\Zpar}{n-1} \Bigr\|_{L^2_{- \expoP}(\Lambda_r)}^2 \\
& \quad
+ \bigl\|\Sym(\nablaslash\Zpar)^\circ\bigr\|_{L^2_{- \expoP}(\Lambda_r)}^2
+ \frac{a_{n,p}}{120} \|\Zperp\|_{L^2_{- \expoP}(\Lambda_r)}^2 
+ \frac{a_{n,p}}{12} \|\Zpar\|_{L^2_{- \expoP}(\Lambda_r)}^2
\\
& \quad
- a_{n,p}\frac{n+29+ (C_{\Korn}^\lambda)^2}{30(n-1)} \Ecal[Z]
- \frac{25}{16} a_{n,p}^2
\frac{1 + (C_{\Korn}^\lambda)^2}{n-1}
\bigl\| \lambda^{- 2\expoP}
\nablaslash\cdot(\lambda^{2\expoP}\Zpar)
\bigr\|_{L^2_{- \expoP}(\Lambda_r)}^2 .
\endaligned
\ee
To apply the Hardy inequality in \autoref{art2 -equainequ49}, one must also estimate its full symmetric-gradient term. Since $0<a_{n,p}\leq1$ in the regime under consideration, the trace decomposition and the definition of $\Ecal$ give
\be
\|\Sym(\nablaslash\Zpar)\|_{L^2_{- \expoP}}^2
\lesssim_n a_{n,p}^{-1}\Ecal[Z],
\qquad
\|\Zpar\|_{L^2_{- \expoP}}^2\leq a_{n,p}^{-1}\Ecal[Z].
\ee
Consequently, we obtain
\be
\bigl\|\lambda^{- 2\expoP}\nablaslash\cdot(\lambda^{2\expoP}\Zpar)\bigr\|_{L^2_{- \expoP}}^2
\lesssim_n a_{n,p}^{-1}\bigl(1 + (C_\Hardy^\lambda)^2\bigr)\Ecal[Z]
\ee
and, altogether,  
\be
\aligned
&
\Psi^\notreM_\beta[Z] + \frac{5}{6} a_{n,p} |T_{k\bullet}L_Z^k|^2
\\
& \geq \biggl( \frac{1}{n+120} - a_{n,p} \frac{n+29+ (C_{\Korn}^\lambda)^2}{30(n-1)}
- C(n)a_{n,p}\bigl(1+(C_{\Korn}^\lambda)^2\bigr)\bigl(1+(C_\Hardy^\lambda)^2\bigr) \biggr) \Ecal[Z] ,
\endaligned
\ee
The Hardy estimate is used without an additional factor~$a_{n,p}$: controlling the trace costs $a_{n,p}^{-1}$ through the term $a_{n,p}\|\Zperp\|^2$, and the resulting bound remains of the smallness order required for the conclusion.
This estimate controls $\Ecal[Z]$ for $a_{n,p}$ small enough compared to expressions involving the dimension, Korn, Korn--Poincaré, and Hardy constants.
In particular, no smallness assumption is imposed on the localization domain itself.
To finish the proof, we observe that all norms in~\eqref{Ecoer-H30} are manifestly controlled by~$\Ecal[Z]$ except for $\|\nablaslash\Zpar\|_{L^2_{- \expoP}(\Lambda_r)}^2$.  By the Korn inequality~\eqref{art2 -korn-ineq-11} these derivatives reduce to those of the conformal Killing vector~$\zeta_Z$, hence to its coefficients $L_Z^k$ and~$L_Z^{kl}$, which we controlled along the way in~\eqref{art2 -bothbounded34} and~\eqref{art2 -LZbullbull-contro}, respectively.
\ese

This completes the proof of \autoref{art2 -prop-semi-mdiss} concerning the shell stability for the localized momentum operator.
\end{proof}

\begin{remark}
  The constant $c^{\notreM \lambda}$ appearing in \autoref{art2 -thm:informal-suff-stab} is read off from our results on momentum stability: \autoref{art2 -lem-39harmmom} (harmonic stability), \autoref{art2 -coro-radial-momen} (radial stability), and \autoref{art2 -prop-semi-mdiss} (shell stability).
Here is one explicit sufficient choice. Let $C_n\geq1$ dominate the constants depending only on the dimension that are implicit in the estimates used in the proofs of \autoref{art2 -prop-average-m}, \autoref{art2 -prop-k-fluctuations-m}, \autoref{art2 -equainequ49}, and \autoref{art2 -prop-semi-mdiss}, and set
\be
\aligned
\sigma_\lambda
& \coloneqq \min_{|\wvect|=1} \wvect^k\la\xh_k\xh_l\ra \wvect^l
= |S^{- 2}|^{-1}
\\
\mathfrak G_\lambda
& \coloneqq
\bigl(1+C_{\Korn}^\lambda\bigr)
\Bigl(1+(\CzeroKP+\ConeKP) |S^{- 2}| \Bigr),
\\
\mathfrak H_\lambda
& \coloneqq
\frac{n+29+(C_{\Korn}^\lambda)^2}{30(n-1)}
+ C_n \bigl(1+(C_{\Korn}^\lambda)^2\bigr)
\bigl(1+(C_\Hardy^\lambda)^2\bigr).
\endaligned
\ee
With the notation $\matV =\bigl(\la\xh_k\xh_l\ra- \la\xh_k\ra\la\xh_l\ra\bigr)$ introduced above, define
\be
\aligned
a_*^\lambda
\coloneqq \min\Biggl\{
&1,
\frac{\sqrt2}{\CzeroKP},
\frac{\sqrt2\,\sigma_\lambda}{\CzeroKP},
\frac{1}{\bigl(1+|\matV^{-1/2}|\bigr)^2}
\\
&\frac{1}{2(n+120)\mathfrak H_\lambda},
\frac{1}{20C_n(n+120)\mathfrak G_\lambda^2}
\Biggr\}.
\endaligned
\ee
The first four entries ensure the smallness assumptions used for harmonic and radial stability and in the fluctuation estimate~\eqref{art2 -fluctuations-silh-v}. The fifth leaves at least $1/(2(n+120))$ in the final lower bound for the momentum dissipation, while the last one absorbs the average--fluctuation error in the passage from~\eqref{art2 -sufficespsiecal} to shell stability. Since $a_{n,p}=2(n- 2 -p)$, one may therefore take the (non-optimal) choice
\be
c^{\notreM \lambda}
\coloneqq \frac12 a_*^\lambda.
\ee
\end{remark}


\

\paragraph*{Acknowledgments.} 

The authors were partially supported by the research project ANR- 23-CE40-0010-02: Einstein-PPF, entitled {\it ``Einstein constraints: past, present, and future''} funded by the Agence Nationale de la Recherche (ANR), as well as by the MSCA Staff Exchange Project 101131233: Einstein-Waves, entitled {\it ``Einstein gravity and nonlinear waves: physical models, numerical simulations, and data analysis'',} funded by the European Research Council (ERC). 
 


\appendix 

\section{Structure and normalization constants}
\label{sec-appendix-A}

\subsubsection{Structure coefficients}

We collect here some coefficients that appear in various operators:
\bel{equa-our-parame-00} 
\aligned
a_{n,p} & \coloneqq 2( n- 2 -p)
&& \text{(harmonic exponent)},
\\
b_{n,p} & \coloneqq 
2 + (n-3)(n- 2 -a_{n,p}), 
&&  
\\ 
c_{n,p} & \coloneqq a_{n,p} \bigl( 1 + (n- 2) ( n- 2 - a_{n,p}) \bigr), 
&&  
\\
d_{n,p} & \coloneqq {(n-1) a_{n,p} b_{n,p} \over (n- 2)^2+1}
&& \text{(ADM energy coefficient),} 
\endaligned
\ee  
as well as
\be
p^\flat_n \coloneqq {(n-1)(n-3) \over 2(n- 2)} < \frac{n- 2}{2} .
\ee
For $p\in(p^\flat_n, n- 2)$ the coefficients $a_{n,p},b_{n,p},c_{n,p},d_{n,p}$ are all positive.


\subsubsection{Functional inequality constants}

We list here some constants that play an important role for the Hamiltonian harmonic and radial stability conditions (cf.~\autoref{section-3}):
\begin{align}
\label{equa-H2110}
d^{\min/\max}_{n,p} & \coloneqq
\min/\max \Bigl(\frac{(n-1)c_{n,p}}{(n- 2)^2},\, d_{n,p}\Bigr) ,
\\
\label{equa-const-alpha}
\alpha_{n,p} & \coloneqq \frac{\sqrt{8(n^2 -3n+3)}}{\sqrt{n-1}}
\,\frac{\sqrt{1+a_{n,p}}}{c_{n,p}}, 
\\
\label{equa-cons-beta}
\beta_{n,p} & \coloneqq \frac{2 \sqrt{2}}{c_{n,p}} \min\Bigl((1+a_{n,p})^{1/2} , (1+a_{n,p}) (b_{n,p})^{-1/2} \Bigr) .
\end{align}
We also have set 
\bel{equa-cntilde}
\tcnp = c_{n,p} \Bigl( \frac{n-1}{8(n^2 -3n+3)(1+a_{n,p})} \Bigr)^{1/2} .
\ee


\subsubsection{Normalization constants}

We collect here the expressions of the structure constants that arise in our analysis. Constants that depend upon the area $\aire[\Lambda,\lambda] = \int_{\Lambda} \, d\chi$  of the localization function are needed, specifically 
\bel{equa-thetalambda}
\aligned
\theta^\lambda 
& \coloneqq {2 (n-1) \over (n- 2)^2 + 1} \, {|\Sphe^{n-1}| \over \aire[\Lambda,\lambda]}
\eqqcolon  {\theta_n \over \aire[\Lambda,\lambda]}, 
\\
\eta^\lambda 
& \coloneqq {2(n-1) |\Sphe^{n-1}| \over \aire[\Lambda,\lambda]} =: {\eta_n \over \aire[\Lambda,\lambda]}.
\endaligned
\ee
We also defined in~\eqref{equa-matrix-STU} several positive-definite structure matrices in terms of averages of coordinates, where $T,U$ are used throughout the paper, the covariance matrix~$\matV$ is used mostly in \autoref{section-7}, and~$\matG$ is used exclusively in the second-order Poincaré inequality~\eqref{equa-defineG}:
\be
\aligned
S^2 & = \bigl((S^2)_{kl}\bigr) \coloneqq (\la\xh_k\xh_l\ra) ,
& T & = \Id + S^2 \coloneqq  (\delta_{kl} + \la \xh_k\xh_l\ra),
\\
& & U & = \Id - S^2 \coloneqq  (\delta_{kl} - \la \xh_k\xh_l\ra),
\\
\matV & = S^2 - \langle\xh\rangle \otimes\langle\xh\rangle ,
& \matG& = \Id + S^2 - 2\langle\xh\rangle \otimes\langle\xh\rangle =U+2\matV.
\endaligned
\ee
We also use structure matrices~\eqref{equa-the-matrix} and~\eqref{Xi-pieces} constructed from the silhouette vectors:
\be
\aligned
\matQ^{(j)k} & \coloneqq \bigl\la 2 \xh_l \xi^{\normal (j) \perp} + \xi^{\normal (j) \parallel}{}_l\bigr\ra (T^{-1})^{lk} ,
\\
\zeroXi^{(i)}{}_k & \coloneqq \bigl\la \nablaslash_k \xi^{\normal (i)\perp} - \xi^{\normal (i)\parallel}{}_k\bigr\ra  ,
\\
(\Xi^\notreM)^{(i)}{}_{k}
& \coloneqq - \zeroXi^{(j)}{}_k + a_{n,p}  \matQ^{(j)l} \, T_{lk}
\\
& \, =
\bigl\la - \nablaslash_k \xi^{\normal (i) \perp} + 2 a_{n,p} \xh_k\, \xi^{\normal (i) \perp} \bigr\ra
+(1+a_{n,p}) \la \xi^{\normal (j) \parallel}{}_k \ra
.
\endaligned
\ee
We also have, for any $\xi=(\xipar,\xiperp)$,
\be
\aligned
\xi^{\fluc} & = \xi - \projP^k(\xi) \xistar_k , \qquad
\projP^k(\xi) = (T^{-1})^{kl} \, \bigl\la 2 \xh_l \xiperp + \xipar{}_l \bigr\ra ,
\\
\xistar_k & = (\xistarperp_k,\xistarpar_k) = (\xh_k, \nablaslash \xh_k) ,
\qquad
(\xistarpar_k)_l = \delta_{kl} - \xh_k \xh_l .
\endaligned
\ee


\subsubsection{Radial Hardy constant} 

The constant $c^{\notreH}_{\textnormal{radial}}$ enters the Hamiltonian shell condition~\eqref{equa-conditionH2} and depends upon averages of the Hamiltonian silhouette function. We introduce first 
\bel{bH10}
\aligned
\bnotreH_{1}& \coloneqq (n-1) c_{n,p}\la\nu^\normal\ra - (n- 2)^2\la\Deltaslash\nut^\normal\ra, 
\\
\bnotreH_{0} & \coloneqq (n^2 -4n+5){c_{n,p} \over a_{n,p}} \Bigl( d_{n,p} \la\nu^\normal\ra - \la\Deltaslash\nut^\normal\ra \Bigr)
\endaligned
\ee
and we consider the critical exponents of the second-order operator 
\bel{betapm-def-app}
\aligned
& - \bnotreH_{1}\vartheta(\vartheta+a_{n,p}) + \bnotreH_{0} \eqqcolon - \bnotreH_{1}  (\vartheta+\beta_-)(\vartheta+\beta_+), 
\qquad
 \beta_- < 0 < a_{n,p} < \astar < \beta_+, 
\endaligned
\ee
namely we find $\beta_- + \beta_+ = a_{n,p}$ and $\beta_- \beta_+ = -  \bnotreH_{0}/\bnotreH_{1}$. We also set 
\bel{equa-cplusminus} 
c_\pm \coloneqq {1 \over \bnotreH_{1} } {(n- 2) \beta_{\pm} + {c_{n,p} / a_{n,p}} \over \beta_+ - \beta_-},
\ee 
and the radial Hardy constant is then defined by
\bel{def-cnotreHradial}
c^{\notreH}_{\textnormal{radial}} \coloneqq  \max(  g^\notreH_I,   g^\notreH_J )
\ee
with  
\bel{equa--517-repeat}
\aligned 
g^\notreH_I & \coloneqq \frac{4 c_-^2}{(a_{n,p}-\beta_-)^2} + \frac{2 c_+^2}{(\beta_+-a_{n,p})^2} + \frac{2 c_+^2 }{(2 \beta_+ - a_{n,p})(\beta_+ - a_{n,p})} ,
\\
g^\notreH_J & \coloneqq \frac{16 c_+^2}{(2 \beta_+ - a_{n,p})^2} + \frac{8 c_-^2}{(a_{n,p} - 2\beta_-)^2} + \frac{2 c_-^2}{(a_{n,p}- 2\beta_-) (a_{n,p}-\beta_-)} .
\endaligned
\ee 


\section{Harmonic--spherical decompositions of the localized Einstein constraints}
\label{sec-appendix-B}

\subsection{Decomposition of basic operators}
\label{art2 -section=5.1}
 
\subsubsection{Radius/angle split and first-order operators}

In this section we derive the proposed decompositions of the main differential operators of interest in this paper. Indeed, the expressions below are instrumental in understanding the sharp asymptotics of solutions to the Einstein constraints. Throughout, we work in an asymptotic end $\Omega_R$ which is included in the exterior of a ball in the Euclidean space $(\RR^n,\delta)$ where $\delta =(\delta_{ij})$ is the standard metric, with which we implicitly sum repeated indices.  

To separate between radial and angular directions we introduce $r=|x|$ and the unit vector $\xh_i = x_i / r \in \Lambda$. We decompose all tensors into components parallel to~$\Lambda$ (angular components) and those orthogonal to it, which are along the vector $\vartheta = r\del_r= x_i\del_i$.
For instance a vector field $Z\colon\Omega\to\RR^n$ is decomposed as
\bel{art2 -Zi-split}
Z_i = \xh_i\Zperp+ \Zpar_i , \qquad
\Zperp \coloneqq \delta(Z,\del_r) = \xh_j Z_j , \qquad
\Zpar_i \coloneqq Z_i - \xh_i\xh_j Z_j .
\ee
Given any sufficiently regular function $f\colon\RR^n\to\RR$, we decompose its gradient (in Euclidean space) as
\bel{art2 -delif}
\del_i f = \frac{1}{r} \bigl( \xh_i \, \vartheta f + \dslash_i f \bigr), \qquad
\vartheta f \coloneqq r\del_r f = x_i \del_i f, \qquad
\dslash_i f \coloneqq r \del_i f - \xh_i \, \vartheta f.
\ee
Observe that the angular derivatives~$\dslash_i$ define $n$~vector fields on~$\Lambda$ that span the tangent space $T\Lambda$ and are subject to the linear relation $\xh_i\dslash_i= 0$. The radial and angular derivatives $\vartheta$ and $\dslash_i$ obey the following identities,
\bel{art2 -basicslash}
\aligned
\vartheta r & = r, \qquad
\vartheta \xh_i = 0, \qquad
\dslash_i r = 0, \qquad
\dslash_i \xh_j = \delta_{ij} - \xh_i \xh_j,
\\
\xh_i \dslash_i & = 0, \qquad
[\vartheta, \dslash_i] = 0, \qquad
[\dslash_i, \dslash_j] = \xh_i \dslash_j - \xh_j \dslash_i.
\endaligned
\ee
Powers of~$r$ and derivatives can be commuted through~$\vartheta$ as follows for any $\alpha\in\RR$: 
\be
\vartheta(r^\alpha f) = r^\alpha (\vartheta+ \alpha) f, \qquad
\del_i\vartheta f = (\vartheta+1) \del_i f.
\ee
For instance, when studying the Laplace operator in a variational framework, we will consider the dot product of gradients of a pair of functions $f,g$, which decomposes as
$
\del_i f \del_i g = \frac{1}{r^2} \bigl( \vartheta f \vartheta g + \dslash_i f \dslash_i g \bigr), 
$
thanks to the orthogonality of $\xh_i\vartheta f$ and $\dslash_i g$, and likewise with $f$ and~$g$ interchanged.
%


\subsubsection{Elementary second-order operators}

The standard Laplacian of a function $f$ is expressed as
\be
\Delta f = \del_i \del_i f
= r^{- 2} \Bigl( (\vartheta + n - 2) \vartheta f + \Deltaslash f \Bigr), \qquad
\Deltaslash = \dslash_i \dslash_i, 
\ee
while the spherical Laplacian~$\Deltaslash$ obeys
\be
\aligned
\Deltaslash \xh_i & = - (n-1) \xh_i, \qquad
[\dslash_i, \Deltaslash] = 2 \xh_i \Deltaslash - (n-3) \dslash_i.
\endaligned
\ee
The Hessian of $f$ is decomposed as
\bel{art2 -hessf-1}
\aligned
\del_i \del_j f
& = \xh_i r^{-1} \vartheta(\xh_j r^{-1}\vartheta f) + \xh_i r^{-1}\vartheta (r^{-1}\dslash_j f)
+ r^{-1} \dslash_i(\xh_j r^{-1}\vartheta f)
+ r^{- 2} \dslash_i \dslash_j f
\\
& = r^{- 2} \Bigl(
\xh_i \xh_j (\vartheta^2 - \vartheta) f
+ \xh_i \dslash_j \vartheta f
+ \xh_j \dslash_i \vartheta f
+ \dslash_i \dslash_j f
- \xh_i \dslash_j f
+ (\delta_{ij} - \xh_i \xh_j) \vartheta f \Bigr).
\endaligned
\ee
This expression is symmetric as expected under the exchange of $i$ and~$j$, since the combination $\dslash_i \dslash_j f - \xh_i \dslash_j f$ is symmetric in view of the commutator~\eqref{art2 -basicslash}.

The link with the covariant Laplacian and Hessian arises as follows. While we will perform most calculations using the differential operators~$\dslash_i$, it is instructive to also consider the action of the Levi-Civita connection~$\nablaslash$ of the induced metric on $\Lambda\subset\RR^n$. For a pair of vector fields $Y,Z$ on $\Lambda$ one has $(\nablaslash_Y Z)_j = Y_i \bigl(\dslash_i Z_j + Z_i \xh_j\bigr)$, where $i,j=1,\dots,n$, with an implicit summation on~$i$.  The divergence of a vector $Z\in T\Lambda$ is thus $\nablaslash\cdot Z = \dslash_i Z_i$.  Note that we are describing here vector fields on~$\Lambda$ in terms of their components in the standard basis of~$\RR^n$.

The covariant Hessian and Laplacian read
\be
(\Hessslash f)_{ij} = \dslash_i \dslash_j f + \xh_j \dslash_i f, \qquad
\nablaslash\cdot\nablaslash f = \dslash_i \dslash_i f = \Deltaslash f.
\ee
The Hessian differs slightly from the combination $\dslash_i \dslash_j f - \xh_i \dslash_j f$ appearing in~\eqref{art2 -hessf-1}. It is nevertheless symmetric. The Laplacian, on the other hand, coincides with $\Deltaslash f$ defined earlier as a sum of $n$ terms $\dslash_i\dslash_i f$, $i=1,\dots,n$.

As we will manipulate higher derivatives as well, it is interesting to consider the covariant derivative of a tensor~$Z$ in $T^2\Lambda$, its divergence, and its double divergence, that is, 
\bel{art2 -double-div}
\aligned
(\nablaslash_Y Z)_{jk} & = Y_i \bigl(\dslash_i Z_{jk} + \xh_j Z_{ik} + \xh_k Z_{ji} \bigr), 
\qquad \quad 
(\nablaslash\cdot Z)_j = \dslash_i Z_{ij} + \xh_j Z_{ii}, \\
\nablaslash\cdot(\nablaslash\cdot Z) & = \dslash_j \dslash_i Z_{ij} + (n-1) Z_{ii} = \dslash_i \dslash_j Z_{ij} + (n-1) Z_{ii}.
\endaligned
\ee
It is also interesting to consider the curvature of~$\nablaslash$, expressed in terms of the unit sphere metric~$\gslash_{ab}$, where $a,b$ are abstract Penrose indices for the tangent bundle on~$\Lambda$:
\be
\aligned
\Rslash_{abcd} & = \gslash_{ac} \gslash_{bd} - \gslash_{ad} \gslash_{bc}, &
\Rslash_{ac} & = (n- 2) \gslash_{ac}, \qquad
\hskip2.cm 
\Rslash = (n- 2) (n-1),
\\
[\nablaslash_a,\nablaslash_b] v^b & = - (n- 2) v_a, &
[\nablaslash_a, \Deltaslash] f
& = [\nablaslash_a, \nablaslash_b] \nablaslash^b f = - (n- 2) \nablaslash_a f.
\endaligned
\ee


\subsection{Decomposition of the localized Hamiltonian operator}
\label{art2 -section=5.2}

We now decompose the operator 
\be
r^4 \lambda^{2\expoP} \notreH^\lambda[u]  
= r^{4-n+2p} \, \del_i \del_j \bigl( \lambda^{2\expoP} r^{n- 2p} \del_i \del_j u \bigr)
+ (n- 2) r^{4-n+2p} \, \Delta \bigl( \lambda^{2\expoP} r^{n- 2p} \Delta u \bigr)
\ee
into radial and angular contributions. The Laplacian part $\PLap\coloneqq r^{4-n+2p} \, \Delta \bigl( \lambda^{2\expoP} r^{n- 2p} \Delta u \bigr)$ is simpler so we decompose it first, that is, 
\be
\aligned 
\PLap 
& = \bigl((\vartheta+n- 2 - 2p)(\vartheta+2n-4- 2p)+ \Deltaslash\bigr) \Bigl( \lambda^{2\expoP} \vartheta(\vartheta+n- 2)u + \lambda^{2\expoP} \Deltaslash u \Bigr)
\\
& = \lambda^{2\expoP} (\vartheta+n- 2 - 2p)(\vartheta+2n-4- 2p) \vartheta(\vartheta+n- 2)u \\
& \quad
+ \lambda^{2\expoP} (\vartheta+n- 2 - 2p)(\vartheta+2n-4- 2p) \Deltaslash u
+ \vartheta(\vartheta+n- 2) \Deltaslash(\lambda^{2\expoP} u) 
+ \Deltaslash( \lambda^{2\expoP} \Deltaslash u).
\endaligned
\ee
Next, for the Hessian part $\PHess \coloneqq r^{4-n+2p} \, \del_i \del_j \bigl( \lambda^{2\expoP} r^{n- 2p} \del_i \del_j u \bigr)$ we start with~\eqref{art2 -hessf-1}, applied to $f=u$ and to $f=\lambda^{2\expoP} r^{n- 2p}\del_i\del_j u$.  We use the $i\leftrightarrow j$ symmetry to reduce the number of terms, and organize terms according to the number of angular derivatives, namely
\be
\aligned
\PHess
& = r^{2 -n+2p} \Bigl(
\xh_i \xh_j \vartheta(\vartheta- 2)
+ \delta_{ij} \vartheta
+ \xh_i \dslash_j (\vartheta-1)
+ \xh_j \dslash_i \vartheta
+ \dslash_i \dslash_j
\Bigr) \Bigl( \\ 
& \quad \qquad \lambda^{2\expoP} r^{n- 2 - 2p} \bigl(
\xh_i \xh_j \vartheta(\vartheta- 2) u
+ \delta_{ij} \vartheta u
+ \xh_i \dslash_j (2\vartheta-1) u
+ \dslash_i \dslash_j u
\bigr) \Bigr)
\\
& = (\vartheta+n- 2 - 2p) (\vartheta+n-4- 2p) \bigl( \lambda^{2\expoP} \vartheta(\vartheta-1) u \bigr) \\
& \quad + (\vartheta+n- 2 - 2p) \bigl( \lambda^{2\expoP} \vartheta(\vartheta+n- 2) u + \lambda^{2\expoP} \Deltaslash u \bigr) \\
& \quad + \xh_i \dslash_j (\vartheta+n-3- 2p)
\bigl( \lambda^{2\expoP} \xh_i \xh_j \vartheta(\vartheta- 2) u
+ \lambda^{2\expoP} \xh_i \dslash_j (2\vartheta-1) u
+ \lambda^{2\expoP} \dslash_i \dslash_j u
\bigr) \\
& \quad + \xh_j \dslash_i (\vartheta+n- 2 - 2p)
\bigl( \lambda^{2\expoP} \xh_i \xh_j \vartheta(\vartheta- 2) u
+ \lambda^{2\expoP} \xh_i \dslash_j (2\vartheta-1) u
+ \lambda^{2\expoP} \dslash_i \dslash_j u
\bigr) \\
& \quad + \dslash_i \dslash_j \bigl(
\lambda^{2\expoP} \xh_i \xh_j \vartheta(\vartheta- 2) u
+ \lambda^{2\expoP} \delta_{ij} \vartheta u
+ \lambda^{2\expoP} \xh_i \dslash_j (2\vartheta-1) u
+ \lambda^{2\expoP} \dslash_i \dslash_j u
\bigr). 
\endaligned
\ee
Thus, we have 
\be 
\aligned
\PHess 
& = \lambda^{2\expoP} (\vartheta+n- 2 - 2p) (\vartheta+n-4- 2p) \vartheta(\vartheta-1) u \\
& \quad + \lambda^{2\expoP} (\vartheta+n- 2 - 2p) \vartheta(\vartheta+n- 2) u + \lambda^{2\expoP} (\vartheta+n- 2 - 2p) \Deltaslash u \\
& \quad + (\vartheta+n-3- 2p)
\bigl( (n-1) \lambda^{2\expoP} \vartheta(\vartheta- 2) u
+ \dslash_i(\lambda^{2\expoP}) (2\vartheta-1) \dslash_i u
+ \lambda^{2\expoP} (2\vartheta- 2) \Deltaslash u
\bigr) \\
& \quad + (\vartheta+n- 2 - 2p)
\bigl( (n-1) \lambda^{2\expoP} \vartheta(\vartheta- 2) u
- \dslash_i(\lambda^{2\expoP}) \dslash_i u
- 2 \lambda^{2\expoP} \Deltaslash u
\bigr) \\
& \quad + (n-1)^2 \lambda^{2\expoP} \vartheta(\vartheta- 2) u
+ \vartheta \Deltaslash(\lambda^{2\expoP} u)
+ n (2\vartheta-1) \dslash_i(\lambda^{2\expoP} \dslash_i u)
+ \dslash_i\dslash_j\bigl(\lambda^{2\expoP} \dslash_i \dslash_j u\bigr), 
\endaligned
\ee
Recalling that $a_{n,p}=2(n- 2 -p)$, we have 
\be
\aligned
\PHess 
& = \lambda^{2\expoP} \vartheta (\vartheta+a_{n,p})
(\vartheta^2 + a_{n,p} \vartheta - 3n + 4 + 2p) u \\
& \quad + (\vartheta+a_{n,p}) \Bigl(
2 \vartheta \dslash_i(\lambda^{2\expoP} \dslash_i u)
+ \Deltaslash(\lambda^{2\expoP} u) - \lambda^{2\expoP} \Deltaslash u \Bigr)
\\
& \quad
- a_{n,p} \Bigl(
\Deltaslash(\lambda^{2\expoP} u)
+ 2 \dslash_i(\lambda^{2\expoP} \dslash_i u) \Bigr)
+ (n-3) \dslash_i(\lambda^{2\expoP}) \dslash_i u
+ 2(n- 2) \lambda^{2\expoP} \Deltaslash u
\\
& \quad
+ \dslash_i\dslash_j\bigl(\lambda^{2\expoP} \dslash_i \dslash_j u\bigr). 
\endaligned
\ee

Combining with the Laplacian and Hessian terms yields 
\bel{art2 -c26}
\aligned
r^4 \lambda^{2\expoP} \notreH^\lambda[u]
& = (n-1) \lambda^{2\expoP} \vartheta (\vartheta+a_{n,p}) \bigl(\vartheta^2 + a_{n,p} \vartheta - b_{n,p}\bigr) u \\
& \quad + (\vartheta+a_{n,p}) \Bigl(
2 \vartheta \dslash_i(\lambda^{2\expoP} \dslash_i u)
+ (n- 2) \vartheta \bigl(\lambda^{2\expoP} \Deltaslash u + \Deltaslash(\lambda^{2\expoP} u) \bigr) \\
& \qquad\qquad
+ \bigl( (n- 2) (n- 2 -a_{n,p}) + 1 \bigr)
\Bigl( \Deltaslash(\lambda^{2\expoP} u)
- \lambda^{2\expoP} \Deltaslash u \Bigr) \Bigr)
\\
& \quad
+ (n- 2) \Deltaslash( \lambda^{2\expoP} \Deltaslash u)
+ \dslash_i\dslash_j\bigl(\lambda^{2\expoP} \dslash_i \dslash_j u\bigr)
+ (n-1) \bigl( \dslash_i(\lambda^{2\expoP}) \dslash_i u + 2 \lambda^{2\expoP} \Deltaslash u \bigr) \\
& \quad
- 2 (a_{n,p} + 1) \dslash_i(\lambda^{2\expoP} \dslash_i u)
- a_{n,p} \bigl( (n- 2) (n- 2 -a_{n,p}) + 1\bigr)
\Deltaslash(\lambda^{2\expoP} u),
\endaligned
\ee
where $b_{n,p} = 2 + (n-3) (2p+2 -n)$. Finally, taking $Z=\lambda^{2\expoP}\Hessslash u$ in~\eqref{art2 -double-div} yields a useful identity
\be
\aligned
 \nablaslash_a\nablaslash_b(\lambda^{2\expoP}\nablaslash^b\nablaslash^a u)
& = \nablaslash\cdot(\nablaslash\cdot(\lambda^{2\expoP}\Hessslash u))
   = \dslash_i \dslash_j\bigl(\lambda^{2\expoP} \dslash_i\dslash_j u + \lambda^{2\expoP} \xh_j\dslash_i u\bigr) + (n-1) \lambda^{2\expoP} \Deltaslash u
\\
& = \dslash_i\dslash_j\bigl(\lambda^{2\expoP} \dslash_i \dslash_j u\bigr)
+ (n-1) \bigl( \dslash_i(\lambda^{2\expoP}) \dslash_i u + 2 \lambda^{2\expoP} \Deltaslash u \bigr),
\endaligned
\ee
which allows us to recast two terms of~\eqref{art2 -c26} in the form $\nablaslash_a\nablaslash_b(\lambda^{2\expoP}\nablaslash^b\nablaslash^a u)$. This establishes the stated decomposition of the localized Hamiltonian operator.


\subsection{Decomposition of the localized momentum}
\label{art2 -section=5.3}

There remains to decompose the operator 
\be
r^2 \notreM^\lambda[Z]^i
= - \frac{1}{2} r^{4-n+2p} \lambda^{- 2\expoP} \del_j \bigl( \lambda^{2\expoP} r^{n- 2 - 2p} (\del_i Z_j + \del_j Z_i) \bigr)
\ee
into radial and angular contributions. Contrarily to the scalar operator $\notreH^\lambda$ where the decomposition was found solely according to the derivatives involved, here there are two additional considerations: we must separate the radial and angular components of the vector~$Z$, and the ones of the vector~$\notreM^\lambda[Z]$.

First of all, we decompose as in~\eqref{art2 -Zi-split} the vector field $Z$ into radial and angular components $\Zperp$ and~$\Zpar$, defined as components along the vector $\del_r=r^{-1}\vartheta$ and orthogonal to it (namely parallel to~$\Lambda$).  The vector $\Zpar$ tangent to $\Sphe^{n-1}$ is described in an overcomplete basis $\dslash_i$, $i=1,\dots,n$.
Next we decompose its first-order derivative according to~\eqref{art2 -delif}, that is,  
\be
\aligned
\del_i Z_j
& = \frac{1}{r} (\xh_i \vartheta + \dslash_i) \bigl( \xh_j\Zperp+ \Zpar_j \bigr)
  = \frac{1}{r} \Bigl( \xh_i\xh_j (\vartheta-1) \Zperp + \xh_j \dslash_i\Zperp + \delta_{ij} \Zperp
+ \xh_i \vartheta \Zpar_j + \dslash_i\Zpar_j \Bigr).
\endaligned
\ee 
Consequently, the momentum operator $r^2 \notreM^\lambda[Z]_i$ equals 
\be
\aligned  
& - \frac{1}{2} \lambda^{- 2\expoP} \bigl(\xh_j (\vartheta+n-3- 2p) + \dslash_j\bigr) \biggl(
\lambda^{2\expoP} \Bigl( 2 \xh_i\xh_j (\vartheta-1) \Zperp + \xh_j \dslash_i\Zperp + \xh_i \dslash_j\Zperp + 2 \delta_{ij} \Zperp 
\\
& \hskip4.cm
+ \vartheta (\xh_i \Zpar_j + \xh_j \Zpar_i) + \dslash_i\Zpar_j + \dslash_j\Zpar_i \Bigr) \biggr)
\\
& = - \frac{1}{2}
(\vartheta+n-3- 2p) \Bigl( 2 \xh_i \vartheta \Zperp + \dslash_i\Zperp + (\vartheta-1) \Zpar_i \Bigr) \\
& \quad - \frac{1}{2} \lambda^{- 2\expoP}
\dslash_j(\lambda^{2\expoP}) \Bigl(
\xh_i \dslash_j\Zperp + 2 \delta_{ij} \Zperp
+ \xh_i \vartheta\Zpar_j + \dslash_i\Zpar_j + \dslash_j\Zpar_i \Bigr)
\\
& \quad - \frac{1}{2}
\Bigl( 2(n-1) \xh_i (\vartheta-1) \Zperp
+ (n+2) \dslash_i\Zperp + \xh_i \dslash_j\dslash_j\Zperp
\\
& \qquad
+ \vartheta (n \Zpar_i + \xh_i \dslash_j\Zpar_j)
+ \dslash_j \dslash_i\Zpar_j + \dslash_j\dslash_j\Zpar_i \Bigr).
\endaligned
\ee 
\bse
We can decompose this operator into a scalar-valued operator and an operator valued in~$T\Lambda$,
\be
\notreM^\lambda[Z]_i
= \xh_i \notreM^{\lambda\perp}[Z] + \notreM^{\lambda\parallel}[Z]_i.
\ee
Each one is then separated into operators acting on $\Zperp$ and $\Zpar$ components of~$Z$. The scalar-valued operator is
\be
\aligned
r^2 \notreM^{\lambda\perp}[Z]
& = r^2 \notreM^{\lambda\perp\perp}[\Zperp] + r^2 \notreM^{\lambda\perp\parallel}[\Zpar], 
\\
r^2 \notreM^{\lambda\perp\perp}[\Zperp]
& = - \vartheta(\vartheta+a_{n,p}) \Zperp
+ (n-1) \Zperp - \frac{1}{2} \lambda^{- 2\expoP} \nablaslash_a \bigl( \lambda^{2\expoP} \nablaslash^a\Zperp \bigr),
\\
r^2 \notreM^{\lambda\perp\parallel}[\Zpar]
& = - \frac{1}{2} \lambda^{- 2\expoP} (\vartheta-1) \nablaslash^a\bigl(\lambda^{2\expoP} \Zpar_a\bigr) + \nablaslash^a\Zpar_a,
\endaligned
\ee
which we have recast in terms of covariant derivatives and with indices $a,b=1,\dots,n-1$ on the sphere. The vector-valued operator is likewise
\be
\aligned
r^2 \notreM^{\lambda\parallel}[Z]_a
& = r^2 \notreM^{\lambda\parallel\perp}[\Zperp]_a + 
r^2 \notreM^{\lambda\parallel\parallel}[\Zpar]_a,
\\
r^2 \notreM^{\lambda\parallel\perp}[\Zperp]_a & = - \frac{1}{2} (\vartheta+a_{n,p}) \nablaslash_a\Zperp - \frac{1}{2} \nablaslash_a\Zperp - \lambda^{- 2\expoP} \nablaslash_a\bigl(\lambda^{2\expoP} \Zperp\bigr),
\\
r^2 \notreM^{\lambda\parallel\parallel}[\Zpar]_a & =
- \frac{1}{2} \vartheta(\vartheta+a_{n,p}) \Zpar_a
+ \frac{1}{2} (a_{n,p}+1) \Zpar_a
- \frac{1}{2} \lambda^{- 2\expoP} \nablaslash^b\Bigl(\lambda^{2\expoP} \bigl( \nablaslash_a\Zpar_b + \nablaslash_b\Zpar_a \bigr) \Bigr).
\endaligned
\ee
\ese
To conclude the derivation, we simply split each component of this decomposition into contributions involving $\vartheta+a_{n,p}$, and the remaining terms. 

\end{document}

%% file: macros-localization.tex
\usepackage{needspace,multirow,accents,comment,xparse}
\renewcommand\dot{\accentset{\bullet}}

\usepackage{amsmath}
\def\over{\csname @@over\endcsname} 

\usepackage{tikz}
\usetikzlibrary{shadings}

\newcommand{\refwithname}[2]{\hyperref[#2]{#1~\ref*{#2}}}

\theoremstyle{plain}
\newtheorem{theorem}{Theorem}[section]
\newcommand{\mynewtheorem}[2]{
  \newaliascnt{#1}{theorem}
  \newtheorem{#1}[#1]{#2}
  \aliascntresetthe{#1}
  \expandafter\providecommand\csname #1autorefname\endcsname{#2}
}
\mynewtheorem{definition}{Definition}
\mynewtheorem{proposition}{Proposition}
\mynewtheorem{lemma}{Lemma}
\mynewtheorem{corollary}{Corollary}
\mynewtheorem{remark}{Remark}
\mynewtheorem{claim}{Claim}

\providecommand{\coloneqq}{\mathrel{\mathop:}\mathrel{\mkern-1.2mu}=}
\providecommand{\eqqcolon}{=\mathrel{\mkern-1.2mu}\mathrel{\mathop:}}

\newcommand{\Hess}{\operatorname{Hess}}
\newcommand \Riem {{\operatorfont Riem}}

\newcommand \la 	{\langle}
\newcommand \ra 	{\rangle}
\newcommand \dbf 	{{\mathbf d}}

\newcommand \bei 	{\begin{itemize}}
\newcommand \eei 	{\end{itemize}}
\newcommand \del	{\partial}

\newcommand \Ecal 	{\mathcal E}
\newcommand \Fcal 	{\mathcal F}

\newcommand \RR 	{\mathbb R}
\newcommand \Sbb 	{\mathbb S}
\newcommand \eps 	{\epsilon}
\newcommand \be 	{\begin{equation}\mathopen{}} 
\newcommand \ee 	{\end{equation}}
\newcommand \bel [1]	{\be \label{#1}\mathopen{}} 
\newcommand \Bcal 	{\mathcal B}

\newcommand \Acal {\mathcal A}

\newcommand \Mbf 	{\mathbf M}

\newcommand \bse {\begin{subequations}}
\newcommand \ese {\ifcase\value{equation}\relax\GenericError{}{Subequation (\theequation) is empty}{}{}\or\GenericError{}{Subequation (\theequation) with single equation}{}{}\fi\end{subequations}}

\DeclareMathOperator \Tr {\bf Tr}
\DeclareFontFamily{U}{matha}{\hyphenchar\font45}
\DeclareFontShape{U}{matha}{m}{n}{ <5> <6> <7> <8> <9> <10> gen * matha <10.95> matha10 <12> <14.4> <17.28> <20.74> <24.88> matha12 }{}
\DeclareSymbolFont{matha}{U}{matha}{m}{n}
\DeclareFontSubstitution{U}{matha}{m}{n}
\DeclareFontFamily{U}{mathx}{\hyphenchar\font45}
\DeclareFontShape{U}{mathx}{m}{n}{ <5> <6> <7> <8> <9> <10> <10.95> <12> <14.4> <17.28> <20.74> <24.88> mathx10 }{}
\DeclareSymbolFont{mathx}{U}{mathx}{m}{n}
\DeclareFontSubstitution{U}{mathx}{m}{n}
\DeclareMathAccent{\widecheck}{0}{mathx}{"71}
\DeclareMathAccent{\wideparen}{0}{mathx}{"75}

\DeclareMathDelimiter{\VV}{0}{matha}{"7E}{mathx}{"17} 
\ExplSyntaxOn
\NewDocumentCommand\norm{som}{%
  \IfBooleanTF{#1}{\left\lVert#3\right\rVert}%
    {\IfValueTF{#2}{\use:c{\cs_to_str:N#2l}\lVert#3\use:c{\cs_to_str:N#2r}\rVert}{\lVert#3\rVert}}}
\NewDocumentCommand\Norm{som}{%
  \IfBooleanTF{#1}{\left\VV #3\right\VV}%
    {\IfValueTF{#2}{\use:c{\cs_to_str:N#2l}\VV #3\use:c{\cs_to_str:N#2r}\VV}{\mathopen{\VV} #3\mathclose{\VV}}}}
\ExplSyntaxOff

\makeatletter
\providecommand\clap[1]{\hbox to 0pt{\hss#1\hss}}
\providecommand\mathclap[1]{\mathpalette\mathclap@aux{#1}}
\newcommand\mathclap@aux[2]{\clap{$#1#2$}}
\providecommand\mathllap[1]{\mathpalette\mathllap@aux{#1}}
\newcommand\mathllap@aux[2]{\llap{$#1#2$}}
\providecommand\mathrlap[1]{\mathpalette\mathrlap@aux{#1}}
\newcommand\mathrlap@aux[2]{\rlap{$#1#2$}}
\makeatother

\newcommand \Obig {\mathcal O}
\usepackage{calligra}
\DeclareMathAlphabet{\mathcalligra}{T1}{calligra}{m}{n}
\newsavebox{\calligrabox}

\makeatletter
\newcommand \lot {{\bf l.o.t.}\@ifnextchar.\@gobble{}} 
\makeatother

\newcommand \Vol {\operatorname{\bf Vol}}
\renewcommand \ker {\operatorname{\bf ker}}


\newcommand{\putabove}[2]{%
  \ifx\Tr#2\!\fi
  \mathrlap{\overset{\hbox{\lower1.5pt\hbox{\smash{$#1$}}}}{\phantom{#2}}}#2}

\newcommand{\gdiff}{\gamma} 
\newcommand{\hdiff}{\eta} 

\newcommand \Poin {\textnormal{\textbf{P}}}  
\newcommand \Korn {\textnormal{\textbf{K}}}       
\newcommand \Hardy {\textnormal{\textbf{H}}}

\newcommand \dslash {\slashed\del{}}
\newcommand \Deltaslash {\slashed\Delta{}}
\newcommand \Hessslash {\mathop{\slashed{\Hess}{}}\nolimits}
\newcommand \nablaslash {\slashed\nabla{}}

\newcommand \xh {\widehat{x}}

\newcommand \ut {\widetilde{u}}

\newcommand \unquad {\hspace{-1em}\ifmmode\mathopen{}\fi}

\newcommand \astar {a_\star}

\newcommand \Sphe {\mathbf{S}}

\newcommand \Arr {\Acal}
\newcommand \Brr {\Bcal}

\newcommand \Ars {\slashed \Acal}
\newcommand \Brs {\slashed \Bcal}

\newcommand \nut{\widetilde\nu}

\newcommand \gslash {\slashed{g}{}}
\newcommand \Rslash {\slashed{R}{}}

\newcommand \Zpar {Z^{\parallel}}
\newcommand \Zperp {Z^{\perp}}

\newcommand \xipar {\xi^{\parallel}}
\newcommand \xiperp {\xi^{\perp}}

\newcommand{\ssAAUX}{\slashed{\Acal}}
\newcommand{\ssA}{\slashed{\ssAAUX}{}}
\declareslashed{}{/}{.1}{0}{\Acal}
\declareslashed{}{/}{.3}{0}{\ssAAUX}
\newcommand{\ssBAUX}{\slashed{\Bcal}}
\newcommand{\ssB}{\slashed{\ssBAUX}{}}
\declareslashed{}{/}{.1}{0}{\Bcal}
\declareslashed{}{/}{.3}{0}{\ssBAUX}
\newcommand{\ssFAUX}{\slashed{\Fcal}}

\declareslashed{}{/}{.1}{0}{F}
\declareslashed{}{/}{.3}{0}{\ssFAUX}
\newcommand{\ssrmAAUX}{\slashed{\mathrm{A}}}
\newcommand{\ssrmA}{\slashed{\ssrmAAUX}{}}
\declareslashed{}{/}{.1}{0}{\mathrm{A}}
\declareslashed{}{/}{.3}{0}{\ssrmAAUX}
\newcommand{\ssrmBAUX}{\slashed{\mathrm{B}}}
\newcommand{\ssrmB}{\slashed{\ssrmBAUX}{}}
\declareslashed{}{/}{.1}{0}{\mathrm{B}}
\declareslashed{}{/}{.3}{0}{\ssrmBAUX}

\newcommand \notreH {\mathscr{H}}

\newcommand \notreM {\mathscr{M}}

\AtBeginDocument{%
  \mathchardef\ordinarycolon=\mathcode`\:
  \mathcode`\:="8000
  \mathchardef\ordinaryequal=\mathcode`\=
  \mathcode`\=="8000
}
\begingroup
\makeatletter
\lccode`~=`: \lowercase{\gdef~{\@ifnextchar={\coloneqq\@gobble}%
    {\begingroup\def~{\ordinarycolon}\colon\endgroup}}}
\lccode`~=`= \lowercase{\gdef~{\@ifnextchar:{\eqqcolon\@gobble}{\ordinaryequal}}}
\endgroup
\renewcommand{\coloneqq}{\mathrel{\mathop\ordinarycolon}\mathrel{\mkern-1.2mu}\ordinaryequal}
\renewcommand{\eqqcolon}{\ordinaryequal\mathrel{\mkern-1.2mu}\mathrel{\mathop\ordinarycolon}}

\newcommand \expoP       {P}

\renewcommand \Vol {\textnormal{\textbf{V}}}

\newcommand \bfDiv {\textnormal{\textbf{Div}}}

\newcommand \aire {\textbf{Area}}

\newcommand \normal {\textnormal{\textbf{si}}} 

\newcommand \Fpar {F^{\parallel}}
\newcommand \Fperp {F^{\perp}}

\newcommand \cperp {c^{\perp}}

\newcommand \Mu {\mathrm{M}}
\newcommand \Nu {\mathrm{N}}

\newcommand \Chi {\mathrm{X}}
\newcommand \Kappa {\mathrm{K}}

\newcommand \Sym 	{{\bf Sym}}

\newcommand{\PsiHquabeta}{\Psi^{\notreH}_{\textnormal{qua},\beta}}
\newcommand{\PsiHbil}{\Psi^{\notreH}_{\textnormal{bil}}}

\newcommand{\psiHscabeta}{\psi^{\notreH}_{\textnormal{sca},\beta}}

\newcommand{\psiHtenbeta}{\psi^{\notreH}_{\textnormal{ten},\beta}}

\newcommand \onec {c_{\textnormal{i}}}
\newcommand \twoc {c_{\textnormal{ii}}}

\newcommand \tcnp{\widetilde{c}_{n,p}}

\newcommand \onegamma {\gamma_n^{(1)}}
\newcommand \twogamma {\gamma_n^{(2)}}
\newcommand \threegamma {\gamma_n^{(3)}}

\newcommand \bnotreH {b^\notreH}

\newcommand\gxedaux{\slashed{g}}

\declareslashed{}{\backslash}{.1}{0}{\gxedaux}
\newcommand\nablaxedaux{\slashed{\nabla}}

\declareslashed{}{\backslash}{.1}{0}{\nablaxedaux}
\newcommand\Deltaxedaux{\slashed{\Delta}}

\declareslashed{}{\backslash}{.1}{0}{\Deltaxedaux}
\newcommand\uxedaux{\slashed{u}}

\declareslashed{}{\backslash}{.1}{0}{\uxedaux}
\newcommand\vxedaux{\slashed{v}}

\declareslashed{}{\backslash}{.1}{0}{\vxedaux}
\newcommand\kxedaux{\slashed{k}}

\declareslashed{}{\backslash}{.1}{0}{\kxedaux}

\newcommand \projP {P}

\newcommand \CzeroKP{C_{0\Korn\Poin}^\lambda}
\newcommand \ConeKP{C_{1\Korn\Poin}^\lambda}

\newcommand \CPoin{C_{\Poin}^{\lambda}}
\newcommand \CPoinTwo{C_{\Poin 2}^{\lambda}}

\newcommand \detbf {\operatorname{\bf det}}

\newcommand \PLap {P^{\textnormal{Lap}}}
\newcommand \PHess {P^{\textnormal{Hess}}}

\newcommand \Id  I    

\newcommand \diam {{\textnormal{\textbf{diam}}}}

\newcommand\Cstar{C_\star}
\newcommand\xistar{\xi^{\star}}
\newcommand\xistarpar{\xi^{\star\parallel}}
\newcommand\xistarperp{\xi^{\star\perp}}

\newcommand\cstun             {\varpi_1}  
\newcommand\cstdeux          {\varpi_2} 

\newcommand\unL{\underline{L}}
\newcommand\unH{\underline{H}}
 
\newcommand \wvect {w}

\newcommand{\matV}{V}
\newcommand{\matG}{G}

\newcommand \matQ {\Theta}
\newcommand{\zeroXi}{\Sigma}
\newcommand{\fluc}{\mathrm{fl}}


%% file: BLF-PLF-localization-inequalities.bbl
\begin{thebibliography}{99}

 \bibitem{BrendleHuisken:2017}
{\sc S. Brendle and G. Huisken,}
A fully nonlinear flow for two-convex hypersurfaces in Riemannian manifolds,
Invent. Math. 210 (2017), 559--613.

\bibitem{Carlotto-Review}
{\sc A. Carlotto,}
The general relativistic constraint equations, 
Living Reviews in Relativity 24 (2021), 2. 

\bibitem{CarlottoSchoen}
{\sc A. Carlotto and R. Schoen,} 
Localizing solutions of the Einstein constraint equations, 
Invent. Math. 205 (2016), 559--615.

\bibitem{CDHS}
{\sc O. Chodosh, J.M. Daniels-Holgate, and F. Schulze,}
Mean curvature flow from conical singularities, 
Invent. Math. 238 (2024), 1041--1066.

\bibitem{Choquet-book} 
{\sc Y. Choquet-Bruhat,}
\emph{General relativity and the Einstein equations,}
Oxford Math. Monographs, Oxford University Press,Oxford, 2009.

\bibitem{Chrusciel-bourbaki}
{\sc P.T. Chru\'sciel,}
Anti-gravit\'e \`a la Carlotto-Schoen [after Carlotto and Schoen],
S\'eminaire Bourbaki, Vol. 2016/2017, Exp. 1120,  
Ast\'erisque 407 (2019), 1-- 25. 

\bibitem{ChruscielDelay-memoir}
{\sc P.T. Chru\'sciel and E. Delay,}
On mapping properties of the general relativistic constraints operator in weighted function spaces with applications, 
M\'em. Soc. Math. Fr. (N.S.), no.~94 (2003), 109~pp.

\bibitem{ChruscielDelay-2021}
{\sc P.T. Chru\'sciel and E. Delay,}
On Carlotto-Schoen-type scalar-curvature gluings, 
Bull. Soc. Math. France 149 (2021), 641--662. 
%
\bibitem{ColdingMinicozzi}
{\sc T.H. Colding and W.P. Minicozzi II,}
Generic mean curvature flow I: generic singularities,
Ann. of Math. 175 (2012), 755--833. 
%
\bibitem{Corvino-2000} 
{\sc J. Corvino,}
Scalar curvature deformation and a gluing construction for the Einstein constraint equations, 
Comm. Math. Phys. 214 (2000), 137--189. 

\bibitem{CorvinoEM} 
{\sc J. Corvino, M. Eichmair, and P. Miao,}
Deformation of scalar curvature and volume, 
Math. Ann. 357 (2013), 551--584.

\bibitem{CorvinoHuang} 
{\sc J. Corvino and L.-H. Huang,} 
Localized deformation for initial data sets with the dominant energy condition, 
Calc. Var. Partial Differential Equations 59 (2020), 42.

\bibitem{CorvinoSchoen} 
{\sc J. Corvino and R. Schoen,}
On the asymptotics for the vacuum Einstein constraint equations, 
J. Differential Geom. 73 (2006), 185-- 217.

\bibitem{Delay}
{\sc E. Delay,}
Localized gluing of Riemannian metrics in interpolating their scalar curvature, 
Differential Geom. Appl. 29 (2011), 433--439.  

\bibitem{FischerMarsden-1973} 
{\sc A.E. Fischer and J.E. Marsden,}
Linearization stability of the Einstein equations,
Bull. Amer. Math. Soc. 79 (1973), 997--1003.

\bibitem{GallowayMiaoSchoen} 
{\sc G.J. Galloway, P. Miao, and R. Schoen,}
Initial data and the Einstein constraint equations,
in \emph{General Relativity and Gravitation: A Centennial Perspective},
edited by A.~Ashtekar, B.~Berger, J.~Isenberg, and M.~MacCallum,
Cambridge University Press, Cambridge, 2015, pp.~412--448. 

\bibitem{LL-Letter}
{\sc B. Le Floch and P.G. LeFloch,} 
Optimal shielding for Einstein gravity, 
Class. Quantum Grav. 41 (2024), 13LT02.  

\bibitem{LL-optimal-main}
{\sc B. Le Floch and P.G. LeFloch,}
Optimal localization for the Einstein constraints.
First version in 2023 available at \url{https://arxiv.org/abs/2312.17706v1}. Second version in 2026 available at \url{https://arxiv.org/abs/2312.17706v2}.

\bibitem{LL-next}
{\sc B. Le Floch and P.G. LeFloch,} 
The seeds and silhouettes of optimal gravitational shielding, 
in preparation.

\bibitem{LeFlochNguyen}
{\sc P.G. LeFloch and T.-C. Nguyen,}
The seed-to-solution method for the Einstein constraints and the asymptotic localization problem, 
J. Funct. Anal. 285 (2023), 110106.

\bibitem{Moncrief-1975}
{\sc V. Moncrief,}
Spacetime symmetries and linearization stability of the Einstein equations. I, 
J. Math. Phys. 16 (1975), 493--498. 

\end{thebibliography}
